\theoremstyle{plain}
\newtheorem{mainthm}{Theorem}
\newtheorem{thm}{Theorem}[section]
\newtheorem{prop}[thm]{Proposition}
\newtheorem{cor}[thm]{Corollary}
\newtheorem{lemma}[thm]{Lemma}
\theoremstyle{definition}
\newtheorem{dfn}[thm]{Definition}
\newtheorem{rmk}[thm]{Remark}
\newtheorem{construction}[thm]{Construction}
\newtheorem{ex}[thm]{Example}
\newcommand{\Acal}{\mathcal{A}} \newcommand{\Bcal}{\mathcal{B}} \newcommand{\Ccal}{\mathcal{C}}
\newcommand{\Dcal}{\mathcal{D}} \newcommand{\Ecal}{\mathcal{E}} \newcommand{\Fcal}{\mathcal{F}}
\newcommand{\Gcal}{\mathcal{G}} \newcommand{\Hcal}{\mathcal{H}} 
 \newcommand{\Kcal}{\mathcal{K}} \newcommand{\Lcal}{\mathcal{L}}
  \newcommand{\Ocal}{\mathcal{O}}
 \newcommand{\Qcal}{\mathcal{Q}} \newcommand{\Rcal}{\mathcal{R}}
\newcommand{\Scal}{\mathcal{S}} \newcommand{\Tcal}{\mathcal{T}} \newcommand{\Ucal}{\mathcal{U}}
\newcommand{\Vcal}{\mathcal{V}}  \newcommand{\Xcal}{\mathcal{X}}
\newcommand{\Ycal}{\mathcal{Y}} \newcommand{\Zcal}{\mathcal{Z}}
\newcommand{\Abb}{\mathbb{A}}  
  \newcommand{\Fbb}{\mathbb{F}}
  \newcommand{\Lbb}{\mathbb{L}}
 \newcommand{\Qbb}{\mathbb{Q}} \newcommand{\Rbb}{\mathbb{R}}
 \newcommand{\Zbb}{\mathbb{Z}}
\newcommand{\Dsf}{\mathsf{D}}  \newcommand{\Fsf}{\mathsf{F}}
  \newcommand{\Lsf}{\mathsf{L}}
  \newcommand{\Rsf}{\mathsf{R}}
 \newcommand{\Tsf}{\mathsf{T}}
\renewcommand{\phi}{\varphi}
\renewcommand{\epsilon}{\varepsilon}
\renewcommand{\theta}{\vartheta}
\newcommand{\cpl}{\mathsf{c}}
\DeclareMathOperator{\coker}{coker}
\DeclareMathOperator{\Hom}{\mathsf{Hom}}
\DeclareMathOperator{\RHom}{\Rbb\!\Hom}
\DeclareMathOperator{\End}{\mathsf{End}}
\DeclareMathOperator{\Inj}{\mathsf{Inj}}
\DeclareMathOperator{\Prod}{\mathsf{Prod}}
\DeclareMathOperator{\add}{\mathsf{add}}
\DeclareMathOperator{\Cogen}{\mathsf{Cogen}}
\DeclareMathOperator{\Ind}{\mathsf{Ind}}
\DeclareMathOperator{\GSupp}{\mathsf{Gsupp}}
\DeclareMathOperator{\GSpec}{\mathsf{GSpec}}
\DeclareMathOperator{\Gdim}{\mathsf{Gdim}}
\DeclareMathOperator{\height}{\mathsf{height}}
\DeclareMathOperator{\Loc}{\mathsf{Loc}}
\DeclareMathOperator{\LOC}{\mathcal{L}}
\DeclareMathOperator{\hocolim}{\mathsf{hocolim}}
\DeclareMathOperator{\spcl}{\mathsf{V}}
\DeclareMathOperator{\gncl}{\mathsf{\Lambda}}
\DeclareMathOperator{\Ext}{\mathsf{Ext}}
\DeclareMathOperator{\D}{\mathsf{D}}
\DeclareMathOperator{\Mod}{\mathsf{Mod}}
\DeclareMathOperator{\Spec}{\mathsf{Spec}}
\DeclareMathOperator{\supp}{\mathsf{supp}}
\newcommand{\mf}{\mathfrak{m}}
\newcommand{\pf}{\mathfrak{p}}
\newcommand{\qf}{\mathfrak{q}}
\newcommand{\of}{\mathfrak{o}}
\DeclareMathOperator{\Red}{\mathsf{Red}}
\begin{document}

\pagestyle{headings}

\title{Mutation and the Gabriel Spectrum}
\author{Michal Hrbek}
\address{Michal Hrbek, Institute of Mathematics of
the Czech Academy of Sciences, \v{Z}itn\'a 25, 115 67 Prague, Czech Republic.}
\email{hrbek@math.cas.cz}
\author{Sergio Pavon}
\address{Sergio Pavon, Dipartimento di Matematica ``Tullio Levi-Civita'', Universit\`a degli Studi di Padova, via Trieste 63, 35121 Padova, Italy}
\email{sergio.pavon@math.unipd.it}
\author{Jorge Vit\'oria}
\address{Jorge Vit\'oria, Dipartimento di Matematica ``Tullio Levi-Civita'', Universit\`a degli Studi di Padova, via Trieste 63, 35121 Padova, Italy}
\email{jorge.vitoria@unipd.it}


\subjclass[2020]{18E10, 13D09, 18G80, 16E35}
\thanks{Acknowledgements: The authors would like to thank Jan Šťovíček for reading a preliminary version
of this paper and for his useful comments concerning the proof of
Proposition~\ref{prop:W-giraud-finite}. They would also like to thank
Rosanna Laking for several discussions on the topics of the paper.
M.H.~was supported by the GAČR project 23-05148S and the Czech Academy of Sciences (RVO 67985840).
S.P.~and J.V.~were supported
by NextGenerationEU under NRRP, Call PRIN 2022  No.~104
of February 2, 2022 of Italian Ministry of University and Research; Project
2022S97PMY \textit{Structures for Quivers, Algebras and Representations
(SQUARE)}, and also by the Department of Mathematics `Tullio Levi-Civita' of the University of Padova through its BIRD - Budget
Integrato per la Ricerca dei Dipartimenti 2022, within the project
\textit{Representations of quivers with commutative coefficients}.}
\begin{abstract}
	Mutations occur in multiple algebraic contexts, often enjoying good
	combinatorial properties. In this paper we study mutations of pure-injective
	cosilting objects in compactly generated triangulated categories from a
	topological point of view. We consider the topologies studied by Gabriel,
	Burke and Prest on the set of indecomposable injective objects in a
	Grothendieck abelian category, transfer them to associated cosilting
	subcategories, and show that, in that context, right mutation induces a
	homeomorphism on two complementary subspaces. We then improve this result in
	the context of the derived category of a commutative noetherian ring, showing
	that right mutation is an open bijection. We end the paper with a detailed
	analysis of a range of cosilting subcategories over commutative noetherian rings
	for which the topology is completely known. As a byproduct of this analysis, we obtain that the category of
modules over a commutative noetherian ring is the unique locally noetherian
Grothendieck category in its derived-equivalence class.
\end{abstract}
\maketitle

\section*{Introduction}

A common mantra in mathematics is that a bijection offers two perspectives on
essentially the same set. In representation theory, bijections are bridges
between homological, combinatorial or geometric data regarding the objects of
study. The more structure these bridges have, the better vehicles of information
they are. For example, a good enough bijection between two sets that are
naturally endowed with topologies ought to enjoy, to merit the adjective, good
topological properties. This paper is dedicated to the study of one such bridge:
cosilting mutation, as developed in \cite{ange-laki-stov-vito-22}. Our main aim
is to show that mutation is not only a bijection between sets, but a local
homeomorphism between naturally occurring topologies --- and to explore the
consequences of that fact. 

Mutations occur in multiple algebraic contexts. They are processes that modify
an object of study into another of the same kind by replacing part of it.
Cosilting mutation occurs naturally in compactly generated triangulated
categories and acts on a special class of objects therein: pure-injective
cosilting objects. This class is important because it parametrises algebraically
well-behaved cohomology theories in the triangulated category, namely those
arising from smashing $t$-structures with Grothendieck hearts
\cite{nico-saor-zvon-19}. The mutation process for these objects is a
generalisation of the classic notion in tilting theory (see for example \cite{ried-scho-91}).

The set of indecomposable injective objects in a Grothendieck abelian category
is naturally endowed with a topology that generalises the one on indecomposable
injective modules over a commutative noetherian ring (the Hochster dual to the
Zariski topology). This topology, introduced by Gabriel \cite{gabr-62} and
studied in detail by Burke and Prest (see, for example, \cite{burk-94-thesis, pres-09}), is defined in terms of localising subcategories, thus
encoding some fundamental structure of the Grothendieck category. For any
pure-injective cosilting object $c$ in a compactly generated triangulated
category, there is a bijection between indecomposable summands of products of
$c$, denoted by $\GSpec(c)$, and indecomposable injectives in the
associated Grothendieck heart \cite{ange-mark-vito-17}. This naturally endows
$\GSpec(c)$ with the corresponding Gabriel topology. Now, as shown in
\cite{ange-laki-stov-vito-22}, if $c'$ is a (right) cosilting mutation of $c$,
then there is a bijection $\GSpec(c)\rightarrow \GSpec(c')$, and
we will study the topological properties of this bijection. 

Recall that a Grothendieck category is said to be semi-artinian if it is
generated, as a localising subcategory, by its simple objects. A Grothendieck
category $\Hcal$ is said to be semi-noetherian if it admits a filtration by
localising subcategories with semi-artinian factors.
Our first main theorem reads as follows.

\begin{mainthm}\label{mainthm:A}
	Let $c$ and $c'$ be pure-injective cosilting objects in a compactly generated
	triangulated category $\Dcal$, with associated Grothendieck hearts
	$\Hcal$ and $\Hcal'$, respectively. Suppose that $c'$ is a right
	mutation of $c$ at $\Ecal=\Prod(\Ecal)\subseteq \Prod(c)$. Then, the following assertions hold.
	\begin{enumerate}
		\item \emph{(Corollary~\ref{cor:mutation-semi-noetherianity})}
			$\Hcal$ is semi-noetherian if and only if so is $\Hcal'$.
		\item \emph{(Corollary~\ref{cor:semi-artinian-mutation})}
			If $\Hcal$ is
			semi-artinian, then so is $\Hcal'$.
		\item \emph{(Theorem~\ref{thm:piecewise-homeomorphism})} The bijection $\Theta\colon \GSpec(c)\to
		\GSpec(c')$ induced by right mutation induces homeomorphisms in the
			subspace topologies on $\Ecal\cap \GSpec(c)$ and on
			$\GSpec(c)\setminus \Ecal$.
		\item \emph{(Corollaries~\ref{cor:discrete-mutation}
			and~\ref{cor:strongly-perfect})} Under one of the assumptions below, the topology of $\GSpec(c')$ is the
			finest one for which the bijection $\Theta$ satisfies (3) and
			$\GSpec(c')\cap\Ecal$ is open:
			\begin{enumerate}[label=(\roman*)]
				\item the Serre quotient $\Hcal/(\Hcal\cap {}^\perp\Ecal)$ is
					semi-artinian; or
				\item $c$ is cotilting and $\Hcal\cap {}^\perp\Ecal$ is a perfect
					hereditary torsion class in $\Hcal$.
			\end{enumerate}
\end{enumerate}
\end{mainthm}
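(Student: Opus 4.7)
The strategy is to leverage the fact that right cosilting mutation corresponds to an HRS-tilt of hearts. By \cite{ange-laki-stov-vito-22}, the heart $\Hcal'$ is the HRS-tilt of $\Hcal$ at the torsion pair $(\Tcal,\Fcal)$ with $\Fcal=\Hcal\cap{}^\perp\Ecal$, and the bijection $\Theta$ is the canonical identification $\Inj(\Hcal)=\Inj(\Hcal')$ induced by the tilt. All four parts then reduce to a careful analysis of how this HRS-tilt interacts with localising subcategories of $\Hcal$ and with the resulting Gabriel topology on $\Inj(\Hcal)$.

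For (1), I would prove that semi-noetherianity is invariant under HRS-tilting: a filtration by localising subcategories with semi-artinian factors in $\Hcal$ can be refined so that each step is compatible with $(\Tcal,\Fcal)$, and such compatible filtrations transfer to $\Hcal'$ via the tilt with matching semi-artinian factors; the reverse direction is symmetric. For (2), I would argue that every simple object in $\Hcal'$ lies either in $\Tcal$ (unshifted) or in $\Fcal[1]$; semi-artinianity of $\Hcal$ restricts to both $\Tcal$ (closed under quotients) and $\Fcal$ (closed under subobjects), and hence produces enough simples in $\Hcal'$. The absence of a converse reflects the possibility of new simples in $\Hcal'$ arising from extensions crossing the $t$-structure boundary.

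For (3), I would describe the Gabriel topology via basic open sets $\Ucal(\Lcal)=\{E: E\notin \Lcal\}$ indexed by localising subcategories $\Lcal$ of $\Hcal$. The restriction of $\Ucal(\Lcal)$ to $\Ecal\cap\GSpec(c)$ is controlled by the trace $\Lcal\cap\Fcal$, while its restriction to $\GSpec(c)\setminus\Ecal$ is controlled by the image of $\Lcal$ in the Gabriel quotient $\Hcal/\Fcal$. Under the tilt, both $\Fcal$ and $\Hcal/\Fcal$ embed into $\Hcal'$ in a way compatible with the analogous decomposition on the other side, so the basic opens on each piece correspond, yielding the two homeomorphisms. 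The main technical obstacle is formalising this bijection of localising subcategories across the tilt and verifying its compatibility with Gabriel quotients; this is where I expect most of the work to concentrate.

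For (4), the extra hypotheses force the topology of $\GSpec(c')$ to be maximally refined. Under (i), the semi-artinianity of $\Hcal/(\Hcal\cap{}^\perp\Ecal)$ (which, via (1)-(2), corresponds to a semi-artinian Gabriel quotient on the $\Hcal'$ side) ensures that the topology on $\GSpec(c')\setminus\Ecal$ is discrete, so the only remaining topological structure is what is forced by (3) together with openness of $\GSpec(c')\cap\Ecal$. Under (ii), the perfect hereditary torsion class hypothesis provides explicit control on the lattice of localising subcategories and plays an analogous role. The hardest point is precisely here: ruling out topologies that are strictly finer than the Gabriel one but still satisfy the two imposed properties, i.e.\ showing that no additional open can ``glue'' the two pieces $\Ecal$ and $\GSpec(c')\setminus\Ecal$ in an unforeseen way.
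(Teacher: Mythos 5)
Your proposal misidentifies the torsion pair right at the outset: the class $\Hcal\cap{}^\perp\Ecal$ is the \emph{torsion} class $\Tcal$ (it is the left-orthogonal to a set of injectives; the torsion-free class is $\Cogen$ of them), not the torsion-free class $\Fcal$ as you write. This sign/orientation slip propagates: for instance in (2) you claim simples of $\Hcal'$ lie in $\Tcal$ or $\Fcal[1]$, whereas $\Hcal'=\Fcal\ast\Tcal[-1]$ forces them to lie in $\Fcal$ or $\Tcal[-1]$; and in (4)(i) you assert the topology on $\GSpec(c')\setminus\Ecal$ is discrete, when the hypothesis ``$\Hcal/\Tcal$ semi-artinian'' is equivalent to $E=\GSpec(c')\cap\Ecal$ being the discrete piece (since $\GSpec(\Hcal/\Tcal)$ is identified with $E$, not its complement). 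A further structural problem is that in (3) you invoke a ``Gabriel quotient $\Hcal/\Fcal$'': $\Fcal$ is a hereditary torsion-free class, hence closed under subobjects and products but \emph{not} a localising subcategory, so no such quotient exists. In fact, $\GSpec(c)\setminus\Ecal$ (the injectives with nonzero torsion part) does not naturally arise as the Gabriel spectrum of any quotient or subcategory of $\Hcal$, so the piecewise comparison cannot be reduced to transfer of localising subcategories across the tilt. The paper avoids all this by working directly in $\Dcal$: for a set $V$ with $E\subseteq V$ it shows $u\in\Prod(V)\iff\Theta(u)\in\Prod(\Theta(V))$ using the exchange triangles, the fact that a product of $\Ecal$-precovers is a precover, and an explicit splitting argument; this bypasses any identification of $\GSpec\setminus E$ with a category-theoretic spectrum.

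Beyond these errors, your approaches to (1), (2) and (4) are also much vaguer than the paper's. For (1) and (2) the paper does not ``refine filtrations to be compatible with $(\Tcal,\Fcal)$''; it identifies the core of the TTF-triple $(\Fcal,\Tcal[-1],\Fcal')$ in $\Hcal'$ with the Giraud subcategory $\Tcal^{\perp_{0,1}}$ of $t$, which yields a localisation sequence $\Tcal[-1]\hookrightarrow\Hcal'\to\Hcal/\Tcal$ parallel to $\Tcal\hookrightarrow\Hcal\to\Hcal/\Tcal$; the ``Serre quality'' of semi-noetherianity (Proposition~\ref{prop:semi-noetherian-serre}) and the refined Gabriel-dimension estimate coming from Proposition~\ref{prop:ttf-simple} then give (1) and (2) directly. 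For (4), the crux is the observation (Remark~\ref{rmk:disconnect}) that the topology on $\GSpec(c')$ is completely determined by (3) as soon as $E=\Theta(E)$ is \emph{clopen} (not merely closed) in $\GSpec(c')$; the two hypotheses (i) and (ii) are exactly designed to produce this clopen-ness (via discreteness of $E$ or via the TTF-quadruple arising from strong perfectness, respectively). Your ``ruling out finer topologies'' framing is in the right spirit, but without identifying clopen-ness of $E$ as the decisive criterion the argument has no mechanism to close the gap.
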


Some readers might be familiar with another topology on such sets: the Ziegler
topology. This topology, defined on indecomposable injective objects in
locally coherent categories, is a fundamental tool in the theory of
purity, and it has roots in the model theory of modules. The Gabriel and the Ziegler
topologies coincide for locally noetherian categories, but these rarely occur as
hearts in our main class of study (the derived category of a commutative
noetherian ring), see Theorem~\ref{mainthm:C} below.
One reason for our choice of working with the Gabriel
topology is that it is defined for arbitrary
Grothendieck categories, regardless of local coherence.

\medskip

Our main applications of this topologically enriched mutation lie in the study
of hearts of bounded cosilting complexes in the derived category of modules over
a commutative noetherian ring. These bounded complexes are automatically
pure-injective \cite{mark-vito-18}, and the $t$-structures associated to them
are known to be compactly generated \cite{hrbe-naka-21}. Since compactly
generated $t$-structures have long been classified (see \cite{alon-jere-saor-10})
in terms of descending sequences of open subsets of $\mathsf{Spec}(R)$ (with
respect to the Hochster dual topology to the Zariski topology), we can also use
commutative algebra to study them. This allows us to strengthen
Theorem~\ref{mainthm:A} in this setup, proving that in fact right mutation
yields an open bijection (or, in other words, that left mutation is continuous).
The following theorem summarises our main results in this setting.
We remark that item (1) is a generalisation of the classical Matlis'
correspondence between prime ideals and indecomposable injective $R$-modules
(see \S\ref{subsubsec:GSpec-Spec}).

\begin{mainthm}\label{mainthm:B}
	Let $R$ be a commutative noetherian ring, and consider $\Spec(R)$ to
	be endowed with the Hochster dual topology to the Zariski topology. Let $c$ be
	a bounded cosilting object in the derived category $\D(R)$ of
	$R$-modules. Then the following statements hold.
	\begin{enumerate}
		\item \emph{(Theorem~\ref{thm:c(p)-support})} There is a canonically defined
			open bijection $\Phi_c\colon \Spec(R)\to \GSpec(c)$.
		\item \emph{(Corollaries~\ref{cor:T0-alexandrov}
			and~\ref{cor:topological-GSpec(c)})} The space $\GSpec(c)$ is Kolmogorov,
			Alexandrov, noetherian and sober.
		\item \emph{(Theorem~\ref{thm:theta-open})} If $c'$ is a right mutation of $c$ then we have that
			$\Theta=\Phi_{c'}\circ \Phi_c^{-1}$ and $\Theta$ is an open map.
	\end{enumerate}
\end{mainthm}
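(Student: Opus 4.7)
The plan is to address the three items in order, building (2) and (3) on the infrastructure developed in (1).

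For (1), the starting point is that $c$ being bounded implies, by \cite{hrbe-naka-21}, that the associated $t$-structure is compactly generated, hence classified by a descending sp-filtration $(V_n)_{n\in\Zbb}$ of $\Spec(R)$ via the Alonso--Jerem\'ias--Saor\'in classification \cite{alon-jere-saor-10}. For each prime $\pf\in\Spec(R)$ I would construct a canonical indecomposable pure-injective object $c(\pf)\in\Prod(c)$ ``concentrated at $\pf$'', by pairing localisation at $\pf$ with an injective-hull construction inside the heart $\Hcal$; in the case $c=E(R)$ this recovers Matlis' $E(R/\pf)$. Bijectivity of $\Phi_c\colon\pf\mapsto c(\pf)$ should follow from the equivalence between $\GSpec(c)$ and the indecomposable injectives of $\Hcal$ \cite{ange-mark-vito-17}, together with an analysis of ``associated primes'' of such injectives inferred from the sp-filtration. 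For openness, I would show that for every specialisation-closed $V\subseteq\Spec(R)$ (i.e.\ every Hochster-open), the set $\Phi_c(V)$ cuts out a hereditary torsion class in $\Hcal$, and therefore defines a Gabriel-open in $\GSpec(c)$.

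For (2), the Hochster dual of the Zariski topology on $\Spec(R)$ is classically Kolmogorov, Alexandrov, noetherian and sober for $R$ commutative noetherian. Since $\Phi_c$ is only a priori an open bijection rather than a homeomorphism, the most economical route is to examine directly the lattice of Gabriel-opens of $\GSpec(c)$: I would identify this lattice with the lattice of hereditary torsion classes in $\Hcal$, and then use (1) to identify it further with the lattice of specialisation-closed subsets of $\Spec(R)$. The four topological properties then transfer at the lattice level.

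For (3), one first verifies the formula $\Theta=\Phi_{c'}\circ\Phi_c^{-1}$ by tracking how right mutation at $\Ecal$ transforms the canonical objects $c(\pf)$. The natural claim is that mutation affects only those $c(\pf)$ lying in $\Ecal$, replacing them with their $c'$-counterparts while leaving the remainder unchanged; this is consistent with the piecewise homeomorphism given by Theorem~\ref{mainthm:A}(3), and should ultimately reduce to the fact that the sp-filtrations associated to $c$ and $c'$ differ only on the data responsible for $\Ecal$. For the openness of $\Theta$, I would exploit the lattice identifications of (2): both $\Phi_c$ and $\Phi_{c'}$ convert Gabriel-opens into the same combinatorial lattice of specialisation-closed subsets of $\Spec(R)$, so $\Theta$ pushes forward Gabriel-opens to Gabriel-opens.

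The main obstacle will be (1), specifically the construction of $c(\pf)$ and the proof that $\pf\mapsto c(\pf)$ is bijective: this is a genuine generalisation of Matlis' correspondence to arbitrary bounded cosilting objects, and it must simultaneously track the interaction of the sp-filtration with the indecomposable structure of $\Prod(c)$, so that (2) and (3) reduce to lattice-theoretic bookkeeping. Once this foundation is in place, the topological properties and the mutation compatibility should follow without substantial additional difficulty.
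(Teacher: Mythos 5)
Your treatment of item (1) is roughly aligned with the paper's, though you leave the construction of $c(\pf)$ vague. The paper characterises $c(\pf)$ as the unique object of $\GSpec(c)$ with $\pf\in\supp(c(\pf))\subseteq\gncl(\pf)$, shows via Proposition~\ref{prop:mutation-BW} that right mutation preserves this membership in $\Bcal_{W_\pf}\setminus\Bcal_{W_\pf\cup\{\pf\}}$, and deduces the well-definedness of $\Phi_c$ and the identity $\Phi_{c'}=\Theta\circ\Phi_c$ simultaneously by induction along a chain of mutations from $\omega$ to $c$. Your idea that openness comes from specialisation-closed subsets cutting out hereditary torsion classes in $\Hcal$ is exactly right and is Lemma~\ref{lemma:preceq-finer}, though one also needs Proposition~\ref{prop:s(p)-inj-envelope}(2) to check that $\Phi_c$ takes those supports to the corresponding Gabriel supports. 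Also note a small slip: mutation at $\Ecal$ fixes the points of $\Ecal$ and alters those outside $\Ecal$, not the other way around.

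The serious gap is in the lattice identification on which you base items (2) and (3). You claim to identify the frame of Gabriel-opens of $\GSpec(c)$ with the lattice of specialisation-closed subsets of $\Spec(R)$. This is false in general. By Proposition~\ref{prop:gsupp}, the Gabriel-opens of $\GSpec(c)$ are in bijection with \emph{all} localising subcategories of $\Hcal$, whereas the specialisation-closed subsets classify only those of \emph{finite type} (Proposition~\ref{prop:smashing-cohomological} and the remark following Proposition~\ref{prop:W-giraud-finite}). The Gabriel topology on $\GSpec(c)$ is thus in general \emph{strictly finer} than the pullback of the Hochster topology along $\Phi_c$: if they were equal, $\Phi_c$ would be a homeomorphism and, essentially by the argument of Theorem~\ref{thm:locally-noetherian-rare}, $\Hcal$ would be locally noetherian, which almost never happens. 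Concretely, Example~\ref{example:H1-twodim} and Figure~\ref{figure:2-dim-1-step} exhibit hearts where the closure order $\preceq$ on $\Spec(R)$ is strictly finer than $\subseteq$, so the opens are genuinely more numerous. Consequently your argument for (3) would actually prove that $\Theta$ is a homeomorphism, which the paper is careful \emph{not} to claim and which is false.

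For (2) the paper instead uses Theorem~\ref{thm:torsion-simple-socle}: Alexandrov-ness is equivalent to every indecomposable injective having a hereditary-torsion-simple subobject, and the key insight (Proposition~\ref{prop:residues-gabriel-simples} together with Lemma~\ref{lemma:s(p)-hereditary-torsion-simple}) is that the shifted residue fields $s(\pf)=k(\pf)[-f(\pf)]$ are exactly the Gabriel-simples of $\Hcal$ and they are hereditary-torsion-simple. Once the topology is Alexandrov with closure order $\preceq$ finer than $\subseteq$, the remaining properties (partial order, $(\Tsf_0)$, noetherian, sober) transfer from $(\Spec(R),\subseteq)$. For (3), the openness of $\Theta$ is a genuinely technical statement (Lemmas~\ref{lemma:reformulation}--\ref{lemma:meat}): after reducing to showing $\min_\preceq(\spcl_{\preceq'}(\pf)\cap E^\cpl)\subseteq\spcl_\preceq(\pf)$ for $\pf\in E$, one constructs an explicit quotient $w$ of $s(\pf)$ in $\Hcal$ whose support is forced to contain $\qf$, working in a Gabriel quotient $\Hcal/\Xcal$ chosen using minimality of $\qf$. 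This is far from lattice-theoretic bookkeeping, and your proposal does not supply a substitute argument.
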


Surprisingly, our topological
considerations on the Gabriel spectra of the bounded
cosilting complexes of $\D(R)$ lead us to a result
concerning derived equivalences.
Indeed, we show that
$\Mod(R)$ is the only locally noetherian abelian category in its
derived-equivalence class.
This is reminiscent of the
Bondal--Orlov reconstruction theorem for smooth projective varieties with ample
or anti-ample canonical bundle \cite{bond-orlo-01}.

\begin{mainthm}[Theorem~\ref{thm:locally-noetherian-rare}]\label{mainthm:C}
	If $\Gcal$ is a locally noetherian abelian category such
	that there is a triangle equivalence $\D^{\mathsf{b}}(\Gcal) \simeq
	\D^{\mathsf{b}}(\Mod(R))$ of the bounded derived categories, then $\Gcal \simeq
	\Mod(R)$.
\end{mainthm}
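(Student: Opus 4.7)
The plan is to turn the hypothesis into a statement about a bounded cosilting object in $\D(R)$ whose heart is equivalent to $\Gcal$, and then use local noetherianity together with Theorem~\ref{mainthm:B} to pin that cosilting object down to a shift of the standard injective cogenerator of $\Mod(R)$.

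First I would use the equivalence $\D^{\mathsf{b}}(\Gcal) \simeq \D^{\mathsf{b}}(\Mod R)$ to transport the standard $t$-structure of $\D^{\mathsf{b}}(\Gcal)$ to a bounded $t$-structure on $\D^{\mathsf{b}}(\Mod R)$ whose heart is equivalent to $\Gcal$. Since $\Gcal$ is locally noetherian, hence Grothendieck with an injective cogenerator, this bounded $t$-structure lifts to a bounded cosilting $t$-structure on $\D(R)$ with heart $\Hcal \simeq \Gcal$; its associated cosilting object $c$ is automatically pure-injective, so Theorem~\ref{mainthm:B} applies. This already provides an open bijection $\Phi_c\colon \Spec R \to \GSpec(c)$, and $\GSpec(c)$ is identified with the set of indecomposable injectives of $\Hcal$ with its Gabriel topology, which for $\Hcal$ locally noetherian coincides with the Ziegler topology.

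The second half of the plan is to invoke the fine analysis of bounded cosilting objects in $\D(R)$ promised in the final section of the paper. Such cosilting $t$-structures are classified by descending filtrations of $\Spec R$ by specialization-closed subsets, and their hearts admit explicit descriptions in terms of these filtrations. I would argue that if the filtration is not concentrated in a single cohomological degree---equivalently, if $c$ is not a shift of the standard injective cogenerator of $\Mod R$---then $\Hcal$ fails to be locally noetherian: one should exhibit an object in $\Hcal$ assembled from cohomologies in distinct degrees, or a distinguished indecomposable injective therein, that admits a strictly ascending chain of subobjects. Once $c$ is forced to be shift-standard, $\Hcal \simeq \Mod R$, and so $\Gcal \simeq \Mod R$.

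The main obstacle is the last implication: translating local noetherianity of $\Hcal$ into rigidity of the filtration. The open bijection with $\Spec R$ furnished by Theorem~\ref{mainthm:B} controls only the underlying set and topology of $\Ind(\Hcal)$, whereas local noetherianity is a categorical, not topological, condition. I expect the decisive input to be an analysis of how noetherian objects behave under the cosilting $t$-structure truncations: a non-trivial filtration should produce a cohomological obstruction preventing the heart from being generated under filtered colimits by its noetherian subcategory, and making this obstruction precise is the heart of the argument.
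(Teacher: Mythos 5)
Your reduction is the same as the paper's: transport the standard $t$-structure of $\D^{\mathsf b}(\Gcal)$ to a bounded cosilting $t$-structure in $\D(R)$ with heart $\Hcal\simeq\Gcal$, then argue that the associated sp-filtration must be (a shift of) the trivial one. However, the final step of your plan --- showing that a non-degenerate filtration prevents $\Hcal$ from being locally noetherian --- is exactly where you say you are stuck, and the approach you sketch there (exhibiting an object in $\Hcal$ with a strictly ascending chain of subobjects) is not the route the paper takes and is not obviously tractable.

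The missing input is that local noetherianity of $\Hcal$ has a \emph{direct topological} consequence that you can exploit, rather than being merely a ``categorical condition'' orthogonal to the topology. In a locally noetherian Grothendieck category every hereditary torsion pair is automatically of finite type, so the Gabriel and Ziegler topologies on $\GSpec(\Hcal)$ coincide. By the classification of finite-type hereditary torsion pairs in these cosilting hearts (supports are exactly the specialization-closed subsets of $\Spec R$), this forces the map $\Phi_c\colon\Spec R\to\GSpec(c)$ to be not merely an open bijection but a homeomorphism for the Hochster topology. Once you have that, Lemma~\ref{lemma:maximal-difference} --- itself a consequence of the piecewise homeomorphism theorem together with Proposition~\ref{prop:ttf-simple} --- tells you that any prime maximal in a difference $V_{i-1}\setminus V_i$ of the sp-filtration is an isolated point of $\GSpec(\Hcal)$, hence (via the homeomorphism) a maximal ideal of $R$. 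This forces each $V_i$ to be clopen in the Hochster topology, i.e.\ a union of connected components of $\Spec R$; after reducing to $R$ connected, every $V_i$ is $\Spec R$ or $\emptyset$, and the $t$-structure is a shift of the standard one. In short: you don't need to hunt for non-noetherian objects; locally noetherian forces Gabriel $=$ Ziegler, which makes $\Phi_c$ a homeomorphism, and then the already-established topological lemma on maximal primes in filtration steps finishes the argument.
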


The techniques which we employ are robust enough to compute the Gabriel spectra
of various examples. Our main tool to describe the
Gabriel spectra of two-term cosilting complexes is
Theorem~\ref{thm:onestep-recipe}. We observe that it relies on the ability to
detect the \emph{coherence} of certain subsets of prime ideals, which is
something that depends on the ring $R$, and not just on its prime spectrum
(see Example~\ref{example:H1-twodim}).

\subsection*{Structure of the paper}

The paper is structured in four sections. In \textbf{Section~\ref{sec:gabriel}}
we recall the necessary material on the localisation theory of Grothendieck
categories, the Gabriel filtration, the Gabriel spectrum. Many of the results
from this section are classical, and we collect them for use
in the rest of the paper.
In \textbf{Section~\ref{sec:general}} we study the Gabriel spectra of the hearts of
$t$-structures associated to pure-injective cosilting objects, in the general
context of a compactly generated triangulated category. In particular, we
investigate how the operation of cosilting mutation affects the topology on the
Gabriel spectrum.
In \textbf{Section~\ref{sec:commutative-noetherian}} we specialise to the derived category of a commutative
noetherian ring. In this context, with the help of the well-developed support
theory of commutative algebra, we are able to strengthen the results of
Section~\ref{sec:general} and make them more explicit.
Finally, in \textbf{Section~\ref{sec:concrete-computations}} we apply the findings of the previous sections to
compute the Gabriel spectra of several examples of bounded cosilting complexes.

\subsection*{Notation and conventions}

In a category $\Acal$, we use lowercase letters to denote objects.
We denote the $\Hom$-bifunctor by $\Acal(-,-)$; if
$\Acal$ is abelian, we will moreover write $\Acal^i(-,-)$ for the $i$-th
$\Ext$-functor. An exception to this convention are the category $\Mod(R)$ of right
$R$-modules over a ring $R$ and its derived category $\D(R)$, where we use the
classical notations $\Hom_R(-,-)$, $\Ext^i_R(-,-)$ and $\Hom_{\D(R)}(-,-)$.

The word \textit{subcategory} will always refer to a strict and full
subcategory.
We fix a few notations for subcategories constructed from a
class of objects $\Xcal$ of $\Acal$. Whenever $\Xcal=\{x\}$ is a singleton, we
write $x$ instead of $\Xcal$.
First, we will often consider the smallest subcategory of $\Acal$ containing
$\Xcal$ and closed under certain operations. So we
denote by $\Prod(\Xcal)$ the smallest subcategory of $\Acal$ containing $\Xcal$
and closed under taking direct summands and existing products; it consists of
the direct summands of existing products of objects of $\Xcal$. Similarly,
$\add(\Xcal)$ is the category of direct summands of finite coproducts of objects
of $\Xcal$, which is the smallest closed under taking direct summands and finite
coproducts which contains $\Xcal$. We denote by
$\varinjlim\Xcal$ the smallest subcategory closed under taking existing direct
limits and containing $\Xcal$, which includes the
existing direct limits of direct systems of objects of $\Xcal$.
We denote by $\Cogen(\Xcal)$ the subcategory consisting of subobjects of
existing products of objects of $\Xcal$, which is the smallest subcategory
closed under products and subobjects which contains $\Xcal$.
Second, we will consider subcategories defined by orthogonality conditions
with respect to the class $\Xcal$. We write:
\[\Xcal^\bot:=\{\,a\in\Acal\mid \Acal(x,a)=0\;\;\forall x\in\Xcal\,\},\quad
{}^\bot\Xcal:=\{\,a\in\Acal\mid \Acal(a,x)=0\;\;\forall x\in\Xcal\,\}.\]

If $\Acal$ is abelian, for any set of non-negative integers
$I\subseteq\Zbb_{\geq0}$ we write:
\[\Xcal^{\bot_I}:=\{\,a\in\Acal\mid \Acal^i(x,a)=0\;\;\forall x\in\Xcal,
i\in I\,\},\quad
{}^{\bot_I}\Xcal:=\{\,a\in\Acal\mid \Acal^i(a,x)=0\;\;\forall x\in\Xcal, i\in I\,\},\]
with the convention that $\Acal^0(-,-):=\Acal(-,-)$.
Similarly, if $\Acal$ is triangulated, for any set of integers $I\subseteq \Zbb$
we write:
\[\Xcal^{\bot_I}:=\{\,a\in\Acal\mid \Acal(x,a[i])=0\;\;\forall x\in\Xcal,
i\in I\,\},\quad
{}^{\bot_I}\Xcal:=\{\,a\in\Acal\mid \Acal(a,x[i])=0\;\;\forall x\in\Xcal, i\in I\,\}.\]
We often list the elements of $I$ rather then writing the set, as in
$\Xcal^{\bot_{0,1}}:=\Xcal^{\bot_{\{0,1\}}}$, or we describe them in an obvious
way, as in $\Xcal^{\bot_{>0}}:=\Xcal^{\bot_{\{i>0\}}}$.
Next, if $\Xcal$ and $\Ycal$ are subcategories of an abelian category $\Acal$,
we write:
\[ \Xcal\ast\Ycal:=\{\,a\in\Acal\mid \exists\;0\to x\to a\to y\to 0\text{ short
exact sequence with }x\in\Xcal, y\in\Ycal\,\}.\]
If $\Acal$ is triangulated, we use the same notation, using distinguished
triangles instead of short exact sequences.
Lastly, we denote by $\Ind(\Xcal)$ the subcategory of indecomposable objects of
$\Acal$
belonging to $\Xcal$. If $\Acal$ is abelian, we denote by $\Inj(\Acal)$ the
subcategory of injective objects of $\Acal$.

When working with a subset $S$ of a set $X$, we will denote by
$S^\cpl:=X\setminus S$ its complement.

\section{The Gabriel spectrum of a Grothendieck category}
\label{sec:gabriel}

\subsection{Hereditary torsion theory and localisation}
\label{sub:localising}

We start by recalling some notions about hereditary torsion pairs and the
associated localisation theory of Grothendieck categories.

Let $\Gcal$ be a Grothendieck category. Recall that a subcategory
$\Lcal\subseteq\Gcal$ is \emph{localising} if it is closed under subobjects,
extensions, quotients and coproducts (or equivalently, colimits).

If $\Lcal$ is such a subcategory, there is a category $\Gcal/\Lcal$, called the
\emph{quotient} of $\Gcal$ over $\Lcal$, together with an exact \emph{quotient
functor} $\Lsf\colon \Gcal \to \Gcal/\Lcal$, whose kernel is $\Lcal$. This
functor has a fully faithful right adjoint $\Rsf\colon \Gcal/\Lcal\to \Gcal$,
whose essential image is the \emph{Giraud subcategory}
$\Ccal:=\Lcal^{\bot_{0,1}}$.
Having a right adjoint, $\Lsf$ preserves colimits;
having an exact left adjoint, $\Rsf$ is left exact and it preserves products
and injectives.
These functors induce an equivalence $\Lsf\colon
\Ccal\simeq \Gcal/\Lcal:\!\Rsf$, which restricts to an equivalence between
the subcategories of injectives:
\[\Lsf\colon \Ccal\cap\Inj(\Gcal)=\Lcal^\bot\cap\Inj(\Gcal)\simeq
\Inj(\Gcal/\Lcal):\!\Rsf.\]
Denote by $\eta\colon 1_\Gcal\Rightarrow \Rsf\Lsf$ the unit of
adjunction. The Giraud subcategory $\Ccal$ is reflective, and the reflection of
$x\in\Gcal$ is the natural morphism $\eta_x\colon x\to \Rsf\Lsf(x)$.

It is well known that the assignment $\Lcal\mapsto (\Lcal,\Lcal^\bot)$ gives a
bijection between localising subcategories and \emph{hereditary torsion pairs}
in $\Gcal$. Recall that a \emph{torsion pair} in $\Gcal$ is a pair of
subcategories $(\Tcal,\Fcal)$ such that $\Tcal^\bot=\Fcal$ and
$\Tcal={}^\bot\Fcal$. For such a pair one has that $\Gcal=\Tcal\ast\Fcal$. For
every $x\in\Gcal$, the short exact sequence $0\to t(x)\to x\to f(x)\to 0$ witnessing
this property is called the \emph{torsion sequence} of $x$ with respect to
$(\Tcal,\Fcal)$, and it is functorially determined by the right adjoint $t\colon
\Gcal\to \Tcal$ of the inclusion $\Tcal\subseteq\Gcal$ (the \emph{torsion
radical}) or by the left adjoint $f\colon \Gcal\to \Fcal$ of the inclusion
$\Fcal\subseteq\Gcal$ (the \emph{torsion-free coradical}).
A torsion pair $(\Tcal,\Fcal)$ is moreover \emph{hereditary} if $\Tcal$ is
closed under subobjects, or equivalently if $\Fcal$ is closed under injective
envelopes. For any class $\Ecal\subseteq\Inj(\Gcal)$, the pair
$({}^\bot\Ecal,\Cogen(\Ecal))$ is a hereditary torsion pair, and every hereditary
torsion pair arises in this way (even with $\Ecal=\{e\}$ consisting of a single
injective object, by
\cite[Prop.~VI.3.7]{sten-75}, adapted using quotients of a generator of $\Gcal$
instead of cyclic modules).

Being closed under subobjects and colimits, a localising subcategory
$\Lcal\subseteq\Gcal$ is determined by which quotients of a fixed generator of
$\Gcal$ it contains. Therefore, the localising subcategories of $\Gcal$ form a
\emph{set} $\LOC(\Gcal):=\{\,\Lcal\subseteq\Gcal\text{ localising}\,\}$, as
opposed to a proper class. Since arbitrary intersections of localising
subcategories are still localising, this set, partially ordered by inclusion, is
a (bounded) complete lattice. The meet is given by intersection, as mentioned.
For the join, given any class $\Scal\subseteq\Gcal$ of objects, we can define
the localising subcategory \emph{generated by} $\Scal$ as
$\Loc(\Scal):=\bigcap_{\Scal\subseteq\Lcal\in\LOC(\Gcal)}\Lcal$. Then the join
of a family of localising subcategory is the localising subcategory generated by
their union. This join is easier to characterise from the side of torsion-free
classes. Indeed, it is easy to see that an arbitrary intersection of
hereditary torsion-free classes is again a hereditary torsion-free class, and
therefore it is the torsion-free class of the join. In other words, for any
family $\{\,\Lcal_i\mid i\in I\,\}\subseteq\LOC(\Gcal)$ we have:
\begin{equation}\label{eqn:join}
	\textstyle \bigvee\Lcal_i={}^\bot(\bigcap \Lcal_i^\bot).
\end{equation}

The following technical lemma relates in the expected way the hereditary torsion
pairs of $\Gcal$ with those of its quotients.

\begin{lemma}\label{lemma:torsion-localisation}
	Let $\Gcal$ be a Grothendieck category, and $\Lcal\subseteq\Gcal$ a
	localising subcategory. Denote by
	$\begin{tikzcd}[sep=small,cramped]
		\Lsf\colon \Gcal\arrow[shift left]{r} & \arrow[shift left]{l} \Gcal/\Lcal :\!\Rsf
	\end{tikzcd}$ the quotient functor and its right adjoint. Let
	$\Tcal\subseteq\Gcal$ be a localising subcategory with $\Lcal\subseteq\Tcal$.
	Then:
	\begin{enumerate}
		\item $\Rsf\Lsf(\Tcal^\bot)\subseteq\Tcal^\bot$;
		\item $(\Lsf\Tcal)^\bot=\Lsf(\Tcal^\bot)$ as subcategories of $\Gcal/\Lcal$.
	\end{enumerate}
\end{lemma}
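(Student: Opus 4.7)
The plan is to prove (1) by a direct argument based on injective envelopes, and then derive (2) as a formal consequence of (1), the adjunction $\Lsf\dashv\Rsf$, and the fully-faithfulness of $\Rsf$. For (1), given $x\in\Tcal^\bot$, I would embed $x$ into its injective envelope $E$ in $\Gcal$. Since $\Tcal$ is a localising subcategory, $(\Tcal,\Tcal^\bot)$ is a hereditary torsion pair and so $\Tcal^\bot$ is closed under injective envelopes; thus $E\in\Tcal^\bot$. Since $\Lcal\subseteq\Tcal$ gives $\Tcal^\bot\subseteq\Lcal^\bot$ and $E$ is injective, we obtain $E\in\Lcal^{\bot_{0,1}}=\Ccal$, so the unit $\eta_E\colon E\to\Rsf\Lsf(E)$ is an isomorphism. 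Applying the left exact functor $\Rsf\Lsf$ to the monomorphism $x\hookrightarrow E$ then realises $\Rsf\Lsf(x)$ as a subobject of $\Rsf\Lsf(E)\cong E\in\Tcal^\bot$. Since torsion-free classes are closed under subobjects, this yields $\Rsf\Lsf(x)\in\Tcal^\bot$.

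For (2), both inclusions follow from the adjunction. To prove $\Lsf(\Tcal^\bot)\subseteq(\Lsf\Tcal)^\bot$, I would pick $x\in\Tcal^\bot$ and $t\in\Tcal$ and combine the adjunction with part (1) to compute
\[(\Gcal/\Lcal)(\Lsf t,\Lsf x)\cong\Gcal(t,\Rsf\Lsf x)=0.\]
For the reverse inclusion, given $y\in(\Lsf\Tcal)^\bot$, the adjunction immediately gives $\Gcal(t,\Rsf y)\cong(\Gcal/\Lcal)(\Lsf t,y)=0$ for every $t\in\Tcal$, hence $\Rsf y\in\Tcal^\bot$; and since $\Rsf$ is fully faithful, $y\cong\Lsf\Rsf y\in\Lsf(\Tcal^\bot)$.

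The argument is quite direct, and I do not foresee a serious obstacle. The one point requiring care is the use of the injective envelope in (1): this is what allows $x$ to be replaced by an object that automatically lies in the Giraud subcategory $\Ccal$, where $\Rsf\Lsf$ acts as the identity, while remaining inside $\Tcal^\bot$. A naive attempt working directly with the torsion sequence of $\ker\eta_x$ and $\coker\eta_x$ (which lie in $\Lcal\subseteq\Tcal$, but not necessarily in ${}^\bot\Tcal$) would stall for exactly this reason, so leveraging the hereditary property of $(\Tcal,\Tcal^\bot)$ at the outset is essential.
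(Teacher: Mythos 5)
Your proof is correct and follows essentially the same approach as the paper: for (1), both arguments reduce to the fact that an injective in $\Tcal^\perp\subseteq\Lcal^\perp$ lies in the Giraud subcategory and hence is fixed by $\Rsf\Lsf$, then use left-exactness of $\Rsf\Lsf$ together with closure of $\Tcal^\perp$ under injective envelopes and subobjects. For (2), the forward inclusion is the same adjunction computation; the only small divergence is the reverse inclusion, where you pass from $\Rsf y\in\Tcal^\perp$ to $y\cong\Lsf\Rsf y\in\Lsf(\Tcal^\perp)$ via fully-faithfulness of $\Rsf$, whereas the paper writes $y=\Lsf(x)$ for some $x\in\Gcal$ and applies $\Lsf$ to the torsion sequence of $x$; your variant is slightly more economical but the two are of the same nature.
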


\begin{proof}
	(1) For any injective $e\in\Tcal^\bot\subseteq\Lcal^\bot$ we have $\Rsf\Lsf(e)\simeq
	e\in\Tcal^\bot$, as mentioned above. Since the functor $\Rsf\Lsf$ is left exact and
	$\Tcal^\bot$ is closed under taking injective envelopes and subobjects, we
	obtain the desired inclusion.

	(2) By adjunction, we have that $y\in\Gcal/\Lcal$ belongs to $(\Lsf\Tcal)^\bot$
	if and only if $\Rsf(y)\in\Tcal^\bot$. By item (1), this is the case if
	$y\in \Lsf(\Tcal^\bot)$, showing the inclusion $\Lsf
	(\Tcal^\bot)\subseteq(\Lsf\Tcal)^\bot$. For the
	converse, let $y\in(\Lsf\Tcal)^\bot$, and write it as $y=\Lsf(x)$ for some
	$x\in\Gcal$. Applying the exact functor $\Lsf$ to the torsion sequence of $x$
	with respect to $(\Tcal,\Tcal^\bot)$, we obtain a short exact sequence
	\[0\to \Lsf(t)\to y\to \Lsf(f)\to 0\]
	in $\Gcal/\Lcal$, with $t\in\Tcal$ and $f\in\Tcal^\bot$. By assumption, the
	monomorphism vanishes, and therefore we conclude that $y\simeq \Lsf(f)\in
	\Lsf(\Tcal^\bot)$.
\end{proof}

Recall that a hereditary torsion pair $(\Tcal,\Fcal)$ in $\Gcal$ is \emph{of finite type} if
$\Fcal$ is closed under direct limits, $\varinjlim\Fcal=\Fcal$. We now show that
it is enough to check this closure property for certain direct systems of
injectives in $\Fcal$, inspired by the proof of \cite[Lemma~5.12]{laki-20}.

Let $\Scal=\{\,s_i\mid i\in I\,\}$ be a set of objects of $\Gcal$, and $\Phi$ a
\emph{filter} on $I$, \emph{i.e.}~a non-empty collection of subsets of $I$
closed under finite intersections and supersets. We
regard the poset $(\Phi,\subseteq)$ as a (codirected) category in the usual way.
Then, the \emph{reduced product diagram} of $\Scal$ over $\Phi$ is the functor
$\Phi^{op}\to \Gcal$, which sends every $U\in\Phi$ to the product $\prod_{i\in
U}s_i$, and every inclusion $U\subseteq V$ to the canonical projection
$\prod_{i\in V}s_i\to \prod_{i\in U}s_i$. The direct limit of this direct system
is the \emph{reduced product} $\prod\Scal/\Phi$ of $\Scal$ over $\Phi$ (see
\emph{e.g.}~\cite{krau-98b}).

Let $\Ccal$ be a complete category, and fix $e\in\Ccal$. It is well known that
for every object $c\in\Ccal$ there is a canonical morphism $\eta_c\colon c\to 
e^{\Ccal(c,e)}$. In fact, these morphisms assemble into the unit $\eta\colon
1_\Ccal\Rightarrow e^{\Ccal(-,e)}$ of an adjunction (see \emph{e.g.}~(the dual of)
\cite[\S6.3]{posi-stov-21}):
\[ \begin{tikzcd}\Ccal(-,e)\colon \Ccal \arrow[shift left]{r} &
\arrow[shift left]{l} \mathsf{Sets}^{op}:\! e^{-}.\end{tikzcd}\]
This unit is monic precisely on the objects of $\Cogen(e)$, by definition.

\begin{lemma}
	Let $\Gcal$ be a Grothendieck category. Then the following are equivalent, for
	$e\in\Gcal$:
	\begin{enumerate}
		\item $\Cogen(e)$ is closed under direct limits;
		\item $\Cogen(e)$ is closed under direct limits of direct systems in $\Prod(e)$;
		\item $\Cogen(e)$ is closed under reduced products of sets of objects in $\Prod(e)$.
	\end{enumerate}
\end{lemma}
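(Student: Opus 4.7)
The implications (1) $\Rightarrow$ (2) and (2) $\Rightarrow$ (3) are essentially formal, and I will dispatch them first. Since $\Prod(e)\subseteq\Cogen(e)$, every direct system in $\Prod(e)$ is also one in $\Cogen(e)$, yielding (1) $\Rightarrow$ (2). For (2) $\Rightarrow$ (3), the reduced product $\prod\Scal/\Phi$ is by construction the direct limit, along the filtered poset $\Phi^{\op}$, of the products $\prod_{i\in U}s_i$; each such product lies in $\Prod(e)\subseteq\Cogen(e)$, and (2) applies.

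The substantial direction is (3) $\Rightarrow$ (1). My plan is, given a directed system $(x_j)_{j\in J}$ in $\Cogen(e)$ with colimit $x$, to dominate it by a directed system $(y_j)_{j\in J}$ in $\Prod(e)$ whose colimit I can then realise as a subobject of a reduced product of objects in $\Prod(e)$. Concretely, I will set $y_j:=e^{\Gcal(x_j,e)}\in\Prod(e)$, observing that the unit $\eta_{x_j}\colon x_j\to y_j$ is a monomorphism by hypothesis. The contravariance of $e^{-}$ turns each transition $x_j\to x_{j'}$, via the precomposition map $\Gcal(x_{j'},e)\to\Gcal(x_j,e)$, into a morphism $y_j\to y_{j'}$; a direct verification through the adjunction $(\Gcal(-,e),\,e^{-})$ then shows that $(y_j)_{j\in J}$ is a directed system in $\Prod(e)$ and $\eta\colon (x_j)\Rightarrow(y_j)$ is a natural transformation of monomorphisms.

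Next, I will compare $\varinjlim_j y_j$ to the reduced product $\prod\{y_j\}_{j\in J}/\Phi$, where $\Phi$ is the filter on $J$ generated by the upper sets $J_{\geq k}$. Since these upper sets form a cofinal basis of $\Phi^{\op}$, this reduced product equals $\varinjlim_{k\in J} P_k$ with $P_k:=\prod_{j\geq k}y_j$, and by (3) it lies in $\Cogen(e)$. A canonical morphism $\varphi\colon \varinjlim_j y_j\to \varinjlim_k P_k$ will be produced from the diagonal maps $\Delta_k\colon y_k\to P_k$. The main obstacle will be proving that $\varphi$ is monic. The key ingredients are that each $\Delta_k$ is split monic (its $k$-th coordinate is the identity) and that for $k\geq j$ the composite $y_j\to P_k$ factors as $y_j\to y_k\xrightarrow{\Delta_k} P_k$. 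With these in hand, two applications of the kernel-colimit interchange $\ker(\varinjlim Z_i\to X)=\varinjlim\ker(Z_i\to X)$, valid in the Grothendieck category $\Gcal$ by AB5, will force $\ker(\varphi)=0$.

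Finally, I will conclude: since $\eta$ is a natural transformation of monomorphisms and filtered colimits preserve monomorphisms (AB5), the induced map $x\to\varinjlim_j y_j$ is itself monic; composing with $\varphi$ realises $x$ as a subobject of $\prod\{y_j\}_{j\in J}/\Phi\in\Cogen(e)$, and closure of $\Cogen(e)$ under subobjects yields $x\in\Cogen(e)$. The delicate point throughout is the monicity of $\varphi$: element-wise reasoning is unavailable in a general Grothendieck category, so one must instead exploit AB5 together with the factorisation property of the diagonals described above.
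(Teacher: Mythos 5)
Your proposal is correct but takes a genuinely different route from the paper for the substantial direction. The paper closes the cycle as $(3)\Rightarrow(2)\Rightarrow(1)$: the step $(2)\Rightarrow(1)$ uses exactly the unit $\eta\colon 1\Rightarrow e^{\Gcal(-,e)}$ as you do, while $(3)\Rightarrow(2)$ is delegated entirely to a citation of Krause (\cite[Cor.~4.2]{krau-98b}), which asserts the existence of a filter $\Phi$ with an embedding $\varinjlim\Dsf\hookrightarrow\prod\Scal/\Phi$. You instead prove $(3)\Rightarrow(1)$ directly in one step, combining the unit trick with a hands-on reconstruction of the needed case of Krause's lemma: you take $\Phi$ to be the filter generated by the principal upper sets $J_{\geq k}$, identify the reduced product with $\varinjlim_k P_k$ where $P_k=\prod_{j\geq k}y_j$, and build the comparison map $\varphi$ from the coherent diagonals $\Delta_k$. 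What this buys is a self-contained argument that does not require the reader to unwind a reference; what the paper's approach buys is brevity and a cleaner split into two independently useful implications.

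One point on your execution deserves a correction. The ``main obstacle'' you flag --- monicity of $\varphi$ --- is in fact immediate. You observe yourself that the $\Delta_k$ assemble into a morphism of $J$-indexed directed systems from $(y_k)$ to $(P_k)$, and that each $\Delta_k$ is split monic (via the $k$-th projection). The correct form of the kernel-colimit interchange in a Grothendieck category is exactness of filtered colimits: for a morphism of directed systems $(f_k\colon Z_k\to W_k)$ one has $\ker(\varinjlim f_k)=\varinjlim\ker(f_k)$. Applied once to $f_k=\Delta_k$, this gives $\ker\varphi=\varinjlim 0=0$. The formula you write down, $\ker(\varinjlim Z_i\to X)=\varinjlim\ker(Z_i\to X)$ with a \emph{fixed} target $X$, is not a valid statement of AB5 (kernels of maps out of a colimit do not commute with the colimit in that form), and the ``two applications'' you anticipate are neither needed nor available in that shape. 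Replacing that sentence with the single correct application of AB5 makes the argument both shorter and sound.
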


\begin{proof}
	The implications $(1\Rightarrow 2)$ and $(2 \Rightarrow3)$ are clear. In the following, consider a small directed category $I$.

	$(2\Rightarrow1)$ Let $\Dsf\colon I\to \Cogen(e)\subseteq\Gcal$ be
	a direct system. Then we have a natural transformation:
	\[\eta_\Dsf\colon \Dsf\Rightarrow e^{\Gcal(-,e)}\circ \Dsf,\]
	which is monic and where the latter is a functor $I\to \Prod(e)$. By assumption, its direct limit
	belongs to $\Cogen(e)$. Since direct limits in $\Gcal$ are exact, we have a
	monomorphism:
	\[\varinjlim \eta_\Dsf\colon \varinjlim \Dsf\hookrightarrow \varinjlim
	(e^{\Gcal(-,e)}\circ \Dsf),\]
	from which we deduce that $\varinjlim \Dsf\in\Cogen(e)$ as well.

	$(3\Rightarrow2)$ Let $\Dsf\colon I\to
	\Prod(e)$ be a direct system. Denote by $\Scal=\{\, s_i:=\Dsf(i)\mid i\in I\,\}$
	the image of $\Dsf$. By \cite[Cor.~4.2]{krau-98b}, there is a filter
	$\Phi$ on the objects of $I$ such that $\varinjlim \Dsf$ is a subobject
	of $\prod \Scal/\Phi$. Since the latter belongs to $\Cogen(e)$ by assumption,
	so does $\varinjlim \Dsf$.
\end{proof}

The following corollary can also be found in \cite[Lemma~3.12]{hrbe-hu-zhu-24}.

\begin{cor}\label{cor:finite-type-inj}
	A hereditary torsion pair $(\Tcal,\Fcal)$ in $\Gcal$ is of finite type if and
	only if we have:
	\[\varinjlim(\Fcal\cap\Inj(\Gcal))\subseteq\Fcal.\]
\end{cor}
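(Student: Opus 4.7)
The forward direction (``only if'') is immediate from the definition of finite type, since direct systems of objects of $\Fcal \cap \Inj(\Gcal)$ are in particular direct systems in $\Fcal$.

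For the converse, the plan is to reduce to the preceding lemma, which characterises closure of $\Cogen(e)$ under direct limits via direct limits of systems in $\Prod(e)$. To do so I would invoke the fact (quoted already in the excerpt, with reference to \cite[Prop.~VI.3.7]{sten-75}) that every hereditary torsion pair in a Grothendieck category is cogenerated by a \emph{single} injective object $e \in \Inj(\Gcal)$, so $\Fcal = \Cogen(e)$. (Explicitly, one may take $e$ to be the injective envelope of $f(G)$ for $G$ a generator of $\Gcal$ and $f$ the torsion-free coradical.)

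The key observation is then that since $e$ is injective and $\Gcal$ is Grothendieck, products of $e$ are injective, hence so are their direct summands; this yields $\Prod(e) \subseteq \Inj(\Gcal)$. On the other hand $\Prod(e) \subseteq \Cogen(e) = \Fcal$, so in fact
\[ \Prod(e) \subseteq \Fcal \cap \Inj(\Gcal). \]
Thus any direct system in $\Prod(e)$ is in particular a direct system in $\Fcal \cap \Inj(\Gcal)$, and by the standing assumption its direct limit lies in $\Fcal = \Cogen(e)$. By condition (2) of the preceding lemma, this means $\Cogen(e) = \Fcal$ is closed under all direct limits, which is exactly the statement that $(\Tcal, \Fcal)$ is of finite type.

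The only potentially delicate step is the reduction to a single cogenerator $e$: without it one is cogenerating $\Fcal$ by a proper class, and the preceding lemma (as stated for a single $e$) does not directly apply. Once we cite the reduction to one injective cogenerator, everything else is a one-line application of the previous lemma.
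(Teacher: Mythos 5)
Your proposal is correct and takes essentially the same route as the paper's proof: reduce to a single injective cogenerator $e$ with $\Fcal=\Cogen(e)$, observe the relation between $\Fcal\cap\Inj(\Gcal)$ and $\Prod(e)$, and invoke the preceding lemma. The only (harmless) difference is that you use just the inclusion $\Prod(e)\subseteq\Fcal\cap\Inj(\Gcal)$ and handle the forward implication directly, whereas the paper notes the equality $\Fcal\cap\Inj(\Gcal)=\Prod(e)$ and reads both implications off the lemma.
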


\begin{proof}
	As explained above, there is an injective object $e\in\Inj(\Gcal)$ such that
	$\Fcal=\Cogen(e)$, for which then $\Fcal\cap\Inj(\Gcal)=\Prod(e)$. Then the
	lemma gives the claim.
\end{proof}

\subsection{Gabriel filtration}

Let $\Gcal$ be a Grothendieck category. We have seen that we can consider the
localising subcategory generated by a class of objects. For example, we denote
by $\Gcal_0$ the localising subcategory generated by the simple objects of
$\Gcal$, which consists of the \emph{semi-artinian} objects, \emph{i.e.}~those which
admit a transfinite filtration with simple factors.
Another way to construct localising subcategories is as follows. If $\Fsf\colon
\Gcal\to \Gcal'$ is an exact, coproduct-preserving functor to another
Grothendieck category $\Gcal'$ and $\Lcal'\in\LOC(\Gcal')$, then
\[\Fsf^{-1}(\Lcal'):=\{\,x\in \Gcal\mid \Fsf(x)\in\Lcal\,\}\]
is a localising subcategory of $\Gcal$.
This leads to the following construction \cite{gabr-62}.

\begin{dfn}\label{dfn:gabriel-filtration}
	The \emph{Gabriel filtration} of $\Gcal$ is an increasing chain of
	localising subcategories:
	\[0=:\Gcal_{-1}\subseteq\Gcal_0\subseteq\cdots\subseteq\Gcal_\alpha\subseteq
	\cdots\subseteq\Gcal,\quad\alpha\text{ ordinal},\]
	defined inductively. Denoting the quotient functors by 
	$\Lsf_\alpha\colon \Gcal \to \Gcal/\Gcal_\alpha$, we define:
	\begin{enumerate}
		\item $\Gcal_{-1}:=0$;
		\item $\Gcal_{\alpha+1}:=\Lsf_\alpha^{-1}((\Gcal/\Gcal_\alpha)_0)$ for every
			ordinal $\alpha$;
		\item $\Gcal_\lambda:=\Loc(\bigcup_{\alpha<\lambda}\Gcal_\alpha)$ for every
			limit ordinal $\lambda=\bigcup_{\alpha<\lambda}\alpha$.
	\end{enumerate}
	Since $\LOC(\Gcal)$ is a set, this chain must stabilise. If it stabilises at
	$\Gcal$, \emph{i.e.}~if there exists an ordinal $\alpha$ such that
	$\Gcal_\alpha=\Gcal$, we say that $\Gcal$ is \emph{semi-noetherian}, and call
	the minimum such $\alpha$ the \emph{Gabriel dimension} of $\Gcal$, denoted by
	$\Gdim(\Gcal)$.
\end{dfn}

The terminology `semi-noetherian' is due to Popescu \cite[\S5.5]{pope-73}. 
Any locally noetherian Grothen\-dieck category is
semi-noetherian \cite[Prop.~IV.7]{gabr-62}.
The property of being semi-noetherian has a `Serre quality' to it, in the sense
of the following result. Recall that a localising subcategory of a Grothendieck
category is itself Grothendieck. Note that since our
ordinals start from $-1$, we need to adjust the sum of ordinals when one of
the two is finite: $\alpha\oplus \beta:=\alpha+\beta+1$ when at least one of
$\alpha,\beta$ is finite, $\alpha\oplus\beta:=\alpha+\beta$ otherwise. See the
beginning of \cite[\S{}IV]{gabr-62}.

\begin{prop}[{\cite[Prop.~IV.1]{gabr-62}}]\label{prop:semi-noetherian-serre}
	Let $\Gcal$ be a Grothendieck category, and $\Lcal\subseteq\Gcal$ a localising
	subcategory. Then $\Gcal$ is semi-noetherian if and only if both $\Lcal$ and
	$\Gcal/\Lcal$ are semi-noetherian. Moreover, in this case we have:
	\[ \max\{\Gdim(\Lcal),\Gdim(\Gcal/\Lcal)\}\leq \Gdim(\Gcal)\leq
	\Gdim(\Lcal)\oplus \Gdim(\Gcal/\Lcal).\]
\end{prop}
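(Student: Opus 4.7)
The plan is to reduce both the equivalence and the ordinal inequality to a single compatibility identity between Gabriel filtrations, namely
\[
\Lcal_\alpha=\Lcal\cap\Gcal_\alpha \quad\text{for every ordinal }\alpha,
\]
together with an analogous statement on the quotient side. I would prove the identity by transfinite induction on $\alpha$. The base case $\alpha=0$ is straightforward: because $\Lcal\subseteq\Gcal$ is closed under subobjects, the simples of $\Lcal$ are exactly the simples of $\Gcal$ lying in $\Lcal$, and a composition-series argument identifies both sides with the semi-artinian objects of $\Lcal$. For the successor step, the induction hypothesis implies that the restriction to $\Lcal$ of the quotient functor $\Gcal\to\Gcal/\Gcal_\alpha$ has kernel $\Lcal_\alpha$, and hence induces a fully faithful exact embedding $\Lcal/\Lcal_\alpha\hookrightarrow \Gcal/\Gcal_\alpha$ whose essential image is a localising subcategory (the standard diamond isomorphism). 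Applying the base case to this new pair yields $(\Lcal/\Lcal_\alpha)_0=(\Lcal/\Lcal_\alpha)\cap(\Gcal/\Gcal_\alpha)_0$; pulling back along the quotient functors gives the identity at $\alpha+1$.

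The limit step is the main obstacle, since $\Gcal_\lambda$ is defined as the localising subcategory \emph{generated by} $\bigcup_{\alpha<\lambda}\Gcal_\alpha$ and generation need not commute with intersection by $\Lcal$. I would pass to torsion-free classes via~(\ref{eqn:join}): the inductive assumption, together with full faithfulness and exactness of $\Lcal\hookrightarrow\Gcal$, yields $\Gcal_\alpha^\bot\cap\Lcal=\Lcal_\alpha^\bot$ for each $\alpha<\lambda$, and intersecting over $\alpha<\lambda$ gives $\Gcal_\lambda^\bot\cap\Lcal=\Lcal_\lambda^\bot$. Then for any $x\in\Lcal\cap\Gcal_\lambda$ and any $f\in\Lcal_\lambda^\bot$, full faithfulness forces $\Lcal(x,f)=\Gcal(x,f)=0$, so $x\in\Lcal_\lambda$; the reverse inclusion is trivial.

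A parallel statement on the quotient side --- that the localising subcategory of $\Gcal$ corresponding, under the lattice isomorphism between $\LOC(\Gcal/\Lcal)$ and the localising subcategories of $\Gcal$ containing $\Lcal$, to $(\Gcal/\Lcal)_\alpha$ contains $\Lcal\vee\Gcal_\alpha$ for every $\alpha$ --- I would prove by the same transfinite induction, using that $\Lsf$ sends each simple of $\Gcal$ either to $0$ or to a simple of $\Gcal/\Lcal$. From the identity and this analogue, the forward direction and the lower bound follow at once: if $\Gcal_\gamma=\Gcal$, then $\Lcal_\gamma=\Lcal$ and $(\Gcal/\Lcal)_\gamma=\Gcal/\Lcal$.

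For the reverse direction and the upper bound, set $\mu=\Gdim(\Lcal)$ and $\nu=\Gdim(\Gcal/\Lcal)$. The identity $\Lcal_\mu=\Lcal$ gives $\Lcal\subseteq\Gcal_\mu$, so $\Gcal/\Gcal_\mu\simeq(\Gcal/\Lcal)/(\Gcal_\mu/\Lcal)$ is a Serre quotient of $\Gcal/\Lcal$; by the already-proved forward direction, $\Gdim(\Gcal/\Gcal_\mu)\leq\nu$. A direct unfolding of Definition~\ref{dfn:gabriel-filtration} shows that $\Gcal_{\mu+1+\beta}$ is the preimage under $\Gcal\to\Gcal/\Gcal_\mu$ of $(\Gcal/\Gcal_\mu)_\beta$ for every ordinal $\beta$ (the $+1$ absorbing the $\Gcal_{-1}=0$ indexing convention), so $\Gcal_{\mu+1+\nu}=\Gcal$, giving $\Gdim(\Gcal)\leq \mu+1+\nu$. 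A routine case check on ordinal arithmetic verifies $\mu+1+\nu\leq\mu\oplus\nu$ in all four combinations of finite/infinite $\mu$ and $\nu$.
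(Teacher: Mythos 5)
The paper does not give a proof of this statement: it is cited verbatim from Gabriel's thesis ({\cite[Prop.~IV.1]{gabr-62}}), so there is no in-paper argument to compare yours against. On its own terms, your reconstruction is correct. The compatibility identity $\Lcal_\alpha=\Lcal\cap\Gcal_\alpha$ does hold, and your transfinite induction goes through; likewise the quotient-side inclusion $\Lsf(\Gcal_\alpha)\subseteq(\Gcal/\Lcal)_\alpha$ and the reindexing $\Gcal_{\mu+1+\beta}=\Lsf_\mu^{-1}\bigl((\Gcal/\Gcal_\mu)_\beta\bigr)$, together with the ordinal case check $\mu+1+\nu\leq\mu\oplus\nu$, give both inequalities. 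A few steps are stated as more routine than they are, though none is wrong: (i) that the essential image of the exact fully faithful embedding $\Lcal/\Lcal_\alpha\hookrightarrow\Gcal/\Gcal_\alpha$ is a localising subcategory is not a formal consequence of the diamond pattern --- one should, e.g., show a subobject $Z\subseteq\Lsf_\alpha(X)$ with $X\in\Lcal$ satisfies $Z\simeq\Lsf_\alpha\bigl(\Rsf_\alpha(Z)\cap\eta_X(X)\bigr)$, and the latter lies in $\Lcal$ because it is a subobject of a quotient of $X$; (ii) the equality $\Gcal_\alpha^\bot\cap\Lcal=\Lcal_\alpha^\bot$ in the limit step needs, beyond full faithfulness, the observation that any morphism from $\Gcal_\alpha$ into $\Lcal$ has image in $\Lcal\cap\Gcal_\alpha=\Lcal_\alpha$ and hence vanishes when the target is in $\Lcal_\alpha^\bot$; (iii) the ``direct unfolding'' at limit ordinals requires checking that $\Lsf_\mu^{-1}$ commutes with $\Loc$ for localising subcategories of $\Gcal/\Gcal_\mu$, which follows from the lattice isomorphism of Proposition~\ref{prop:spectrum-quotients}. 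With these spelled out, the argument is a complete and self-contained proof of the cited result.
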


As a corollary, we obtain the first useful consequence of semi-noetherianity.

\begin{cor}\label{cor:localising-simple}
	Let $\Gcal$ be semi-noetherian, and $\Lcal\subseteq\Gcal$ a nonzero localising
	subcategory. Then $\Lcal$ contains a simple object of $\Gcal$.
\end{cor}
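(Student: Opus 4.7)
The plan is to bootstrap the semi-noetherianity of $\Gcal$ to $\Lcal$ itself, and then use the Gabriel filtration of $\Lcal$ to produce a simple object. The first step is to invoke Proposition~\ref{prop:semi-noetherian-serre} for the localising subcategory $\Lcal\subseteq\Gcal$: since $\Gcal$ is semi-noetherian, so is $\Lcal$. In particular, $\Lcal$ carries its own Gabriel filtration $0=\Lcal_{-1}\subseteq\Lcal_0\subseteq\cdots$ which stabilises at $\Lcal$, and therefore at a nonzero stage.

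The main step is to show that $\Lcal_0\neq 0$. I would argue by transfinite induction that the opposite assumption $\Lcal_0=0$ forces $\Lcal_\alpha=0$ for every ordinal $\alpha$, which would contradict semi-noetherianity. At a successor step, if $\Lcal_\alpha=0$ then the quotient functor $\Lcal\to\Lcal/\Lcal_\alpha$ is the identity, so by Definition~\ref{dfn:gabriel-filtration}(2), $\Lcal_{\alpha+1}=\Lsf_\alpha^{-1}((\Lcal/\Lcal_\alpha)_0)=\Lcal_0=0$. At a limit ordinal $\lambda$, if every $\Lcal_\beta$ for $\beta<\lambda$ is zero, then $\Lcal_\lambda=\Loc(\bigcup_{\beta<\lambda}\Lcal_\beta)=\Loc(0)=0$. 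This completes the induction and forces $\Lcal_0\neq 0$.

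The final step is almost immediate. Since $\Lcal_0$ is by definition the localising subcategory of $\Lcal$ generated by its simple objects, the fact that $\Lcal_0\neq 0$ means the class of simple objects of $\Lcal$ is non-empty (otherwise $\Lcal_0=\Loc(\emptyset)=0$). As $\Lcal$ is closed under subobjects in $\Gcal$, any simple object of $\Lcal$ is simple in $\Gcal$ as well, and the proof concludes. I do not anticipate any serious obstacle here: the argument is essentially a one-line consequence of Proposition~\ref{prop:semi-noetherian-serre} once one observes that the Gabriel filtration of any Grothendieck category without simple objects collapses to zero.
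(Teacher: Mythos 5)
Your proposal is correct and takes essentially the same route as the paper: apply Proposition~\ref{prop:semi-noetherian-serre} to conclude $\Lcal$ is semi-noetherian, then use the Gabriel filtration of $\Lcal$ to extract a simple object, and finally note that simples in $\Lcal$ are simples in $\Gcal$ by closure under subobjects. The only difference is that the paper compresses the middle step into the remark that a nonzero semi-noetherian category must have a simple object, whereas you supply the (correct) transfinite-induction argument showing $\Lcal_0=0$ would force the whole filtration to vanish.
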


\begin{proof}
	By the closure properties of $\Lcal\subseteq\Gcal$, it is easy to see that an
	object $x\in\Lcal$ is simple in $\Lcal$ if and only if it is simple in
	$\Gcal$. Now, if $\Lcal$ is nonzero, since it is semi-noetherian by the
	proposition above, it must have a simple object.
\end{proof}

\begin{rmk}
	The conclusion of the previous corollary also holds in \emph{locally finitely generated} Grothendieck
	categories. Indeed, in that setting any nonzero object has a finitely
	generated subobject, which in turn has a simple quotient, using Zorn's Lemma.
	However, the property of being locally finitely generated does not pass to
	Gabriel quotients: see Example~\ref{ex:no-indecomposable} later on, where a
	nonzero Gabriel quotient of a category of modules does not even have any simple
	object.
	On the other hand, being semi-noetherian does pass to quotients, by the
	proposition above; this will be important when we argue by induction
	later on.
\end{rmk}

An important consequence of this corollary is that the localising subcategories generated
by a simple object can be recognised in the lattice $\LOC(\Gcal)$.

\begin{cor}\label{cor:minimal-nonzero}
	Let $\Gcal$ be a semi-noetherian Grothendieck category. Then the following are
	equivalent for a localising subcategory $\Lcal\subseteq\Gcal$:
	\begin{enumerate}
		\item $\Lcal=\Loc(s)$ for a simple object $s\in\Gcal$;
		\item $\Lcal$ is minimal nonzero in $\LOC(\Gcal)$.
	\end{enumerate}
\end{cor}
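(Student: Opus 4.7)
The plan is to apply Corollary~\ref{cor:localising-simple} in both directions. The implication $(2)\Rightarrow(1)$ is the quick one: if $\Lcal$ is minimal nonzero, a simple $s\in\Lcal$ is provided by Corollary~\ref{cor:localising-simple}, and then $\Loc(s)$ is a nonzero localising subcategory contained in $\Lcal$, so minimality forces $\Lcal=\Loc(s)$.

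For $(1)\Rightarrow(2)$, I would fix $\Lcal=\Loc(s)$ with $s$ simple, and take a nonzero localising subcategory $\Mcal\subseteq\Lcal$. By Proposition~\ref{prop:semi-noetherian-serre}, $\Lcal$ is itself semi-noetherian, so applying Corollary~\ref{cor:localising-simple} inside $\Lcal$ yields a simple object $s'\in\Mcal$ (simple in $\Lcal$, and hence, as noted in the proof of that corollary, simple in $\Gcal$). It then suffices to show $s'\simeq s$, which gives $s\in\Mcal$ and therefore $\Mcal=\Loc(s)=\Lcal$. The natural way to do this is to establish the identity $\Loc(s)^\bot=s^\bot$: granted it, the nonzero simple $s'\in\Loc(s)$ cannot lie in $\Loc(s)^\bot=s^\bot$, so $\Gcal(s,s')\neq0$, and any nonzero morphism between simple objects is an isomorphism.

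The bulk of the work lies in proving $s^\bot\subseteq\Loc(s)^\bot$ (the reverse containment being immediate from $s\in\Loc(s)$). My plan is to check that $s^\bot$ is itself a hereditary torsion-free class in $\Gcal$, i.e.~closed under products, extensions, subobjects, and injective envelopes; then ${}^\bot(s^\bot)$ is a localising subcategory containing $s$, hence containing $\Loc(s)$, and the identity $s^\bot=({}^\bot(s^\bot))^\bot\supseteq\Loc(s)^\bot{}^c$... more precisely $s^\bot\subseteq\Loc(s)^\bot$ follows. Closure under products and extensions is formal. Closure under subobjects and injective envelopes is where simplicity of $s$ is decisive: any nonzero morphism out of a simple is monic, so if $y'\hookrightarrow y\in s^\bot$ then any nonzero $s\to y'$ would embed $s$ into $y$; and a nonzero $f\colon s\to E(y)$ would be monic, and essentiality of $y$ in $E(y)$ would force $f(s)\cap y=f(s)\simeq s\hookrightarrow y$, contradicting $y\in s^\bot$. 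This verification, while technically elementary, is the main obstacle and where the content of the argument really sits.
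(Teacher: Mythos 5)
Your proof of $(2)\Rightarrow(1)$ matches the paper's exactly. For $(1)\Rightarrow(2)$, however, you take a genuinely different route. The paper's argument is short and direct: it identifies $\Loc(s)$ as the class of objects admitting a transfinite filtration with factors isomorphic to $s$, so every nonzero object of $\Loc(s)$ has $s$ as a subfactor, and hence any localising subcategory containing a nonzero object of $\Loc(s)$ already contains $s$; notably, this direction does not use semi-noetherianity at all. You instead characterize the torsion-free class: you verify by hand that $s^\perp$ is a hereditary torsion-free class (closure under subobjects and injective envelopes hinging on simplicity of $s$), deduce the identity $\Loc(s)^\perp = s^\perp$, and then, after locating a simple $s'$ in the putative smaller $\Mcal$ via Proposition~\ref{prop:semi-noetherian-serre} and a second application of Corollary~\ref{cor:localising-simple}, conclude $s'\simeq s$ from $\Gcal(s,s')\neq0$. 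Both arguments are correct. Yours is heavier and reuses the semi-noetherian hypothesis where the paper does not, but the identity $\Loc(s)^\perp=s^\perp$ it establishes along the way is a reasonable fact to know; the paper's filtration description of $\Loc(s)$ is the more economical path to minimality. One cosmetic note: the displayed chain ``$s^\bot=({}^\bot(s^\bot))^\bot\supseteq\Loc(s)^\bot{}^c$'' is garbled (the inclusion and the stray complement are wrong), but your stated conclusion $s^\perp\subseteq\Loc(s)^\perp$ and the logic around it are sound: $\Loc(s)\subseteq{}^\perp(s^\perp)$ gives $({}^\perp(s^\perp))^\perp\subseteq\Loc(s)^\perp$, and $({}^\perp(s^\perp))^\perp=s^\perp$ since $s^\perp$ is a torsion-free class.
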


\begin{proof}
	($1\Rightarrow 2$) If $s$ is a simple object, the subcategory of objects
	admitting a transfinite filtration with factors isomorphic to $s$ is
	localising, and therefore it is $\Loc(s)$. It follows that in such a
	subcategory, every nonzero object has $s$ as a subfactor. In particular, $\Loc(s)$
	is the smallest localising subcategory containing any of its nonzero objects,
	and so it is minimal nonzero.

	($2\Rightarrow1$) If $\Lcal$ is nonzero, it
	contains a simple object $s$ by the Proposition, so
	$\Loc(s)\subseteq\Lcal$. If $\Lcal$ is minimal nonzero, this inclusion is
	an equality.
\end{proof}

\subsection{Gabriel Spectrum: via injectives}

We now introduce the topological space which is central to this
Iork. In the next two subsections we will give a few descriptions of its
points and its topology, and establish some of its properties.

As before, let $\Gcal$ be a Grothendieck category. Any indecomposable injective
object of $\Gcal$ is \emph{coirreducible}, meaning that it is the injective
envelope of all its nonzero subobjects. In particular, it is the injective
envelope of some quotient of a fixed generator of $\Gcal$. This shows that the
indecomposable injectives, up to isomorphism, form a set.

\begin{dfn}
	The \emph{Gabriel spectrum} of $\Gcal$, denoted by $\GSpec(\Gcal)$, is the set
	of isoclasses of indecomposable injective objects of $\Gcal$. It is endowed
	with the \emph{Gabriel topology}, whose closed sets are those of the
	form $\GSpec(\Gcal)\cap\Lcal^\bot$, for a localising subcategory
	$\Lcal\subseteq\Gcal$.
\end{dfn}

In other words, the closed sets are the sets of (isoclasses of) indecomposable
injectives which are torsion-free for a certain hereditary torsion pair. This
topology is also referred to as the `full support topology' in the literature
\cite{burk-94-thesis,pres-09}. The word `support'
refers to the notion with the same name in commutative algebra, which can be used
to classify hereditary torsion pairs in $\Mod(R)$, for $R$ commutative
noetherian, and in other related abelian
categories (see \emph{e.g.}~\cite{gabr-62,pavo-vito-21}, and
\S\ref{sec:commutative-noetherian}, in particular
Proposition~\ref{prop:s(p)-inj-envelope}). The
adjective `full' underlines the fact that we are considering the topology
induced by \emph{all} hereditary torsion pairs, as opposed to only the ones of
finite type, as it is done in the literature to define the
\emph{Ziegler spectrum} \cite{herz-97,krau-97}.

To show that these sets
indeed form a topology, we first give an alternative description.

\begin{lemma}\label{lemma:topology-inj}
	The following are equivalent for a subset $V\subseteq\GSpec(\Gcal)$:
	\begin{enumerate}
		\item $V$ is closed;
		\item $V=\Ind(\Ecal)$ for a class $\Ecal\subseteq\Inj(\Gcal)$ such that
			$\Ecal=\Prod(\Ecal)$;
		\item $V=\Ind(\Prod(V))$.
	\end{enumerate}
\end{lemma}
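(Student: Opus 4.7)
The plan is to show the cyclic chain of implications $(1)\Rightarrow(2)\Rightarrow(3)\Rightarrow(1)$, using the standard dictionary between localising subcategories, hereditary torsion pairs, and classes of injectives that was recalled in \S\ref{sub:localising}.

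For $(1)\Rightarrow(2)$, I would start from $V=\GSpec(\Gcal)\cap\Lcal^\bot$ for some localising $\Lcal\subseteq\Gcal$ and propose $\Ecal:=\Lcal^\bot\cap\Inj(\Gcal)$. Since $\Lcal$ is closed under coproducts, $\Lcal^\bot$ is closed under products; it is trivially closed under direct summands. Together with the fact that in an abelian category with products, products and summands of injectives are injective, this gives $\Ecal=\Prod(\Ecal)$. Then $\Ind(\Ecal)$ is exactly $\GSpec(\Gcal)\cap\Lcal^\bot=V$. The implication $(2)\Rightarrow(3)$ is purely formal: if $V=\Ind(\Ecal)$ with $\Ecal=\Prod(\Ecal)$, then $V\subseteq\Ecal$ yields $\Prod(V)\subseteq\Prod(\Ecal)=\Ecal$, so $\Ind(\Prod(V))\subseteq\Ind(\Ecal)=V$; and the reverse inclusion $V\subseteq\Ind(\Prod(V))$ holds because $V$ already consists of indecomposables contained in $\Prod(V)$.

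The substantive step is $(3)\Rightarrow(1)$. Given $V=\Ind(\Prod(V))$, I would invoke the fact stated in \S\ref{sub:localising} that for any class of injectives $\Ecal$, the pair $({}^\bot\Ecal,\Cogen(\Ecal))$ is a hereditary torsion pair, so $\Lcal:={}^\bot\Prod(V)$ is localising. It then remains to identify $\Lcal^\bot\cap\Inj(\Gcal)$ with $\Prod(V)$: the inclusion $\Prod(V)\subseteq\Lcal^\bot\cap\Inj(\Gcal)$ is tautological, and conversely, any injective $e\in\Cogen(\Prod(V))$ embeds into a product of objects of $\Prod(V)$, and the embedding splits by injectivity, so $e\in\Prod(\Prod(V))=\Prod(V)$. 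Taking indecomposables then yields $\GSpec(\Gcal)\cap\Lcal^\bot=\Ind(\Prod(V))=V$, as required.

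The main potential obstacle is the identification in the last step: one has to be careful that the class of injective objects cogenerated by $\Prod(V)$ is literally $\Prod(V)$ (not merely contained in it), and this relies crucially on the splitting of monomorphisms with injective domain together with the idempotency $\Prod\circ\Prod=\Prod$. Everything else amounts to a clean bookkeeping of the closure properties of orthogonal classes, so no Gabriel filtration or semi-noetherianity machinery is required here.
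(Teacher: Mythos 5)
Your proof is correct and follows essentially the same route as the paper: the implications $(1)\Rightarrow(2)$ and $(2)\Rightarrow(3)$ are verbatim, and your $(3)\Rightarrow(1)$ is precisely the paper's trivial $(3)\Rightarrow(2)$ (take $\Ecal:=\Prod(V)$) composed with its $(2)\Rightarrow(1)$, relying on the same identification $\Cogen(\Ecal)\cap\Inj(\Gcal)=\Prod(\Ecal)$ via the splitting of monomorphisms into injectives.
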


\begin{proof}
	$(1\Rightarrow2)$ Let $V=\GSpec(\Gcal)\cap\Lcal^\bot$ be a closed subset, for a
	localising subcategory $\Lcal\subseteq\Gcal$. Since
	$\Prod(\Lcal^\bot)=\Lcal^\bot$, taking $\Ecal:=\Inj(\Gcal)\cap\Lcal^\bot$
	gives that $\Ecal=\Prod(\Ecal)$, and $V=\Ind(\Ecal)$.

	$(2\Rightarrow1)$ Let $\Ecal=\Prod(\Ecal)\subseteq\Inj(\Gcal)$. Then we know that
	 $\Lcal:={}^\bot\Ecal$ is a localising subcategory, such that
	 $\Lcal^\bot=\Cogen(\Ecal)$. Therefore, we have
	 $\Inj(\Gcal)\cap\Lcal^\bot=\Inj(\Gcal)\cap\Cogen(\Ecal)=\Prod(\Ecal)=\Ecal$. Hence
	 $V:=\Ind(\Ecal)=\GSpec(\Gcal)\cap\Lcal^\bot$ is closed.

	$(2\Rightarrow 3)$ Let $\Ecal=\Prod(\Ecal)\subseteq\Inj(\Gcal)$, and
	$V=\Ind(\Ecal)$. Then, since $\Prod(V)\subseteq\Prod(\Ecal)$, we have
	$\Ind(\Prod(V))\subseteq\Ind(\Prod(\Ecal))=\Ind(\Ecal)=V$, and the converse
	inclusion is clear.

	$(3\Rightarrow2)$ is obvious.
\end{proof}

From item (2), it is easy to see that arbitrary intersections of closed
sets are closed. Indeed, arbitrary intersections of classes of injectives
closed under $\Prod(-)$ are still closed under $\Prod(-)$. For finite unions of closed
sets, we have the following lemma.

\begin{lemma}
	Let $\Ecal_1,\Ecal_2\subseteq\Inj(\Gcal)$ be such that $\Ecal_i=\Prod(\Ecal_i)$.
	Then we have that 
	\[\Ind(\Prod(\Ecal_1\cup\Ecal_2))=\Ind(\Ecal_1)\cup\Ind(\Ecal_2).\]
	In particular, $\Ind(\Ecal_1)\cup\Ind(\Ecal_2)\subseteq\GSpec(\Gcal)$ is closed.
\end{lemma}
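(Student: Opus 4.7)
The plan is to verify both inclusions of the set-equality, deduce the ``in particular'' from the previous lemma, and use the fact that indecomposable injectives in a Grothendieck category have local endomorphism rings.

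First, the inclusion $\Ind(\Ecal_1)\cup\Ind(\Ecal_2)\subseteq \Ind(\Prod(\Ecal_1\cup\Ecal_2))$ is immediate: any indecomposable in $\Ecal_j$ is a fortiori an indecomposable object of $\Prod(\Ecal_1\cup\Ecal_2)$, since $\Ecal_j\subseteq \Ecal_1\cup\Ecal_2\subseteq \Prod(\Ecal_1\cup\Ecal_2)$.

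For the reverse inclusion, I would take an indecomposable $e\in \Prod(\Ecal_1\cup \Ecal_2)$, so that $e$ is a summand of some product $\prod_{i\in I} e_i$ with each $e_i\in \Ecal_1\cup\Ecal_2$. Partitioning the index set as $I=I_1\sqcup I_2$ so that $e_i\in\Ecal_1$ for $i\in I_1$ and $e_i\in\Ecal_2$ for $i\in I_2$ (arbitrarily assigning objects belonging to both), one writes $\prod_{i\in I}e_i\simeq p_1\oplus p_2$ with $p_j:=\prod_{i\in I_j}e_i\in \Prod(\Ecal_j)=\Ecal_j$. Thus $e$ is a direct summand of $p_1\oplus p_2$, giving morphisms $\iota_j\colon e\to p_j$ and $\pi_j\colon p_j\to e$ with $\pi_1\iota_1+\pi_2\iota_2=1_e$. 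Here the key classical fact enters: since $\Gcal$ is a Grothendieck category and $e$ is indecomposable injective, $\End_\Gcal(e)$ is a local ring. Hence at least one of $\pi_j\iota_j$ is invertible, which shows $e$ is a direct summand of the corresponding $p_j\in\Ecal_j$, and therefore $e\in\Prod(\Ecal_j)=\Ecal_j$. The only slightly delicate step is recalling (or citing) the locality of $\End_\Gcal(e)$; this is the standard Matlis--Gabriel fact, and is really what makes the argument work.

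Finally, for the ``in particular'' statement, set $\Ecal:=\Prod(\Ecal_1\cup\Ecal_2)$. By construction $\Ecal=\Prod(\Ecal)\subseteq\Inj(\Gcal)$, so Lemma~\ref{lemma:topology-inj}(2) gives that $\Ind(\Ecal)$ is closed in $\GSpec(\Gcal)$; by what we just proved, $\Ind(\Ecal)=\Ind(\Ecal_1)\cup\Ind(\Ecal_2)$, which is therefore closed. Together with the observation preceding the lemma about intersections, this would complete the verification that the sets of the form $\GSpec(\Gcal)\cap\Lcal^\bot$ really constitute a topology.
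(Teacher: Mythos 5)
Your proof is correct and follows essentially the same route as the paper's: reduce a general product of objects from $\Ecal_1\cup\Ecal_2$ to a sum $p_1\oplus p_2$ with $p_j\in\Ecal_j=\Prod(\Ecal_j)$, then invoke the locality of $\End_\Gcal(e)$ for an indecomposable injective $e$ to force $e$ to split off from one of the two summands. You merely make explicit (via $\pi_1\iota_1+\pi_2\iota_2=1_e$) the step the paper phrases abstractly as ``summands of a finite direct sum are summands of one of the terms when the endomorphism ring is local.''
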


\begin{proof}
	We have that $\Prod(\Ecal_1\cup\Ecal_2)=\add\{\,e_1\oplus e_2\mid e_i\in
	\Ecal_i\,\}$. The indecomposable injective objects in a Grothendieck category
	have local endomorphism ring, so they are direct summands of a finite direct sum if
	and only if they are direct summands of one of the summands.
\end{proof}

\begin{rmk}
	Observe that the characterisations (2--3) of Lemma~\ref{lemma:topology-inj} of the
	Gabriel topology do not depend on $\Gcal$, but rather only on its
	subcategory of injectives $\Inj(\Gcal)$. This fact will be exploited later on.
\end{rmk}

By definition, we have the following well-defined surjective assignment.

\begin{dfn}\label{dfn:gsupp}
	For any localising subcategory $\Lcal\subseteq\Gcal$, its \emph{Gabriel
	support} is the open set $\GSupp(\Lcal):=\GSpec(\Gcal)\setminus\Lcal^\bot$.
\end{dfn}

It is natural to ask whether the Gabriel support $\GSupp(\Lcal)$ uniquely
determines the localising subcategory $\Lcal$; that is, whether the topology on
$\GSpec(\Gcal)$ classifies the localising subcategories. This is not the case in
general, but it is true in the semi-noetherian setting.

\begin{prop}\label{prop:gsupp}
	Assume $\Gcal$ is semi-noetherian. Then:
	\begin{enumerate}
		\item For any class $\Ecal\subseteq\Inj(\Gcal)$ such that $\Ecal=\Prod(\Ecal)$, we
			have $\Ecal=\Prod(\Ind(\Ecal))$.
		\item The assignment of Gabriel support is injective.
	\end{enumerate}
	In particular, $\GSupp$ is a lattice isomorphism between $\LOC(\Gcal)$ and
	the frame of opens of $\GSpec(\Gcal)$.
\end{prop}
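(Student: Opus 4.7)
The plan is to prove (1) first, derive (2) from it, and finally combine (2) with the surjectivity from Lemma~\ref{lemma:topology-inj} to obtain the lattice isomorphism. For (1), I will first handle the special case $\Ecal = \Inj(\Gcal)$ and then transport the general statement along a Gabriel quotient. In the special case, given $e \in \Inj(\Gcal)$, I would consider the canonical morphism
\[ \eta_e \colon e \longrightarrow \prod_{f \in \GSpec(\Gcal)} f^{\Gcal(e,f)} \]
into an object of $\Prod(\GSpec(\Gcal))$. Since $e$ is injective, it suffices to show $\eta_e$ is monic; then it automatically splits and yields $e \in \Prod(\GSpec(\Gcal))$.

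To prove $\ker(\eta_e) = 0$, assume otherwise and set $k := \ker(\eta_e)$. The injective envelope $E(k)$ taken inside $e$ is an injective direct summand of $e$, and I would observe that every morphism $k \to f$ with $f \in \GSpec(\Gcal)$ extends by injectivity to some $e \to f$ which is by construction a coordinate of $\eta_e$, hence vanishes on $k$; by essentiality of $k$ in $E(k)$, this forces $\Gcal(E(k), f) = 0$ for every $f \in \GSpec(\Gcal)$, so $E(k) \in {}^\bot \GSpec(\Gcal)$. The crux is that, in any semi-noetherian Grothendieck category, the localising subcategory ${}^\bot \GSpec(\Gcal)$ is zero: if it were nonzero, Corollary~\ref{cor:localising-simple} would provide a simple object $s$ in it, but then the inclusion $s \hookrightarrow E(s) \in \GSpec(\Gcal)$ would contradict $s \in {}^\bot \GSpec(\Gcal)$. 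For the general case of (1), I would set $\Lcal := {}^\bot\Ecal$, invoke Proposition~\ref{prop:semi-noetherian-serre} to guarantee that $\Gcal/\Lcal$ is still semi-noetherian, and transport the special case along the equivalence $\Lsf \colon \Ecal \simeq \Inj(\Gcal/\Lcal)$, which matches $\Ind(\Ecal)$ with $\GSpec(\Gcal/\Lcal)$ and preserves products (as a routine consequence of $\Rsf$ being right adjoint). Then (2) follows immediately: if $\GSupp(\Lcal_1) = \GSupp(\Lcal_2)$, taking complements in $\GSpec(\Gcal)$ gives $\Ind(\Lcal_1^\bot \cap \Inj(\Gcal)) = \Ind(\Lcal_2^\bot \cap \Inj(\Gcal))$, and (1) applied to each side gives $\Lcal_1^\bot \cap \Inj(\Gcal) = \Lcal_2^\bot \cap \Inj(\Gcal)$; since every hereditary torsion-free class is cogenerated by its injective objects, this propagates to $\Lcal_1^\bot = \Lcal_2^\bot$, hence $\Lcal_1 = \Lcal_2$.

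The lattice isomorphism then follows because Lemma~\ref{lemma:topology-inj} already exhibits every open subset of $\GSpec(\Gcal)$ as a Gabriel support, (2) supplies injectivity, and order is preserved and reflected in both directions via the contravariant correspondence $\Lcal \mapsto \Lcal^\bot$. I expect the main obstacle to be the vanishing ${}^\bot \GSpec(\Gcal) = 0$ in the semi-noetherian setting: this is precisely where the hypothesis enters crucially, via the ability of Corollary~\ref{cor:localising-simple} to locate a simple object inside any nonzero localising subcategory; without semi-noetherianity one could imagine a nonzero Gabriel quotient lacking simple objects altogether (as flagged in the earlier remark), so the injectivity of $\GSupp$ would genuinely fail.
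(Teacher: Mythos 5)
Your proof is correct in its essential logic and takes a genuinely different route from the paper. The paper's proof of item~(1) proceeds by citing Gabriel's structure theorem for injectives in semi-noetherian categories (\cite[Thm.~IV.1]{gabr-62}): every injective is the injective envelope of a coproduct of indecomposable injectives. Those indecomposables are automatically summands of $e\in\Ecal$, hence lie in $\Ind(\Ecal)$, and $\Cogen(\Ind(\Ecal))$ contains $e$ because it is closed under products, subobjects, and injective envelopes; no reduction to a special case is needed. Your approach instead re-proves the needed structural fact from more elementary inputs. It establishes the case $\Ecal=\Inj(\Gcal)$ by showing the canonical comparison $\eta_e\colon e\to\prod_{f\in\GSpec(\Gcal)}f^{\Gcal(e,f)}$ is monic (hence split, since $e$ is injective), reducing this to the vanishing ${}^\bot\GSpec(\Gcal)=0$, which follows cleanly from Corollary~\ref{cor:localising-simple}. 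The general case is then transported along the Gabriel quotient at $\Lcal:={}^\bot\Ecal$, using Proposition~\ref{prop:semi-noetherian-serre} and the product-preserving equivalence $\Lsf\colon\Ecal=\Inj(\Gcal)\cap\Lcal^\bot\simeq\Inj(\Gcal/\Lcal)$. Item~(2) and the lattice-isomorphism conclusion are handled the same way in both proofs. What your approach buys is self-containment (Gabriel's theorem for injectives is not used as a black box, and one sees exactly where semi-noetherianity enters, namely through Corollary~\ref{cor:localising-simple}); what it costs is the extra reduction step to the quotient.

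One intermediate claim is unjustified and in fact false as stated: that $\Gcal(k,f)=0$ for all $f\in\GSpec(\Gcal)$ plus essentiality of $k$ in $E(k)$ forces $\Gcal(E(k),f)=0$. Essentiality does not prevent a nonzero morphism from vanishing on an essential subobject (quotients by essential subobjects are typically nonzero, e.g.\ $\Qbb/\Zbb$). Fortunately this detour is entirely unnecessary: from $\Gcal(k,f)=0$ for all $f\in\GSpec(\Gcal)$ you already have $k\in{}^\bot\GSpec(\Gcal)$, and your ``crux'' step ${}^\bot\GSpec(\Gcal)=0$ then gives $k=0$ directly, contradicting the assumption. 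Dropping the passage through $E(k)$ makes the argument both shorter and correct.
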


\begin{proof}
	(1) To show that $\Ecal=\Prod(\Ind(\Ecal))$, it is enough to prove that the two
	hereditary torsion-free classes $\Cogen(\Ecal)=\Cogen(\Ind(\Ecal))$ coincide, and
	then intersect them with $\Inj(\Gcal)$. For the
	nontrivial inclusion ($\subseteq$), we apply \cite[Thm.~IV.1]{gabr-62}, which
	states that in a semi-noetherian category every injective object is the
	injective envelope of a coproduct of
	indecomposable injectives. As a consequence, every injective can be
	reconstructed from its indecomposable direct summands by means of taking coproducts
	and injective envelopes, operations under which the hereditary torsion-free
	class $\Cogen(\Ind(\Ecal))$ is closed.

	(2) For any localising subcategory $\Lcal$, setting
	$\Ecal:=\Inj(\Gcal)\cap\Lcal^\bot$ and using (1), we have:
	\[\Lcal={}^\bot\Ecal={}^\bot\Ind(\Ecal)={}^\bot(\GSpec(\Gcal)\setminus\GSupp(\Lcal)).\]
	This gives an inverse assignment to $\GSupp$, proving its injectivity.
\end{proof}

Recall that a nonzero object in an additive category is \emph{superdecomposable} if it
does not have nonzero indecomposable summands. Such objects may exist in
various natural additive categories (see for example \cite[Summary~8.17]{jens-lenz-89}), but not in the
category of injectives of a semi-noetherian Grothendieck category.

\begin{lemma}[{\cite[Thm.~5.5.5 and Thm.~5.3.3]{pope-73}}]\label{lemma:no-superdecomposables}
	If $\Gcal$ is semi-noetherian, it has no superdecomposable injectives.
\end{lemma}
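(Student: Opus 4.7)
The plan is to exhibit an indecomposable direct summand inside any nonzero injective $e\in\Gcal$ by descending through the Gabriel filtration. For each ordinal $\alpha$ denote by $t_\alpha\colon \Gcal\to \Gcal_\alpha$ the torsion radical of the hereditary torsion pair $(\Gcal_\alpha,\Gcal_\alpha^\bot)$, so that $t_\alpha(e)=0$ is equivalent to $e\in\Gcal_\alpha^\bot$. Since $\Gcal$ is semi-noetherian, we have $\Gcal_\alpha=\Gcal$ for some $\alpha$, and hence $t_\alpha(e)=e$; in particular the class of ordinals $\alpha$ with $t_\alpha(e)\neq 0$ is nonempty. Let $\alpha$ be its minimum.

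I would first argue that $\alpha$ must be a successor. Indeed, if $\alpha$ were a limit ordinal, then by Equation~\eqref{eqn:join} we would have $\Gcal_\alpha^\bot=\bigcap_{\beta<\alpha}\Gcal_\beta^\bot$; by minimality of $\alpha$ we have $e\in\Gcal_\beta^\bot$ for every $\beta<\alpha$, which forces $e\in\Gcal_\alpha^\bot$, contradicting $t_\alpha(e)\neq 0$. So I can write $\alpha=\beta+1$ for some ordinal $\beta$.

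The next step is to transport the problem to $\Gcal/\Gcal_\beta$ via the quotient functor $\Lsf_\beta$. Since $e\in\Gcal_\beta^\bot$ and is injective, $e$ lies in the Giraud subcategory $\Gcal_\beta^{\bot_{0,1}}$, so $e\simeq \Rsf_\beta\Lsf_\beta(e)$; moreover, a standard adjunction argument (using that $\Rsf_\beta$ is fully faithful and left exact) shows that $\Lsf_\beta(e)$ is injective in $\Gcal/\Gcal_\beta$. By the very definition $\Gcal_\alpha=\Lsf_\beta^{-1}((\Gcal/\Gcal_\beta)_0)$, the object $\Lsf_\beta(t_\alpha(e))$ is semi-artinian in $\Gcal/\Gcal_\beta$; and it is nonzero because $t_\alpha(e)$ is a nonzero subobject of $e\in\Gcal_\beta^\bot$, hence cannot lie in $\Gcal_\beta$. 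Pick a simple subobject $s$ of $\Lsf_\beta(t_\alpha(e))\subseteq \Lsf_\beta(e)$; its injective envelope in $\Gcal/\Gcal_\beta$ is indecomposable and splits off from $\Lsf_\beta(e)$.

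Finally, applying the fully faithful, injective-preserving functor $\Rsf_\beta$ yields an indecomposable injective direct summand of $e\simeq \Rsf_\beta\Lsf_\beta(e)$, as required. The only real conceptual step is the reduction to a successor ordinal via the join formula; after that, the argument reduces to tracking closure properties of the quotient functors $\Lsf_\beta$ and $\Rsf_\beta$ recalled at the beginning of Section~\ref{sub:localising}, and extracting a simple subobject inside a nonzero semi-artinian object, which exists by definition. No serious obstacle is expected, which is consistent with the appearance of this result in Popescu's book.
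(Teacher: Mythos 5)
Your proof is correct, and since the paper only cites Popescu without reproducing an argument, it is useful to have a self-contained one. Let me confirm the key steps are sound. The minimum ordinal $\alpha$ with $t_\alpha(e)\neq 0$ exists because the Gabriel filtration stabilises at $\Gcal$; your reduction to a successor $\alpha=\beta+1$ via Equation~\eqref{eqn:join} is correct, using that an intersection of hereditary torsion-free classes is again a hereditary torsion-free class (so that $(\bigvee\Gcal_\gamma)^\bot=\bigcap\Gcal_\gamma^\bot$). The passage to $\Gcal/\Gcal_\beta$ is clean: $e$, being an injective object of $\Gcal_\beta^\bot$, lies in the Giraud subcategory $\Gcal_\beta^{\bot_{0,1}}$, and the paper already records that $\Lsf_\beta$ restricts to an equivalence $\Ccal_\beta\cap\Inj(\Gcal)\simeq\Inj(\Gcal/\Gcal_\beta)$, so $\Lsf_\beta(e)$ is injective (no need for a separate adjunction argument). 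The subobject $t_\alpha(e)$ lies in $\Gcal_\beta^\bot$ but not in $\Gcal_\beta$ (else it would be zero), so $\Lsf_\beta(t_\alpha(e))$ is a nonzero object of $(\Gcal/\Gcal_\beta)_0$, whence it has a simple subobject $s$ (the first nonzero step of any transfinite filtration with simple factors is simple). The injective envelope $E(s)$ is indecomposable, splits off from the injective $\Lsf_\beta(e)$, and $\Rsf_\beta(E(s))$ is then a nonzero indecomposable direct summand of $\Rsf_\beta\Lsf_\beta(e)\simeq e$, using that $\Rsf_\beta$ is fully faithful (hence preserves and reflects idempotents) and preserves injectives. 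This is precisely what ``no superdecomposable injectives'' requires. The argument is essentially the one underlying Popescu's cited theorems, but it is nice to see it spelled out in the language of the Gabriel filtration as set up in Section~\ref{sec:gabriel}.
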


We now compare the Gabriel spectra of a Grothendieck
category and of its quotients.

\begin{prop}\label{prop:spectrum-quotients}
	Let $\Gcal$ be a Grothendieck category, let $\Lcal\subseteq\Gcal$ be a localising
	subcategory, and denote by
	$\begin{tikzcd}[sep=small,cramped]
		\Lsf\colon \Gcal \arrow[shift left]{r} & \arrow[shift left]{l} \Gcal/\Lcal :\!\Rsf
	\end{tikzcd}$
	the quotient functor and its right adjoint. Then:
	\begin{enumerate}
		\item The inverse equivalences $\Lsf\colon \Inj(\Gcal)\cap\Lcal^\bot\simeq
			\Inj(\Gcal/\Lcal):\!\Rsf$ induce inverse homeomorphisms between the closed subspace
			$\GSpec(\Gcal)\setminus\GSupp(\Lcal)\subseteq\GSpec(\Gcal)$ and
			$\GSpec(\Gcal/\Lcal)$.
		\item Using the homeomorphism provided by (1), for any localising
			subcategory $\Tcal\subseteq\Gcal$ with $\Lcal\subseteq\Tcal$, we have
			$\GSupp(\Tcal)=\GSupp(\Lcal)\cup \Rsf(\GSupp(\Lsf\Tcal))$.
	\end{enumerate}
\end{prop}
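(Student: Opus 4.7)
The plan is to derive both items from the restriction equivalence $\Lsf\colon\Inj(\Gcal)\cap\Lcal^\bot\simeq\Inj(\Gcal/\Lcal):\!\Rsf$ recalled at the start of \S\ref{sub:localising}, combined with Lemma~\ref{lemma:topology-inj} for item (1) and Lemma~\ref{lemma:torsion-localisation} for item (2).

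For item (1), I would first observe that any equivalence of additive categories preserves and reflects indecomposability, so the above restriction bijects the indecomposable objects on both sides. On the source these are $\Ind(\Inj(\Gcal)\cap\Lcal^\bot)=\GSpec(\Gcal)\cap\Lcal^\bot=\GSpec(\Gcal)\setminus\GSupp(\Lcal)$; on the target they form $\GSpec(\Gcal/\Lcal)$. To promote this bijection of sets to a homeomorphism, I would appeal to Lemma~\ref{lemma:topology-inj}, which describes closed subsets of each Gabriel spectrum as $\Ind(-)$ of $\Prod$-closed subcategories of injectives. A subspace-closed subset of $\GSpec(\Gcal)\cap\Lcal^\bot$ has the form $\Ind(\Ecal)\cap\Lcal^\bot=\Ind(\Ecal\cap\Lcal^\bot)$ for some $\Ecal=\Prod(\Ecal)\subseteq\Inj(\Gcal)$, and $\Ecal\cap\Lcal^\bot$ is $\Prod$-closed in $\Inj(\Gcal)\cap\Lcal^\bot$; conversely, any $\Prod$-closed subcategory of $\Inj(\Gcal)\cap\Lcal^\bot$ is already $\Prod$-closed in $\Inj(\Gcal)$, since products and summands of injectives in $\Lcal^\bot$ stay there. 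Since the equivalence $\Lsf$ preserves products (using that $\Rsf$ does and $\Lsf\Rsf\simeq\mathrm{id}$ on $\Gcal/\Lcal$) and direct summands, it matches $\Prod$-closed subcategories on both sides, and hence closed subsets.

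For item (2), I would argue pointwise for $e\in\GSpec(\Gcal)$. If $e\in\GSupp(\Lcal)$, then the antitonicity of $(-)^\bot$ together with $\Lcal\subseteq\Tcal$ gives $\Tcal^\bot\subseteq\Lcal^\bot$, so $e\notin\Lcal^\bot$ forces $e\notin\Tcal^\bot$, i.e.~$e\in\GSupp(\Tcal)$. Otherwise $e\in\Inj(\Gcal)\cap\Lcal^\bot$, so $e\simeq\Rsf\Lsf(e)$, and for every $t\in\Tcal$ adjunction yields $\Hom_{\Gcal/\Lcal}(\Lsf(t),\Lsf(e))\simeq\Hom_\Gcal(t,e)$; equivalently, this is the content of Lemma~\ref{lemma:torsion-localisation}(2) evaluated at $e$, and gives $e\in\Tcal^\bot$ if and only if $\Lsf(e)\in(\Lsf\Tcal)^\bot$. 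Thus $e\in\GSupp(\Tcal)\setminus\GSupp(\Lcal)$ if and only if $\Lsf(e)\in\GSupp(\Lsf\Tcal)$, which under the homeomorphism of (1) means exactly $e\in\Rsf(\GSupp(\Lsf\Tcal))$, yielding the claimed decomposition.

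The main obstacle is the bookkeeping in item (1): verifying that $\Prod$-closure of a subcategory of injectives in $\Lcal^\bot$ is the same whether computed inside $\Inj(\Gcal)$, inside $\Inj(\Gcal)\cap\Lcal^\bot$, or transported to $\Inj(\Gcal/\Lcal)$ along $\Lsf$. Once this is settled, item (2) follows at once by combining Lemma~\ref{lemma:torsion-localisation}(2) with the antitonicity of $(-)^\bot$.
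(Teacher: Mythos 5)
Your argument is correct and follows essentially the same route as the paper's proof: for item (1), both use the restriction equivalence $\Lsf\colon\Inj(\Gcal)\cap\Lcal^\bot\simeq\Inj(\Gcal/\Lcal):\!\Rsf$, the preservation of products, and Lemma~\ref{lemma:topology-inj} to match closed sets; for item (2), both reduce to checking membership in $\Tcal^\bot$ via the adjunction and Lemma~\ref{lemma:torsion-localisation}(2), you doing so pointwise where the paper phrases it by passing to complements --- a cosmetic difference only.
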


\begin{proof}
	(1) Since $\Lsf$ and $\Rsf$ are equivalences between idempotent-complete additive
	categories, they induce a bijection between
	$\GSpec(\Gcal)\cap\Lcal^\bot=\GSpec(\Gcal)\setminus\GSupp(\Lcal)$ and
	$\GSpec(\Gcal/\Lcal)$. Moreover, they also preserves products; so they induce
	a homeomorphism, by the description of closed sets given in Lemma~\ref{lemma:topology-inj}.

	(2) This equality is easier to see passing to the complements.
	Observe that both $(\GSupp(\Tcal))^\cpl$ and
	$\left(\GSupp(\Lcal)\cup \Rsf(\GSupp(\Lsf\Tcal))\right)^\cpl$ are contained in $(\GSupp(\Lcal))^\cpl$. To
	show that these two complements are equal, take
	$e$ in $(\GSupp(\Lcal))^\cpl=\GSpec(\Gcal)\cap\Lcal^\bot$. Then $e$ lies in
	$(\GSupp(\Tcal))^\cpl$ if and only if $e$ belongs to $\Tcal^\bot$. By adjunction, this
	is the case if and only if $\Lsf(e)$ belongs to $\Lsf(\Tcal^\bot)=(\Lsf\Tcal)^\bot$,
	where this equality comes from Lemma~\ref{lemma:torsion-localisation}(2). This
	is equivalent to the fact that $\Lsf(e)$ lies in $(\GSupp(\Lsf\Tcal))^\cpl$,
	\emph{i.e.}~ that $e\simeq \Rsf\Lsf(e)$ lies in $\left(\Rsf(\GSupp(\Lsf\Tcal))\right)^\cpl$.
\end{proof}

Now, under the assumption that $\Gcal$ is semi-noetherian, we give another
description of the points of $\GSpec(\Gcal)$.
In the following, we fix the notation
$0=\Gcal_{-1}\subseteq\Gcal_0\subseteq\cdots\subseteq\Gcal_\delta=\Gcal$ for the
Gabriel filtration of $\Gcal$, assuming that $\delta$ is the Gabriel dimension;
we also denote by $\Ccal_{\alpha}:=\Gcal_\alpha^{\bot_{0,1}}$ the Giraud
subcategories, and by
$\begin{tikzcd}[cramped,sep=small]
	\Lsf_\alpha\colon \Gcal \arrow[shift left]{r} &
	\arrow[shift left]{l} \Gcal/\Gcal_\alpha :\!\Rsf_\alpha
\end{tikzcd}$
the quotient functors and their right adjoints, for all ordinals $\alpha\leq
\delta$.

We start by introducing the following concept.

\begin{dfn}
	An object of $\Gcal$ of the form $\Rsf_\alpha(s)$, for $s\in\Gcal/\Gcal_\alpha$ a
	simple object in the quotient, is called a \emph{Gabriel-simple} of $\Gcal$ of
	\emph{dimension} $\alpha+1$.
\end{dfn}

For example, the Gabriel-simples of dimension $0$ are precisely the
simple objects of $\Gcal$. By Schanuel's Lemma, the simple objects in the
quotients $\Gcal/\Gcal_\alpha$ are all \emph{bricks}, meaning that their endomorphism ring
is a skew-field. Since the functors $\Rsf_\alpha$ are fully faithful, this is also
true of the Gabriel-simples. In particular, they are indecomposable with local endomorphism
ring.

Observe that the Gabriel-simple objects of $\Gcal$ of
dimension $\alpha+1$ are precisely the simple objects of $\Ccal_\alpha$, which
is a Grothen\-dieck category by the equivalence
$\Rsf_\alpha\colon\Gcal/\Gcal_\alpha\simeq\Ccal_\alpha$.
In other words, these are the objects of $\Ccal_\alpha$ whose proper quotients
(in $\Gcal$) all lie in $\Gcal_\alpha$.

The connection with the Gabriel spectrum is given by the following lemma.

\begin{lemma}[{\cite[Thm.~5.56]{pope-73}}]\label{lemma:gabriel-simple-subobject}
	Let $\Gcal$ be semi-noetherian. Then taking injective envelopes gives a
	bijection between isoclasses of Gabriel-simples and indecomposable
	injectives of $\Gcal$.
\end{lemma}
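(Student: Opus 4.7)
The plan is to show that the injective-envelope assignment $g \mapsto E(g)$ provides the bijection from isoclasses of Gabriel-simples to $\GSpec(\Gcal)$. Well-definedness is immediate from the discussion preceding the statement: a Gabriel-simple $g = \Rsf_\alpha(s)$ is, by the full faithfulness of $\Rsf_\alpha$ and Schanuel's Lemma applied to the simple $s$ in $\Gcal/\Gcal_\alpha$, a brick with local endomorphism ring; in particular it is indecomposable, and so $E(g) \in \GSpec(\Gcal)$.

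For surjectivity, given $e \in \GSpec(\Gcal)$, I would first attach to $e$ a \emph{Gabriel dimension}: an ordinal $\alpha$ such that $t_\alpha(e) = 0$ and $t_{\alpha+1}(e) \neq 0$, where $t_\beta$ denotes the torsion radical for $(\Gcal_\beta,\Gcal_\beta^\bot)$. From $t_\alpha(e) = 0$ together with the injectivity of $e$ (which handles $\Ext^1(\Gcal_\alpha,e) = 0$), $e$ lies in $\Ccal_\alpha$, so $e \simeq \Rsf_\alpha\Lsf_\alpha(e)$. Applying the exact functor $\Lsf_\alpha$ to the inclusion $t_{\alpha+1}(e) \hookrightarrow e$ yields a nonzero subobject $\Lsf_\alpha(t_{\alpha+1}(e)) \subseteq \Lsf_\alpha(e)$ in $(\Gcal/\Gcal_\alpha)_0$: nonzero since $t_\alpha(t_{\alpha+1}(e)) = t_\alpha(e) \cap t_{\alpha+1}(e) = 0$ forces $t_{\alpha+1}(e) \notin \Gcal_\alpha$, and semi-artinian because $t_{\alpha+1}(e) \in \Gcal_{\alpha+1} = \Lsf_\alpha^{-1}((\Gcal/\Gcal_\alpha)_0)$. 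Picking a simple subobject $s$ of $\Lsf_\alpha(t_{\alpha+1}(e))$ and pushing forward by the left exact $\Rsf_\alpha$ produces a monomorphism $\Rsf_\alpha(s) \hookrightarrow \Rsf_\alpha\Lsf_\alpha(e) \simeq e$; by coirreducibility of $e$, this gives $e = E(\Rsf_\alpha(s))$.

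For injectivity, assume $E(g_1) \simeq E(g_2) =: e$ for Gabriel-simples $g_i$ of respective dimensions $\alpha_i + 1$, with $\alpha_1 \leq \alpha_2$. If $\alpha_1 < \alpha_2$, then $g_1 \in \Gcal_{\alpha_1+1} \subseteq \Gcal_{\alpha_2}$, while $g_2 \in \Ccal_{\alpha_2}$ satisfies $t_{\alpha_2}(g_2) = 0$; then $g_1 \cap g_2 \subseteq g_1 \in \Gcal_{\alpha_2}$ is a subobject of $g_2$ in $\Gcal_{\alpha_2}$, hence vanishes, contradicting the coirreducibility of $e$. Thus $\alpha_1 = \alpha_2 =: \alpha$. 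The equivalence $\Lsf_\alpha\colon \Ccal_\alpha \cap \Inj(\Gcal) \simeq \Inj(\Gcal/\Gcal_\alpha)$ sends $e$ to an indecomposable, hence uniform, injective of $\Gcal/\Gcal_\alpha$ containing the two simples $\Lsf_\alpha(g_1), \Lsf_\alpha(g_2)$; uniformity forces $\Lsf_\alpha(g_1) \simeq \Lsf_\alpha(g_2)$, and the full faithfulness of $\Rsf_\alpha$ lifts this to $g_1 \simeq g_2$.

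The main obstacle I anticipate is the rigorous assignment of the Gabriel dimension $\alpha$ to $e$: one must rule out the pathology where, for some limit ordinal $\lambda$, one has $t_\beta(e) = 0$ for all $\beta < \lambda$ yet $t_\lambda(e) \neq 0$. Such a $t_\lambda(e)$ would be a nonzero object of $\Loc(\bigcup_{\beta<\lambda} \Gcal_\beta)$ devoid of nonzero subobjects in any $\Gcal_\beta$ with $\beta < \lambda$. I would dispatch this by a transfinite induction on $\Gdim(\Gcal)$, invoking Proposition~\ref{prop:semi-noetherian-serre} (applied to the localising subcategory $\Gcal_\lambda \subseteq \Gcal$, which is itself semi-noetherian) to reduce to strictly smaller Gabriel dimensions and argue that the Gabriel filtration of $\Gcal$ restricts compatibly to that of $\Gcal_\lambda$.
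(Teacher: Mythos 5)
The paper cites this lemma from Popescu and does not supply its own proof, so there is no in-paper argument to compare against; I will assess correctness.

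Your surjectivity and injectivity arguments are both sound. The one piece you leave unresolved --- the possible pathology at a limit ordinal $\lambda$, that $t_\beta(e)=0$ for all $\beta<\lambda$ while $t_\lambda(e)\neq 0$ --- is in fact a non-issue, and the transfinite induction on $\Gdim(\Gcal)$ via Proposition~\ref{prop:semi-noetherian-serre} that you sketch is unnecessary. By the paper's discussion culminating in equation~\eqref{eqn:join}, the torsion-free class of a join of localising subcategories is the intersection of the individual torsion-free classes. Applied to $\Gcal_\lambda=\Loc\bigl(\bigcup_{\beta<\lambda}\Gcal_\beta\bigr)=\bigvee_{\beta<\lambda}\Gcal_\beta$, this yields $\Gcal_\lambda^\bot=\bigcap_{\beta<\lambda}\Gcal_\beta^\bot$. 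Hence $t_\lambda(e)=0$ precisely when $t_\beta(e)=0$ for every $\beta<\lambda$, and the pathology cannot arise. Since semi-noetherianity gives $\Gcal_\delta=\Gcal$ for some $\delta$, so that $t_\delta(e)=e\neq 0$, while $t_{-1}(e)=0$, the least $\gamma$ with $t_\gamma(e)\neq 0$ exists and must be a successor $\alpha+1$; this $\alpha$ is exactly what your argument requires. Substituting this one-line observation for your final paragraph makes the proof complete. One small remark: the identity $t_\alpha(t_{\alpha+1}(e))=t_{\alpha+1}(e)\cap t_\alpha(e)$ you use silently invokes the fact that hereditary torsion radicals commute with taking subobjects in the sense $t(n)=n\cap t(m)$ for $n\subseteq m$; this is standard, but worth stating.
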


\subsection{Topological properties of the Gabriel Spectrum}\label{subsec:topology-gabriel-spectrum}

We now proceed to establish some topological properties of the Gabriel spectrum,
under the assumption that $\Gcal$ is semi-noetherian.
Recall that a topological space is \emph{Alexandrov} if 
arbitrary unions of closed sets are closed, or equivalently,
if arbitrary intersections of open sets are open. A
famous example of such a space is the Hochster dual of Zariski topology on the
prime spectrum of a commutative
noetherian ring. An Alexandrov topology on a set determines a (partial) preorder
on the points, called the \emph{closure preorder}, defined by setting $x\preceq
y$ if $x\in\overline{\{y\}}$. Conversely, any partial preorder induces an
Alexandrov topology, where the closed sets are the lower-sets. These two
assignments are inverse bijections.
In general, the Gabriel topology on the Gabriel spectrum is not
expected to be Alexandrov. In fact, it is possible to characterise precisely
when this is the case, thanks to the following notion \cite{pavo-25}.

\begin{dfn}\label{dfn:torsion-simple}
	A nonzero object $x\neq 0$ in a Grothendieck category is \emph{hereditary-torsion-simple}
	if it belongs to $\Tcal\cup\Fcal$ for every hereditary torsion pair
	$(\Tcal,\Fcal)$. In other words, if it is always either torsion or
	torsion-free, for any hereditary torsion pair.
\end{dfn}

\begin{thm}\label{thm:torsion-simple-socle}
	Let $\Gcal$ be a semi-noetherian Grothendieck category. Then $\GSpec(\Gcal)$ is
	Alexandrov if and only if every indecomposable injective has a
	hereditary-torsion-simple subobject.
\end{thm}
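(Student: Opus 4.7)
The plan is to leverage the lattice isomorphism between $\LOC(\Gcal)$ and the frame of open subsets of $\GSpec(\Gcal)$ afforded by $\GSupp$ in Proposition~\ref{prop:gsupp}, in order to translate the Alexandrov property into a statement about arbitrary intersections of localising subcategories, and then connect that statement to hereditary-torsion-simple subobjects via the uniformity of indecomposable injectives. Two basic facts would be invoked at the outset: (a) for $e \in \GSpec(\Gcal)$ and $\Lcal \in \LOC(\Gcal)$, one has $e \in \GSupp(\Lcal)$ if and only if $e$ admits a nonzero subobject lying in $\Lcal$ (because $\Lcal^\bot$ is closed under subobjects, so $e \in \Lcal^\bot$ precisely when $e$ has no nonzero subobject in $\Lcal$); and (b) an indecomposable injective in a Grothendieck category is uniform, so any two nonzero subobjects of $e$ intersect nontrivially.

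For the implication ``$\Leftarrow$'', I assume every indecomposable injective $e$ admits a hereditary-torsion-simple subobject $x \subseteq e$, and verify that $\bigcap_{i \in I} \GSupp(\Lcal_i) \subseteq \GSupp(\bigcap_i \Lcal_i)$ for every family $\{\Lcal_i\}_{i \in I}$ in $\LOC(\Gcal)$; the reverse inclusion is automatic. Given $e$ in the left-hand side, fact (a) lets me pick, for each $i$, a nonzero subobject $y_i \subseteq e$ with $y_i \in \Lcal_i$. By (b), $x \cap y_i$ is a nonzero subobject of $x$, and it lies in $\Lcal_i$ (as a subobject of $y_i$), so $x \notin \Lcal_i^\bot$; being hereditary-torsion-simple, $x$ then belongs to $\Lcal_i$, for every $i$. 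Hence $x \in \bigcap_i \Lcal_i$, and applying (a) again gives $e \in \GSupp(\bigcap_i \Lcal_i)$.

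For the implication ``$\Rightarrow$'', I fix an indecomposable injective $e$ and set $\mathcal{L}_e := \{\Lcal \in \LOC(\Gcal) : e \in \GSupp(\Lcal)\}$. The intersection $U := \bigcap_{\Lcal \in \mathcal{L}_e} \GSupp(\Lcal)$ contains $e$ and, by the Alexandrov hypothesis, is open, so Proposition~\ref{prop:gsupp} provides a unique $\Mcal \in \LOC(\Gcal)$ with $U = \GSupp(\Mcal)$, which is then contained in every member of $\mathcal{L}_e$. By (a), $e$ has a nonzero subobject $x \in \Mcal$, and I claim it is hereditary-torsion-simple: for any $\Tcal \in \LOC(\Gcal)$, either $\Tcal \in \mathcal{L}_e$, in which case $\Mcal \subseteq \Tcal$ forces $x \in \Tcal$; or else $e \in \Tcal^\bot$, in which case $x \in \Tcal^\bot$ since $\Tcal^\bot$ is closed under subobjects. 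The only delicate step is the Alexandrov-driven identification $U = \GSupp(\Mcal)$, which is where Proposition~\ref{prop:gsupp} enters essentially; all the remaining work is formal bookkeeping with (a) and (b).
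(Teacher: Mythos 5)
Your proof is correct and takes essentially the same route as the paper's: both translate the Alexandrov property through the lattice isomorphism $\GSupp$ of Proposition~\ref{prop:gsupp}, extract a hereditary-torsion-simple subobject from the smallest open set containing $e$ in the forward direction, and use that subobject to close arbitrary unions of closed sets (equivalently, intersect arbitrary opens) in the reverse direction. The only cosmetic divergence is in $(\Leftarrow)$: the paper gets $t\notin\Fcal_i$ directly from closure of $\Fcal_i$ under injective envelopes, whereas you intersect $x$ with a subobject $y_i\in\Lcal_i$ and invoke uniformity of $e$ --- both observations amount to the same thing.
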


\begin{proof}
	$(\Rightarrow)$ Assume $\GSpec(\Gcal)$ is Alexandrov, and let
	$e\in\GSpec(\Gcal)$ be an indecomposable injective. Consider the collection of
	all hereditary torsion pairs $(\Tcal,\Fcal)$ for which $e\in\GSupp(\Tcal)$,
	that is, $e\notin\Fcal$.
	Take $S_e$ to be the intersection of all these supports $\GSupp(\Tcal)$, which
	contains $e$ and is open by assumption (in fact, it is the \emph{smallest open
	set} containing $e$). Let therefore $\Tcal_e$ be the corresponding hereditary
	torsion class, with $\GSupp(\Tcal_e)=S_e$ (see Proposition~\ref{prop:gsupp}). Since $e$ belongs to
	$\GSupp(\Tcal_e)$, it has nonzero torsion part $0\neq t\subseteq e$ with
	respect to $\Tcal_e$; we show that $t$ is hereditary-torsion-simple. Indeed,
	let $(\Tcal,\Fcal)$ be any hereditary torsion pair. Then we have two cases.
	If $e\in\GSupp(\Tcal)$, by construction $S_e\subseteq\GSupp(\Tcal)$; since
	Gabriel support is an isomorphism of lattices, by
	Proposition~\ref{prop:gsupp}, we deduce that $t\in\Tcal_e\subseteq\Tcal$.
	If instead $e$ does not belong to $\GSupp(\Tcal)$, then $e$ lies in $\Fcal$;
	therefore $t$ lies in $\Fcal$ as well.

	$(\Leftarrow)$ Suppose that every indecomposable injective has a
	hereditary-torsion-simple subobject. Let
	$\{\,(\Tcal_i,\Fcal_i)\mid i\in I\,\}$ be a family of hereditary torsion pairs
	in $\Gcal$, and $V_i:=\GSpec(\Gcal)\cap\Fcal_i$ the corresponding closed sets.
	Consider the hereditary torsion pair $(\Tcal:=\bigcap\Tcal_i,\Fcal)$; we show
	that $\bigcup V_i=\GSpec(\Gcal)\cap\Fcal$, so this union is closed. Indeed,
	since $\Fcal_i\subseteq\Fcal$ for every $i\in I$, the inclusion $(\subseteq)$
	is clear. For the converse, let $e\in\GSpec(\Gcal)$ be such that $e\notin \Fcal_i$ for every
	$i$; we prove that then $e\notin \Fcal$. Let $0\neq t\subseteq e$ be a
	hereditary-torsion-simple subobject of $e$. Then $e$ is the injective
	envelope of $t$, and since $\Fcal_i$ is closed under injective envelopes, we must
	have $t\notin\Fcal_i$ for every $i$. Since $t$ is hereditary-torsion-simple,
	this implies that $t\in\Tcal_i$ for every $i$. Then $t$ lies in $\Tcal=\bigcap\Tcal_i$,
	and therefore $e$ is not in $\Fcal$.
\end{proof}

As an immediate application, we obtain the following corollary.

\begin{cor}
	If $\Gcal$ is semi-noetherian and every hereditary torsion pair in $\Gcal$ is
	\emph{stable}, \emph{i.e.}~the torsion class is closed under
	taking injective envelopes, then $\GSpec(\Gcal)$ is Alexandrov.
	In particular, this is the case for $\Gcal=\Mod(R)$, for a commutative
	noetherian ring $R$.
\end{cor}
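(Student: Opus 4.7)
The plan is to invoke Theorem~\ref{thm:torsion-simple-socle}, which reduces the task to producing a hereditary-torsion-simple subobject inside each indecomposable injective $e\in\GSpec(\Gcal)$. Under the stability hypothesis, I anticipate that $e$ itself plays that role, which is strictly stronger than what the theorem requires.

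To verify this, fix an arbitrary hereditary torsion pair $(\Tcal,\Fcal)$ and consider the torsion subobject $t(e)\subseteq e$. If $t(e)=0$, then $e\in\Fcal$ and there is nothing to do for this pair. Otherwise $t(e)$ is a nonzero subobject of $e$; since $e$ is coirreducible (being an indecomposable injective in a Grothendieck category), $e$ coincides with the injective envelope of $t(e)$. Stability of $(\Tcal,\Fcal)$ then forces $e\in\Tcal$. In either case $e\in\Tcal\cup\Fcal$, so $e$ is hereditary-torsion-simple, and Theorem~\ref{thm:torsion-simple-socle} (applied with the distinguished subobject equal to $e$ itself) yields that $\GSpec(\Gcal)$ is Alexandrov.

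For the final clause, it remains to check that every hereditary torsion pair in $\Mod(R)$ is stable when $R$ is commutative noetherian. This is classical: such pairs are parametrised by specialisation-closed subsets $Z\subseteq\Spec(R)$ via $\Tcal_Z=\{\,M\mid \Supp(M)\subseteq Z\,\}$. Combined with the Matlis decomposition of injectives, this gives that for $M\in\Tcal_Z$ one has $\Supp(E(M))=\bigcup_{\pf\in\mathsf{Ass}(M)}V(\pf)\subseteq Z$, using $\mathsf{Ass}(M)\subseteq\Supp(M)\subseteq Z$ and the fact that $Z$ is specialisation-closed; hence $E(M)\in\Tcal_Z$, and the torsion class is closed under injective envelopes.

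I do not foresee any genuine obstacle here: the crux is simply the observation that coirreducibility of an indecomposable injective collapses its torsion sequence to a trivial one whenever the ambient torsion class is closed under injective envelopes, so the argument is essentially immediate once Theorem~\ref{thm:torsion-simple-socle} is available.
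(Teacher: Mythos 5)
Your argument is correct and matches the paper's proof: both reduce to Theorem~\ref{thm:torsion-simple-socle} by observing that under the stability hypothesis every indecomposable injective $e$ is itself hereditary-torsion-simple (if its torsion part is nonzero, coirreducibility gives $e=E(t(e))\in\Tcal$). The only difference is cosmetic — the paper cites Stenström for the stability of hereditary torsion pairs in $\Mod(R)$, whereas you spell out the support/Matlis argument directly.
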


\begin{proof}
	The assumption is equivalent to saying that the indecomposable injectives are
	themselves hereditary-torsion-simple, because if they are not torsion-free,
	they are the injective envelopes of their torsion part. For a commutative
	noetherian ring $R$, all hereditary torsion pairs in $\Mod(R)$ are stable
	\cite[Prop.~VII.4.5]{sten-75}.
\end{proof}

In our application to cosilting hearts in the commutative noetherian setting in
\S\ref{sec:commutative-noetherian}, the strong assumption of this corollary
will not necessarily hold, but we will use
Theorem~\ref{thm:torsion-simple-socle} to show that the Gabriel spectra will
nonetheless be Alexandrov (Corollary~\ref{cor:T0-alexandrov}).

The existence of hereditary-torsion-simple subobjects of indecomposable
injective objects allows for another description of the Gabriel
topology. Assume that every $e\in\GSpec(\Gcal)$ has a hereditary-torsion-simple
subobject $s(e)\subseteq e$. The Gabriel
topology, being Alexandrov, is then determined by its closure preorder, $e'\preceq e$
if and only if $e'\in\overline{\{e\}}$. 

\begin{lemma}\label{lemma:hts-topology-characterisation}
	In the notation above, the following are equivalent:
	\begin{enumerate}
		\item $e'\preceq e$;
		\item $\Gcal(s(e'),e)\neq 0$;
		\item $\exists\, 0\neq x\subseteq s(e')$ such that $\Gcal(x,s(e))\neq 0$.
	\end{enumerate}
\end{lemma}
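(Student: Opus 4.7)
The plan is to prove the chain $(1) \Leftrightarrow (2) \Leftrightarrow (3)$.

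For $(1) \Leftrightarrow (2)$, the first step is to identify the closure $\overline{\{e\}}$ with $\GSpec(\Gcal) \cap \Cogen(e)$: since $({}^\bot e, \Cogen(e))$ is the smallest hereditary torsion pair whose torsion-free class contains $e$ (any hereditary torsion pair $(\Tcal,\Fcal)$ with $e \in \Fcal$ satisfies $\Tcal \subseteq {}^\bot e$), the corresponding closed set $\GSpec(\Gcal) \cap \Cogen(e)$ is the smallest closed set containing $e$, hence equals $\overline{\{e\}}$. Thus $(1)$ becomes the condition $e' \in \Cogen(e)$. Next, since $e'$ is coirreducible and $s(e') \neq 0$, the inclusion $s(e') \subseteq e'$ is essential, realising $e'$ as the injective envelope of $s(e')$; as $\Cogen(e)$ is closed under both subobjects and injective envelopes, this yields $e' \in \Cogen(e)$ if and only if $s(e') \in \Cogen(e)$. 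Finally, the hereditary-torsion-simplicity of $s(e')$ with respect to $({}^\bot e, \Cogen(e))$, together with $s(e') \neq 0$, forces $s(e') \in \Cogen(e)$ if and only if $s(e') \notin {}^\bot e$, if and only if $\Gcal(s(e'), e) \neq 0$, which is $(2)$.

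For $(2) \Leftrightarrow (3)$, the direction $(3) \Rightarrow (2)$ is direct: compose any nonzero map $x \to s(e)$ with $s(e) \hookrightarrow e$ and extend along $x \hookrightarrow s(e')$ using the injectivity of $e$, producing a nonzero map $s(e') \to e$. For $(2) \Rightarrow (3)$, given a nonzero $f \colon s(e') \to e$, set $x := f^{-1}(s(e))$; since $e$ is coirreducible and $s(e) \neq 0$, $s(e)$ is essential in $e$, so $\im(f) \cap s(e) \neq 0$, and this image coincides with $f(x)$, which simultaneously forces $x \neq 0$ and makes the restriction $f|_x \colon x \to s(e)$ nonzero.

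The most substantive step is the reduction of $e' \in \Cogen(e)$ to $\Gcal(s(e'), e) \neq 0$, which genuinely depends on $s(e')$ being hereditary-torsion-simple; the remaining manipulations are routine bookkeeping with coirreducibility and the closure properties of hereditary torsion-free classes.
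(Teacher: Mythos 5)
Your proof is correct and follows essentially the same route as the paper's: identify $\overline{\{e\}}$ with $\GSpec(\Gcal)\cap\Cogen(e)$, pass from $e'$ to $s(e')$ via injective envelopes, and invoke hereditary-torsion-simplicity for $(1\Leftrightarrow2)$, then use that $e$ is the injective envelope of $s(e)$ for $(2\Leftrightarrow3)$. You merely spell out two steps the paper compresses (that $\Cogen(e)$ is the smallest closed set containing $e$, glossed in the paper as ``by definition,'' and the explicit pullback/extension argument for $(2\Leftrightarrow3)$), both of which are carried out correctly.
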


\begin{proof}
	($1\Leftrightarrow2$) Let $(\Tcal_e,\Fcal_e)=({}^\bot e,\Cogen(e))$ be the hereditary torsion pair
	cogenerated by $e$. We have that $e'\preceq e$ if and only if
	$e'\in\overline{\{e\}}=\GSpec(\Gcal)\cap \Fcal_e$, by definition. This is
	equivalent to $s(e')\in\Fcal_e$, since this class is closed under subobjects
	and injective envelopes. Now, since $s(e')$ is hereditary-torsion-simple, this
	is in turn equivalent to $s(e')\notin\Tcal_e={}^\bot e$.

	($2\Leftrightarrow3$) This just follows from the fact that
	$e$ is the injective envelope of $s(e)$.
\end{proof}

\begin{rmk}
	Observe that, even if the space is not Alexandrov, the closure of a point
	$e\in\GSpec(\Gcal)$ can be characterised as the set of indecomposable
	injectives $e'$ without nonzero subobjects in $\Tcal_e={}^\bot e$ (and hence
	torsion-free, $e'\in\Fcal_e=\Cogen(e)$). This means:
	\[e'\in\overline{\{e\}} \iff \forall\, 0\neq x\subseteq e'\;\; \Gcal(x,e)\neq 0.\]
	The advantage we gain when we can work with the
	hereditary-torsion-simple subobjects is that by (2) above we
	only need to check this condition for $x:=s(e')$.
\end{rmk}

Recall
that a topological space is $(\Tsf_0)$ (or \emph{Kolmogorov}) if for any two
distinct points, there exists an open set which contains one but not the other.
This is equivalent to the property that if two points belong to each other's
closure, they are equal.
We will obtain that $\GSpec(\Gcal)$ is $(\Tsf_0)$ as a consequence of the
following notion.

\begin{dfn}\label{dfn:cantor-bendixson}
	Let $X$ be a topological space. The \emph{Cantor--Bendixson filtration} of $X$
	is an increasing chain of open subsets:
	\[ \emptyset=:X_{-1}\subseteq X_0\subseteq X_1\subseteq\cdots\cdots\subseteq
	X_\alpha\subseteq\cdots\subseteq X, \quad \alpha\text{ ordinal},\]
	defined inductively by setting:
	\begin{enumerate}
		\item $X_{-1}:=\emptyset$;
		\item $X_{\alpha+1}$ to be the union of $X_\alpha$ with the set of isolated
			points of the topological subspace $X\setminus X_\alpha$, for every
			ordinal $\alpha$;
		\item $X_{\lambda}:=\bigcup_{\alpha<\lambda} X_\alpha$ for every limit
			ordinal $\lambda=\bigcup_{\alpha<\lambda}X_\alpha$.
	\end{enumerate}
	This chain must stabilise for cardinality reasons. The \emph{Cantor--Bendixson
	rank} is the minimum ordinal $\alpha$ for which $X_\alpha=X$, if it exists.
\end{dfn}

\begin{lemma}
	If a topological space $X$ has Cantor--Bendixson rank, then it is $(\Tsf_0)$.
\end{lemma}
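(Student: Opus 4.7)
The plan is to exploit the Cantor--Bendixson filtration directly: each stage $X_\alpha$ is open in $X$, and every point acquires an intrinsic ordinal rank telling us the first stage that contains it. Distinct points will then be separated either by an open set of the form $X_\alpha$ (when they have different ranks) or by the open neighbourhood witnessing their isolation in a common complement (when they have the same rank).

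First I would verify by transfinite induction that each $X_\alpha$ is open in $X$. The base and limit steps are immediate ($\emptyset$ is open, and $X_\lambda$ is a union of opens). For the successor step, if $p$ is isolated in the subspace $X\setminus X_\alpha$, there exists an open $U_p\subseteq X$ with $U_p\cap (X\setminus X_\alpha)=\{p\}$, hence $U_p\cup X_\alpha=X_\alpha\cup\{p\}$ is open; thus $X_{\alpha+1}=X_\alpha\cup\bigcup_p \{p\}=\bigcup_p(X_\alpha\cup\{p\})$ is open.

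Next, since by assumption $X$ has a Cantor--Bendixson rank, every $x\in X$ lies in some $X_\alpha$, and we may define $\rho(x)$ to be the least such ordinal. Observe that $\rho(x)$ is never a limit ordinal: if $\rho(x)=\lambda$ with $\lambda$ limit, then $x\in X_\lambda=\bigcup_{\alpha<\lambda}X_\alpha$ would force $x\in X_\alpha$ for some $\alpha<\lambda$, contradicting minimality. Hence $\rho(x)=\alpha+1$ for some $\alpha$, meaning $x$ is an isolated point of $X\setminus X_\alpha$.

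Finally, given two distinct points $x,y\in X$, assume without loss of generality that $\rho(x)\le \rho(y)$. If $\rho(x)<\rho(y)$, then $X_{\rho(x)}$ is an open set containing $x$ but not $y$. If instead $\rho(x)=\rho(y)=\alpha+1$, both $x$ and $y$ are isolated points of $X\setminus X_\alpha$; choosing an open $U_x\subseteq X$ with $U_x\cap(X\setminus X_\alpha)=\{x\}$ gives an open set containing $x$ but not $y$, since $y\in X\setminus X_\alpha$ and $y\ne x$. Either way we separate the two points by an open set, which is exactly the $(\Tsf_0)$ condition. I do not anticipate any real obstacle; the only small subtlety is checking openness of the $X_\alpha$'s, which is what makes the argument go through.
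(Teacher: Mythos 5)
Your proof is correct and follows essentially the same strategy as the paper's: assign each point the least ordinal $\alpha$ with $x\in X_\alpha$, then separate by $X_\alpha$ when the ranks differ and by an isolating neighbourhood when they coincide. Your write-up is slightly more careful on two points that the paper leaves implicit --- that each $X_\alpha$ is open, and that the minimal ordinal is never a limit (so that the equal-rank case genuinely concerns isolated points of $X\setminus X_\alpha$) --- but there is no substantive difference in the argument.
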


\begin{proof}
	Let $x_1,x_2\in X$. In the notation above, let $\alpha_i\geq 0$ be the minimum
	ordinal for which $x_i\in X_{\alpha_i}$, for $i=1,2$. If $\alpha_1\neq
	\alpha_2$, say $\alpha_1<\alpha_2$, then $X_{\alpha_1}$ is an open
	set containing $x_1$ but not $x_2$. If instead $\alpha_1=\alpha_2=:\alpha$, then
	$x_1,x_2$ are both isolated points of the subspace $X_{\alpha}$; in
	particular, $X_{\alpha-1}\cup\{x_1\}\subseteq X$ is an open set which contains $x_1$ but
	not $x_2$.
\end{proof}

The striking similarity between Definition~\ref{dfn:cantor-bendixson} and the
definition of the Gabriel filtration of $\Gcal$
(Definition~\ref{dfn:gabriel-filtration}) is explained by the following result.

\begin{prop}\label{prop:cantor-bendixson-gabriel}
	Let $\Gcal$ be a semi-noetherian Grothendieck category. Then:
	\begin{enumerate}
		\item Gabriel support gives a bijective correspondence between localising
			subcategories of $\Gcal$ generated by a simple object and isolated points of
			$\GSpec(\Gcal)$.
		\item Gabriel support gives a bijective correspondence between the Gabriel
			filtration of $\Gcal$ and the Cantor--Bendixson filtration of
			$\GSpec(\Gcal)$.
	\end{enumerate}
	\end{prop}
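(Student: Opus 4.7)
The plan is to prove (1) first and then use it as the base case for a transfinite induction establishing (2). For (1), I will combine the lattice isomorphism $\GSupp$ between $\LOC(\Gcal)$ and the frame of opens of $\GSpec(\Gcal)$ (Proposition~\ref{prop:gsupp}) with Corollary~\ref{cor:minimal-nonzero}, which identifies the localising subcategories of the form $\Loc(s)$, for $s$ simple, as precisely the minimal nonzero members of $\LOC(\Gcal)$. Being a lattice isomorphism, $\GSupp$ sends these minimal nonzero localising subcategories bijectively onto the minimal nonzero opens of $\GSpec(\Gcal)$; the final step is to observe that these are singletons, and singleton opens are by definition precisely the isolated points.

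The key computation for (1) is to identify $\GSupp(\Loc(s)) = \{E(s)\}$, where $E(s)$ is the injective envelope of $s$. Unfolding definitions, $e \in \GSupp(\Loc(s))$ iff $e \notin (\Loc(s))^\bot$. Since $e$ is indecomposable injective, $({}^\bot e, \Cogen(e))$ is a hereditary torsion pair as recalled in \S\ref{sub:localising}, so ${}^\bot e$ is a localising subcategory; therefore $s \in {}^\bot e$ forces $\Loc(s) \subseteq {}^\bot e$, and hence $e \in \GSupp(\Loc(s))$ is equivalent to $\Gcal(s,e) \neq 0$. Simplicity of $s$ turns a nonzero morphism $s \to e$ into a monomorphism, and the coirreducibility of the indecomposable injective $e$ then forces $e \simeq E(s)$, giving the claimed singleton.

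For (2), writing $X := \GSpec(\Gcal)$, I will prove by transfinite induction on $\alpha$ that $\GSupp(\Gcal_\alpha) = X_\alpha$. The case $\alpha = -1$ is trivial. At a limit ordinal $\lambda$, the subcategory $\Gcal_\lambda$ is the join of the $\Gcal_\alpha$ for $\alpha < \lambda$, so the lattice isomorphism gives $\GSupp(\Gcal_\lambda) = \bigcup_{\alpha<\lambda} \GSupp(\Gcal_\alpha) = \bigcup_{\alpha<\lambda} X_\alpha = X_\lambda$. At the successor step, assuming $\GSupp(\Gcal_\alpha) = X_\alpha$, I apply Proposition~\ref{prop:spectrum-quotients}(2) to the inclusion $\Gcal_\alpha \subseteq \Gcal_{\alpha+1}$ to obtain
\[\GSupp(\Gcal_{\alpha+1}) = X_\alpha \cup \Rsf_\alpha\bigl(\GSupp(\Lsf_\alpha(\Gcal_{\alpha+1}))\bigr).\]
Now $\Lsf_\alpha(\Gcal_{\alpha+1}) = (\Gcal/\Gcal_\alpha)_0$ via the standard bijection between localising subcategories of $\Gcal/\Gcal_\alpha$ and those of $\Gcal$ containing $\Gcal_\alpha$, combined with the defining identity $\Gcal_{\alpha+1} = \Lsf_\alpha^{-1}((\Gcal/\Gcal_\alpha)_0)$. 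Since $\Gcal/\Gcal_\alpha$ is semi-noetherian (Proposition~\ref{prop:semi-noetherian-serre}), part (1) applied there identifies $\GSupp((\Gcal/\Gcal_\alpha)_0)$ as the set of isolated points of $\GSpec(\Gcal/\Gcal_\alpha)$. Under the homeomorphism of Proposition~\ref{prop:spectrum-quotients}(1), these correspond to the isolated points of the subspace $X \setminus X_\alpha$, so by the very definition of the Cantor--Bendixson filtration the right-hand side equals $X_{\alpha+1}$.

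The step requiring most care is the successor case: verifying that $\Lsf_\alpha(\Gcal_{\alpha+1})$ genuinely coincides with $(\Gcal/\Gcal_\alpha)_0$ as a localising subcategory (not merely one containing the other), and that the support of the join-of-$\Loc(s)$'s $(\Gcal/\Gcal_\alpha)_0$ decomposes as the union of the singleton supports $\{E(s)\}$ ranging over simples $s \in \Gcal/\Gcal_\alpha$. Both points hinge on $\GSupp$ being a lattice \emph{isomorphism} (rather than merely order-preserving), which is where the semi-noetherianity hypothesis enters through Proposition~\ref{prop:gsupp}.
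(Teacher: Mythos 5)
Your proof is correct and follows essentially the same route as the paper's: part (1) combines the lattice isomorphism of Proposition~\ref{prop:gsupp} and Corollary~\ref{cor:minimal-nonzero} with the explicit computation $\GSupp(\Loc(s))=\{E(s)\}$, and part (2) is the same transfinite induction, handling the successor step via Proposition~\ref{prop:spectrum-quotients}(2) applied to $\Gcal_\alpha\subseteq\Gcal_{\alpha+1}$ together with part (1) in the semi-noetherian quotient $\Gcal/\Gcal_\alpha$.
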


\begin{proof}
	(1) Recall that since $\Gcal$ is semi-noetherian, by
	Proposition~\ref{prop:gsupp}, Gabriel support gives a bijection between
	localising subcategories of $\Gcal$ and open subsets of $\GSpec(\Gcal)$. Let
	$e\in\GSpec(\Gcal)$ be an isolated (\textit{i.e.} open) point. In
	particular, $\{e\}$ is a minimal non-empty open set, and therefore the corresponding
	localising subcategory is minimal nonzero, having thus the form $\Loc(s)$
	for a simple object $s\in\Gcal$, by Corollary~\ref{cor:minimal-nonzero}.
	Conversely, consider $\Lcal=\Loc(s)$ for a simple object $s$, and let $e$ be the
	(indecomposable) injective envelope of $s$. For any other indecomposable
	injective $e'$, a nonzero morphism $s\to e'$ would necessarily be monic, and $e$ would be a summand of $e'$, showing that indeed 
	$e'\in\Lcal^\bot$. This shows that $\GSupp(\Lcal)=\{e\}$, so $e$ is an
	isolated point.

	(2) If we denote by $(\Gcal_\alpha\mid \alpha)$ the Gabriel filtration of
	$\Gcal$ and by $(X_\alpha\mid \alpha)$ the Cantor--Bendixson filtration of
	$\GSpec(\Gcal)$, we show that $\GSupp(\Gcal_\alpha)=X_\alpha$ by induction on
	$\alpha$. For the base step, observe that by definition $X_0$ is the union of all
	isolated points of $\GSpec(\Gcal)$, while $\Gcal_0$ is the join in $\LOC(\Gcal)$ of the
	localising subcategories generated by the simple objects; by (1) and the fact
	that $\GSupp$ is an isomorphism of lattices (Proposition~\ref{prop:gsupp}), we
	conclude. The limit ordinal case follows immediately from the fact that
	$\GSupp$, as an isomorphism of (complete) lattices, preserves arbitrary joins.

	For the successor ordinal case, assume $\GSupp(\Gcal_\alpha)=X_\alpha$.
	Observe that $\Gcal/\Gcal_\alpha$ is itself semi-noetherian, its Gabriel
	filtration being $0=\Lsf_\alpha\Gcal_\alpha\subseteq
	\Lsf_\alpha\Gcal_{\alpha+1}\subseteq\cdots$. By
	Proposition~\ref{prop:spectrum-quotients}, we can identify the Gabriel spectrum of
	$\Gcal/\Gcal_\alpha$ with the subspace
	$\GSpec(\Gcal)\setminus\GSupp(\Gcal_\alpha)=\GSpec(\Gcal)\setminus X_\alpha$.
	Moreover, using this identification, we have:
	\[\GSupp(\Gcal_{\alpha+1})=\GSupp(\Gcal_\alpha)\cup\GSupp(\Lsf_\alpha\Gcal_{\alpha+1})=
	X_\alpha\cup\GSupp((\Gcal/\Gcal_\alpha)_0).\]
	By (1), this is then the union of $X_\alpha$ and the set of isolated points of
	\[\GSpec(\Gcal/\Gcal_\alpha)\simeq \GSpec(\Gcal)\setminus
	\GSupp(\Gcal_\alpha)=\GSpec(\Gcal)\setminus X_\alpha.\]
	This union is by definition $X_{\alpha+1}$.
\end{proof}

\begin{cor}
	The Gabriel spectrum of a semi-noetherian Grothendieck category is $(\Tsf_0)$.
\end{cor}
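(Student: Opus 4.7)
The plan is to combine the two immediately preceding results: Proposition~\ref{prop:cantor-bendixson-gabriel} and the lemma stating that a space with a Cantor--Bendixson rank is $(\Tsf_0)$. The reasoning is essentially a one-liner once those tools are in place.

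First I would unpack what semi-noetherianity buys us: by definition, the Gabriel filtration $(\Gcal_\alpha)_\alpha$ of $\Gcal$ stabilises at $\Gcal$, i.e. $\Gcal_\delta = \Gcal$ for $\delta = \Gdim(\Gcal)$. Then I would apply Proposition~\ref{prop:cantor-bendixson-gabriel}(2), which says that Gabriel support gives a bijective correspondence between the Gabriel filtration of $\Gcal$ and the Cantor--Bendixson filtration $(X_\alpha)_\alpha$ of $\GSpec(\Gcal)$. In particular, $X_\delta = \GSupp(\Gcal_\delta) = \GSupp(\Gcal) = \GSpec(\Gcal)$, so the Cantor--Bendixson filtration of $\GSpec(\Gcal)$ stabilises at $\GSpec(\Gcal)$; that is, $\GSpec(\Gcal)$ has Cantor--Bendixson rank at most $\delta$.

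Having shown this, I would invoke the lemma directly preceding Proposition~\ref{prop:cantor-bendixson-gabriel}, which asserts that any topological space admitting a Cantor--Bendixson rank is $(\Tsf_0)$, to conclude. There is no real obstacle here; the whole point of having set up the Cantor--Bendixson machinery and its correspondence with the Gabriel filtration in the preceding proposition was precisely to make this corollary immediate, so I would present the proof as a two-sentence combination of those two results.
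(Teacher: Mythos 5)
Your proposal is correct and is exactly the argument the paper intends: the corollary is placed immediately after Proposition~\ref{prop:cantor-bendixson-gabriel} and the lemma on Cantor--Bendixson rank precisely so that it follows as a two-line combination of those results, which is what you do. The one small point worth making explicit, and which you do, is that semi-noetherianity is what guarantees the Cantor--Bendixson filtration stabilises at the whole space (via $\GSupp(\Gcal_\delta)=X_\delta$), so that the lemma's hypothesis of having a Cantor--Bendixson rank is actually satisfied.
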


\begin{rmk}
	Observe that in the Proposition we assume from the beginning that $\Gcal$ is
	semi-noetherian, as this is necessary to prove item (1). In other words, it is
	not sufficient to know that $\GSpec(\Gcal)$ admits Cantor--Bendixson rank to
	deduce that $\Gcal$ is semi-noetherian; this is illustrated by the following
	counterexample.
\end{rmk}

\begin{ex}[Of a category with Cantor--Bendixson rank but no Gabriel dimension]
	\label{ex:no-indecomposable}
	This example is based on an answer by Eric
	Wofsey on \texttt{math.stackexchange.com}\footnote{\texttt{https://math.stackexchange.com/questions/1212747/on-semisimple-rings}}.
	Recall that a \emph{boolean ring} is a (necessarily commutative) ring $R$ in
	which every element is idempotent.
	If $M$ is an indecomposable $R$-module and
	$r\in R$ is any element,
	since we have $M=rM\oplus (1-r)M$, either $r$ or $1-r$
	belongs to the annihilator $I:=\mathsf{Ann}(M)$. This shows that $M$ is a
	module over $R/I\simeq \Fbb_2$, the field with two elements. Therefore it is
	actually simple, and it has two elements.

	This shows that all indecomposable modules are simple and finite; moreover,
	they are also injective, as they must coincide with their (indecomposable)
	injective envelope. It follows that $\GSpec(\Mod(R))$ is a discrete space, so
	it has Cantor--Bendixson rank $0$.

	On the other hand, if $R$ is infinite (for example, if $R:=\Fbb_2^I$, where
	$\Fbb_2$ is the field with two elements and $I$ is an infinite set), it cannot
	belong to $(\Mod(R))_0$, as otherwise, being finitely generated, it would have
	finite length, and therefore finitely many elements. This shows that in this
	case the quotient $(\Mod(R))/(\Mod(R))_0$ is nonzero, but it does not contain
	any simple objects, or indeed indecomposable,
	objects. In particular, $\Mod(R)$ is not semi-noetherian.
\end{ex}

\section{Cosilting mutation and the Gabriel Spectrum}
\label{sec:general}

In this section we focus on the Gabriel spectra of certain Grothendieck
categories, arising as hearts of $t$-structures in the context of cosilting
theory. We will be particularly interested in the effects of the operation of
\emph{cosilting mutation} on the topologies of these Gabriel spectra.

\subsection{Preliminaries}

We start by recalling the necessary notions about $t$-structures and
pure-injective cosilting objects.

\subsubsection{Compactly generated triangulated categories}
Let $\Dcal$ be a triangulated category admitting all set-indexed coproducts. Recall
that $\Dcal$ is said to be \emph{compactly generated} if the subcategory
$\Dcal^\cpl$ formed by the objects $x\in\Dcal$ such that $\Dcal(x,-)$ commutes with
coproducts (called \emph{compact objects}) is skeletally small and
$(\Dcal^\cpl)^\bot=0$. It is well-known that compactly generated triangulated
categories admit all set-indexed products \cite[Cor.~1.18]{neem-01}. From now on, $\Dcal$ will denote
a compactly generated triangulated category.

\subsubsection{Pure-injective objects}
An object $c$ in a compactly generated triangulated category $\Dcal$ is said to
be \emph{pure-injective} if, for any set $I$, the canonical map $c^{(I)}\to c$
(often referred to as the \emph{summation map}) factors through the canonical
map $c^{(I)}\to c^I$. It can be shown that an object $c$ is pure-injective if
and only if the functor $\Dcal(-,c)_{|\Dcal^\cpl}$ is injective in the category of
(contravariant) additive functors $\Dcal^\cpl\to \Mod(\Zbb)$, which we denote by
$\Mod(\Dcal^\cpl)$. For more on pure-injective objects, we refer to
\cite{krau-00}.

\subsubsection{$t$-structures and HRS-tilting}

A \emph{$t$-structure} in a triangulated category $\Dcal$ is a pair
$\tau:=(\Ucal,\Vcal)$ of subcategories such that $\Dcal(\Ucal,\Vcal)=0$,
$\Ucal[1]\subseteq\Ucal$ and $\Ucal\ast\Vcal=\Dcal$. The subcategories $\Ucal$
and $\Vcal$ are called the \emph{aisle} and the \emph{coaisle} of $\tau$,
respectively. The inclusion of the aisle $\Ucal\subseteq\Dcal$ admits a
right-adjoint $u\colon \Dcal\to \Ucal$, known as the \emph{left-truncation}
functor of $\tau$; similarly, the inclusion $\Vcal\subseteq\Dcal$ has a
left-adjoint $v\colon \Dcal\to\Vcal$, the \emph{right-truncation} functor of
$\tau$. For any object $x\in\D(R)$ there is a distinguished triangle, called the
\emph{truncation triangle}, of the form
\[u(x)\to x\to v(x)\to u(x)[1],\]
witnessing the fact that
$x\in\Dcal=\Ucal\ast\Vcal$.

The intersection $\Hcal_\tau:=\Ucal[-1]\cap\Vcal$, called the \emph{heart} of
$\tau$, is an abelian category, whose short exact sequences are precisely
the distinguished triangles of $\Dcal$ with terms in $\Hcal_\tau$ \cite{bbd-81}.
The truncation functors induce a \emph{cohomology} functor with respect to
$\tau$, which is $H_\tau^0:=u(v(-)[1])[-1]\simeq v(u(-[1])[-1])\colon
\Dcal\to\Hcal_\tau$. This functor is cohomological, \emph{i.e.}~it sends
distinguished triangles of $\Dcal$ to long exact sequences in $\Hcal_\tau$.
For every $n\geq 0$ we have a binatural transformation $\theta_n\colon
\Ext^n_\Hcal(-,-)\Rightarrow\Dcal(-,-[n])$. These are isomorphisms for $n=0,1$,
and a monomorphism for $n=2$. We say that $\tau$ has \emph{derived type} if $\theta_n$ is
an isomorphism for every $n\geq 0$.

Following from the axioms, the aisle of a $t$-structure is closed under
extensions, positive shifts and coproducts; dually, the coaisle is closed under
extensions, negative shifts and products.
A $t$-structure $\tau=(\Ucal,\Vcal)$ is called: \emph{stable} if $\Ucal$
(equivalently $\Vcal$) is triangulated, that is, closed under all shifts;
\emph{smashing} if $\Vcal$ is closed under coproducts; \emph{nondegenerate} if
$\bigcap_{n\in\Zbb}\Ucal[n]=0=\bigcap_{n\in\Zbb}\Vcal[n]$.

If $\tau=(\Ucal,\Vcal)$ is a $t$-structure with heart $\Hcal$ and
$t=(\Tcal,\Fcal)$ is a torsion pair in $\Hcal$, the (right) \emph{HRS-tilt} of $\tau$ at
$t$ is the $t$-structure \cite{happ-reit-smal-96}:
\[\tau_t:=(\Ucal\ast\Tcal,\Fcal\ast\Vcal[-1])\qquad\text{with heart
}\Hcal_t=\Fcal\ast\Tcal[-1].\]
The pair $(\Fcal,\Tcal[-1])$ is a torsion pair in $\Hcal_t$.

\subsubsection{Cosilting objects}
An object $c\in\Dcal$ is said to be \emph{cosilting} if the pair $\tau_c:=({}^{\bot_{\leq
0}}c,{}^{\bot_{>0}}c)$ is a $t$-structure, which is then said to be a
\emph{cosilting} $t$-structure. Observe that in this case $c$ belongs to $({}^{\bot_{\leq
0}}c)^\bot={}^{\bot_{>0}}c$, and therefore we have that $\Dcal(c,c[i])=0$ for every $i>0$. Two cosilting
objects $c$ and $c'$
are said to be \emph{equivalent} if $\Prod(c)=\Prod(c')$, or equivalently if
$\tau_{c}=\tau_{c'}$ \cite{nico-saor-zvon-19,psar-vito-18}. We will denote the heart $\Hcal_{\tau_c}$ and the
cohomology functor $H^0_{\tau_c}$ by $\Hcal_c$ and $H^0_c$, respectively, for
short.
It follows from the definition of $\tau_c$ and the adjunction
formulas for the truncation functors that there is a
natural isomorphism $\Dcal(-,c)\simeq \Hcal_c(H_c^0(-),H_c^0(c))$ as functors on
$\Dcal$ \cite{nico-saor-zvon-19,ange-mark-vito-17}.

By construction, a cosilting $t$-structure $\tau_c=(\Ucal,\Vcal)$ 
is smashing; moreover, $\tau_c$ is also nondegenerate.
Lastly, the heart $\Hcal_c$ has enough injectives, and the cohomology functor
induces an equivalence $H^0_c\colon \Prod(c)\simeq \Inj(\Hcal_c)$
\cite[Lemma~2.8]{ange-mark-vito-17}. The following
theorem shows that among nondegenerate smashing $t$-structures, having a
Grothendieck heart characterises those
associated to \emph{pure-injective} cosilting objects.

\begin{thm}[{\cite[Thm.~3.6]{ange-mark-vito-17}}]\label{thm:pure-inj-tstr}
	Let $\Dcal$ be a compactly generated triangulated category.
	Then the following are equivalent for a $t$-structure $\tau$ in $\Dcal$:
	\begin{enumerate}
		\item $\tau$ is nondegenerate and smashing, and $\Hcal_\tau$ is a Grothendieck category;
		\item $\tau=\tau_c$ for a pure-injective cosilting object $c\in\Dcal$.
	\end{enumerate}
\end{thm}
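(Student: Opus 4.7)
The plan is to prove the two implications separately, with the main external tool being the Parra--Saor\'in criterion: a smashing $t$-structure has a Grothendieck heart if and only if its coaisle is closed under directed homotopy colimits. For $(2)\Rightarrow(1)$, I first observe that $\tau_c=({}^{\bot_{\leq 0}}c,{}^{\bot_{>0}}c)$ is smashing directly by construction, since the coaisle is defined by $\Hom$-vanishing into shifts of $c$ and coproducts in the source turn into products on the $\Hom$-side. Nondegeneracy would follow from the fact that $H^0_c(c)$ is an injective cogenerator of $\Hcal_c$: any $x\in\bigcap_n \Ucal[n]$ must have vanishing cohomology in every degree, and the natural isomorphism $\Dcal(-,c)\simeq \Hcal_c(H^0_c(-),H^0_c(c))$ then forces $x=0$ (and dually for $\bigcap_n \Vcal[n]$). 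The substantive part is that $\Hcal_c$ is Grothendieck: given a directed system $(x_i)$ in $\Vcal={}^{\bot_{>0}}c$, I would use its telescope triangle, apply $\Dcal(-,c[j])$ for $j>0$, and exploit pure-injectivity of $c$ to conclude that the resulting long exact sequence collapses, so that $\hocolim x_i\in\Vcal$.

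For $(1)\Rightarrow(2)$, the strategy is constructive: take an injective cogenerator $e$ of the Grothendieck heart $\Hcal_\tau$ and set $c:=e$ viewed in $\Dcal$ via the inclusion $\Hcal_\tau\hookrightarrow \Dcal$. I would then establish both pure-injectivity of $c$ and the identity $\tau_c=\tau$. Pure-injectivity translates, via the functorial characterisation recalled in the preliminaries, to the statement that $\Dcal(-,c)_{|\Dcal^\cpl}$ is injective in $\Mod(\Dcal^\cpl)$, which in turn follows from the Parra--Saor\'in condition on $\tau$ together with the compatibility of $H^0_\tau$ with directed colimits of compact objects. To see $\tau_c=\tau$, the inclusion $\Vcal\subseteq {}^{\bot_{>0}}c$ is immediate since $c$ lies in degree zero of $\Hcal_\tau$ and $\Vcal$ is closed under negative shifts; the reverse inclusion uses that $e$ is an injective cogenerator to detect vanishing of the positive cohomologies $H^j_\tau(x)$ from vanishing of $\Dcal(x,c[j])$, whereupon nondegeneracy of $\tau$ forces $x\in\Vcal$.

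The main obstacle I anticipate is the interplay between homotopy colimits in $\Dcal$ and directed colimits in $\Hcal_\tau$, which underlies both the Grothendieck-heart verification in $(2)\Rightarrow(1)$ and the pure-injectivity in $(1)\Rightarrow(2)$. Bringing Parra--Saor\'in's criterion to bear requires a careful transfer along the restricted Yoneda embedding $\Dcal\to \Mod(\Dcal^\cpl)$, matching the $\Mod(\Dcal^\cpl)$-theoretic notion of injectivity with the triangulated closure property on the coaisle. Once this translation is in place, the remaining steps are essentially formal manipulations of truncation functors, injective cogenerators, and the adjunctions built into the $t$-structure.
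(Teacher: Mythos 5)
Your proof outline contains a serious structural error in the direction $(1)\Rightarrow(2)$, plus some weaker gaps in $(2)\Rightarrow(1)$.

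The fatal issue: you propose setting $c:=e$, where $e$ is an injective cogenerator of the heart $\Hcal_\tau$, viewed inside $\Dcal$. But an object $c$ with $\tau_c=\tau$ can lie in $\Hcal_\tau={}^{\bot_{\neq 0}}c$ only if $\Prod(c)\subseteq\Hcal_c$, i.e.\ only if $c$ is \emph{cotilting}, which (as recalled in the preliminaries of the paper) happens precisely when $\tau$ has derived type. For a cosilting $t$-structure that is not of derived type, the cosilting object does not lie in its own heart, and the injective cogenerator of the heart is $H^0_c(c)\neq c$, a genuinely different object. Since the theorem claims a cosilting object for \emph{every} nondegenerate smashing $t$-structure with Grothendieck heart --- not just those of derived type --- your construction cannot produce the required $c$ in general. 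The actual construction in Angeleri H\"ugel--Marks--Vit\'oria goes through the restricted Yoneda embedding $\Dcal\to\Mod(\Dcal^\cpl)$ and a Gabriel--Popescu-type localisation: one exhibits the heart as a Giraud subcategory of $\Mod(\Dcal^\cpl)$, pulls back a suitable injective cogenerator there, and then uses purity theory to realise it in the form $\Dcal(-,c)|_{\Dcal^\cpl}$ for some pure-injective $c\in\Dcal$. That $c$ is, in general, a complex with nonzero cohomology in several degrees, not an object of $\Hcal_\tau$.

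A secondary error: the claim that ``$\Vcal\subseteq{}^{\bot_{>0}}c$ is immediate since $c$ lies in degree zero and $\Vcal$ is closed under negative shifts'' does not hold up. For $v\in\Vcal$, $h\in\Hcal_\tau$ and $j>0$ one has $h[j]\in\Ucal$, so $\Dcal(v,h[j])$ is a Hom-group from the coaisle to the aisle; the $t$-structure axiom $\Dcal(\Ucal,\Vcal)=0$ says nothing about it, and indeed $\Dcal(\Vcal,\Ucal)\neq 0$ in general. Even in the derived-type case where the inclusion is true, establishing it requires a spectral sequence or d\'evissage argument using injectivity of $e$ in $\Hcal_\tau$ and a comparison of $\Ext^j_{\Hcal_\tau}$ with $\Dcal(-,-[j])$, not the shift property you invoke. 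Finally, in the $(2)\Rightarrow(1)$ direction, the telescope computation you outline does not close: after applying $\Dcal(-,c[j])$ to the Milnor triangle, the term $\Dcal(\hocolim x_i,c[1])$ sits as a cokernel of a map between products of Homs of the $x_i$ into $c$, which is a $\varprojlim^1$-type term that does not vanish for free; showing $\hocolim x_i\in\Vcal$ is precisely the content of the homotopical smashing result you would need (Saor\'in--\v S\v tov\'i\v cek--Virili, Laking) rather than a consequence of the bare telescope argument. Your high-level instinct that pure-injectivity and closure of the coaisle under directed homotopy colimits are the two load-bearing facts is correct, but each of those is itself a theorem that has to be proved, not a formal manipulation.
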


A cosilting object $c\in\Dcal$ is \emph{cotilting} if
$\Prod(c)\subseteq \Hcal_c$ or, equivalently, if $\Prod(c)\subseteq{}^{\bot_{<0}}c$.
This is the case if and only if $\tau_c$ has derived type \cite[Cor.~5.2]{psar-vito-18}.

\subsubsection{Cosilting mutation}

The main topic of this section is the effect on the Gabriel spectrum of
\emph{cosilting mutation}, whose definition we are now going to recall. For more
background, we refer to \cite{ange-laki-stov-vito-22}.
Let $\Dcal$ be a compactly generated triangulated
category.

\begin{thm}[{\cite[Thm.~4.3]{ange-laki-stov-vito-22}}]
	Let $\tau=(\Ucal,\Vcal)$ be a nondegenerate, smashing $t$-structure with
	Grothendieck heart $\Hcal_\tau$ in $\Dcal$. For a torsion pair
	$t=(\Tcal,\Fcal)$ in $\Hcal_\tau$, the HRS-tilt $\tau_t$ is nondegenerate and
	smashing. Moreover, the following are equivalent:
	\begin{enumerate}
		\item $t$ is of finite type;
		\item $\tau_t$ has Grothendieck heart.
	\end{enumerate}
\end{thm}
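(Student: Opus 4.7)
My plan is to establish nondegeneracy and smashing of $\tau_t$ as formal consequences of the hypotheses, and then to prove the two implications of the equivalence separately; (1) $\Rightarrow$ (2) is where the main content lies.

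For nondegeneracy, since $\Tcal\subseteq\Hcal\subseteq\Ucal[-1]$ we have $\Tcal[n]\subseteq\Ucal$ for every $n\geq 1$, so $(\Ucal\ast\Tcal)[n]=\Ucal[n]\ast\Tcal[n]\subseteq\Ucal\ast\Ucal=\Ucal$ for such $n$. Hence
\[
\bigcap_{n\in\Zbb}(\Ucal\ast\Tcal)[n]\subseteq\bigcap_{n\geq 1}\Ucal[n]=0
\]
by nondegeneracy of $\tau$, with the dual argument handling the coaisle $\Fcal\ast\Vcal[-1]$. For smashing, I first observe that $\Hcal=\Ucal[-1]\cap\Vcal\hookrightarrow\Dcal$ preserves coproducts (aisles are always closed under coproducts, and $\Vcal$ is by the smashing hypothesis on $\tau$), so coproducts in $\Hcal$ agree with those in $\Dcal$. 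Next, $\Fcal$ is closed under coproducts in $\Hcal$: given $F_i\in\Fcal$, the torsion subobject $t\hookrightarrow\bigoplus F_i$ maps to zero via each projection $\pi_j\colon\bigoplus F_i\to F_j$ (torsion to torsion-free), and composing with the canonical monomorphism $\bigoplus F_i\hookrightarrow\prod F_i$ then forces $t=0$. The coproduct of the defining triangles $F_i\to X_i\to V_i[-1]$ is a triangle witnessing $\bigoplus X_i\in\Fcal\ast\Vcal[-1]$, completing the argument.

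For (2) $\Rightarrow$ (1), I would compute $h:=\hocolim^{\Dcal}F_i$ of a filtered system $\{F_i\}\subseteq\Fcal$ in two hearts at once. Using that $\Hcal$ is Grothendieck and $\tau$ is smashing, the telescope map $\bigoplus F_i\to\bigoplus F_i$ is a morphism in $\Hcal$ and a monomorphism there (by exactness of filtered colimits) with cokernel $\varinjlim^{\Hcal}F_i$; hence $h\in\Hcal$ and $h\simeq\varinjlim^{\Hcal}F_i$. The identical argument now applied to $\Hcal_t$ and the smashing $t$-structure $\tau_t$ (established above) forces $h\in\Hcal_t$. A short long-exact-sequence analysis of a triangle $F\to x\to T[-1]\to F[1]$ witnessing $x\in\Fcal\ast\Tcal[-1]$ for $x\in\Hcal$ forces $T=0$ and $x\simeq F\in\Fcal$, so $\Hcal\cap\Hcal_t=\Fcal$. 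Hence $\varinjlim^{\Hcal}F_i=h\in\Fcal$, proving $t$ is of finite type.

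The hard direction is (1) $\Rightarrow$ (2). My strategy is to leverage Theorem~\ref{thm:pure-inj-tstr} together with the cosilting mutation construction of \cite{ange-laki-stov-vito-22}: pick a pure-injective cosilting $c$ with $\tau_c=\tau$ (which exists by Theorem~\ref{thm:pure-inj-tstr}), and build a new object $c'$ via an appropriate approximation triangle associated to the torsion pair $t$; then show that $c'$ is pure-injective cosilting with $\tau_{c'}=\tau_t$, so that $\Hcal_t$ is Grothendieck by Theorem~\ref{thm:pure-inj-tstr} applied again. The main obstacle is precisely that the finite-type hypothesis is what guarantees good behaviour of this mutation process: the existence of the required approximations and the pure-injectivity of $c'$ both rely on it, and without it the candidate coaisle fails the closure properties needed for the cosilting characterisation. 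An alternative route, closer in spirit to Parra--Saor\'in, would directly construct a generator of $\Hcal_t$ from a generator of $\Hcal$ and verify exactness of filtered colimits in $\Hcal_t$, with the latter being essentially the finite-type condition on $\Fcal$ transported through the tilt.
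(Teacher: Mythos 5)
The statement under review is not proved in the paper: it is quoted verbatim from \cite[Thm.~4.3]{ange-laki-stov-vito-22}, so there is no internal proof to compare against. Your proposal must therefore be judged as a self-contained argument, and on those terms it has one correct part, one fixable part, and one genuine gap.

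The arguments for nondegeneracy and for the smashing property of $\tau_t$ are essentially correct. The inclusion $\Ucal\ast\Tcal\subseteq\Ucal[-1]$ (and dually $\Fcal\ast\Vcal[-1]\subseteq\Vcal$) combined with nondegeneracy of $\tau$ does give the nondegeneracy claim, and the observation that $\Fcal$ is closed under coproducts because $\bigoplus F_i\hookrightarrow\prod F_i$ (AB5 plus closure of $\Fcal$ under products and subobjects) correctly yields smashing via the coproduct of the defining triangles. Neither of these needs the finite-type hypothesis, which is the right structure.

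For $(2)\Rightarrow(1)$ the idea --- compute $\hocolim^\Dcal F_i$ in $\Hcal$ and in $\Hcal_t$ simultaneously, use $\Hcal\cap\Hcal_t=\Fcal$, and conclude $\varinjlim^\Hcal F_i\in\Fcal$ --- is sound. However, as written the argument only treats the sequential Milnor telescope, where monicity of $1-\sigma$ in an AB5 category is routine. For an arbitrary filtered system there is no single ``telescope map'' with the stated monicity, so you cannot conclude closure of $\Fcal$ under all directed colimits this way. You should either invoke the reduction to reduced products of injectives (the very reduction the paper records in Corollary~\ref{cor:finite-type-inj}), or use that nondegenerate smashing $t$-structures with Grothendieck heart are homotopically smashing and that $H^0_\tau$, $H^0_{\tau_t}$ both commute with directed homotopy colimits --- but the latter is exactly Laking's theorem, and your proof should make that dependency explicit rather than re-derive a fragment of it ad hoc. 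You also need to justify that the relevant map is monic \emph{in $\Hcal_t$}, not only in $\Hcal$; this is true but requires a word, since a map that is monic in $\Hcal$ need not be monic in a tilted heart.

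The real gap is $(1)\Rightarrow(2)$. You propose to pick a pure-injective cosilting $c$ with $\tau_c=\tau$ and ``build $c'$ via an appropriate approximation triangle associated to $t$,'' citing the cosilting mutation construction of \cite{ange-laki-stov-vito-22}. This does not constitute a proof for three reasons. First, the mutation construction recorded as Theorem~\ref{thm:cosilting-mutation} in this paper (and as \cite[Thm.~4.9]{ange-laki-stov-vito-22}) is specifically for \emph{hereditary} torsion pairs of finite type, encoded by a covering subclass $\Ecal\subseteq\Prod(c)$; the statement you must prove concerns arbitrary torsion pairs of finite type, for which there is no $\Ecal$-cover to build an exchange triangle from. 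Second, even restricted to the hereditary case the construction you cite is proved in \cite{ange-laki-stov-vito-22} \emph{after} their Theorem~4.3 and relies on it, so invoking it here is circular. Third, you never actually construct $c'$ or verify it is pure-injective cosilting with $\tau_{c'}=\tau_t$; you acknowledge the finite-type hypothesis is essential at this point, but that acknowledgement is not a proof of how it is used. The viable route here is the one you gesture at in your last sentence: work with an injective cogenerator of $\Hcal$ and show directly, using finite type of $\Fcal$, that the nondegenerate smashing $t$-structure $\tau_t$ has an AB5 heart with a generator --- the Parra--Saor\'in style argument --- or, equivalently, establish that the coaisle $\Fcal\ast\Vcal[-1]$ is a definable subcategory of $\Dcal$ and appeal to Theorem~\ref{thm:pure-inj-tstr}. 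As it stands, this direction is an outline of where a proof might live, not a proof.
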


In case $t$ is a \emph{hereditary} torsion pair of finite type, we give this
situation a name.

\begin{dfn}
	Let $\tau=(\Ucal,\Vcal)$ be a nondegenerate, smashing $t$-structure with
	Grothen\-dieck heart $\Hcal$. If $t=(\Tcal,\Fcal)$ is a hereditary torsion pair
	of finite type, the HRS-tilt $\tau_t$ is called the \emph{right mutation of
	$\tau$ at $t$}.
\end{dfn}

Observe that, using Theorem~\ref{thm:pure-inj-tstr}, right mutation of
$t$-structures can be
considered from the point of view of the pure-injective cosilting objects
associated to $\tau$ and $\tau_t$. If $c\in\Dcal$ is pure-injective cosilting and
$\tau=\tau_c$, as recalled we have an equivalence
$H_c^0\colon\Prod(c)\simeq\Inj(\Hcal_c)$. Therefore, the assignment $\Ecal\mapsto
t_\Ecal:=\left({}^\bot H_c^0(\Ecal),\Cogen(H_c^0(\Ecal))\right)$ gives a bijective correspondence between
subcategories $\Ecal\subseteq\Prod(c)$ closed under products and direct summands
(in $\Dcal$) and hereditary torsion pairs in $\Hcal_c$. We have the following
theorem.

\begin{thm}[\cite{ange-laki-stov-vito-22}]\label{thm:cosilting-mutation}
	The following are equivalent, for $\Ecal\subseteq\Prod(c)$ such that
	$\Prod(\Ecal)=\Ecal$:
	\begin{enumerate}[label=(\alph*)]
		\item $t_\Ecal$ is of finite type;
		\item $\Ecal$ is covering in $\Prod(c)$.
	\end{enumerate}
	In case (a) and (b) hold, we have the following.
	\begin{enumerate}
		\item The object $c':=e_0\oplus e_1$ defined by the distinguished triangle:
			\[\begin{tikzcd}[sep=1.5pc] e_1 \arrow{r} & e_0 \arrow{r}{\phi} & c
				\arrow{r} & e_1[1], \end{tikzcd}\]
				where $\phi$ is a $\Ecal$-cover, is pure-injective cosilting.
		\item The $t$-structure associated to $c'$ is the right mutation
			$\tau_{t_\Ecal}$ of $\tau$ at $t_\Ecal$.
		\item We have $\Prod(c)\cap\Prod(c')=\Ecal$.
		\item There is a bijection $\Theta_\Ecal\colon \Ind(\Prod(c))\to
			\Ind(\Prod(c'))$, given by:
			\[u \mapsto \begin{cases}
				u & \text{if }u\in\Ecal\\ u' &\text{otherwise, for }
				\begin{tikzcd}[sep=1.5pc] u'\arrow{r} & e \arrow{r}{\phi_u} & u
				\arrow{r} & u'[1]\end{tikzcd} \text{ with }\phi_u\text{ an
				$\Ecal$-cover}.
			\end{cases}\]
		These triangles are often referred to as \emph{exchange triangles}.
	\end{enumerate}
\end{thm}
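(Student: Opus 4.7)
The plan is to transport the problem to the Grothendieck heart $\Hcal_c$ via the equivalence $H_c^0 \colon \Prod(c) \simeq \Inj(\Hcal_c)$ together with the natural isomorphism $\Dcal(-,c) \simeq \Hcal_c(H_c^0(-), H_c^0(c))$. Under $H_c^0$, the class $\Ecal$ is identified with a product-and-summand closed class of injectives cogenerating the torsion-free class $\Fcal$ of $t_\Ecal$, so that arguments in $\Dcal$ involving $\Ecal$ translate into hereditary-torsion-theoretic arguments in $\Hcal_c$ where classical machinery applies.

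For the equivalence $(a) \Leftrightarrow (b)$, I would invoke Corollary~\ref{cor:finite-type-inj}: the pair $t_\Ecal$ is of finite type if and only if $\Fcal$ is closed under direct limits of diagrams valued in $\Fcal \cap \Inj(\Hcal_c)$. Via $H_c^0$ and the natural isomorphism above, this closure property can be rephrased in terms of morphisms $e \to c$ with $e \in \Ecal$. I would then translate this into the existence of a minimal $\Ecal$-cover of $c$ (and conversely, recover the finite-type condition from the cover by pulling back along reduced products), following the reduced-product pattern of the lemma preceding Corollary~\ref{cor:finite-type-inj}. This back-and-forth translation is the main technical obstacle of the proof.

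Assuming $(a)$ and $(b)$, I would fix an $\Ecal$-cover $\phi \colon e_0 \to c$, complete to a triangle $e_1 \to e_0 \to c \to e_1[1]$, and set $c' := e_0 \oplus e_1$. For (1), pure-injectivity reduces to that of $e_0 \in \Prod(c)$ together with pure-injectivity of $e_1$, which I would derive from the triangle and the minimality of $\phi$; the cosilting vanishing $\Dcal(c', c'[i]) = 0$ for $i > 0$ follows by applying $\Dcal(-, c'[i])$ and $\Dcal(c',-[i])$ to the exchange triangle and using $e_0 \in \Prod(c) \subseteq {}^{\bot_{>0}}c$ together with the cover property of $\phi$. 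For (2), I would compute directly that ${}^{\bot_{\leq 0}} c' = {}^{\bot_{\leq 0}} c \ast \Tcal$, matching $\tau_{c'}$ with the HRS-tilt $\tau_{t_\Ecal}$. For (3), any $x \in \Prod(c) \cap \Prod(c')$ satisfies $H_c^0(x) \in \Inj(\Hcal_c) \cap \Fcal$, and hence $x \in \Ecal$, the reverse inclusion being immediate from the construction of $c'$. Finally for (4), writing $\Hcal_{c'} = \Fcal \ast \Tcal[-1]$, the torsion-free indecomposable injectives of $\Hcal_c$ persist as indecomposable injectives of $\Hcal_{c'}$ (yielding the $\Ecal$-fixed points of $\Theta_\Ecal$), while each remaining indecomposable injective of $\Hcal_c$ pairs with a unique new indecomposable injective of $\Hcal_{c'}$ via its own exchange triangle; this produces the bijection $\Theta_\Ecal$ described by the stated formula.
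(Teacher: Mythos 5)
The paper does not actually prove this theorem: it imports it wholesale from \cite{ange-laki-stov-vito-22}, and the ``proof'' in the paper is only a citation map (the equivalence $(a)\Leftrightarrow(b)$ to Prop.~4.5 there, item (1) to Thm.~4.9, items (2--3) to Thm.~3.5, item (4) to Prop.~4.12). So there is no in-paper argument for you to match or diverge from, and the question is whether your sketch would actually reconstruct a proof.

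As it stands there is a genuine gap, concentrated in item (1). You assert that pure-injectivity of $e_1$ ``would be derived from the triangle and the minimality of $\phi$,'' but pure-injectivity is not a property that passes through triangles: having $e_0$ and $c$ pure-injective does not in general make the third vertex pure-injective. This is precisely the nontrivial content of \cite[Thm.~4.9]{ange-laki-stov-vito-22}, and it is not a consequence of minimality alone; one needs a genuine argument (in the source it goes via the restricted Yoneda embedding into $\Mod(\Dcal^\cpl)$ and careful analysis of the cotorsion-theoretic structure of $\Prod(c)$). Your reduction ``pure-injectivity of $c'$ reduces to $e_0$ and $e_1$'' is fine, but the step for $e_1$ is the crux of the theorem and cannot be waved away. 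Related issues: the cosilting vanishing $\Dcal(c',c'[i])=0$ for $i>0$ does involve the four $\Hom$-vanishings you name, but the components involving $e_1$ in both slots are not immediate and require the exchange triangle plus the covering property in a way you would need to spell out, and your claimed identity ${}^{\bot_{\leq 0}}c' = {}^{\bot_{\leq 0}}c \ast \Tcal$ in item (2) is the correct target but ``compute directly'' hides the bulk of the verification (both inclusions use the full strength of the cover and of finite type). Items (3) and (4) are closer to complete, although in (4) you should note that well-definedness and injectivity of $\Theta_\Ecal$ rest on the uniqueness (up to isomorphism) of minimal $\Ecal$-covers and on ruling out superdecomposable summands in $\Prod(c')$, which is where \cite[Prop.~4.12]{ange-laki-stov-vito-22} does real work. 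Finally, for $(a)\Leftrightarrow(b)$ your appeal to Corollary~\ref{cor:finite-type-inj} handles the $(b)\Rightarrow(a)$ direction in spirit, but the converse (finite type implies the existence of $\Ecal$-covers, not just precovers) is a separate Enochs-style argument that your ``pulling back along reduced products'' does not quite supply.
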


\begin{proof}
	The equivalence between (a) and (b) is a rephrasing of
	\cite[Prop.~4.5(1--3)]{ange-laki-stov-vito-22}; item (1) is
	\cite[Thm.~4.9]{ange-laki-stov-vito-22}; items (2--3) are
	\cite[Thm.~3.5]{ange-laki-stov-vito-22}; item (4) is
	\cite[Prop.~4.12]{ange-laki-stov-vito-22}.
\end{proof}

\begin{rmk}\label{rmk:left-mutation}
	Both for nondegenerate, smashing $t$-structures with Grothendieck heart
	$\tau$ and for pure-injective cosilting objects $c$ there is a dual notion of
	\emph{left mutation}. We will only work with right mutation, using that $\tau$
	is a left mutation of $\tau'$ if and only if $\tau'$ is a right mutation of
	$\tau$ (and similarly for $c,c'$) \cite[Cor.~3.7]{ange-laki-stov-vito-22}.
\end{rmk}

\subsection{Mutation and semi-noetherianity}\label{subsec:mutation-semi-noetherianity}

The first result we are going to prove is that mutation preserves
semi-noetherianity of the hearts.

Let $\tau=(\Ucal,\Vcal)$ be a nondegenerate, smashing $t$-structure with
Grothendieck heart $\Hcal$, and let $t=(\Tcal,\Fcal)$ be a hereditary torsion
pair of finite type in $\Hcal$. Denote by $\tau_t$ the right mutation of $\tau$
at $t$, and by $\Hcal_t:=\Fcal\ast\Tcal[-1]$ its Grothendieck heart.

We have already recalled that $(\Fcal,\Tcal[-1])$ is a torsion pair in $\Hcal_t$.
Since we are assuming that $t$ is hereditary, though, more is true.
Recall that a \emph{TTF-triple} in an
abelian category is a triple $(\Xcal,\Ycal,\Zcal)$ such that $(\Xcal,\Ycal)$ and
$(\Ycal,\Zcal)$ are torsion pairs.

\begin{lemma}[{\cite[Rmk.~2.15]{pavo-vito-21}}]
	There is a TTF-triple $(\Fcal,\Tcal[-1],\Fcal')$ in $\Hcal_t$.
\end{lemma}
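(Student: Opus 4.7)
Since the previous lemma provides the torsion pair $(\Fcal,\Tcal[-1])$ in $\Hcal_t$, it suffices to establish that $\Tcal[-1]$ is itself a torsion class in $\Hcal_t$; then taking $\Fcal':=\Tcal[-1]^{\bot}$ (right orthogonal in $\Hcal_t$) yields the second torsion pair $(\Tcal[-1],\Fcal')$ and completes the TTF-triple. As the torsion-free part of a torsion pair, $\Tcal[-1]$ is automatically closed under extensions and subobjects in $\Hcal_t$, so the remaining conditions to verify are closure under coproducts and closure under quotients in the Grothendieck category $\Hcal_t$.

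Closure under coproducts is immediate from the hypothesis that $\tau$ (and hence $\tau_t$) is smashing: coproducts in $\Hcal$ and $\Hcal_t$ agree with those in $\Dcal$, so a coproduct $\bigoplus_i t_i[-1] = (\bigoplus_i t_i)[-1]$ lies in $\Tcal[-1]$ because $\Tcal$ is closed under coproducts in $\Hcal$.

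Closure under quotients is the substantial step, and the only place where the hereditary hypothesis on $(\Tcal,\Fcal)$ enters. Let $0\to x\to t[-1]\to y\to 0$ be a short exact sequence in $\Hcal_t$ with $t\in\Tcal$; the goal is to show $y\in\Tcal[-1]$. The key preliminary observation is a cohomological dictionary: any object $h\in\Hcal_t$ with $(\Fcal,\Tcal[-1])$-torsion decomposition $0\to f_h\to h\to t_h[-1]\to 0$ has $\tau$-cohomology concentrated in two consecutive degrees, with values $f_h\in\Fcal$ and $t_h\in\Tcal$ respectively, as one sees by applying $H^{\ast}_\tau$ to the distinguished triangle $f_h\to h\to t_h[-1]\to f_h[1]$ (noting that $\Hom(\Tcal,\Fcal)=0$ forces the connecting map to vanish). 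Feeding the distinguished triangle $x\to t[-1]\to y\to x[1]$ underlying our short exact sequence into the long exact sequence for $H^{\ast}_\tau$, one obtains the exact sequence
\[0\to f_y\to t_x\to t\to t_y\to 0\]
in $\Hcal$, which exhibits $f_y$ as a subobject of $t_x\in\Tcal$. Since $(\Tcal,\Fcal)$ is hereditary, $\Tcal$ is closed under subobjects in $\Hcal$, whence $f_y\in \Tcal\cap\Fcal = 0$, and therefore $y\in\Tcal[-1]$.

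The principal obstacle is setting up the cohomological dictionary between an $\Hcal_t$-object and its $(\Fcal,\Tcal[-1])$-torsion decomposition; once in place, the argument reduces to a diagram chase in the long exact sequence, with the hereditary assumption invoked at the very last step (and indeed, dropping the hereditary hypothesis allows explicit counterexamples, such as HRS-tilting at the non-hereditary torsion pair on representations of $A_2$).
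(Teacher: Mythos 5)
Your proof is correct, and it sensibly reconstructs a result that the paper merely cites from \cite[Rmk.~2.15]{pavo-vito-21} without giving an argument. The strategy — show $\Tcal[-1]$ is a torsion class by verifying closure under coproducts (via smashing) and closure under quotients (via the cohomological dictionary $H^0_\tau(h)=f_h$, $H^1_\tau(h)=t_h$ for $h\in\Hcal_t$ and the long exact sequence of $\tau$-cohomology) — is exactly the standard route and is carried out correctly: the extracted exact sequence $0\to f_y\to t_x\to t\to t_y\to 0$ exhibits $f_y$ as a subobject of $t_x\in\Tcal$, and the hereditary hypothesis then forces $f_y\in\Tcal\cap\Fcal=0$.

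Two small remarks. First, the parenthetical justification for the cohomological dictionary is slightly off: the connecting map in the cohomology long exact sequence of $f_h\to h\to t_h[-1]\to f_h[1]$ that one would worry about is $H^0_\tau(t_h[-1])\to H^1_\tau(f_h)$, but both of these groups already vanish ($H^0_\tau(t_h[-1])=H^{-1}_\tau(t_h)=0$ and $H^1_\tau(f_h)=0$ since $f_h,t_h\in\Hcal$), so no $\Hom(\Tcal,\Fcal)=0$ input is needed there; what that vanishing actually establishes is that $(\Fcal,\Tcal[-1])$ is the torsion pair to begin with. Second, for the coproducts step you implicitly use that $\Fcal$ is closed under coproducts (so that $\tau_t$ is smashing); this does hold here because $t$ is assumed of finite type, but it is worth saying, since the statement of the lemma you invoke for the torsion pair $(\Fcal,\Tcal[-1])$ does not record this. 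Neither of these affects the validity of the argument.
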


The \emph{core} of a TTF-triple $(\Xcal,\Ycal,\Zcal)$ is the full subcategory
$\Xcal\cap\Zcal$. The core of a TTF-triple in a Grothendieck category is closed under extensions, images of morphisms and coproducts. For the TTF-triple above we have the
following statement.

\begin{lemma}\label{lemma:core=giraud}
	The core $\Fcal\cap\Fcal'$ of $(\Fcal,\Tcal[-1],\Fcal')$ in $\Hcal_t$ coincides
	with the Giraud subcategory $\Ccal:=\Tcal^{\bot_{0,1}}\subseteq\Hcal$ of $t$.
\end{lemma}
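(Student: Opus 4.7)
My plan is to prove the equality by double inclusion, translating both sides into orthogonality conditions in the original heart $\Hcal$. The linchpin will be the identification, valid for any $x \in \Hcal$,
\[
\Hcal_t(\Tcal[-1], x) = \Dcal(\Tcal[-1], x) = \Dcal(\Tcal, x[1]) \simeq \Ext^1_\Hcal(\Tcal, x),
\]
where the first equality uses that $\Hcal_t$ is a full subcategory of $\Dcal$, and the last isomorphism is the natural transformation $\theta_1$ recalled earlier (which is always an iso, regardless of the derived-type condition). This will turn the torsion-theoretic data defining $\Fcal'$ inside $\Hcal_t$ into the $\Ext$-vanishing condition defining $\Ccal$ inside $\Hcal$.

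To establish $\Fcal \cap \Fcal' \subseteq \Ccal$, I would first unwind the definition of the TTF-triple $(\Fcal, \Tcal[-1], \Fcal')$, which yields $\Fcal' = (\Tcal[-1])^\bot$ computed in $\Hcal_t$. Then $x \in \Fcal \cap \Fcal'$ amounts to saying $x \in \Fcal$ (hence $x \in \Hcal$ and $\Hcal(\Tcal, x) = 0$) together with $\Hcal_t(\Tcal[-1], x) = 0$; by the identification above, the latter reads $\Ext^1_\Hcal(\Tcal, x) = 0$, and combining the two vanishings gives $x \in \Tcal^{\bot_{0,1}} = \Ccal$. For the reverse inclusion, given $x \in \Ccal \subseteq \Hcal$, I would observe that $\Hcal(\Tcal, x) = 0$ places $x$ in $\Fcal$, and in particular in $\Hcal_t$ since $\Hcal_t = \Fcal \ast \Tcal[-1]$; the vanishing $\Ext^1_\Hcal(\Tcal, x) = 0$ then translates, via the same identification, to $\Hcal_t(\Tcal[-1], x) = 0$, placing $x$ in $\Fcal'$.

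I do not anticipate a serious obstacle in this plan. The only thing to watch is that the $\theta_1$ iso is being applied to objects genuinely in $\Hcal$, which is the case throughout, since both arguments $\Tcal \subseteq \Hcal$ and $x \in \Fcal \subseteq \Hcal$ are in the original heart. Once the natural identification $\Hcal_t(\Tcal[-1], -) \simeq \Ext^1_\Hcal(\Tcal, -)$ on $\Hcal$ is in hand, everything else is essentially formal bookkeeping about the defining orthogonality conditions; note that hereditariness of $t$ is used only implicitly, as it underwrites the existence of the TTF-triple in the first place.
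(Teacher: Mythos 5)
Your proposal is correct and uses exactly the same key identification as the paper, namely the chain $\Hcal_t(\Tcal[-1],x)\simeq\Dcal(\Tcal,x[1])\simeq\Ext^1_\Hcal(\Tcal,x)$ via $\theta_1$; the paper simply states the resulting equality of subcategories in one line rather than splitting it into two inclusions.
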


\begin{proof}
	We have: $\Fcal\cap\Fcal'=\{\,x\in\Fcal\mid
	0=\Hcal_t(\Tcal[-1],x)\simeq\Dcal(\Tcal,x[1])\simeq
	\Ext^1_\Hcal(\Tcal,x)\,\}$.
\end{proof}

Now, we can apply a well-known fact for TTF-triples.

\begin{prop}[{\cite[Prop.~I.1.3]{beli-reit-07}}]
	Let $(\Xcal,\Ycal,\Zcal)$ be a TTF-triple in an abelian category $\Acal$, and
	let $\Lsf\colon \Acal\to \Acal/\Ycal$ be the localisation functor. Then $\Lsf$
	induces an equivalence $\Lsf\colon \Xcal\cap\Zcal\simeq \Acal/\Ycal$.
\end{prop}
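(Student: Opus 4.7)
The plan is to verify directly that the restriction of the quotient functor $\Lsf$ to the core $\Xcal\cap\Zcal$ is both fully faithful and essentially surjective. Both checks reduce to combining the two torsion sequences available from the TTF-triple with the explicit description of Hom-sets in the Serre quotient $\Acal/\Ycal$.

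For full faithfulness, recall that morphisms $\Lsf a\to \Lsf b$ in $\Acal/\Ycal$ are computed as a filtered colimit of $\Acal(a',b/b')$ ranging over pairs $a'\hookrightarrow a$ with $a/a'\in\Ycal$ and $b'\hookrightarrow b$ with $b'\in\Ycal$. Given $a,b\in\Xcal\cap\Zcal$, the vanishing $\Acal(\Xcal,\Ycal)=0$ means the projection $a\twoheadrightarrow a/a'$ is zero whenever $a/a'\in\Ycal$, forcing $a'=a$; dually, $\Acal(\Ycal,\Zcal)=0$ means the inclusion $b'\hookrightarrow b$ is zero whenever $b'\in\Ycal$, forcing $b'=0$. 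Hence the colimit collapses to $\Acal(a,b)$, and $\Lsf$ restricts to a fully faithful functor on $\Xcal\cap\Zcal$.

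For essential surjectivity, I would start from an arbitrary $a\in\Acal$ and first apply the torsion sequence for $(\Xcal,\Ycal)$, obtaining $0\to x_a\to a\to y_a\to 0$ with $x_a\in\Xcal$ and $y_a\in\Ycal$. Since $\Lsf y_a=0$, this gives $\Lsf x_a\simeq \Lsf a$, so every object of $\Acal/\Ycal$ is represented by an object of $\Xcal$. Next, I would apply the torsion sequence for $(\Ycal,\Zcal)$ to $x_a$, producing $0\to y_{x_a}\to x_a\to z_{x_a}\to 0$ with $z_{x_a}\in\Zcal$, and again $\Lsf z_{x_a}\simeq \Lsf x_a$. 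Because $\Xcal$ is a torsion class it is closed under quotients, so $z_{x_a}\in\Xcal$ as well; hence $z_{x_a}\in\Xcal\cap\Zcal$ and maps to the given object under $\Lsf$.

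There is no real obstacle here: the argument is a bookkeeping exercise with the defining orthogonality relations of the two torsion pairs. The only points requiring attention are (i) that the quotients and subobjects appearing in the colimit description of $\Acal/\Ycal$ must both lie in $\Ycal$, which is exactly what allows the two vanishing conditions above to apply, and (ii) the standard fact that a torsion class is closed under quotients, which is what keeps the essential-surjectivity construction within $\Xcal\cap\Zcal$.
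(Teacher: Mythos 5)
Your proof is correct. Note that the paper does not prove this proposition; it simply cites \cite[Prop.~I.1.3]{beli-reit-07}, so there is no in-paper argument to compare against. Your two-part verification is the standard direct route: for full faithfulness you unwind the filtered-colimit formula for Hom-sets in the Serre quotient and observe that the orthogonality relations $\Acal(\Xcal,\Ycal)=0$ and $\Acal(\Ycal,\Zcal)=0$ collapse the indexing poset to the single pair $(a,0)$ (one can say this even more cleanly by noting $a/a'\in\Xcal\cap\Ycal=0$ since $\Xcal$ is closed under quotients, and $b'\in\Ycal\cap\Zcal=0$ since $\Zcal$ is closed under subobjects); for essential surjectivity you apply the two torsion sequences in succession and use that $\Xcal$ is closed under quotients to land in the core. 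Both steps are sound and complete, and the observations you flag at the end — that the auxiliary objects in the colimit must lie in $\Ycal$, and that torsion classes are closed under quotients — are indeed the only places where care is needed.
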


\begin{cor}[{cf.\ \cite[\S6.1]{ange-laki-stov-vito-22}}]\label{cor:mutation-semi-noetherianity}
	There is a diagram, where rows are localisation sequences:
	\[\begin{tikzcd}[row sep=1pc,column sep=1.5pc]
		\Tcal[-1] \arrow[hook]{r} \arrow[-]{d}{\simeq} & \Hcal_t \arrow{r} &
			\Hcal_t/\Tcal[-1] \arrow[-]{d}{\simeq} \\
		\Tcal \arrow[hook]{r} & \Hcal \arrow{r} & \Hcal/\Tcal.
	\end{tikzcd}\]
	In particular, $\Hcal$ is semi-noetherian if and only if $\Hcal_t$ is.
\end{cor}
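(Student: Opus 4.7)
The plan is to assemble the diagram by constructing both localisation sequences separately and then producing the two outer vertical equivalences; once these are in place, the semi-noetherianity statement will follow immediately from Proposition~\ref{prop:semi-noetherian-serre}.

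For the bottom row there is nothing to prove: $\Tcal$ is localising in $\Hcal$ because $(\Tcal,\Fcal)$ is a hereditary torsion pair, so $\Tcal\hookrightarrow\Hcal\to\Hcal/\Tcal$ is the standard localisation sequence. For the top row, the first thing to check is that $\Tcal[-1]$ is localising in $\Hcal_t$. This follows directly from the TTF-triple $(\Fcal,\Tcal[-1],\Fcal')$ in $\Hcal_t$: as the torsion class of the torsion pair $(\Tcal[-1],\Fcal')$, the subcategory $\Tcal[-1]$ is closed under quotients, extensions and coproducts, while as the torsion-free class of $(\Fcal,\Tcal[-1])$ it is closed under subobjects. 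Hence $\Tcal[-1]\hookrightarrow \Hcal_t\to \Hcal_t/\Tcal[-1]$ is a localisation sequence.

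The left vertical equivalence $\Tcal\simeq\Tcal[-1]$ is just the restriction of the triangle self-equivalence $[-1]\colon\Dcal\to\Dcal$. For the right vertical equivalence, the proposition of Beligiannis--Reiten quoted above, applied to the TTF-triple in $\Hcal_t$, says that the localisation functor induces an equivalence $\Hcal_t/\Tcal[-1]\simeq \Fcal\cap \Fcal'$. By Lemma~\ref{lemma:core=giraud}, this core is exactly the Giraud subcategory $\Ccal=\Tcal^{\bot_{0,1}}\subseteq\Hcal$ of the torsion pair $t$, and by the general theory of localisation recalled in \S\ref{sub:localising}, $\Ccal\simeq \Hcal/\Tcal$. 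Composing yields the desired equivalence $\Hcal_t/\Tcal[-1]\simeq \Hcal/\Tcal$.

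For the ``in particular'' statement, apply Proposition~\ref{prop:semi-noetherian-serre} on both rows: $\Hcal$ is semi-noetherian if and only if both $\Tcal$ and $\Hcal/\Tcal$ are, and likewise $\Hcal_t$ is semi-noetherian if and only if both $\Tcal[-1]$ and $\Hcal_t/\Tcal[-1]$ are. Since the two pairs of conditions coincide under the equivalences just produced, the equivalence of semi-noetherianity for $\Hcal$ and $\Hcal_t$ follows. The only step requiring any real work is the verification that $\Tcal[-1]$ is localising and the identification of the quotient $\Hcal_t/\Tcal[-1]$ with $\Hcal/\Tcal$ through the core of the TTF-triple; both are straightforward once Lemma~\ref{lemma:core=giraud} is available, so I do not foresee a significant obstacle.
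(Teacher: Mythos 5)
Your proof is correct and follows essentially the same route as the paper's: shift for the left vertical equivalence, the Beligiannis--Reiten proposition together with Lemma~\ref{lemma:core=giraud} to identify $\Hcal_t/\Tcal[-1]$ with the core and hence with $\Hcal/\Tcal$, and Proposition~\ref{prop:semi-noetherian-serre} for the semi-noetherianity conclusion. You add a welcome explicit check that $\Tcal[-1]$ is localising in $\Hcal_t$ via the closure properties coming from the TTF-triple, which the paper leaves implicit.
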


\begin{proof}
	The shift functor gives an equivalence $\Tcal[-1]\simeq\Tcal$.
	For the right-hand side, we already know that the localisation functor $\Hcal\to\Hcal/\Tcal$ induces an
	equivalence between the Giraud subcategory $\Ccal$ of $t$ and $\Hcal/\Tcal$.
	By the results above, $\Ccal$, being the core of $(\Fcal,\Tcal[-1],\Fcal')$,
	is also equivalent to $\Hcal_t/\Tcal[-1]$. The statement about
	semi-noetherianity then follows from Proposition~\ref{prop:semi-noetherian-serre}.
\end{proof}

From the diagram in this corollary, one could use
Proposition~\ref{prop:semi-noetherian-serre} also to derive a bound for the
Gabriel dimension of $\Hcal_t$: \[\Gdim(\Hcal_t)\leq
\Gdim(\Tcal)\oplus\Gdim(\Hcal/\Tcal)\leq \Gdim(\Hcal)\oplus\Gdim(\Hcal).\] We
can improve it slightly, thanks to the following observation.

\begin{prop}\label{prop:ttf-simple}
	Let $\Acal$ be an abelian category, with a TTF-triple $(\Xcal,\Ycal,\Zcal)$.
	Denote by $\Kcal:=\Xcal\cap\Zcal$ its core. Observe that both $\Ycal$ and
	$\Kcal\simeq\Acal/\Ycal$ are abelian categories. Then:
	\begin{enumerate}
		\item the simple objects of $\Acal$ lie either in $\Ycal$ or in $\Kcal$;
		\item an object in $\Ycal$ is simple in $\Acal$ if and only if it is simple
			in $\Ycal$;
		\item an object in $\Kcal$ is simple in $\Acal$ if and only if it is simple
			in $\Kcal$;
	\end{enumerate}
\end{prop}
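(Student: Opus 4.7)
My plan is to prove the three items in order, exploiting in each case the closure properties of the torsion classes and the fact that, for any torsion pair $(\Mcal,\Ncal)$, the intersection $\Mcal\cap\Ncal$ is zero (since $\Acal(\Mcal,\Ncal)=0$ forces identities to vanish).

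For (1), take $s$ simple in $\Acal$ and consider its torsion sequence $0\to x\to s\to y\to 0$ with respect to $(\Xcal,\Ycal)$: by simplicity, either $x=0$ (so $s\in\Ycal$) or $x=s$ (so $s\in\Xcal$). Applying the same argument to the torsion pair $(\Ycal,\Zcal)$ gives $s\in\Ycal$ or $s\in\Zcal$. Combining the two dichotomies, if $s\notin\Ycal$ then $s\in\Xcal\cap\Zcal=\Kcal$.

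For (2), the key remark is that $\Ycal$, being simultaneously torsion and torsion-free, is closed under both subobjects and quotients in $\Acal$. Hence for $s\in\Ycal$ the notion of subobject in $\Ycal$ agrees with that in $\Acal$, and both directions of the equivalence are immediate.

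The main obstacle is (3), because subobjects in $\Kcal$ (computed via the equivalence $\Lsf\colon\Kcal\simeq\Acal/\Ycal$) need not a priori be subobjects in $\Acal$. I will bridge the two as follows. For $(\Rightarrow)$, let $s\in\Kcal$ be simple in $\Acal$ and let $f\colon k\to s$ be a monomorphism in $\Kcal$; via the equivalence, this says the kernel of $f$ computed in $\Acal$ lies in $\Ycal$. But $\ker f$ is a subobject of $k\in\Kcal\subseteq\Zcal$, and $\Zcal$ is closed under subobjects, so $\ker f\in\Ycal\cap\Zcal=0$. Therefore $f$ is already monic in $\Acal$, and simplicity of $s$ in $\Acal$ forces $k=0$ or $f$ an isomorphism. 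For $(\Leftarrow)$, let $s\in\Kcal$ be simple in $\Kcal$ (equivalently, $\Lsf(s)$ is simple in $\Acal/\Ycal$) and take $0\neq s'\hookrightarrow s$ in $\Acal$. Since $s\in\Zcal$ and $\Zcal$ is closed under subobjects, $s'\in\Zcal$; in particular $s'\notin\Ycal$, because $\Ycal\cap\Zcal=0$ and $s'\neq 0$. Then $\Lsf(s')$ is a nonzero subobject of the simple object $\Lsf(s)$, hence $\Lsf(s/s')=0$, i.e.\ $s/s'\in\Ycal$. But $s\in\Xcal$ and $\Xcal$ is closed under quotients, so $s/s'\in\Xcal\cap\Ycal=0$, giving $s'=s$. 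This establishes simplicity of $s$ in $\Acal$ and completes the proof.
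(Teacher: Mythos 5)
Your proof is correct and follows essentially the same route as the paper's: item (1) via the dichotomy for simple objects with respect to the two torsion pairs, item (2) via closure of $\Ycal$ under subobjects and quotients, and item (3) via the observation that kernels computed in $\Acal$ of $\Kcal$-morphisms land in $\Ycal\cap\Zcal=0$, combined with exactness of $\Lsf$ and closure of $\Zcal$ under subobjects and $\Xcal$ under quotients. The only presentational difference is that the paper isolates the fact that the inclusion $\Kcal\hookrightarrow\Acal$ preserves monomorphisms and epimorphisms as a standalone lemma before applying it, whereas you run the same kernel argument inline for the specific morphism at hand; the substance is identical.
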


\begin{proof}
	(1) A simple object is either torsion or torsion-free, with respect to each
	torsion pair of $\Acal$. Therefore, if it does not belong to $\Ycal$, it
	belongs to $\Xcal\cap\Zcal=\Kcal$.

	(2) Follows easily from the fact that $\Ycal\subseteq\Acal$ is closed under subobjects and
	quotients.

	(3) We first prove that a morphism $f\colon c_1\to c_2$ in $\Kcal$ is monic
	in $\Kcal$ if and only if it is monic in $\Acal$. Let $\ker f$ be the kernel of
	$f$ in $\Acal$; observe that since it is a subobject of
	$c_1\in\Kcal\subseteq\Zcal$, it must also lie in $\Zcal$. Let $\Lsf\colon\Acal\to
	\Acal/\Ycal$ be the localisation functor. We have seen that $\Lsf$ induces an
	equivalence $\Lsf\colon \Kcal\simeq\Acal/\Ycal$; therefore, $f$ is monic in
	$\Kcal$ if and only if $\Lsf(f)$ is monic in $\Acal/\Ycal$. This is in turn the
	case if and only if $\ker f$ lies in $\Ycal(\cap\Zcal)$, that is, if and only
	if $\ker f=0$, which shows the claim. A similar argument applies to epimorphisms.

	We have shown that the inclusion $\Kcal\subseteq\Acal$ is a fully faithful
	functor which preserves monomorphisms (and epimorphisms). It follows that if
	an object $c\in\Kcal$ is not simple in $\Kcal$, that is, if it has nonzero
	proper subobjects in $\Kcal$, it is not simple in $\Acal$ either.

	For the converse, let $c\in\Kcal$ be simple in $\Kcal$. Consider a short exact
	sequence in $\Acal$
	\[0\to x\to c\to y\to 0.\]
	As before, since
	$x$ is a subobject of $c\in\Kcal\subseteq\Zcal$, we must have $x\in\Zcal$;
	similarly, $y\in\Xcal$. Since $\Lsf(c)$ is
	simple in $\Acal/\Ycal$ by the equivalence $\Lsf\colon\Kcal\simeq\Acal/\Ycal$, we
	have that either $\Lsf(x)=0$ or $\Lsf(y)=0$, that is, either $x\in\Ycal(\cap\Zcal)$ or
	$y\in\Ycal(\cap\Xcal)$. This means that either $x=0$ or $y=0$, which shows
	that $c$ is simple in $\Acal$.
\end{proof}

\begin{cor}
	Let $\Acal$ be a semi-noetherian Grothendieck category, with a TTF-triple
	$(\Xcal,\Ycal,\Zcal)$, with core $\Kcal=\Xcal\cap\Zcal$. Then, if $\Ycal\neq
	0$, we have:
	\[\Gdim(\Ycal)\leq \Gdim(\Acal)\leq \Gdim(\Ycal)\oplus\Gdim(\Kcal/\Kcal_0).\]
	In particular, if $\Kcal$ is semi-artinian, we have
	\[\Gdim(\Acal)=\Gdim(\Ycal).\]
\end{cor}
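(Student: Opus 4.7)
The lower bound $\Gdim(\Ycal)\le\Gdim(\Acal)$ is an immediate instance of Proposition~\ref{prop:semi-noetherian-serre} applied to the localising subcategory $\Ycal\subseteq\Acal$; the content is in the upper bound. The key idea is that by Proposition~\ref{prop:ttf-simple}, the simple objects of $\Kcal$ are already simple in $\Acal$, so the semi-artinian part $\Kcal_0$ of the core can be absorbed into the initial layer $\Acal_0$ of the Gabriel filtration of $\Acal$ ``for free''.

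Concretely, I would set $\Lcal_0:=\Loc_\Acal(\text{simples of }\Kcal)$ and $\Ncal:=\Ycal\vee \Lcal_0$. The quotient functor $\Lsf\colon\Acal\to\Acal/\Ycal\simeq\Kcal$ sends simples of $\Kcal$ to themselves, so $\Lsf(\Lcal_0)=\Kcal_0$ and hence $\Lsf(\Ncal)=\Kcal_0$, whence the third isomorphism theorem for Serre quotients yields $\Acal/\Ncal\simeq\Kcal/\Kcal_0$. Proposition~\ref{prop:semi-noetherian-serre} applied to $\Ncal\subseteq\Acal$ then bounds $\Gdim(\Acal)\le\Gdim(\Ncal)\oplus\Gdim(\Kcal/\Kcal_0)$, and the whole proof reduces to showing $\Gdim(\Ncal)=\Gdim(\Ycal)$.

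To establish this last equality, I would first show $\Acal_0=\Ycal_0\vee \Lcal_0$: the inclusion $\supseteq$ is clear since both summands are generated by simples of $\Acal$, while the reverse uses Proposition~\ref{prop:ttf-simple}(1), which guarantees that every simple of $\Acal$ lies either in $\Ycal$ (as a simple of $\Ycal$ by part (2), hence in $\Ycal_0$) or in $\Kcal$ (as a simple of $\Kcal$ by part (3), hence in $\Lcal_0$). Two consequences: first, $\Ncal_0=\Acal_0$, since $\Acal_0\subseteq\Ncal$ and the simples of $\Ncal$ coincide with those of $\Acal$; second, $\Ycal\cap\Acal_0=\Ycal_0$, since the composition factors in $\Acal$ of an object of $\Ycal$ must lie in $\Ycal$, and simples of $\Acal$ inside $\Ycal$ are simples of $\Ycal$. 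The standard second isomorphism theorem for localising subcategories then gives $\Ncal/\Ncal_0=(\Ycal\vee\Acal_0)/\Acal_0\simeq \Ycal/(\Ycal\cap\Acal_0)=\Ycal/\Ycal_0$. Finally, since $\Gdim(\Gcal)=0\oplus\Gdim(\Gcal/\Gcal_0)$ for any nonzero semi-noetherian Grothendieck category $\Gcal$ (a direct consequence of Proposition~\ref{prop:semi-noetherian-serre}), I conclude $\Gdim(\Ncal)=0\oplus\Gdim(\Ycal/\Ycal_0)=\Gdim(\Ycal)$.

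The main obstacle I expect is verifying the second-isomorphism-theorem identification $\Ncal/\Ncal_0\simeq\Ycal/\Ycal_0$: while standard in the theory of Serre quotients, it requires the carefully established equalities $\Ycal\cap\Acal_0=\Ycal_0$ and $\Ycal\vee\Acal_0=\Ncal$, together with the fact that the exact functor $\Ycal\hookrightarrow\Acal\to\Acal/\Acal_0$ induces a fully faithful embedding of $\Ycal/\Ycal_0$ into $\Acal/\Acal_0$. The ``in particular'' clause is then immediate: if $\Kcal$ is semi-artinian, $\Kcal/\Kcal_0=0$ and $\Gdim(\Kcal/\Kcal_0)=-1$, so by the convention $\alpha\oplus(-1)=\alpha$ the upper bound collapses to $\Gdim(\Ycal)$, matching the lower bound.
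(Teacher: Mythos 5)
Your overall strategy mirrors the paper's --- absorb the semi-artinian part of the core into a localising subcategory of Gabriel dimension $\Gdim(\Ycal)$ and then apply Proposition~\ref{prop:semi-noetherian-serre} --- but the final step has a genuine gap. You justify $\Gdim(\Gcal)=0\oplus\Gdim(\Gcal/\Gcal_0)$ as ``a direct consequence of Proposition~\ref{prop:semi-noetherian-serre},'' but that proposition gives only the inequality $\Gdim(\Gcal)\leq 0\oplus\Gdim(\Gcal/\Gcal_0)$, so the resulting chain $\Gdim(\Ncal)\leq 0\oplus\Gdim(\Ncal/\Ncal_0)=0\oplus\Gdim(\Ycal/\Ycal_0)\geq\Gdim(\Ycal)$ is inconclusive. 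Moreover, under the paper's $\oplus$ convention the claimed equality is false in general: if $\Gdim(\Gcal)=\omega$, then the index shift between the Gabriel filtrations of $\Gcal$ and $\Gcal/\Gcal_0$ collapses at the limit stage, so $\Gdim(\Gcal/\Gcal_0)=\omega$ as well, while $0\oplus\omega=\omega+1\neq\omega$. What you actually need --- that $\Gdim(\Ncal)=\Gdim(\Ycal)$ --- is true, but this route does not establish it.

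The paper obtains the bound by a more elementary device that also repairs your argument: a straightforward induction along the Gabriel filtration shows $\Ycal_\alpha\subseteq\Acal_\alpha$ for every ordinal $\alpha$, hence $\Ycal\subseteq\Acal_\gamma$ where $\gamma:=\Gdim(\Ycal)$. Since $\Acal_0\subseteq\Acal_\gamma$, this gives $\Ncal=\Ycal\vee\Acal_0\subseteq\Acal_\gamma$, so $\Gdim(\Ncal)\leq\Gdim(\Acal_\gamma)\leq\gamma$, and the reverse inequality comes from $\Ycal\subseteq\Ncal$. This also lets you avoid the Noether isomorphism $(\Ycal\vee\Acal_0)/\Acal_0\simeq\Ycal/(\Ycal\cap\Acal_0)$ for Serre quotients, which you correctly flag as needing care: the paper never invokes it, instead realising $\Acal/\Acal_\gamma$ merely as a Gabriel \emph{quotient} of $\Kcal/\Kcal_0$, which suffices by the monotonicity of Gabriel dimension under quotients from Proposition~\ref{prop:semi-noetherian-serre}. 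Your exact identification $\Acal/\Ncal\simeq\Kcal/\Kcal_0$ is a pleasant sharpening of that structural picture, and the supporting verifications ($\Acal_0=\Ycal_0\vee\Lcal_0$, $\Ncal_0=\Acal_0$, $\Ycal\cap\Acal_0=\Ycal_0$, all via Proposition~\ref{prop:ttf-simple}) are correct; it is only the last numerical deduction that needs replacing.
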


\begin{proof}
	Denote by
	$0=\Acal_{-1}\subseteq\Acal_0\subseteq\cdots\subseteq\Acal_\delta=\Acal$ the
	Gabriel filtration of $\Acal$. Write $\gamma$ for the Gabriel dimension $\Gdim(\Ycal)\geq 0$, and
	similarly denote by
	$0=\Ycal_{-1}\subseteq\Ycal_0\subseteq\cdots\subseteq\Ycal_\gamma=\Ycal$ the
	Gabriel filtration of $\Ycal$. One
	can easily show by induction that $\Ycal_\alpha\subseteq\Acal_\alpha$, for
	every ordinal $\alpha$; in particular, we have
	$\Ycal=\Ycal_\gamma\subseteq\Acal_\gamma$. Denote by $\Lsf\colon\Acal\to
	\Acal/\Ycal$ the localisation functor: we then have that $\Acal/\Acal_\gamma$
	is equivalent to the quotient of $\Acal/\Ycal\simeq\Kcal$ over
	$\Lsf(\Acal_\gamma)$. Moreover, since $\Acal_0\subseteq\Acal_\gamma$ already
	contains the simple objects of $\Kcal$ by the Proposition, then
	$(\Acal/\Ycal)_0=\Lsf(\Kcal_0)\subseteq \Lsf(\Acal_\gamma)$; hence $\Acal/\Acal_\gamma$ is
	actually equivalent to a quotient of $(\Acal/\Ycal)/(\Acal/\Ycal)_0\simeq
	\Kcal/\Kcal_0$. Now, we have:
	\[\Gdim(\Acal)=\gamma\oplus \Gdim(\Acal/\Acal_\gamma)\leq \gamma\oplus
	\Gdim(\Kcal/\Kcal_0)=\Gdim(\Ycal)\oplus \Gdim(\Kcal/\Kcal_0).\qedhere\]
\end{proof}

Applying this result to right mutation, we obtain the following estimate.

\begin{prop}\label{prop:gdim-inequality}
	Let $\tau$ be a nondegenerate, smashing $t$-structure with Grothendieck heart
	$\Hcal$, let $t=(\Tcal,\Fcal)\neq (0,\Hcal)$ be a (nontrivial) hereditary torsion pair of finite type,
	and let $\tau_t$ be the right mutation of $\tau$ at $t$, with heart
	$\Hcal_t=\Fcal\ast\Tcal[-1]$. Then we have:
	\[\Gdim(\Tcal)\leq \Gdim(\Hcal_t)\leq
	\Gdim(\Tcal)\oplus\Gdim((\Hcal/\Tcal)/(\Hcal/\Tcal)_0).\]
	In particular, if $\Hcal/\Tcal$ is semi-artinian, we have:
	\[\Gdim(\Hcal_t)=\Gdim(\Tcal)\leq \Gdim(\Hcal).\]
\end{prop}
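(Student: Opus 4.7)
The plan is to apply the corollary preceding this proposition (the one derived from Proposition~\ref{prop:ttf-simple}) to the TTF-triple $(\Fcal,\Tcal[-1],\Fcal')$ in $\Hcal_t$. Two preparatory observations make this straightforward. By Corollary~\ref{cor:mutation-semi-noetherianity}, $\Hcal$ is semi-noetherian if and only if $\Hcal_t$ is, so both sides of the claimed inequality are defined under the same hypothesis and I may assume $\Hcal_t$ is semi-noetherian (otherwise the statement is vacuous). Moreover, the nontriviality of $t$ forces $\Tcal\neq 0$, hence the middle term $\Tcal[-1]$ of the TTF-triple is nonzero, which is precisely the hypothesis needed by that corollary.

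Next, I would match up the three pieces of data appearing in the bound. The shift $[-1]\colon \Tcal\simeq \Tcal[-1]$ is an equivalence of Grothendieck categories, so $\Gdim(\Tcal[-1]) = \Gdim(\Tcal)$. For the core $\Kcal = \Fcal\cap\Fcal'$, Lemma~\ref{lemma:core=giraud} identifies it with the Giraud subcategory $\Ccal = \Tcal^{\bot_{0,1}}\subseteq \Hcal$; the localisation-Giraud equivalence $\Ccal\simeq \Hcal/\Tcal$ then yields the identification $\Kcal/\Kcal_0\simeq (\Hcal/\Tcal)/(\Hcal/\Tcal)_0$. Substituting into the estimate from the preceding corollary gives exactly
\[\Gdim(\Tcal)\leq \Gdim(\Hcal_t)\leq \Gdim(\Tcal)\oplus\Gdim\bigl((\Hcal/\Tcal)/(\Hcal/\Tcal)_0\bigr).\]

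For the ``in particular'' clause, if $\Hcal/\Tcal$ is semi-artinian then $(\Hcal/\Tcal)_0 = \Hcal/\Tcal$, so the quotient vanishes. With the convention $\Gdim(0) = -1$, the summand $\Gdim((\Hcal/\Tcal)/(\Hcal/\Tcal)_0)$ is absorbed by the $\oplus$-operation (as defined at the beginning of \S\ref{sec:gabriel}, since $-1$ is finite), and the upper bound collapses to $\Gdim(\Tcal)$, matching the lower bound. Finally, $\Gdim(\Tcal)\leq \Gdim(\Hcal)$ is a direct instance of Proposition~\ref{prop:semi-noetherian-serre} applied to the localisation sequence $\Tcal\hookrightarrow \Hcal\to \Hcal/\Tcal$.

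I do not expect any real obstacle here: the non-trivial work was already carried out in the preceding corollary on TTF-triples, and what remains is the bookkeeping of transporting the TTF data through the equivalence between the core and the Giraud subcategory, together with a small consistency check of the boundary case of the $\oplus$-convention when the core quotient is zero.
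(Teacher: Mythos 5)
Your proof is correct and follows exactly the same route as the paper: identify the core of the TTF-triple $(\Fcal,\Tcal[-1],\Fcal')$ in $\Hcal_t$ with the Giraud subcategory $\Ccal\simeq\Hcal/\Tcal$ via Lemma~\ref{lemma:core=giraud}, then invoke the preceding corollary on TTF-triples with $\Ycal=\Tcal[-1]\neq 0$. The only difference is that you spell out the bookkeeping (the nontriviality check, the semi-noetherianity reduction, the degenerate case of $\oplus$ when the core quotient vanishes) that the paper leaves implicit.
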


\begin{proof}
	If $\Ccal:=\Tcal^{\bot_{0,1}}$ denotes the Giraud subcategory of $t$ in
	$\Hcal$, we have seen in Lemma~\ref{lemma:core=giraud} that $\Ccal$ is the core of the TTF-triple
	$(\Fcal,\Tcal[-1],\Fcal')$ in $\Hcal'$. The claim follows by applying the
	corollary, using that $\Tcal[-1]\simeq\Tcal$ and $\Ccal\simeq\Hcal/\Tcal$.
\end{proof}

\begin{cor}\label{cor:semi-artinian-mutation}
	Right-mutation of semi-artinian hearts is semi-artinian.
\end{cor}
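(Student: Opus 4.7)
The plan is to deduce this corollary as a direct application of Proposition~\ref{prop:gdim-inequality}, using that semi-artinianity translates to the statement $\Gdim(\Gcal)=0$ in the language of the Gabriel filtration. So the goal is to show $\Gdim(\Hcal_t)=0$ under the assumption that $\Gdim(\Hcal)=0$.

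First I would handle the trivial torsion pair $(0,\Hcal)$ excluded from the hypothesis of Proposition~\ref{prop:gdim-inequality}: in that case $\Tcal=0$ forces $\Hcal_t=\Fcal\ast\Tcal[-1]=\Hcal$, so semi-artinianity is preserved tautologically. For any nontrivial hereditary torsion pair $t=(\Tcal,\Fcal)$ of finite type, I would invoke Proposition~\ref{prop:semi-noetherian-serre} applied to the localising subcategory $\Tcal\subseteq\Hcal$, which gives
\[\max\{\Gdim(\Tcal),\Gdim(\Hcal/\Tcal)\}\leq\Gdim(\Hcal)=0.\]
Hence both $\Tcal$ and the Serre quotient $\Hcal/\Tcal$ are semi-artinian. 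In particular, the hypothesis of the ``in particular'' clause of Proposition~\ref{prop:gdim-inequality} is satisfied, yielding
\[\Gdim(\Hcal_t)=\Gdim(\Tcal)\leq\Gdim(\Hcal)=0,\]
so $\Hcal_t$ is semi-artinian as required.

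There is really no main obstacle here, as the corollary is essentially a reading of the previous proposition in the boundary case $\Gdim(\Hcal)=0$; the only substantive input is the Serre-type stability of semi-noetherianity (and in particular of semi-artinianity, when the bound collapses to $0$) provided by Proposition~\ref{prop:semi-noetherian-serre}. If I wanted to avoid Proposition~\ref{prop:gdim-inequality} and argue more directly, I could instead use the TTF-triple $(\Fcal,\Tcal[-1],\Fcal')$ in $\Hcal_t$ from Lemma~\ref{lemma:core=giraud} together with Proposition~\ref{prop:ttf-simple}: the simples of $\Hcal_t$ split into the simples of $\Tcal[-1]\simeq\Tcal$ and the simples of the core $\Ccal\simeq\Hcal/\Tcal$, both of which are semi-artinian, and a straightforward extension argument using that $\Hcal_t=\Fcal\ast\Tcal[-1]$ shows that every object of $\Hcal_t$ admits a transfinite filtration by simples.
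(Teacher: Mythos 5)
Your main argument is correct and coincides with the paper's intent: the corollary has no separate proof in the paper and is read off directly from Proposition~\ref{prop:gdim-inequality}. Your verification that $\Hcal/\Tcal$ is semi-artinian (via Proposition~\ref{prop:semi-noetherian-serre}) before invoking the ``in particular'' clause, and your separate treatment of the trivial pair $t=(0,\Hcal)$, fill in exactly the details left implicit in the paper.

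One small caution about your final aside: the claim that a ``straightforward extension argument using $\Hcal_t=\Fcal\ast\Tcal[-1]$'' yields transfinite filtrations by simples glosses over a real subtlety. The core $\Ccal\simeq\Hcal/\Tcal$ is semi-artinian \emph{as a Grothendieck category in its own right}, but $\Ccal$ is not closed under directed colimits in $\Hcal_t$, so a transfinite filtration of an object of $\Ccal$ by $\Ccal$-simples does not transfer verbatim to a filtration inside $\Hcal_t$ at limit steps. One either has to interleave contributions from $\Tcal[-1]$ at each limit ordinal, or argue at the level of localising subcategories: $\Tcal[-1]\subseteq(\Hcal_t)_0$ since $\Tcal[-1]$ is semi-artinian with simples that remain simple in $\Hcal_t$ by Proposition~\ref{prop:ttf-simple}(2), and then $(\Hcal_t)_0/\Tcal[-1]$ contains the simples of $\Ccal$ and hence equals $\Hcal_t/\Tcal[-1]$, which forces $(\Hcal_t)_0=\Hcal_t$ by the lattice bijection of Proposition~\ref{prop:spectrum-quotients}. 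In effect this reproves Proposition~\ref{prop:gdim-inequality} in the zero-dimensional case, so the route you actually took is the cleaner one.
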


\subsection{Mutation gives piecewise
homeomorphisms}\label{subsec:piecewise-homeomorphism}

By Theorem~\ref{thm:cosilting-mutation}(4), if two hearts are linked by
mutation, their Gabriel spectra are in bijection. In this section we want to
compare their Gabriel topologies.

We fix our setting. Let $\Dcal$ be a compactly generated triangulated category.
Let $c\in\Dcal$ be a pure-injective cosilting object, and $\tau:=\tau_c$ its associated
nondegenerate, smashing $t$-structure with Grothendieck heart $\Hcal:=\Hcal_c$.
Denote by $H^0\colon \Dcal\to \Hcal$ the cohomology functor of $\tau$.
Motivated by the equivalence $H^0\colon
\Prod(c)\simeq \Inj(\Hcal)$, we first give the following definition.

\begin{dfn}
	The \emph{Gabriel spectrum} of a pure-injective cosilting object $c\in\Dcal$
	is the set $\GSpec(c):=\Ind(\Prod(c))$, endowed with the \emph{Gabriel
	topology}, whose closed sets are
	the $V\subseteq\Ind(\Prod(c))$ such that $V=\Ind(\Prod(V))$.
\end{dfn}

By construction, $H^0$ gives a homeomorphism $\GSpec(c)\simeq\GSpec(\Hcal)$ (see
Lemma~\ref{lemma:topology-inj}).

Now, following Theorem~\ref{thm:cosilting-mutation}, let
$\Ecal=\Prod(\Ecal)\subseteq\Prod(c)$ be a covering subcategory, and
$t:=t_\Ecal=:(\Tcal,\Fcal)$ the
associated hereditary torsion pair of finite type in $\Hcal$. Denote by $c'$ the
right mutation of $c$ at $\Ecal$ , and
by $\tau':=\tau_{c'}$ the associated $t$-structure, with heart
$\Hcal_t=\Hcal_{c'}=\Fcal\ast\Tcal[-1]$. From
Theorem~\ref{thm:cosilting-mutation}, we have a bijection:
\[\Theta:=\Theta_\Ecal\colon \GSpec(c) \to \GSpec(c'),\]
of which we are going to study the topological properties. We start with a
general result.

\begin{thm}\label{thm:piecewise-homeomorphism}
	Assume $\Ecal=\Prod(\Ind(\Ecal))$, which is the case if $\Hcal$ is
	semi-noetherian. Then $\Theta$
	induces homeomorphisms: \[ \GSpec(c)\cap\Ecal \simeq
	\GSpec(c')\cap\Ecal\qquad\text{and}\qquad
		\GSpec(c)\setminus \Ecal \simeq \GSpec(c')\setminus \Ecal\]
	where these subsets are endowed with the subspace topologies induced by the
	Gabriel topologies of $\GSpec(c)$ and $\GSpec(c')$.
\end{thm}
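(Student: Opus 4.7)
The plan is to split the bijection $\Theta$ according to the partition $\GSpec(c) = (\GSpec(c)\cap\Ecal)\sqcup(\GSpec(c)\setminus\Ecal)$ and handle each half separately, characterising the relevant subspace topology intrinsically enough that it is manifestly preserved by $\Theta$.

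For the first homeomorphism, Theorem~\ref{thm:cosilting-mutation}(3) gives $\Ecal\subseteq\Prod(c)\cap\Prod(c')$, so $\GSpec(c)\cap\Ecal = \Ind(\Ecal) = \GSpec(c')\cap\Ecal$; Theorem~\ref{thm:cosilting-mutation}(4) shows that $\Theta$ restricts to the identity on this common set. I claim that the subspace closed sets are precisely the $W\subseteq\Ind(\Ecal)$ satisfying $W=\Ind(\Prod(W))$: any subspace closed set has the form $V\cap\Ind(\Ecal)$ with $V\subseteq\GSpec(c)$ closed, and taking $V=\Ind(\Prod(W))$ works because $W\subseteq\Ecal=\Prod(\Ecal)$ forces $\Prod(W)\subseteq\Ecal$. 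Since this characterisation depends only on $\Ecal$ and on products in $\Dcal$, the two subspace topologies coincide.

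For the second homeomorphism, I would transfer to the Gabriel spectra of the hearts via the equivalences $\Prod(c)\simeq\Inj(\Hcal)$ and $\Prod(c')\simeq\Inj(\Hcal_{c'})$: then $\GSpec(c)\setminus\Ecal$ corresponds to the open subspace $\GSupp(\Tcal)\subseteq\GSpec(\Hcal)$ and $\GSpec(c')\setminus\Ecal$ to $\GSupp(\Tcal[-1])\subseteq\GSpec(\Hcal_{c'})$, where $(\Tcal[-1],\Fcal')$ is a hereditary torsion pair in $\Hcal_{c'}$ because the middle class of a TTF-triple in a Grothendieck category is localising in both torsion pairs. The pivotal lemma is that the subspace open sets of $\GSupp(\Tcal)$ are exactly the $\GSupp(\Lcal)$ with $\Lcal$ a localising subcategory of $\Hcal$ contained in $\Tcal$; the nontrivial inclusion rests on the identity $\GSupp(\Mcal)\cap\GSupp(\Tcal) = \GSupp(\Mcal\cap\Tcal)$, which I would prove by taking an indecomposable injective $e$ with nonzero subobjects $m'\in\Mcal$ and $t'\in\Tcal$ (images of maps from $\Mcal$ and $\Tcal$ respectively) and invoking essentiality of nonzero subobjects in $e$ to get $0\neq m'\cap t' \in \Mcal\cap\Tcal$. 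The analogous statement in $\Hcal_{c'}$, together with the exact equivalence $[-1]\colon\Tcal\simeq\Tcal[-1]$ of abelian categories, yields a bijection $\Lcal\leftrightarrow\Lcal[-1]$ between the lattices indexing subspace opens on the two sides.

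The main obstacle is then to show that $\Theta$ implements this bijection on subspace opens. Given $u\in\GSupp(\Tcal)$ with $\Theta(u)=u'$ fitting into the exchange triangle $u'\to e_u\to u\to u'[1]$, and any $l\in\Tcal$, applying $\Hom_\Dcal(l,-)$ to this triangle yields a long exact sequence in which the adjacent groups $\Hom_\Hcal(l,e_u)$ and $\Ext^1_\Hcal(l,e_u)$ both vanish: the former because $l\in\Tcal$ and $e_u\in\Ecal\subseteq\Fcal=\Tcal^\perp$ in $\Hcal$, the latter because $e_u$ is injective in $\Hcal$. What remains is a natural isomorphism
\[
\Hom_{\Hcal_{c'}}(l[-1],u') \simeq \Hom_\Hcal(l,u).
\]
Letting $l$ range over a localising $\Lcal\subseteq\Tcal$ then yields $u\in\GSupp(\Lcal)$ iff $u'\in\GSupp(\Lcal[-1])$, so $\Theta$ induces a bijection between subspace open sets and is therefore a homeomorphism $\GSpec(c)\setminus\Ecal\simeq\GSpec(c')\setminus\Ecal$.
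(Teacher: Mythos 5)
Your proof is correct, and for the second homeomorphism it takes a genuinely different route from the paper. The paper proves both homeomorphisms simultaneously via Lemma~\ref{lemma:two-cases}, whose case $E\subseteq V$ is a careful diagram chase: one takes a section $u\hookrightarrow\prod v_i$, lifts it through products of exchange triangles, and repairs the failure of $gf=1_{u'}$ by factoring $1_{u'}-gf$ through the $\Ecal$-preenvelope $\psi$. You instead transfer to the hearts, recognise $\GSpec(c)\setminus\Ecal$ as the open subspace $\GSupp(\Tcal)$, characterise its subspace-open sets as $\{\GSupp(\Lcal):\Lcal\subseteq\Tcal\text{ localising}\}$ via the identity $\GSupp(\Mcal)\cap\GSupp(\Tcal)=\GSupp(\Mcal\cap\Tcal)$ (a clean use of coirreducibility of indecomposable injectives), and then reduce the comparison of open sets across the mutation to a single long-exact-sequence vanishing on the exchange triangle. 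This is more conceptual and, notably, it makes no use of the hypothesis $\Ecal=\Prod(\Ind(\Ecal))$, whereas the paper's Lemma~\ref{lemma:two-cases} invokes $\Ecal=\Prod(E)$ at the final step of the $E\subseteq V$ case. Your argument thus suggests the hypothesis is not needed for the theorem itself. The approach also buys you a lattice-theoretic restatement (subspace opens of $\GSupp(\Tcal)$ are in order-preserving bijection with $\LOC(\Hcal)$ below $\Tcal$) that is independently useful.

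Two small imprecisions, both repairable: you write $e_u\in\Ecal\subseteq\Fcal$ and justify $\Dcal(l,e_u[1])=0$ by ``$e_u$ is injective in $\Hcal$.'' In general $e_u\in\Prod(c)$ does \emph{not} lie in $\Hcal$ (only in the cotilting case). What you should say is: $H^0_c(e_u)\in\Fcal\cap\Inj(\Hcal)$ gives $\Dcal(l,e_u)\simeq\Hcal(l,H^0_c(e_u))=0$ since $l\in\Tcal$, and $\Dcal(l,e_u[1])=0$ because $l\in\Hcal={}^{\bot_{\neq0}}c$ while $e_u\in\Prod(c)$, so $\Dcal(\Hcal,\Prod(c)[1])=0$. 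Likewise, the $\Hom$-groups in your target isomorphism should be written with cohomologies, $\Hcal_{c'}(l[-1],H^0_{c'}(u'))\simeq\Hcal(l,H^0_c(u))$; the identification with $\Dcal(l[-1],u')\simeq\Dcal(l,u)$ then uses the standard natural isomorphism $\Dcal(-,x)\simeq\Hcal_c(H^0_c(-),H^0_c(x))$ for $x\in\Prod(c)$, recalled in the paper. Neither correction affects the validity of your conclusion.
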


To ease the notation, we
write $E:=\GSpec(c)\cap \Ecal=\GSpec(c')\cap\Ecal$. We are assuming, as in the
Theorem, that $\Ecal=\Prod(E)$. The proof of the theorem will depend on the following lemma. 

\begin{lemma}\label{lemma:two-cases}
	Let $V\subseteq\GSpec(c)$ be any set. If either $V\subseteq E$ or $E\subseteq
	V$, then $V\subseteq\GSpec(c)$ is closed if and only if
	$\Theta(V)\subseteq\GSpec(c')$ is closed.
\end{lemma}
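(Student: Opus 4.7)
The plan is first to verify that $E$ sits as a closed subset of both $\GSpec(c)$ and $\GSpec(c')$. For $\GSpec(c)$, Lemma~\ref{lemma:topology-inj} applies directly since $E=\Ind(\Ecal)$ with $\Ecal=\Prod(\Ecal)$. For $\GSpec(c')$, I would invoke Theorem~\ref{thm:cosilting-mutation}(3) to write $\Ecal=\Prod(c)\cap\Prod(c')$, so that $\Ecal$ is also $\Prod$-closed in $\Prod(c')$ and hence $E=\GSpec(c')\cap\Ecal$ is closed by the same lemma. Crucially, by Theorem~\ref{thm:cosilting-mutation}(4), $\Theta$ acts as the identity on $E$.

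For the case $V\subseteq E$, the key observation is that $\Prod(V)\subseteq\Prod(E)=\Ecal\subseteq\Prod(c)\cap\Prod(c')$, and that indecomposability of an object of $\Dcal$ is intrinsic to the ambient category. Hence the condition $V=\Ind(\Prod(V))$ that characterises closedness (via Lemma~\ref{lemma:topology-inj}) is independent of whether $\Ind$ is interpreted inside $\Prod(c)$ or $\Prod(c')$. Since $\Theta(V)=V$ in this case, both directions of the equivalence follow at once.

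For the case $E\subseteq V$, my plan is to argue via exchange triangles. Assuming $V$ is closed in $\GSpec(c)$, take any indecomposable summand $u'$ of a product $\prod_i\theta_i$ with $\theta_i\in\Theta(V)$, formed in $\Prod(c')$; we want $u'\in\Theta(V)$. If $u'\in E$ this is immediate; otherwise $u'=\Theta(u)$ for a unique $u\in\GSpec(c)\setminus\Ecal$, and the task reduces to showing $u\in V$. For each $i$, take the exchange triangle $\theta_i\to e_i\to u_i\to\theta_i[1]$ of Theorem~\ref{thm:cosilting-mutation}(1,4), degenerated to the identity triangle when $\theta_i\in E$ (setting $u_i:=\theta_i=:e_i$), so that in every case $u_i\in V$. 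Taking products yields a triangle
\[ \prod_i\theta_i\to\prod_i e_i\to\prod_i u_i\to\Big(\prod_i\theta_i\Big)[1] \]
in $\Dcal$, with $\prod_i u_i\in\Prod(V)\subseteq\Prod(c)$ and $\prod_i e_i\in\Ecal$. I would then split this triangle compatibly with the decomposition $\prod_i\theta_i\simeq u'\oplus p'$, exhibiting the exchange triangle $u'\to e\to u\to u'[1]$ of $u'$ as a direct summand and thereby realising $u$ as an indecomposable summand of $\prod_i u_i$. Closedness of $V$ then forces $u\in V$, whence $u'=\Theta(u)\in\Theta(V)$. The reverse implication follows by symmetry, interpreting the situation via left mutation (Remark~\ref{rmk:left-mutation}).

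The main technical obstacle is precisely this summand-extraction step in the second case: while products of right $\Ecal$-approximations are again right $\Ecal$-approximations, products of $\Ecal$-covers need not be covers, so some care is needed to split off $u'$ compatibly with the product triangle and recover the exchange triangle of $u'$ as a direct summand of it. The essential tools for overcoming this are the uniqueness and functoriality of the exchange triangles in the pure-injective cosilting setting provided by Theorem~\ref{thm:cosilting-mutation}, together with the fact that $\prod_i e_i$ still belongs to $\Ecal$ (being $\Prod$-closed).
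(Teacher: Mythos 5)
Your treatment of the case $V\subseteq E$ matches the paper's: closedness is characterised by the intrinsic condition $V=\Ind(\Prod(V))$ inside $\Dcal$, and since $\Theta$ is the identity on $E$ this is immediate. No issues there.

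In the case $E\subseteq V$, however, your plan deviates from the paper and hits a genuine gap at precisely the step you flag as the "main technical obstacle." You form the product triangle
\[\textstyle\prod_i\theta_i\to\prod_i e_i\to\prod_i u_i\to\bigl(\prod_i\theta_i\bigr)[1]\]
and then want to "split this triangle compatibly with the decomposition $\prod_i\theta_i\simeq u'\oplus p'$." This is not something you can do in a triangulated category: given an idempotent (or a section--retraction pair) on one vertex of a distinguished triangle, there is no general mechanism to promote it to an idempotent morphism of the whole triangle and split, especially since the summand $u'$ is \emph{not} aligned with the product decomposition of $\prod_i\theta_i$. Neither uniqueness of exchange triangles nor any "functoriality" of them provides such a lift; those facts only tell you that once $u$ is fixed, the $\Ecal$-cover $e\to u$ and hence $u'$ are determined up to isomorphism. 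What the paper does instead is sidestep the splitting problem entirely: it uses the section and retraction between $u$ and $\prod v_i$, lifts them across the precovers to obtain maps $f\colon u'\to\prod v_i'$ and $g\colon\prod v_i'\to u'$ completing to morphisms of triangles, observes that $1_{u'}-gf$ need not vanish but does vanish when precomposed with $u[-1]\to u'$, hence factors as $h\psi$ through the $\Ecal$-preenvelope $\psi\colon u'\to e$, and thereby exhibits $u'$ as a split subobject of $e\oplus\prod v_i'\in\Prod(\Theta(V))$ (using $E\subseteq\Theta(V)$ so that $e\in\Ecal=\Prod(E)\subseteq\Prod(\Theta(V))$). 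That "correction term" argument is the essential idea you are missing.

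Your appeal to "symmetry via left mutation" for the converse is also not quite right as stated. The paper's converse is not formally symmetric: it repeats the same construction replacing precovers by preenvelopes, and has to separately check that $\prod\psi_i$ is an $\Ecal$-preenvelope (a priori false for arbitrary products of preenvelopes, but true here because the cocone $\prod v_i[-1]$ receives no maps from $\Ecal$). Invoking left mutation would require you to first establish a left-mutation analogue of the lemma, which you have not done; the paper keeps everything on the right-mutation side.
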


\begin{proof}
	\textit{Case $V\subseteq E$.} By definition, $V$ is closed if and only if
	$V=\Ind(\Prod(V))$; observe that this condition is intrinsic to
	$V\subseteq\Dcal$, it does not depend on either $c$ or $c'$.
	If $V\subseteq E$, we have $\Theta(V)=V$, and therefore the claim is trivial.

	\textit{Case $E\subseteq V$.}
	We are going to show that, in this case, for any
	$u\in\GSpec(c)$ we have that $u\in\Prod(V)$ if and only if $\Theta(u)\in\Prod
	\Theta(V)$; the claim will follow.

	Since we are assuming that $E\subseteq V$, and therefore $E=\Theta(E)\subseteq
	\Theta(V)$, the case $u\in E$ is trivial, as we will have both $u\in\Prod(V)$
	and $u=\Theta(u)\in\Prod(\Theta(V))$. We can therefore focus on the case
	$u\notin E$. By definition, $\Theta(u)=:u'$ is determined by the triangle:
	\[\begin{tikzcd}[sep=1.5pc]u[-1]\arrow{r} & u'\arrow{r}{\psi} & e \arrow{r}{\phi} & u, \end{tikzcd}\]
	where $\phi$ is an $\Ecal$-cover. Observe that since
	$\Dcal(u[-1],\Ecal)\subseteq\Dcal(\Prod(c)[-1],\Prod(c))=0$, the map $\psi$ is
	a $\Ecal$-preenvelope. We start by showing the implication $(\Rightarrow)$,
	that is, we assume that $u\in\Prod(V)$. Let $\{\,v_i\mid i\in
	I\,\}\subseteq V$ be such that there is a section $u\to \prod v_i$.
	For each $i\in I$, consider the triangle:
	\[\begin{tikzcd}[sep=1.5pc]
		v_i'\arrow{r}{\psi_i} & e_i \arrow{r}{\phi_i} & v_i \arrow{r} & v_i'[1]
	\end{tikzcd}\]
	where $\phi_i$ is an $\Ecal$-cover. This means that $v_i'=0$ if $v_i\in
	V\cap E$, while $v_i'=\Theta(v_i)$ if $v_i\in V\setminus E$. As
	before, $\psi_i$ is an $\Ecal$-preenvelope. Consider the
	product of these triangles, and the commutative diagram:
	\[\begin{tikzcd}
		u[-1] \arrow[shift left, hook]{d}{\oplus} \arrow{r} &
			u' \arrow[dotted, shift left]{d}{f} \arrow{r}[swap]{\psi} &
			e \arrow[dotted, shift left]{d} \arrow[dotted, bend right]{l}[swap]{h} \arrow{r}{\phi} &
			u \arrow[shift left, hook]{d}{\oplus} \arrow{r} &
			u'[1] \arrow[dotted, shift left]{d} \\
		\prod v_i[-1] \arrow[shift left, two heads]{u} \arrow{r} &
			\prod v_i' \arrow[dotted, shift left]{u}{g} \arrow{r}{\prod \psi_i} &
			\prod e_i \arrow[dotted, shift left]{u} \arrow{r}{\prod \phi_i} &
			\prod v_i \arrow[shift left, two heads]{u} \arrow{r} &
			\prod v_i' \arrow[dotted, shift left]{u}
	\end{tikzcd}\]
	Since a product of precovers is again a precover, $\prod\phi_i$ is an
	$\Ecal$-precover, as $\phi$ is. Hence, the section and the retraction lift to
	dotted morphisms between $e$ and
	$\prod e_i$. The dotted morphisms $f,g$ are then
	obtained by completing the commuting square to a morphism of triangles.
	Now, the composition $gf$ is not necessarily equal to the identity of $u'$;
	but the difference $1_{u'}-gf$ vanishes when precomposed with the morphism
	$u[-1]\to u'$ in the diagram, by an easy chase. Therefore, $1_{u'}-gf$ factors
	as $h\psi$, for some dotted morphism $h$. We then obtain that $[h,g]\colon
	e\oplus\prod v_i'\to u'$ is a retraction for the morphism ${}^t[\psi,f]\colon
	u'\to e\oplus\prod v_i'$, which is then split monic. Since its target belongs
	to
	$\Ecal\oplus\Prod(\Theta(V))=\Prod(E)\oplus\Prod(\Theta(V))=\Prod(\Theta(V))$, we
	conclude that $u'\in\Prod(\Theta(V))$, as wanted.

	For the converse implication $(\Leftarrow)$, the argument is analogous. The
	only notable difference is that instead of the $\Ecal$-precovers $\phi,\phi_i$, one
	needs to use the $\Ecal$-preenvelopes $\psi,\psi_i$; and it is not true that a
	product of preenvelopes is a preenvelope. In this case, though, $\prod\psi_i$
	is indeed an $\Ecal$-preenvelope, because its cocone has no morphisms to
	$\Ecal$, as argued earlier.
\end{proof}

\begin{proof}[Proof of Theorem~\ref{thm:piecewise-homeomorphism}]
	First, if $\Hcal$ is semi-noetherian then every class
	$\Ecal=\Prod(\Ecal)\subseteq\Prod(c)$ is such that $\Ecal=\Prod(\Ind(\Ecal))$ by
	Proposition~\ref{prop:gsupp}(1).

	Now, the case $V\subseteq E$ in the Lemma is showing precisely that the restriction
	of $\Theta$ to the subspace $E=\GSpec(c)\cap\Ecal$ is a homeomorphism. For the restriction
	to $\GSpec(c)\setminus\Ecal$, observe that a subset
	$U\subseteq\GSpec(c)\setminus E$ is closed with respect to the subspace
	topology if and only if $V:=E\cup U\subseteq\GSpec(c)$ is closed. Since this
	$V$ contains $E$, by the Lemma it is closed if and only if
	$\Theta(V)=\Theta(E)\cup\Theta(U)=E\cup\Theta(U)$ is closed in $\GSpec(c')$.
	This is equivalent to $\Theta(U)\subseteq\GSpec(c')\setminus E$ being closed
	for the subspace topology.
\end{proof}

With the following lemma, this result gives a consequence regarding
separation.

\begin{lemma}
	Let $X$ be a set, with two topologies $\Ocal_1$ and $\Ocal_2$. Assume that
	there is a subset $E\subseteq X$ such that:
	\begin{enumerate}[label=(\alph*)]
		\item $E$ is closed for both $\Ocal_1$ and $\Ocal_2$; and
		\item $\Ocal_1$ and $\Ocal_2$ induce the same subspace topologies on $E$ and
			$X\setminus E$.
	\end{enumerate}
	Then $\Ocal_1$ is $(\Tsf0)$ if and only if so is $\Ocal_2$.
\end{lemma}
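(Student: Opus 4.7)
The plan is to show directly that separation of any two distinct points $x\neq y\in X$ by $\Ocal_1$ is equivalent to their separation by $\Ocal_2$, by a case analysis based on whether $x,y$ lie in $E$ or in $X\setminus E$. By symmetry between $\Ocal_1$ and $\Ocal_2$, it suffices to prove one implication; say that if $\Ocal_1$ is $(\Tsf_0)$, then so is $\Ocal_2$. Fix $x\neq y$ in $X$, and let $U\in\Ocal_1$ be an open set containing exactly one of them; we want to produce an analogous $V\in\Ocal_2$.

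The easy case is when $x\in E$ and $y\in X\setminus E$ (or vice versa): by assumption (a), $X\setminus E$ is open in $\Ocal_2$ and contains exactly one of $x,y$, so no further work is needed. Suppose instead that both $x,y\in E$. Then $U\cap E$ is an open set of the subspace $E$ with respect to the topology induced by $\Ocal_1$, and it contains exactly one of $x,y$. By (b), $U\cap E$ is also open in the subspace topology on $E$ induced by $\Ocal_2$, so there exists $V\in\Ocal_2$ with $V\cap E=U\cap E$, and this $V$ separates $x$ and $y$ in $X$. Finally, if both $x,y\in X\setminus E$, the same argument applies, using that $X\setminus E$ is open in $\Ocal_2$ by (a), so that any subset of $X\setminus E$ which is open in the subspace topology (such as the desired $V\cap(X\setminus E)$) is actually open in $\Ocal_2$.

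There is no real obstacle here; the only mild subtlety is making sure that when we lift a subspace-open set back to an open set of $X$, the lift itself (and not merely its intersection with the relevant piece) distinguishes $x$ from $y$. This is handled automatically by intersecting with $E$ or $X\setminus E$, both of which are open or closed in both topologies by (a).
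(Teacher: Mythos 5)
Your proof is correct. It establishes the $(\Tsf0)$ transfer by working directly with the open-set definition of $(\Tsf0)$ and carrying out a three-way case analysis (one point in $E$ and one outside; both in $E$; both in $X\setminus E$), lifting a separating subspace-open set back to $X$ where needed. The paper instead uses the reformulation of $(\Tsf0)$ that it records just before the lemma: a space is $(\Tsf0)$ iff any two points lying in each other's closure coincide. With that characterisation, the argument collapses: if $x_1$ and $x_2$ are in each other's $\Ocal_2$-closure, then since $E$ is $\Ocal_2$-closed they lie in the same piece ($E$ or $X\setminus E$), and since subspace closure is ambient closure intersected with the subspace, condition (b) transports the mutual-closure relation to $\Ocal_1$, whence $x_1 = x_2$. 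Both arguments are sound; yours is more elementary and explicit, at the cost of the case analysis and the lifting step, which you correctly handle (for $E$, the lift $V\in\Ocal_2$ need only satisfy $V\cap E = U\cap E$ since both points lie in $E$; for $X\setminus E$, openness of $X\setminus E$ in $\Ocal_2$ promotes a subspace-open set to a genuinely open set). One small imprecision: you write that $E$ and $X\setminus E$ are ``open or closed in both topologies,'' which is technically true of each separately ($E$ is closed, $X\setminus E$ is open) but reads as if both were clopen; your argument does not actually need $E$ to be open, so this is harmless.
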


\begin{proof}
	Without loss of generality, assume $\Ocal_1$ is $(\Tsf0)$. Let $x_1,x_2\in X$
	be points which are contained in each other's closure with respect to
	$\Ocal_2$. Since $E$ is closed for $\Ocal_2$, we must have either $x_1,x_2\in
	E$ or $x_1,x_2\in X\setminus E$. In a subspace topology, the closure is just
	the intersection of the subspace with the closure in the ambient space. Hence
	$x_1,x_2$ are in each other's closure with respect to the subspace topology
	induced by $\Ocal_2$. By (b), then they are in each other's closure with
	respect to $\Ocal_1$ as well. So we must have $x_1=x_2$, showing that
	$\Ocal_2$ is $(\Tsf0)$.
\end{proof}

\begin{cor}\label{cor:piecewise-T0}
	Let $c'$ be obtained from $c$ by right mutation. Then $\GSpec(c)$ is $(\Tsf0)$
	if and only so is $\GSpec(c')$.
\end{cor}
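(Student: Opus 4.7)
The plan is to apply the preceding topological lemma to the bijection $\Theta\colon\GSpec(c)\to\GSpec(c')$. Since $(\Tsf_0)$ is a topological invariant, it suffices to put both Gabriel topologies on a common underlying set: I would fix $X:=\GSpec(c)$ with its Gabriel topology $\Ocal_1$, and define $\Ocal_2$ to be the topology on $X$ obtained by pulling back the Gabriel topology of $\GSpec(c')$ along $\Theta$. The goal is then to check that $\Ocal_1$ and $\Ocal_2$ satisfy the hypotheses of the lemma with respect to the subset $E:=\GSpec(c)\cap\Ecal$.

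For condition (a), that $E$ is closed for both $\Ocal_1$ and $\Ocal_2$, I would use Lemma~\ref{lemma:topology-inj}. Working under the standing assumption $\Ecal=\Prod(\Ind(\Ecal))$ (as in Theorem~\ref{thm:piecewise-homeomorphism}), one has $E=\Ind(\Ecal)$ and $\Prod(E)=\Ecal$, so $E=\Ind(\Prod(E))$ is closed in $\GSpec(c)$. For the other side, Theorem~\ref{thm:cosilting-mutation}(3--4) gives $\Ecal\subseteq\Prod(c')$ and $\Theta$ fixes $E$ pointwise; hence $E=\Theta(E)=\GSpec(c')\cap\Ecal$, and exactly the same computation shows that $E$ is closed in $\GSpec(c')$, i.e.\ closed for $\Ocal_2$.

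Condition (b) of the lemma, which demands that $\Ocal_1$ and $\Ocal_2$ induce the same subspace topologies on $E$ and on $X\setminus E$, is precisely what Theorem~\ref{thm:piecewise-homeomorphism} provides: the two homeomorphisms exhibited there (on $\GSpec(c)\cap\Ecal$ and on $\GSpec(c)\setminus\Ecal$) are given by $\Theta$, so after transport they say that the subspace topologies of $\Ocal_1$ and $\Ocal_2$ on $E$ and $X\setminus E$ coincide. Having verified (a) and (b), the lemma yields that $\Ocal_1$ is $(\Tsf_0)$ if and only if $\Ocal_2$ is, which is the desired conclusion.

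There is no real obstacle here beyond bookkeeping: the corollary is essentially a formal consequence of the piecewise-homeomorphism theorem together with the purely topological lemma just proved, and the only point to be careful about is transporting the topology along $\Theta$ so that the hypothesis ``same underlying set'' of the lemma is literally met.
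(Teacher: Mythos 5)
Your proposal is correct and follows essentially the same route as the paper: identify the two Gabriel spectra via $\Theta$, then invoke the preceding purely topological lemma, with condition (b) supplied by Theorem~\ref{thm:piecewise-homeomorphism}. The only superficial difference is that for condition (a) the paper cites Lemma~\ref{lemma:two-cases} (case $V\subseteq E$) to get that $E$ is closed in $\GSpec(c')$, whereas you unwind that citation and re-derive it directly from Lemma~\ref{lemma:topology-inj} and the intrinsic nature of the condition $E=\Ind(\Prod(E))$ — the substance is identical.
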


\begin{proof}
	We use the bijection $\Theta$ to identify the two sets $\GSpec(c)$ and
	$\GSpec(c')$, and take this as $X$; the two topologies $\Ocal_1$ and $\Ocal_2$
	are the corresponding Gabriel topologies. As $E$ we take the set at which we have mutated.
	Then (a) holds by Lemma~\ref{lemma:two-cases} and (b) holds
	by Theorem~\ref{thm:piecewise-homeomorphism}, so we conclude by the Lemma.
\end{proof}

Despite not giving a complete description of the topology of $\GSpec(c')$ in
terms of that of $\GSpec(c)$, Theorem~\ref{thm:piecewise-homeomorphism} provides
an \emph{upper bound}, as follows.

\begin{lemma}
	In the notation above, consider a subset $U\subseteq\GSpec(c)$.
	\begin{enumerate}
		\item If $\Theta(U)\subseteq\GSpec(c')$ is closed, then:
			\[U=(V_1\cap E)\sqcup(V_2\cap E^\cpl)\quad\text{for }V_1,V_2\subseteq\GSpec(c)
			\text{ closed.}\]
		\item If $U\subseteq\GSpec(c)$ is closed, then:
			\[U=(V_1\cap E)\sqcup (V_2\cap E^\cpl)\quad
			\text{for }\Theta(V_1),\Theta(V_2)\subseteq\GSpec(c')\text{ closed.}\]
	\end{enumerate}
\end{lemma}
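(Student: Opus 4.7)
The plan is to reduce the statement to a direct application of Theorem~\ref{thm:piecewise-homeomorphism} by decomposing any set $U$ into $U = (U\cap E)\sqcup (U\cap E^\cpl)$ and tracking how the ``closed'' condition interacts with the two subspace topologies. Throughout I implicitly use that, by Theorem~\ref{thm:cosilting-mutation}(3--4), $\Theta$ acts as the identity on $E$, so $\Theta(U)\cap E = U\cap E$ and $\Theta(U)\cap E^\cpl = \Theta(U\cap E^\cpl)$.

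For item (1), I would start from the assumption that $\Theta(U)$ is closed in $\GSpec(c')$. Intersecting with $E$ and with $E^\cpl$ (which is open, being the complement of the closed set $\GSpec(c')\cap\Ecal$) produces closed subsets of the corresponding subspaces of $\GSpec(c')$. By Theorem~\ref{thm:piecewise-homeomorphism}, $\Theta$ restricts to homeomorphisms $\GSpec(c)\cap E\simeq \GSpec(c')\cap E$ and $\GSpec(c)\setminus\Ecal\simeq\GSpec(c')\setminus\Ecal$, so $U\cap E$ is closed in the subspace topology of $E\subseteq\GSpec(c)$ and $U\cap E^\cpl$ is closed in the subspace topology of $E^\cpl\subseteq\GSpec(c)$. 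The definition of the subspace topology then yields closed subsets $V_1,V_2\subseteq\GSpec(c)$ with $V_1\cap E=U\cap E$ and $V_2\cap E^\cpl=U\cap E^\cpl$, hence $U=(V_1\cap E)\sqcup(V_2\cap E^\cpl)$.

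The argument for (2) is symmetric. Assuming $U\subseteq\GSpec(c)$ closed, the sets $U\cap E$ and $U\cap E^\cpl$ are closed in the respective subspace topologies inherited from $\GSpec(c)$. Transporting via the same homeomorphisms of Theorem~\ref{thm:piecewise-homeomorphism}, $\Theta(U\cap E)=U\cap E$ is closed in the subspace topology of $E$ induced from $\GSpec(c')$, and $\Theta(U\cap E^\cpl)$ is closed in the subspace topology of $E^\cpl$ induced from $\GSpec(c')$. Hence there exist closed sets $W_1,W_2\subseteq\GSpec(c')$ with $W_1\cap E=U\cap E$ and $W_2\cap E^\cpl=\Theta(U\cap E^\cpl)$. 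Setting $V_i:=\Theta^{-1}(W_i)$, we have $\Theta(V_i)=W_i$ closed, and using once more that $\Theta$ is the identity on $E$ we obtain $V_1\cap E=U\cap E$ and $V_2\cap E^\cpl=\Theta^{-1}(W_2\cap E^\cpl)=U\cap E^\cpl$.

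There is no real obstacle here: the content is entirely packaged in Theorem~\ref{thm:piecewise-homeomorphism}, and one only needs to be careful not to conflate the two Gabriel topologies on the underlying set (via $\Theta$) and to keep track of which topology a given ``closed'' refers to. The only small subtlety is checking that $E^\cpl\subseteq\GSpec(c')$ is open (so that ``closed in the subspace $E^\cpl$'' corresponds to intersecting a closed set of the full space with $E^\cpl$), which follows from $E$ being closed by Lemma~\ref{lemma:two-cases} applied to $V=E$.
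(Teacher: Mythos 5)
Your proof is correct and follows essentially the same route as the paper's: decompose $U$ into $U\cap E$ and $U\cap E^\cpl$, observe that $\Theta(U)$ closed in $\GSpec(c')$ forces the two pieces to be closed in the respective subspace topologies, transport these back through the homeomorphisms of Theorem~\ref{thm:piecewise-homeomorphism}, and read off $V_1,V_2$ from the definition of the subspace topology; item (2) is symmetric. One small caveat: the closing remark that openness of $E^\cpl$ is needed so that ``closed in the subspace $E^\cpl$'' means ``intersection of a closed set with $E^\cpl$'' is not right --- this characterisation is the definition of the subspace topology and holds for an arbitrary subset, whether open, closed, or neither --- so that step is superfluous (though harmless, since the conclusion $E$ closed $\Rightarrow$ $E^\cpl$ open is of course true and the overall argument never actually relies on it).
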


\begin{proof}
	(1) The subspace topologies on $E, E^\cpl\subseteq\GSpec(c)$ have as closed subsets
	those of the form $V_1\cap E$ and $V_2\cap E^\cpl$, for
	$V_1,V_2\subseteq\GSpec(c)$ closed, respectively. 
	If $\Theta(U)\subseteq\GSpec(c')$ is closed, then
	$\Theta(U)\cap \Theta(E)$ and $\Theta(U)\cap\Theta(E^\cpl)$ are closed in the
	subspace topologies, and therefore they are of the form $\Theta(V_1\cap E),
	\Theta(V_2\cap E^\cpl)$, respectively.
	(2) is analogous.
\end{proof}

\begin{rmk}\label{rmk:disconnect}
	By Lemma~\ref{lemma:two-cases}, $\Theta(E)=E$ is closed in $\GSpec(c')$. As
	soon as it is also open (that is, its complement
	$\Theta(E^\cpl)\subseteq\GSpec(c')$ is closed), a subset
	$\Theta(U)\subseteq\GSpec(c')$ is closed \emph{if and only if} both
	intersections $\Theta(U)\cap E$ and $\Theta(U)\cap \Theta(E^\cpl)$ are closed in
	the subspace topologies. In other words, the converse implication of (1) above
	holds as well, yielding a complete description of the topology of
	$\GSpec(c')$ in terms of that of $\GSpec(c)$.
\end{rmk}

\subsection{When mutation disconnects the Gabriel
spectrum}\label{subsec:disconnect}

In this subsection we are going to present two cases in which
Remark~\ref{rmk:disconnect} applies, to give an explicit description of the
topology on $\GSpec(c')$. Throughout, we keep the notation of the previous subsection.

\subsubsection{Discrete mutation}

For a topological space $X$, we say that a $S\subseteq X$ is
a \emph{discrete subspace} if its subspace topology is discrete. Notice that an
open subset is a discrete subspace if and only if it consists of open points;
this is not the case for non-open subsets.
We will now consider the case in which the set $E\subseteq\GSpec(c)$ we tilt at is
a discrete subspace.
We are going to rely on the results of
\S\ref{subsec:mutation-semi-noetherianity}, and the results of this subsection
will be used in \S~\ref{subsec:zerodim-examples}.

\begin{lemma}
	The following are equivalent for $E\subseteq \GSpec(c)$:
	\begin{enumerate}
		\item $E$ is a discrete subspace;
		\item the quotient $\Hcal/{}^\bot E$ is semi-artinian;
	\end{enumerate}
\end{lemma}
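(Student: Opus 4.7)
The plan is to reinterpret condition (2) topologically through Proposition~\ref{prop:spectrum-quotients} and then compare it with (1). I would set $\Lcal={}^{\bot}E$, a hereditary torsion class in $\Hcal$, and identify $E$ with its image in $\Inj(\Hcal)$ via $H^0\colon\Prod(c)\simeq\Inj(\Hcal)$. In the mutation context $E$ is closed in $\GSpec(c)$ (since $E=\Ind(\Ecal)$ with $\Ecal=\Prod(\Ecal)$ forces $E=\Ind(\Prod E)$), so Proposition~\ref{prop:spectrum-quotients}(1) yields a homeomorphism $\Lsf\colon E\simeq\GSpec(\Hcal/\Lcal)$, under which the Gabriel topology of $\GSpec(\Hcal/\Lcal)$ matches the subspace topology of $E$ in $\GSpec(c)$.

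For the direction $(2)\Rightarrow(1)$, I would observe that semi-artinianness of $\Hcal/\Lcal$ means it has Gabriel dimension $0$ and is in particular semi-noetherian; Proposition~\ref{prop:cantor-bendixson-gabriel} then gives that $\GSpec(\Hcal/\Lcal)$ equals its zeroth Cantor--Bendixson stratum, i.e.\ is discrete. Through the identification, $E$ is discrete as well.

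For the direction $(1)\Rightarrow(2)$, I would convert the discreteness of $E$ into algebraic data as follows. For each $e\in E$, pick a localising subcategory $\Tcal_e\subseteq\Hcal$ with $\GSupp(\Tcal_e)\cap E=\{e\}$; after replacing $\Tcal_e$ by $\Tcal_e\vee\Lcal$, which preserves this intersection (as $E\subseteq\Lcal^{\bot}$), one may assume $\Lcal\subseteq\Tcal_e$. Then Lemma~\ref{lemma:torsion-localisation}(2) and Proposition~\ref{prop:spectrum-quotients}(2) ensure that $\Lsf(\Tcal_e)$ is a localising subcategory of $\Hcal/\Lcal$ with Gabriel support $\{\Lsf(e)\}$. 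By closedness of $E$, these singletons exhaust $\GSpec(\Hcal/\Lcal)$, so every point is isolated. Taking $\Tcal_e$ minimal with its defining property, Corollary~\ref{cor:minimal-nonzero} identifies $\Lsf(\Tcal_e)$ as $\Loc(\bar s_e)$ for a simple $\bar s_e\in\Hcal/\Lcal$, and Lemma~\ref{lemma:gabriel-simple-subobject} shows that $\Lsf(e)$ is the injective envelope of $\bar s_e$. The localising subcategory generated by $\{\bar s_e : e\in E\}$ then has full Gabriel support, so by Proposition~\ref{prop:gsupp} must equal $\Hcal/\Lcal$; this is by definition semi-artinianness of $\Hcal/\Lcal$.

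The principal obstacle is that Corollary~\ref{cor:minimal-nonzero} and Proposition~\ref{prop:gsupp} both assume $\Hcal/\Lcal$ is semi-noetherian, which does not follow immediately from the hypotheses. I expect to handle this by running the above argument inductively on the Cantor--Bendixson rank of $\GSpec(\Hcal/\Lcal)\simeq E$ (which equals $0$ under discreteness), thereby constructing the Gabriel filtration of $\Hcal/\Lcal$ from the simples $\bar s_e$ and establishing semi-noetherianness and semi-artinianness together.
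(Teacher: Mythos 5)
Your overall route — identify $E$ with $\GSpec(\Hcal/\Lcal)$ via Proposition~\ref{prop:spectrum-quotients}(1) and then relate discreteness of the spectrum to semi-artinianness of the quotient — is the same as the paper's, although the paper's proof is a two-liner that simply cites the equivalence ``semi-noetherian $\Rightarrow$ (semi-artinian $\iff$ discrete Gabriel spectrum)''. You correctly spot that this equivalence, and in particular the inputs you want to invoke (Proposition~\ref{prop:gsupp}, Corollary~\ref{cor:minimal-nonzero}, Lemma~\ref{lemma:gabriel-simple-subobject}), all presuppose semi-noetherianity of $\Hcal/\Lcal$, which is not supplied by the hypotheses as literally stated.

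The issue is with the fix you propose at the end. Inducting on the Cantor--Bendixson rank of $E$ cannot close the gap: since $E$ is discrete, that rank is $0$, so there is nothing to induct on, and more to the point, Cantor--Bendixson rank $0$ simply does not imply semi-artinianness. The paper's own Example~\ref{ex:no-indecomposable} is a counterexample: for $R$ an infinite boolean ring, $\GSpec(\Mod(R))$ is discrete (so has Cantor--Bendixson rank $0$), yet $\Mod(R)$ is not semi-artinian (nor semi-noetherian). Taking $c=\omega$, $\Ecal=\Prod(c)$, $E=\GSpec(c)$, $\Lcal={}^\bot E = 0$ in your setup shows that the implication $(1)\Rightarrow(2)$ genuinely fails without an extra hypothesis. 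Knowing that each $\GSupp(\Lsf(\Tcal_e))$ is a singleton, or even that each $\Lsf(e)$ is the envelope of a simple, does not ensure that the simples generate $\Hcal/\Lcal$. The correct resolution is not to derive semi-noetherianity but to \emph{assume} it: the lemma should be read with the standing hypothesis that $\Hcal$ is semi-noetherian (which then passes to $\Hcal/{}^\bot E$ by Proposition~\ref{prop:semi-noetherian-serre}); that hypothesis is satisfied automatically in all the applications in \S\ref{sec:commutative-noetherian}, where $\Hcal$ arises from a bounded cosilting complex over a commutative noetherian ring.
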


\begin{proof}
	It follows from the homeomorphism of Proposition~\ref{prop:spectrum-quotients}(1)
	and the fact that a semi-noether\-ian category is semi-artinian if and only if
	its Gabriel spectrum is discrete.
\end{proof}

\begin{prop}\label{prop:discrete-mutation}
	If $E\subseteq\GSpec(c)$ is a discrete subspace, then
	$E=\Theta(E)\subseteq\GSpec(c')$ is a discrete open subset.
\end{prop}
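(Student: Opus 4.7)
My plan is to prove the stronger statement that every point of $E$ is an isolated point of $\GSpec(c')$; this immediately yields both that $E$ is open (as a union of open singletons) and that its subspace topology is discrete.

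The first step is to translate the hypothesis through the mutation. By the preceding Lemma, $E$ being a discrete subspace of $\GSpec(c)$ is equivalent to $\Hcal/\Tcal$ being semi-artinian, where $(\Tcal,\Fcal)=t_\Ecal$ is the hereditary torsion pair of finite type in $\Hcal$ associated to $\Ecal$. By Corollary~\ref{cor:mutation-semi-noetherianity}, this is equivalent to the Serre quotient $\Hcal_{c'}/\Tcal[-1]$ being semi-artinian. Moreover, I will check that each $u'\in E$, viewed via $H^0_{c'}$ as an indecomposable injective of $\Hcal_{c'}$, belongs to the Giraud subcategory $\Ccal=\Fcal\cap\Fcal'$ of $\Tcal[-1]\subseteq\Hcal_{c'}$: indeed, elements of $\Ecal$ are injective in $\Hcal_c$ and hence satisfy $\Ext^1_{\Hcal_c}(\Tcal,-)=0$, which places them in both $\Fcal$ and $\Fcal'=\Tcal[-1]^{\bot}$ (cf.\ Lemma~\ref{lemma:core=giraud}). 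Applying Proposition~\ref{prop:spectrum-quotients}(1) to $\Tcal[-1]\subseteq\Hcal_{c'}$ then identifies $E\subseteq\GSpec(c')$ with $\GSpec(\Hcal_{c'}/\Tcal[-1])$.

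Fix now $u'\in E$. By semi-artinianness, the indecomposable injective $\Lsf(u')$ of $\Hcal_{c'}/\Tcal[-1]$ corresponding to $u'$ is the injective envelope of some simple object $\bar s$; via $\Rsf\colon \Hcal_{c'}/\Tcal[-1]\simeq\Ccal$, this lifts to a simple object $s\in\Ccal$, which by Proposition~\ref{prop:ttf-simple}(3) applied to the TTF-triple $(\Fcal,\Tcal[-1],\Fcal')$ in $\Hcal_{c'}$ is in fact simple in $\Hcal_{c'}$ itself. Since $u'$ is a coirreducible injective object of $\Hcal_{c'}$ containing the nonzero subobject $s$, it is the injective envelope of $s$ in $\Hcal_{c'}$. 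The argument within the proof of Proposition~\ref{prop:cantor-bendixson-gabriel}(1) (which makes no use of semi-noetherianity of $\Hcal_{c'}$) then shows $\GSupp_{\Hcal_{c'}}(\Loc(s))=\{u'\}$, so that $\{u'\}$ is indeed open in $\GSpec(c')$.

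The crucial step is the lifting of simples from the semi-artinian quotient $\Hcal_{c'}/\Tcal[-1]$ back to genuine simples of $\Hcal_{c'}$, which is handled by Proposition~\ref{prop:ttf-simple}(3) applied to the TTF-triple produced by mutation; everything else is bookkeeping through the categorical equivalences $\Hcal/\Tcal\simeq\Ccal\simeq\Hcal_{c'}/\Tcal[-1]$ and the homeomorphism of Proposition~\ref{prop:spectrum-quotients}(1).
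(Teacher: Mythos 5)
Your proof follows the same basic strategy as the paper's --- identify each point of $E$ as the injective envelope of a simple object of $\Hcal_{c'}$, hence an isolated point of its Gabriel spectrum --- but there are two closely related gaps in the middle of the argument, both stemming from a conflation of $\Ccal=\Fcal\cap\Fcal'$ with the Giraud subcategory of the \emph{post-mutation} torsion pair $(\Tcal[-1],\Fcal')$ in $\Hcal_{c'}$.

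First, the justification that $H^0_{c'}(u')\in\Ccal$ actually shows $H^0_c(u')\in\Ccal$: it is $H^0_c(u')$ that is injective in $\Hcal_c$, hence in $\Tcal^{\bot_{0,1}}=\Ccal$ by Lemma~\ref{lemma:core=giraud}. But $H^0_c(u')$ and $H^0_{c'}(u')$ are different objects in general; by Proposition~\ref{prop:theta-GSupp(H)}(1), $H^0_c(u')$ is the $(\Fcal,\Tcal[-1])$-torsion part of $H^0_{c'}(u')$, and the cokernel in $\Tcal[-1]$ need not vanish (it does when $c$ is cotilting, not in general). All one can say directly is that $H^0_{c'}(u')\in\Fcal'$, which the paper proves via the computation $\Hcal_{c'}(\Tcal[-1],H^0_{c'}(u'))\simeq\Dcal(\Tcal[-1],u')\subseteq\Dcal(\Hcal,\Prod(c)[1])=0$; without this one does not even get the inclusion needed to invoke Proposition~\ref{prop:spectrum-quotients}(1).

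Second, the lift of $\bar s$ via $\Rsf$ lands in the essential image of $\Rsf$, which is $(\Tcal[-1])^{\bot_{0,1}}\subseteq\Hcal_{c'}$, the Giraud subcategory of the \emph{new} torsion pair. By Lemma~\ref{lemma:core=giraud}, $\Ccal=\Fcal\cap\Fcal'$ is instead the Giraud subcategory of the \emph{original} pair $(\Tcal,\Fcal)$ in $\Hcal$; the two coincide only if $\Ext^1_{\Hcal_{c'}}(\Tcal[-1],\Ccal)\hookrightarrow\Dcal(\Tcal,\Ccal[2])$ vanishes, i.e.\ if $t$ is strongly perfect, which is not part of the hypotheses. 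Since Proposition~\ref{prop:ttf-simple}(3) is a statement about the core of the TTF-triple (namely $\Ccal$), it does not apply to $\Rsf(\bar s)$ as you use it. The paper sidesteps this by never leaving $\Hcal_{c'}$: having $H^0_{c'}(u')\in\Fcal'\setminus\Tcal[-1]$, it extracts the nonzero $(\Fcal,\Tcal[-1])$-torsion part $x$, observes $x\in\Fcal\cap\Fcal'=\Ccal$ by heredity of $\Fcal'$, takes a simple subobject of $x$ inside the semi-artinian $\Ccal$, and only then invokes Proposition~\ref{prop:ttf-simple}(3). Your argument could be repaired similarly by replacing $\Rsf(\bar s)$ with its $(\Fcal,\Tcal[-1])$-torsion part, which is nonzero, lies in $\Ccal$, and still maps onto $\bar s$ under $\Lsf$.
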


\begin{proof}
	Denote by $t=(\Tcal,\Fcal)$ the hereditary torsion pair of finite type at which we
	are mutating, and by $\Hcal_t=\Fcal\ast\Tcal[-1]$ the heart of $c'$. Denote also by 
	$H_t^0\colon\Dcal\to \Hcal_t$ the cohomology with respect to $\Hcal_t$.
	If $e\in E=\Theta(E)$, we have that:
	\[\Hcal_t(\Tcal[-1],H_t^0(e))\simeq
	\Dcal(\Tcal[-1],e)\subseteq\Dcal(\Hcal,\Prod(c)[1])=0,\]
	which shows that $H_t^0(e)\in\GSpec(\Hcal_t)\cap \Fcal'$. In
	particular, it does not belong to $\Tcal[-1]$, and therefore it has a
	subobject $x\hookrightarrow H_t^0(e)$ in $\Fcal$. Since $\Fcal'$ is
	closed under subobjects, we have that $x\in\Ccal:=\Fcal\cap\Fcal'$. Recall
	from subsection~\S\ref{subsec:mutation-semi-noetherianity} that $\Ccal\simeq \Hcal/\Tcal$, and therefore
	it is semi-artinian by assumption. Hence $x$ has a simple subobject in $\Ccal$,
	which is also a simple subobject in $\Hcal_t$ by Proposition~\ref{prop:ttf-simple}.
	This shows that $H_t^0(e)$ is the injective envelope of a simple
	object of $\Hcal_t$. As such, it is an isolated point in $\GSpec(\Hcal_t)$.

	Translating back to $\GSpec(c')$, we have shown that $e$ is an isolated point
	of $\GSpec(c')$. It follows that $E=\bigcup_{e\in E}\{e\}\subseteq\GSpec(c')$
	is open, and discrete.
\end{proof}

\begin{cor}\label{cor:discrete-mutation}
	Assume that $E\subseteq\GSpec(c)$ is a discrete subspace. Then the following
	are equivalent, for $U\subseteq\GSpec(c)$:
	\begin{enumerate}
		\item $\Theta(U)$ is closed for $\GSpec(c')$;
		\item $U\cap E^\cpl=V\cap E^\cpl$ for $V\subseteq\GSpec(c)$ closed;
		\item $U\cup E$ is closed for $\GSpec(c)$.
	\end{enumerate}
\end{cor}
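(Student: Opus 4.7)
The plan is to reduce the problem to Remark~\ref{rmk:disconnect} once it is verified that $E$ is clopen in $\GSpec(c')$. First, since $E = \Ind(\Ecal)$ with $\Ecal = \Prod(\Ecal)$, Lemma~\ref{lemma:topology-inj} yields that $E$ is closed in $\GSpec(c)$; Lemma~\ref{lemma:two-cases} then gives that $E = \Theta(E)$ is closed in $\GSpec(c')$ as well. Proposition~\ref{prop:discrete-mutation} provides the complementary piece: under the discreteness hypothesis, $E$ is in fact an open subset of $\GSpec(c')$.

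For $(1) \Leftrightarrow (2)$, I would invoke Remark~\ref{rmk:disconnect}: since $E$ is clopen in $\GSpec(c')$, $\Theta(U)$ is closed there precisely when both $\Theta(U) \cap E = U \cap E$ is closed in the subspace $E$, and $\Theta(U) \cap \Theta(E^\cpl) = \Theta(U \cap E^\cpl)$ is closed in $\Theta(E^\cpl)$. The former is automatic because $E$ is discrete. The latter, via the homeomorphism in Theorem~\ref{thm:piecewise-homeomorphism}, is equivalent to $U \cap E^\cpl$ being closed in the subspace $E^\cpl \subseteq \GSpec(c)$, which is precisely the content of (2). Before applying Theorem~\ref{thm:piecewise-homeomorphism}, one must check its hypothesis $\Ecal = \Prod(\Ind(\Ecal))$; this follows because the discreteness of $E$ forces $\Hcal / {}^\bot \Ecal$ to be semi-artinian, and Proposition~\ref{prop:gsupp}(1), applied through the equivalence $\Ecal \simeq \Inj(\Hcal / {}^\bot \Ecal)$, then yields the identity.

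Finally, for $(2) \Leftrightarrow (3)$, the implication $(3) \Rightarrow (2)$ is immediate with the choice $V := U \cup E$. For $(2) \Rightarrow (3)$, I would use the set-theoretic identity
\[ U \cup E = (U \cap E^\cpl) \cup E = (V \cap E^\cpl) \cup E = V \cup E, \]
and conclude that $U \cup E$ is closed because both $V$ and $E$ are closed in $\GSpec(c)$ and the Gabriel topology is closed under finite unions of closed sets. The only genuinely non-routine point is the verification of the hypothesis of Theorem~\ref{thm:piecewise-homeomorphism} in the discrete setting; the rest of the argument is a transparent unwinding of the remark together with elementary set theory.
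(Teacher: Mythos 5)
Your proof follows essentially the same route as the paper's: verify that $E$ is clopen in $\GSpec(c')$ (closed by Lemma~\ref{lemma:two-cases}, open by Proposition~\ref{prop:discrete-mutation}), apply Remark~\ref{rmk:disconnect}, note that the subspace condition on $E$ is vacuous because $E$ is discrete, and reduce the subspace condition on $E^\cpl$ to~(2) via Theorem~\ref{thm:piecewise-homeomorphism}; the $(2)\Leftrightarrow(3)$ step is handled the same way, using that $E$ is closed in $\GSpec(c)$.

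The one point worth flagging is your extra verification of the hypothesis $\Ecal = \Prod(\Ind(\Ecal))$ of Theorem~\ref{thm:piecewise-homeomorphism}. This hypothesis is carried along from \S\ref{subsec:piecewise-homeomorphism} (``Throughout, we keep the notation of the previous subsection''), so the paper never re-derives it. More to the point, your proposed derivation is circular as stated: the lemma opening \S\ref{subsec:disconnect} asserts that discreteness of $E$ is equivalent to $\Hcal/{}^\bot E$ being semi-artinian, with ${}^\bot E = {}^\bot \Prod(E)$, whereas you need semi-artinianity of $\Hcal/{}^\bot\Ecal$; these quotients coincide exactly when $\Ecal = \Prod(E)$, which is the identity you are trying to establish. (Moreover, the lemma itself silently relies on semi-noetherianity of $\Hcal$ for the direction ``discrete $\Rightarrow$ semi-artinian''; cf.\ Example~\ref{ex:no-indecomposable}. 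And under that same semi-noetherianity, Proposition~\ref{prop:gsupp}(1) already gives $\Ecal = \Prod(\Ind(\Ecal))$ directly, making the detour unnecessary.) If you drop that paragraph and simply inherit the hypothesis from the ambient context, the rest of your argument is correct and matches the paper's.
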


\begin{proof}
	Since $E=\Theta(E)\subseteq\GSpec(c')$ is clopen, by
	Remark~\ref{rmk:disconnect} a subset $\Theta(U)\subseteq\GSpec(c')$ is closed
	if and only if the intersections $\Theta(U)\cap E$ and $\Theta(U)\cap \Theta(E^\cpl)$
	are closed in the subspace topologies. Since $E$ is discrete, the first
	condition is always satisfied. Therefore $\Theta(U)$ is closed if and only if
	$U\cap E^\cpl=V\cap E^\cpl$ for $V\subseteq\GSpec(c)$ closed, which is (2).
	The equivalence $(2\Leftrightarrow3)$ follows from the fact that
	$E\subseteq\GSpec(c)$ is closed.
\end{proof}

\subsubsection{Strongly perfect mutation}

The second case is a generalisation of \cite[\S4.2]{pavo-vito-21}. Let
$t=(\Tcal,\Fcal)$ be a hereditary torsion pair in a Grothendieck category
$\Acal$, and let $\Ccal:=\Tcal^{\bot_{0,1}}$ be the Giraud subcategory. Recall
that $t$ is said to be \emph{perfect} if $\Ext^2(\Tcal,\Ccal)=0$ as well. By
dimension shifting, it is easy to see that this equivalent to the fact that
$\Ccal$ is closed under cokernels of monomorphisms. We need the following slightly stronger
notion.

\begin{dfn}\label{dfn:strongly-perfect}
	Let $\Dcal$ be a triangulated category, and $\Hcal$ a Grothendieck heart of
	$t$-structure. Let $t=(\Tcal,\Fcal)$ be a hereditary torsion pair in $\Hcal$,
	with Giraud subcategory $\Ccal\subseteq\Hcal$. Then $t$ is
	\emph{strongly perfect} if $\Dcal(\Tcal,\Ccal[2])=0$.
\end{dfn}

Observe that this notion is not intrinsic to $t$ in $\Hcal$, but it depends on
the embedding $\Hcal\subseteq\Dcal$. In general, we always have a monomorphism $\Ext_\Hcal^2(-,-)\hookrightarrow
\Dcal(-,-[2])$, making this condition stronger then
perfectness. It coincides with perfectness when $\Hcal$ is the heart of a
$t$-structure of derived type, such as the standard $t$-structure in a
derived category.

From now on, we resume the notation of \S\ref{subsec:piecewise-homeomorphism}
we have been using so far.

\begin{prop}
	Assume $t=(\Tcal,\Fcal)$ is strongly perfect in $\Hcal\subseteq\Dcal$. Then:
	\begin{enumerate}
		\item there is a \emph{TTF-quadruple} $(\Fcal,\Tcal[-1],\Ccal,\Rcal)$ in
			$\Hcal_t$;
		\item $\Ccal$ is a hereditary torsion class in
			$\Hcal_t$ with Gabriel support $\GSupp(\Ccal)=\GSupp(\Tcal[-1])^\cpl$, which
			is therefore open in $\GSpec(c')$.
	\end{enumerate}
\end{prop}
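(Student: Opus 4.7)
The plan is to reduce both parts to the identification $\Fcal' = \Ccal$ in $\Hcal_t$. Once this is established, part (1) follows by verifying that $\Ccal$ is also a torsion class therein, and part (2) is a formal statement about supports of the resulting hereditary torsion pair.

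For $\Fcal' = \Ccal$, the inclusion $\Ccal \subseteq \Fcal'$ is contained in Lemma~\ref{lemma:core=giraud}. For the converse, take $x \in \Fcal'$ with $(\Fcal, \Tcal[-1])$-torsion sequence in $\Hcal_t$,
\[
0 \to f \to x \to s[-1] \to 0,
\]
realised in $\Dcal$ as a triangle with connecting morphism $\alpha \in \Dcal(s, f[2])$. The key observation is that $\Fcal'$ is a torsion-free class and hence closed under subobjects in $\Hcal_t$, so the monomorphism $f \hookrightarrow x$ places $f$ in $\Fcal \cap \Fcal' = \Ccal$. Strong perfectness now gives $\alpha = 0$, whence $x \simeq f \oplus s[-1]$ in $\Hcal_t$. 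Applying $\Hcal_t(\Tcal[-1], -)$ to this splitting and projecting onto the second summand forces $\Hom_\Hcal(\Tcal, s) = 0$; taking $t = s$ yields $\End_\Hcal(s) = 0$, so $s = 0$ and $x = f \in \Ccal$.

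With $\Fcal' = \Ccal$ in hand, the torsion pair $(\Tcal[-1], \Ccal)$ is hereditary by a routine $\Hcal$-cohomological chase (independent of strong perfectness) showing that $\Tcal[-1]$ is closed under subobjects in $\Hcal_t$, which in turn yields closure of $\Ccal$ under injective envelopes. To complete the TTF-quadruple, $\Ccal$ must also be a torsion class, i.e., closed under extensions (automatic from torsion-freeness), coproducts, and quotients. Coproducts: the finite-type assumption gives closure of $\Ccal$ under coproducts in $\Hcal$, and the smashness of both $\tau$ and $\tau_t$ ensures that for objects in $\Fcal \subseteq \Hcal \cap \Hcal_t$ the $\Hcal$-, $\Hcal_t$- and $\Dcal$-coproducts all coincide. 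Quotients: given an $\Hcal_t$-short exact sequence $0 \to k \to c \to c' \to 0$ with $c \in \Ccal$, subobject closure places $k$ in $\Ccal$, and applying $\Dcal(\Tcal, -[1])$ to the associated triangle together with strong perfectness produces
\[
0 = \Dcal(\Tcal, c[1]) \to \Dcal(\Tcal, c'[1]) \hookrightarrow \Dcal(\Tcal, k[2]) = 0,
\]
so $c' \in \Fcal' = \Ccal$. Setting $\Rcal := \Ccal^\bot$ in $\Hcal_t$ yields the torsion pair $(\Ccal, \Rcal)$ and completes the quadruple.

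For part (2), $\Ccal$ is a hereditary torsion class (closed under subobjects as it is also torsion-free) and is closed under injective envelopes; hence every indecomposable injective of $\Hcal_t$ belongs to exactly one of $\Ccal, \Rcal$. Therefore
\[
\GSupp(\Ccal) = \GSpec(c') \setminus \Rcal = \GSpec(c') \cap \Ccal = \GSupp(\Tcal[-1])^\cpl,
\]
where the last equality uses that $\Ccal$ is the torsion-free class of $(\Tcal[-1], \Ccal)$. The main obstacle is the $\Fcal' = \Ccal$ step: without the observation that subobject closure of $\Fcal'$ places $f$ into $\Ccal$, strong perfectness could not be deployed, and one would be forced to assume the stronger (and generally false) condition $\Dcal(\Tcal, \Fcal[2]) = 0$.
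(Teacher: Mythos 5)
Your argument follows the paper's strategy closely: the central step $\Fcal' = \Ccal$ is obtained exactly as in the paper, by observing that the torsion part $f$ of $x\in\Fcal'$ already lies in $\Ccal = \Fcal\cap\Fcal'$ (since $\Fcal'$ is closed under subobjects), so that strong perfectness annihilates the connecting morphism and the torsion sequence splits. Your subsequent deduction that the complementary summand vanishes is more roundabout than necessary, and uses a clashing symbol $t$ — the paper simply notes that this summand lies in $\Tcal[-1]\cap\Fcal'=0$. Part~(2) coincides with the paper's argument.

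Where you diverge is in upgrading $\Ccal$ to a torsion class. Your quotient step applies strong perfectness directly in $\Dcal$; the paper instead notes that the short exact sequence lies in $\Hcal$ as well and there uses the weaker consequence that $\Ccal\subseteq\Hcal$ is closed under cokernels of monomorphisms. Both are fine. Your coproduct step, on the other hand, is both superfluous and shakily justified. It is superfluous because $\Fcal'$, as a torsion-free class, is already closed under products and subobjects, and in a Grothendieck category the canonical map $\bigoplus c_i\to\prod c_i$ is monic (a filtered colimit of the split inclusions $\prod_{i\in F}c_i\hookrightarrow\prod_{i\in I}c_i$ over finite $F\subseteq I$, and AB5 preserves monomorphisms under filtered colimits), so closure under quotients alone already makes $\Fcal'$ a torsion class — this is what the paper's ``it is enough to show that it is closed under quotients'' is tacitly using. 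It is shakily justified because ``the finite-type assumption gives closure of $\Ccal$ under coproducts in $\Hcal$'' is not an obvious implication; the correct reason, should one want to argue via coproducts, is that $\Ccal$ is the core of the TTF-triple $(\Fcal,\Tcal[-1],\Fcal')$ and cores of TTF-triples are closed under coproducts, a fact the paper records just before its appeal to Beligiannis--Reiten.
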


\begin{proof}
	(1) We already know that there is a TTF-triple $(\Fcal,\Tcal[-1],\Fcal')$ in
	$\Hcal_t$, and that $\Ccal=\Fcal\cap\Fcal'$. Now, for any object $x\in\Fcal'$, consider the torsion sequence with respect to 
	$(\Fcal,\Tcal[-1])$:
	\[ f\to x\to t[-1]\to f[1]\]
	Since $\Fcal'$ is closed under subobjects, we have $f\in\Ccal$. Hence,
	the connecting morphism $t[-1]\to f[1]$ lies in
	$\Dcal(t[-1],f[1])\simeq\Dcal(t,f[2])\subseteq\Dcal(\Tcal,\Ccal[2])=0$. The
	triangle then splits, forcing $t[-1]=0$ and showing that $x\simeq f$ lies in
	$\Ccal$.

	Now we have to show that the torsion-free class $\Ccal=\Fcal'$ is also a
	torsion class; it is enough to show that it is closed under quotients, and in
	fact, under cokernels of monomorphisms. Consider a short exact sequence in
	$\Hcal_t$:
	\[c_1\to c_2\to x\to c_1[1]\]
	with $c_1,c_2\in\Ccal\subseteq \Fcal$. Since $\Fcal$ is closed under
	quotients, we have $x\in\Fcal$. Hence this triangle is also a short exact
	sequence in $\Hcal$. Since $\Ccal\subseteq\Hcal$ is closed under cokernels of
	monomorphisms, as mentioned above, we conclude that $x\in\Ccal$.

	(2) Since $\Ccal$ is simultaneously a hereditary torsion class and the
	torsion-free class of a hereditary torsion pair, it is \emph{stable}, that is,
	closed under injective envelopes. Therefore, its Gabriel support consists of
	the indecomposable injectives it contains. This set is precisely the
	complement of the Gabriel support of $\Tcal[-1]$.
\end{proof}

Let $c\in\Dcal$ be a pure-injective cosilting object, with heart $\Hcal$, and
let $t=(\Tcal,\Fcal)$ be a hereditary torsion pair of finite type in $\Hcal$.
Let $c'$ be the right mutation of $c$ at $t$.

\begin{cor}\label{cor:strongly-perfect}
	Assume that $t$ is strongly perfect in $\Hcal\subseteq\Dcal$ (which is the
	case, for example, when
	$c$ is cotilting and $t$ is perfect). Then the following are
	equivalent, for $U\subseteq\GSpec(c)$:
	\begin{enumerate}
		\item $\Theta(U)$ is closed for $\GSpec(c')$;
		\item $U$ has the form $U=(V_1\cap E)\sqcup(V_2\cap E^\cpl)$ for
			$V_1, V_2\subseteq\GSpec(c)$ closed.
	\end{enumerate}
\end{cor}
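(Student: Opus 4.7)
The plan is to reduce the corollary, via Remark~\ref{rmk:disconnect} and Theorem~\ref{thm:piecewise-homeomorphism}, to the single statement that $E=\Theta(E)\subseteq\GSpec(c')$ is clopen. Closedness is free: $E=\Ind(\Ecal)$ is closed in $\GSpec(c)$ (since $\Prod(E)\subseteq\Ecal$ gives $E=\Ind(\Prod(E))$), so Lemma~\ref{lemma:two-cases} applied to $V=E$ transports this to $\GSpec(c')$. The whole game is therefore to prove openness.

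For openness, I would invoke the proposition preceding the corollary: under strong perfectness, $\Ccal=\Tcal^{\bot_{0,1}}$ is a hereditary torsion class in $\Hcal_t$ with Gabriel support $\GSupp(\Ccal)=\GSupp(\Tcal[-1])^\cpl$, which is open in $\GSpec(c')\simeq\GSpec(\Hcal_t)$. It therefore suffices to identify $E^\cpl=\GSupp(\Tcal[-1])$ as subsets of $\GSpec(c')$. Using the cosilting natural isomorphism for $c'$ and $H_t^0(\Tcal[-1])=\Tcal[-1]$, a point $u'\in\GSpec(c')$ lies in $\GSupp(\Tcal[-1])$ if and only if $\Dcal(\Tcal,u'[1])\neq 0$; thus the task reduces to showing that $u'\in\Ecal$ iff $\Dcal(\Tcal,u'[1])=0$.

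The identification splits into two cases. If $u'\in E\subseteq\Prod(c)$, then the cosilting natural isomorphism for $c$ (extended to $\Prod(c)$ via $H_c^0\colon\Prod(c)\simeq\Inj(\Hcal)$) gives, for any $x\in\Hcal$, $\Dcal(x,u'[1])\simeq\Dcal(x[-1],u')\simeq\Hcal(H_c^{-1}(x),H_c^0(u'))=0$ since $x\in\Hcal$ forces $H_c^{-1}(x)=0$; in particular $\Dcal(\Tcal,u'[1])=0$. Conversely, for $u'\in\GSpec(c')\setminus E$, I would use the exchange triangle $u'\to e\to u\to u'[1]$ of Theorem~\ref{thm:cosilting-mutation}(4), with $u\in\GSpec(c)\setminus E$ and $e\in\Ecal$. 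Applying $\Dcal(\Tcal,-)$, and noting that $\Dcal(\Tcal,e)=\Hcal(\Tcal,H_c^0(e))=0$ (because $H_c^0(e)\in\Inj(\Hcal)\cap\Fcal=\Tcal^\bot\cap\Inj(\Hcal)$) and $\Dcal(\Tcal,e[1])=0$ (by the vanishing computation of the previous case applied to $e\in\Prod(c)$), one obtains $\Dcal(\Tcal,u'[1])\simeq\Dcal(\Tcal,u)\simeq\Hcal(\Tcal,H_c^0(u))$. This last group is nonzero, since $u\notin\Ecal$ forces $H_c^0(u)\notin\Fcal=\Tcal^\bot$.

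The parenthetical remark, that perfect plus cotilting implies strongly perfect, follows because in the cotilting case $\tau_c$ has derived type (see \cite[Cor.~5.2]{psar-vito-18}), so the canonical map $\Ext^2_\Hcal(\Tcal,\Ccal)\hookrightarrow\Dcal(\Tcal,\Ccal[2])$ is an isomorphism, matching the two conditions. The principal obstacle in the plan is the bookkeeping between the two cohomology functors $H_c^0$ and $H_t^0$ (one lives on the old heart $\Hcal$, the other on the new heart $\Hcal_t$); the cosilting natural isomorphisms and the explicit exchange triangles are precisely the tools that make the identification $E^\cpl=\GSupp(\Tcal[-1])$ tractable.
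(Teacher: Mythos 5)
Your proof is correct and follows the same route as the paper, reducing the corollary via Remark~\ref{rmk:disconnect} to the clopenness of $E$ in $\GSpec(c')$, which the preceding proposition about the TTF-quadruple supplies. You additionally carry out explicitly the identification $E^\cpl=\GSupp(\Tcal[-1])$ inside $\GSpec(c')$ (using the cosilting isomorphisms for $c$ and $c'$ together with the exchange triangle), a step the paper's one-line proof leaves to the reader but which is indeed needed to match $\GSupp(\Ccal)$ with $\Theta(E)$.
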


\begin{proof}
	Apply Remark~\ref{rmk:disconnect}.
\end{proof}

Under certain extra assumptions, we can prove that a discrete mutation
is actually perfect. This result, which links the two cases we have considered in
this subsection, will be applied in \S\ref{sec:concrete-computations}. Let
$t=(\Tcal,\Fcal)$ be a hereditary torsion pair of
finite type in $\Hcal$ and denote again by $\Rsf: \Gcal/\Tcal \to \Gcal$ the right
adjoint to the quotient functor. Then $t$ is perfect precisely when $\Rsf$ is
(right-)exact. Following \cite{stov-24}, we say that $t$ is \emph{Giraud-finite}
if the Giraud subcategory $\Ccal = \Tcal^{\bot_{0,1}}$ is closed under direct
limits in $\Hcal$, or equivalently, if $\Rsf$ preserves direct limits. This implies
that $t$ is of finite type by \cite[Lemma~2.4]{krau-97}.

\begin{prop}\label{prop:semi-artinian-perfect}
	Let $t=(\Tcal,\Fcal)$ be a hereditary torsion pair of finite type in $\Hcal$ such that:
	\begin{enumerate}[label=(\alph*)]
		\item $\Gcal/\Tcal$ is semi-artinian;
		\item for every simple object $s\in\Gcal/\Tcal$ we have
			$\Ext_\Hcal^2(\Tcal,\Rsf(s)) = 0$;
		\item $t$ is Giraud-finite.
	\end{enumerate}
	Then $t$ is perfect.
\end{prop}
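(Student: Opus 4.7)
The plan is to prove that $\Rsf : \Hcal/\Tcal \to \Hcal$ is an exact functor. Being a right adjoint, $\Rsf$ is already left exact, so this reduces to preservation of epimorphisms, which is equivalent to $t$ being perfect. I would introduce the subcategory
\[ \Ycal := \{\, y \in \Hcal/\Tcal \mid \text{for every epi } x \twoheadrightarrow y \text{ in } \Hcal/\Tcal,\; \Rsf(x) \twoheadrightarrow \Rsf(y) \text{ in } \Hcal\,\}, \]
and aim to show $\Ycal = \Hcal/\Tcal$. By (a), every object of $\Hcal/\Tcal$ admits a transfinite filtration with simple quotients, so a transfinite induction on the composition length reduces the goal to (i) $\Ycal$ is closed under direct limits of continuous chains, (ii) $\Ycal$ is closed under extensions, and (iii) every simple $s \in \Hcal/\Tcal$ lies in $\Ycal$.

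For (i), given $y = \varinjlim y_\alpha$ with $y_\alpha \in \Ycal$ and an epi $x \twoheadrightarrow y$, I would form the pullbacks $x_\alpha := x \times_y y_\alpha$: by AB5 in $\Hcal/\Tcal$, $x = \varinjlim x_\alpha$ with each $x_\alpha \twoheadrightarrow y_\alpha$ epic, and by (c) the functor $\Rsf$ commutes with this direct limit, so $\Rsf(x) = \varinjlim \Rsf(x_\alpha) \twoheadrightarrow \varinjlim \Rsf(y_\alpha) = \Rsf(y)$ is a colimit of epimorphisms to a common target chain. For (ii), given a short exact sequence $0 \to y_1 \to y_2 \to y_3 \to 0$ with $y_1, y_3 \in \Ycal$ and an epi $x \twoheadrightarrow y_2$, the pullback $x_1 := x \times_{y_2} y_1$ fits into an exact sequence $0 \to x_1 \to x \to y_3 \to 0$ with $x_1 \twoheadrightarrow y_1$ and $x \twoheadrightarrow y_3$; applying $\Rsf$ and invoking $y_1, y_3 \in \Ycal$ yields a commutative diagram with a short exact top row, a left exact bottom row, epic left vertical, and identity right vertical, and a direct element chase then gives $\Rsf(x) \twoheadrightarrow \Rsf(y_2)$.

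For (iii), given an epi $x \twoheadrightarrow s$ with kernel $y'$, applying $\Rsf$ produces $0 \to \Rsf(y') \to \Rsf(x) \to q \to 0$ with $q \hookrightarrow \Rsf(s)$ and $\bar q := \Rsf(s)/q \in \Tcal$, so the task is to show $\bar q = 0$. Chaining the $\Hom_\Hcal(\bar q, -)$-long exact sequences of the two short exact sequences $0 \to q \to \Rsf(s) \to \bar q \to 0$ and $0 \to \Rsf(y') \to \Rsf(x) \to q \to 0$, and using that $\Rsf(s), \Rsf(x) \in \Ccal$ annihilate $\Hom_\Hcal(\Tcal, -)$ and $\Ext^1_\Hcal(\Tcal, -)$, produces the identification
\[ \Hom_\Hcal(\bar q, \bar q) \cong \Ext^1_\Hcal(\bar q, q) \hookrightarrow \Ext^2_\Hcal(\bar q, \Rsf(y')), \]
from which $\bar q = 0$ follows as soon as $\Ext^2_\Hcal(\Tcal, \Rsf(y')) = 0$. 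I would establish this last vanishing by a nested transfinite induction on the composition length of $y'$, with base case handled by (b), successor step via the same identity above (to first obtain exactness of $\Rsf$ on the defining sequence) followed by the $\Ext^2$ long exact sequence and (b), and limit step using (c) to identify $\Rsf(y')$ with the direct limit of $\Rsf$-images of the composition subseries combined with an Auslander-type argument for continuous chains in the Grothendieck category $\Hcal$. The principal obstacle is this limit step, since $\Ext^2$ does not commute with filtered colimits in general; (c) is precisely the hypothesis that makes it run cleanly.
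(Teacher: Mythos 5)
Steps (i) and (ii) of your reduction are fine, but step (iii) is circular and the nested induction you propose to discharge it does not close. To place a simple $s$ in $\Ycal$ you need $\Ext^2_\Hcal(\Tcal,\Rsf(y'))=0$ where $y'$ is the kernel of an \emph{arbitrary} epimorphism onto $s$. Since $y'$ ranges over all of $\Gcal/\Tcal$, this is the assertion $\Ext^2_\Hcal(\Tcal,\Ccal)=0$, which is precisely the perfectness you are trying to prove. Your nested induction on the length of $y'$ handles the base and successor cases, but at a limit ordinal you need $\Ext^2_\Hcal(T,\varinjlim\Rsf(y'_\alpha))=0$ for $T\in\Tcal$, and nothing in the hypotheses implies that $\Ext^2_\Hcal(T,-)$ commutes with this direct limit: the torsion objects $T$ are not small, Eklof's lemma concerns the contravariant variable, and the ``Auslander-type argument'' you allude to is not a known result in this generality. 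Hypothesis (c) only says that $\Rsf$ preserves direct limits, not that $\Ext$-groups out of $\Tcal$ do, so it does not repair this step.

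The paper avoids the issue entirely by inducting on the kernel of the epimorphism rather than on its target. Given an epimorphism $f\colon y\to z$ in $\Gcal/\Tcal$ with kernel $x$, it fixes a minimal composition series of $x$ and inducts on its length $\lambda$. The base case is $x$ simple, which is exactly where (b) applies --- so (b) is invoked where it is strong, rather than deferred to an unbounded auxiliary vanishing. At a limit ordinal the crucial manoeuvre is to push out $x\twoheadrightarrow x/x_\alpha$ along $x\hookrightarrow y$, producing a chain of epimorphisms $y\twoheadrightarrow y'_\alpha:=y/x_\alpha$ whose kernels $x_\alpha$ have strictly smaller length. The inductive hypothesis then makes each $\Rsf(y)\to\Rsf(y'_\alpha)$ epic, one has $\varinjlim y'_\alpha\simeq z$, and AB5 together with (c) gives that $\Rsf(y)\to\varinjlim\Rsf(y'_\alpha)\simeq\Rsf(z)$ is epic, with no appeal to $\Ext^2$ commuting with colimits. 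That transfer of the kernel filtration to a tower of quotients of $y$ is the idea your argument is missing.
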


\begin{proof}
	Unravelling the last two definitions, in (c) we assume $\Rsf$ to commute with direct limits, and
	we want to prove that it is right exact. That is,
	for every epimorphism $f\colon y\to z$ in $\Gcal/\Tcal$
	want to show that $\Rsf(f)$ is epic, or equivalently that $t:=\coker(\Rsf(f))\in\Tcal$ is zero.

	By (a), $x:=\ker f$ admits
	a transfinite filtration with
	simple factors; that is, a filtration $0=:x_{-1}\subset x_0\subset x_1\subset\cdots
	\subset x_\lambda=x$, indexed over an ordinal $\lambda$, such that the factors
	$x_{\alpha+1}/x_\alpha$ are simple for every $\alpha<\lambda$ and we have
	$\varinjlim_{\beta<\alpha}x_\beta=x_\alpha$ for every limit ordinal
	$\alpha=\bigcup_{\beta<\alpha}\beta\leq \lambda$. We fix such a filtration so
	that $\lambda$ is minimal; this ordinal is the \emph{length} of $x$, and we
	argue by induction on it. Observe that, by minimality, every subobject
	$x_\alpha$ has length $\alpha$.
	
	If $\lambda=0$, then $x$ is simple, and by (b) we have $\Rsf(x)\in\Tcal^{\bot_2}$.
	From the long sequence of $\Ext$, we deduce that
	the image of $\Rsf(f)$ belongs to $\Tcal^{\bot_{0,1}}$, and therefore $t$ must split as a summand
	of $\Rsf(z)$, which forces $t=0$.

	If $\lambda=\alpha+1$ is a successor ordinal, we have a short exact
	sequence $0\to x_\alpha\to x\to s\to 0$, with $x_\alpha$ of length $\alpha$ and $s$ simple.
	Consider the pushout diagram in $\Gcal/\Tcal$, on the left:
	\[\begin{tikzcd}[sep=1.5pc]
		& 0 \arrow{d} & 0 \arrow{d} \\
		& x_\alpha \arrow[equal]{r} \arrow{d} & x_\alpha \arrow{d} && \text{in }\Gcal/\Tcal,\\
		0 \arrow{r} & x \arrow[phantom,description,very near end, "\ulcorner"]{dr}
			\arrow{r} \arrow{d} & y \arrow{r}{f} \arrow{d} & z \arrow[equal]{d} \arrow{r} & 0 \\
		0 \arrow{r} & s \arrow{r} \arrow{d} & y' \arrow{r} \arrow{d} & z \arrow{r} & 0\\
		& 0 & 0
	\end{tikzcd}\qquad
	\begin{tikzcd}[sep=1.5pc]
		& 0 \arrow{d} & 0 \arrow{d} \\
		& \Rsf(x_\alpha) \arrow[equal]{r} \arrow{d} & \Rsf(x_\alpha) \arrow{d} && \text{in }\Gcal.\\
		0 \arrow{r} & \Rsf(x) \arrow{r} \arrow{d} & \Rsf(y) \arrow{r}{\Rsf(f)}
		\arrow{d} & \Rsf(z) \arrow[equal]{d} \\
		0 \arrow{r} & \Rsf(s) \arrow{r} \arrow{d} & \Rsf(y') \arrow{r} \arrow{d} & \Rsf(z) \arrow{r} & 0\\
		& 0 & 0
	\end{tikzcd}\]
	Applying $\Rsf$ we obtain the diagram in $\Gcal$ on the right.
	Since $s$ and $x_\alpha$ have length less than $\alpha+1$, by induction the two
	columns and the bottom row are exact.
	It follows that $\Rsf(f)$ is epic in $\Gcal$.

	For the last case, assume that $\lambda=\bigcup_{\alpha<\lambda}$ is a limit
	ordinal. For each
	$\alpha<\lambda$, consider the short exact sequence $0\to x_\alpha\to x\to
	x_\alpha'\to0$ in $\Gcal/\Tcal$, and the pushout diagram on the left:
	\[
		\begin{tikzcd}[sep=1.5pc]
			& 0 \arrow{d} & 0 \arrow{d} \\
			& x_\alpha \arrow[equal]{r} \arrow{d} & x_\alpha \arrow{d} && \text{in
			}\Gcal/\Tcal,\\
			0 \arrow{r} & x \arrow[phantom,description,very near end, "\ulcorner"]{dr}
				\arrow{r} \arrow{d} & y \arrow{r}{f} \arrow{d} & z \arrow[equal]{d} \arrow{r} & 0 \\
			0 \arrow{r} & x_\alpha' \arrow{r} \arrow{d} & y_\alpha'
			\arrow{r}{f_\alpha} \arrow{d} & z \arrow{r} & 0\\
			& 0 & 0
		\end{tikzcd}
		\qquad
		\begin{tikzcd}[sep=1.5pc]
			& 0 \arrow{d} & 0 \arrow{d} \\
			& \Rsf(x_\alpha) \arrow[equal]{r} \arrow{d} & \Rsf(x_\alpha) \arrow{d} && \text{in
			}\Gcal.\\
			0 \arrow{r} & \Rsf(x) \arrow{r} \arrow{d} & \Rsf(y) \arrow{r}{\Rsf(f)} \arrow{d} & \Rsf(z)
				\arrow[equal]{d} \\
			0 \arrow{r} & \Rsf(x_\alpha') \arrow{r} \arrow{d} & \Rsf(y_\alpha')
			\arrow{r}{\Rsf(f_\alpha)} \arrow{d} & \Rsf(z) \\
			& 0 & 0
	\end{tikzcd}
	\]
	Applying $\Rsf$, since $x_\alpha$ has length less than $\lambda$, by induction we
	obtain the exact diagram in $\Gcal$ on the right.
	Taking the direct limit of the
	diagram on the left we obtain by construction that $\varinjlim_\alpha
	x_\alpha'=0$, so that $\varinjlim_\alpha f_\alpha$ is an isomorphism by
	exactness of direct limits in $\Gcal/\Tcal$.
	Since we
	are assuming that $\Rsf$ preserves direct limits, we
	deduce that $\varinjlim_\alpha \Rsf(f_\alpha)=\Rsf(\varinjlim_\alpha f_\alpha)$
	is an isomorphism as well. Taking the direct limit in $\Gcal$ of the
	diagram on the right, we conclude that $\Rsf(f)$, being the composition of an
	isomorphism with an epimorphism, is an epimorphism.
\end{proof}

\subsection{Mutation of injectives}

Now we present another description of the bijection $\Theta$ induced by
right mutation, this time in terms of the Gabriel spectra of the hearts, rather
than those of the cosilting objects.

Let $c\in\Dcal$ be a pure-injective cosilting object, and let
$\tau=(\Ucal,\Vcal)$ be its $t$-structure, with heart $\Hcal$. Let
$\Ecal\subseteq\Prod(c)$ be a class suitable for right mutation, and let
$t=(\Xcal,\Ycal)$ be the corresponding hereditary torsion pair of finite type in
$\Hcal$. Let $c'$ be the right mutation, with heart
$\Hcal_t:=\Ycal\ast\Xcal[-1]$. The equivalences $\Prod(c)\simeq\Inj(\Hcal)$ and
$\Prod(c')\simeq\Inj(\Hcal_t)$ allow us to define (by abuse of notation) a bijection
$\Theta\colon\GSpec(\Hcal)\to \GSpec(\Hcal_t)$. Namely, for $e\in\GSpec(c)$,
there is a unique $u\in\GSpec(c)$ such that $e=H_c^0(u)$; we set
$\Theta(e):=H^0_t(\Theta(u))$. We now give a more explicit description of this
bijection.

\begin{prop}\label{prop:theta-GSupp(H)}
	For any $e\in\GSpec(\Hcal)$, we have that:
	\begin{enumerate}
		\item if $e\in\Ycal$, then $e$ is the torsion part of $\Theta(e)$ for the pair $(\Ycal,\Xcal[-1])$ in $\Hcal_t$;
		\item if $e\notin\Ycal$, then $t(e)[-1]$ is the torsion part of $\Theta(e)$
			for to the pair $(\Xcal[-1],\Ycal')$ in $\Hcal_t$.
	\end{enumerate}
	In particular, $\Theta(e)$ is the injective envelope in $\Hcal_t$ of this
	subobject, in both cases.
\end{prop}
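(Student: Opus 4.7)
The plan is to translate the problem via the equivalence $H^0_t\colon\Prod(c')\simeq \Inj(\Hcal_t)$: with $u\in\Ind(\Prod(c))$ the lift of $e$ (so $H^0_c(u)=e$), the object $\Theta(e)$ identifies with the indecomposable injective $H^0_t(\Theta(u))\in\Inj(\Hcal_t)$, and we need to identify the torsion part of the latter with respect to the appropriate hereditary torsion pair of $\Hcal_t$. In both cases, the strategy is to compute the Hom-functor $\Hcal_t(z,H^0_t(\Theta(u)))$ for $z$ ranging over the relevant torsion class, and to conclude via Yoneda inside that torsion class. The two key computational tools are the Hom-formulas $\Dcal(-,v)\simeq \Hcal(H^0_c(-),H^0_c(v))$ for $v\in\Prod(c)$ and $\Dcal(-,v)\simeq\Hcal_t(H^0_t(-),H^0_t(v))$ for $v\in\Prod(c')$, together with the observation that $H^0_t(x)=0$ for every $x\in\Xcal$ (as $\Xcal$ sits in $\tau_t$-cohomological degree $-1$).

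For Case (1), $u\in\Ecal=\Prod(c)\cap\Prod(c')$ and $\Theta(u)=u$. For any $y\in\Ycal\subseteq\Hcal\cap\Hcal_t$, chaining the two Hom-formulas gives
\[\Hcal_t(y,H^0_t(u))=\Dcal(y,u)=\Hcal(y,e)=\Ycal(y,e),\]
using in the last step that $\Ycal\subseteq\Hcal$ is full. On the other hand, since $(\Ycal,\Xcal[-1])$ is a hereditary torsion pair in $\Hcal_t$, the inclusion $\Ycal\hookrightarrow\Hcal_t$ has the torsion radical $t'$ as right adjoint, yielding $\Hcal_t(y,H^0_t(u))=\Ycal(y,t'(H^0_t(u)))$. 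Yoneda in $\Ycal$ then gives $t'(H^0_t(u))\cong e$.

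For Case (2), $u\notin\Ecal$ and we use the exchange triangle $\Theta(u)\to e_0\to u\to\Theta(u)[1]$, writing $e':=H^0_c(e_0)\in\Ycal\cap\Inj(\Hcal)$. For $x\in\Xcal$, apply $\Dcal(x,-)$ to the triangle; we claim three consecutive terms vanish: $\Dcal(x,\Theta(u))=\Hcal_t(H^0_t(x),H^0_t(\Theta(u)))=0$ since $H^0_t(x)=0$; $\Dcal(x,e_0)=\Hcal(x,e')=0$ since $\Hom_\Hcal(\Xcal,\Ycal)=0$; and $\Dcal(x,e_0[1])=\Dcal(x[-1],e_0)=\Hcal(H^{-1}_c(x),e')=0$ since $x\in\Hcal$ forces $H^{-1}_c(x)=0$. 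The surviving part of the long exact sequence reads
\[\Hcal_t(x[-1],H^0_t(\Theta(u)))=\Dcal(x,\Theta(u)[1])\cong\Dcal(x,u)=\Hcal(x,e)=\Hcal(x,t(e))=\Xcal(x,t(e)),\]
using $\Hom_\Hcal(\Xcal,\Ycal)=0$ again to identify $\Hcal(x,e)$ with $\Hcal(x,t(e))$. Transferring through the shift equivalence $\Xcal\simeq\Xcal[-1]$, this becomes $\Xcal[-1](x[-1],t(e)[-1])$. Since $(\Xcal[-1],\Ycal')$ is a hereditary torsion pair in $\Hcal_t$ with torsion radical $t''$, the left-hand side also equals $\Xcal[-1](x[-1],t''(H^0_t(\Theta(u))))$. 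Yoneda in $\Xcal[-1]$ concludes $t''(H^0_t(\Theta(u)))\cong t(e)[-1]$.

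In both cases, $\Theta(e)=H^0_t(\Theta(u))$ is indecomposable injective in $\Hcal_t$ and the identified torsion part is nonzero ($e\neq 0$ in Case (1); $t(e)\neq 0$ in Case (2) by the hypothesis $e\notin\Ycal$), so the final assertion follows from the standard fact that an indecomposable injective in a Grothendieck category is the injective envelope of each of its nonzero subobjects. The most delicate step is the triple vanishing in Case (2): it requires switching deliberately between the two Hom-formulas according to whether the target lies in $\Prod(c)$ or in $\Prod(c')$, and using the cohomological degree of $x\in\Xcal$ with respect to both $\tau$ and $\tau_t$.
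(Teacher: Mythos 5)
Your proof is correct, and it is fundamentally the same idea as the paper's: exploit the two natural Hom-formulas $\Dcal(-,v)\simeq\Hcal(H^0_c(-),H^0_c(v))$ and $\Dcal(-,v)\simeq\Hcal_t(H^0_t(-),H^0_t(v))$ together with the exchange triangle and the degree constraints on $\Xcal$. The packaging is slightly different: where you identify $t'(\Theta(e))$ and $t''(\Theta(e))$ as representing objects of the appropriate restricted Hom-functors and conclude by Yoneda, the paper (i) leaves Case (1) as ``easy to check'' and (ii) in Case (2) first \emph{constructs} the monomorphism $t(e)[-1]\hookrightarrow\Theta(e)$ by reading off the $\Hcal_t$-cohomology long exact sequence of the exchange triangle (observing $H^{-1}_t(u)\simeq t(e)[-1]$), and then verifies maximality by factoring an arbitrary $x[-1]\to\Theta(e)$ through $u[-1]$. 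Your Yoneda-based version is a clean systematic substitute for this two-step argument; the only thing it buys you at a slight cost is an identification of the torsion part up to isomorphism, whereas the paper's route directly produces the canonical subobject. Both are valid, and your careful tracking of the three vanishings in Case (2) (only the two with target $e_0$ and $e_0[1]$ are actually needed for the isomorphism $\Dcal(x,u)\cong\Dcal(x,\Theta(u)[1])$) is sound.
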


\begin{proof}
	By construction, $\Theta(e)$ is an indecomposable injective object of
	$\Hcal_t$; therefore it is the injective envelope of its nonzero subobjects,
	which shows the last claim.

	(1) If $e$ belongs to $\Ycal$, then it is of the form $e=H_c^0(u)$ for some
	$u\in\Ecal$, and so we have
	$\Theta(u)=u$. It is easy to check that $e=H_c^0(u)$ is the
	torsion part of $H^0_t(u)=:\Theta(e)$ with respect to the pair
	$(\Ycal,\Xcal[-1])$ in $\Hcal_t$.

	(2) If $e\notin\Ycal$, we have $\Theta(e):=H^0_t(u')$, for the exchange triangle
	\[u'\to f\to u\to u'[1]\]
	extending an $\Ecal$-cover $f\to u$. Observe that $u$ may have an
	$\Hcal_t$-cohomology in degree $-1$: this is precisely $H^{-1}_t(u)\simeq
	t(H_c^0(u))[-1]=t(e)[-1]$. From the long exact sequence of
	$\Hcal_t$-cohomology of the triangle above we then deduce that $t(e)[-1]$ is
	a subobject of $H^0_t(u')=:\Theta(e)$ in $\Hcal_t$, as wanted. Now, to show
	that it is the torsion part, consider any morphism $x[-1]\to H^0_t(u')$, for
	$x\in\Xcal$: this gives a morphism $x[-1]\to u'$, which then corresponds to a
	morphism $x\to u'[1]$. Since the composition $x\to u'[1]\to f[1]$ vanishes, as
	$\Hcal(-,f[1])\subseteq\Hcal(-,\Prod(c)[1])=0$, we deduce that the map $x\to
	u'[1]$ factors through $u\to u'[1]$. This means that the morphism $x[-1]\to
	H^0_t(u')$ factors through $H^0_t(u[-1])=t(e)[-1]$.
\end{proof}

\section{The case of commutative noetherian rings}
\label{sec:commutative-noetherian}

In this section, we specialise to $\Dcal=\D(R):=\D(\Mod(R))$, the derived category
of $\Mod(R)$, for a commutative noetherian ring $R$. The standard $t$-structure,
whose heart is $\Mod(R)$, is the one associated to the injective cogenerator
$\omega$ of $\Mod(R)$, which is a pure-injective cosilting (in fact,
cotilting) object. We will only
consider pure-injective cosilting objects which (up to shift) can be obtained from $\omega$ by a
finite chain of mutation operations (in either direction, see
Remark~\ref{rmk:left-mutation}). It is true in general that such an object will
be quasi-isomorphic to a bounded complex of injectives (this can be easily
derived from the construction of the right mutation in
Theorem~\ref{thm:cosilting-mutation}(1)). In the commutative noetherian setting,
moreover, the converse is also true by \cite[Prop.~6.2]{pavo-vito-21}: a
cosilting object can be obtained from $\omega$ by a finite chain of mutation
operations (and in fact, up to shift, by a finite chain of right mutations) if
and only if it is quasi-isomorphic to a bounded complex of injectives. These
objects are known in the literature as \emph{bounded cosilting complexes}
\cite[Def.~4.15 and Prop.~4.17]{psar-vito-18}, and they are always
pure-injective \cite[Prop.~3.10]{mark-vito-18}. We point out that in $\D(R)$, the
$t$-structures associated to a bounded cosilting complex are precisely the
\emph{intermediate compactly generated} $t$-structures (combining
\cite[Thm~1.1]{hrbe-naka-21} and \cite[Thm.~4.6]{laki-20}). Many of the references
we are going to use are formulated in this equivalent language.

We immediately observe that, since locally noetherian Grothendieck categories
are semi-noetherian \cite[Prop.~IV.7]{gabr-62}, this is the case for $\Mod(R)$,
and therefore for the hearts associated to bounded cosilting complexes, by
Corollary~\ref{cor:mutation-semi-noetherianity}. In the following, we will
implicitely use this fact to apply results from the first section, without mentioning
it every time.
In particular, using Lemma~\ref{lemma:no-superdecomposables} and the equivalence
$H^0_c\colon\Prod(c)\simeq\Inj(\Hcal_c)$, we have the following statement.

\begin{cor}
	For any bounded cosilting complex $c\in\D(R)$, there are no superdecomposable
	objects in $\Prod(c)$.
\end{cor}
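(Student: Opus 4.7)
The plan is to trace the chain of implications already laid out in the paragraph preceding the corollary, combining three ingredients: the bounded cosilting complexes over $R$ are obtained from the standard cotilting object $\omega$ by a finite sequence of right mutations; semi-noetherianity is preserved by right mutation; and semi-noetherian Grothendieck categories have no superdecomposable injectives.

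First, I would invoke the description of bounded cosilting complexes recalled in the introduction of Section~\ref{sec:commutative-noetherian}: for $R$ commutative noetherian, every bounded cosilting complex $c\in\D(R)$ is, up to shift, obtained from $\omega$ by a finite chain of right mutations (as stated, using \cite[Prop.~6.2]{pavo-vito-21}). Since shifting does not affect $\Prod(c)$ in any material way for the question of superdecomposables, we may and do assume $c$ itself is obtained from $\omega$ by finitely many right mutations.

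Second, the heart of the standard $t$-structure is $\Mod(R)$, which is locally noetherian (as $R$ is commutative noetherian), hence semi-noetherian by \cite[Prop.~IV.7]{gabr-62}. By Corollary~\ref{cor:mutation-semi-noetherianity}, semi-noetherianity is preserved under right mutation. Applying this corollary inductively along the finite chain of mutations linking $\omega$ to $c$, we conclude that the Grothendieck heart $\Hcal_c$ associated to $c$ is semi-noetherian.

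Finally, by Lemma~\ref{lemma:no-superdecomposables}, the semi-noetherian Grothendieck category $\Hcal_c$ admits no superdecomposable injectives. Since the cohomology functor induces an equivalence of additive categories $H^0_c\colon \Prod(c)\simeq\Inj(\Hcal_c)$, and this equivalence preserves (and reflects) the property of having no nonzero indecomposable summand, the absence of superdecomposables in $\Inj(\Hcal_c)$ transfers back to $\Prod(c)$. There is no real obstacle here: the proof is essentially a one-line assembly of results already established in the paper, and the only thing to be careful about is to cite the existence of the mutation chain from $\omega$ to $c$ so that the inductive application of Corollary~\ref{cor:mutation-semi-noetherianity} is justified.
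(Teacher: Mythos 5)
Your proof is correct and follows exactly the same route as the paper: locally noetherian implies semi-noetherian for $\Mod(R)$, semi-noetherianity is preserved along the finite mutation chain linking $\omega$ to $c$ via Corollary~\ref{cor:mutation-semi-noetherianity}, Lemma~\ref{lemma:no-superdecomposables} rules out superdecomposable injectives in $\Hcal_c$, and the equivalence $H^0_c\colon\Prod(c)\simeq\Inj(\Hcal_c)$ transfers this back. The only minor remark is that since Corollary~\ref{cor:mutation-semi-noetherianity} is an if-and-only-if, one does not even need the ``up to shift, right mutations only'' refinement from \cite{pavo-vito-21}; any finite chain of mutations in either direction suffices.
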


This is the case despite the fact that there may very well be superdecomposable
pure-injective objects in $\D(R)$ \cite[Summary~8.17]{jens-lenz-89}.

In this context, we will be able to improve on
Theorem~\ref{thm:piecewise-homeomorphism}, showing that the map $\Theta$
underlying a right mutation between such cosilting objects is
an open map between their Gabriel spectra. The structure of this section is as
follows.
\begin{enumerate}[label=\S\textbf{3.\arabic*})]
	\item We start by showing that there is a canonical bijection between
		$\GSpec(c)$, for $c$ a bounded cosilting complex, and the prime spectrum
		$\Spec R$.
	\item We then show that suitable shifts of the residue fields are the
		Gabriel-simples in the hearts of bounded cosilting complexes.
	\item We use the previous subsections to deduce some topological properties of
		$\GSpec(c)$.
	\item Finally, we prove the desired theorem about right mutation inducing open
		maps.
\end{enumerate}

\subsection{Matlis' correspondence for cosilting complexes}\label{subsubsec:GSpec-Spec}

It is a classical theorem of Matlis \cite[Thm.~2.4]{matl-58} that the Gabriel
spectrum of $\Mod(R)$ (which coincides with $\GSpec(\omega)$) is in bijection with the prime spectrum
$\Spec(R)$. The bijection associates to any prime $\pf\in\Spec(R)$ the injective
envelope $\omega(\pf):=E(R/\pf)\in\GSpec(\omega)$ of the cyclic module $R/\pf$.

By Theorem~\ref{thm:cosilting-mutation}(4), by composing the maps $\Theta$
underlying each mutation, the Gabriel spectra of the pure-injective cosilting
objects $c$ which can be obtained from $\omega$ by a finite chain of mutations
are in bijection with $\GSpec(\omega)\simeq\Spec(R)$. Outside of the commutative
noetherian setting, these bijections are not canonically defined, but
rather they would depend on the chain of mutations used to link $c$ and
$\omega$, as demonstrated in the following example.

\begin{ex}\label{ex:different-bijections}
	Fix a field $k$, and consider the hereditary finite-dimensional algebra
	$\Lambda=k\Abb_2$. The
	indecomposable objects of $\Mod\Lambda$ (and, up to shift, the only indecomposable objects of $\D(\Lambda)$) are the two simples $s_2$ and
	$s_1$ (projective and injective respectively) and the projective-injective
	$p$, sitting in a short exact sequence $0\to s_2\to p\to s_1\to 0$. Let
	$\omega=p\oplus s_1$ be the injective cogenerator of $\Mod \Lambda$, which is
	a cotilting object of $\D(\Lambda)$. We have two routes to obtain the cosilting object
	$\omega[-1]$ from $\omega$ by iterated
	right mutation. One way is the trivial mutation at $0$, so that every summand
	is shifted: hence $\Theta(s_1)=s_1[-1]$ and $\Theta(p)=p[-1]$. Another way is
	to mutate first at $p$, to obtain $\Theta_1(p)=p$ and $\Theta_1(s_1)=s_2$;
	then at $s_2$, to obtain $\Theta_2(s_2)=s_2$ and
	$\Theta_2(p)=s_1[-1]$; lastly at $s_1[-1]$, obtaining
	$\Theta_3(s_1[-1])=s_1[-1]$ and $\Theta_3(s_2)=p[-1]$.

	Hence $\Theta_3\Theta_2\Theta_1(s_1)=p[-1]$ and
	$\Theta_3\Theta_2\Theta_1(p)=s_1[-1]$: these two different chains of
	right mutations have yielded different bijections $\Theta$ and
	$\Theta_3\Theta_2\Theta_1$ between $\GSpec(\omega)$ and $\GSpec(\omega[-1])$.
\end{ex}

In this subsection we show that, remarkably, in the commutative noetherian
setting this phenomenon does not occurr: any chain of mutations from $\omega$ to
$c$ yields the same bijection between $\GSpec(\omega)\simeq\Spec(R)$ and
$\GSpec(c)$.

Recall that a \emph{localising subcategory} of a triangulated category is a
triangulated subcategory closed under coproducts. In $\D(R)$, these are
classified in terms of prime ideals \cite{neem-92a}. For an object $x\in\D(R)$,
its \emph{support} is the set $\supp(x)=\{\,\pf\in\Spec(R)\mid x\otimes_R^\Lbb
k(\pf)\neq 0\,\}$, where $k(\pf):=R_\pf/\pf R_\pf$ denotes the residue field at
$\pf$. Then the localising subcategories of $\D(R)$ are those of the form
$\Lcal_U:=\supp^{-1}(U)$, for a set $U\subseteq\Spec(R)$
\cite[Thm.~2.8]{neem-92a}. From the classification it follows that these
subcategories are (triangulated) aisles of stable $t$-structures
$(\Lcal_U,\Bcal_U:=\Lcal_U^\bot)$. Indeed, $\Lcal_U$ is the smallest localising
subcategory containing a set of objects $\{\,k(\pf)\mid \pf\in U\,\}$, and
therefore it is the aisle of a $t$-structure by \cite[Thm.~2.3]{neem-21}.
Recall that a
subset $W\subseteq\Spec(R)$ is \emph{specialisation-closed} if $\pf\subseteq\qf$
and $\pf\in W$ imply $\qf\in W$. In other words, these are the upper-sets with
respect to the inclusion order $\subseteq$. Via the classification, the
specialisation-closed sets
$W\subseteq\Spec(R)$ correspond to smashing stable $t$-structures:
\[(\Lcal_W,\Bcal_W):=(\supp^{-1}W,\supp^{-1}W^\cpl)\]
by \cite[Thm~3.3]{neem-92a}. The description of $\Bcal_W$ is due to the fact
that if $\Lcal_W$ is smashing, then $\Bcal_W=\Lcal_{W^\cpl}$ is
itself localising.
Smashing localisations interact nicely with respect to the
$t$-structures associated to bounded cosilting complexes.

\begin{prop}\label{prop:smashing-cohomological}
	Let $c\in D(R)$ be a bounded cosilting complex, with heart $\Hcal$, and
	$W\subseteq\Spec(R)$ a specialisation-closed subset.
	Then $\Tcal_W:=\Lcal_W\cap\Hcal$ is a hereditary torsion class in $\Hcal$,
	and:
	\[\Lcal_W=\{\,x\in\D(R)\mid H_c^\ast(x)\in\Tcal_W\,\}.\]
\end{prop}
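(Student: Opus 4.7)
The plan is to establish a compatibility between the stable smashing $t$-structure $(\Lcal_W,\Bcal_W)$ and the cosilting $t$-structure $\tau_c=(\Ucal,\Vcal)$: namely, that the truncation functors of $\tau_c$ preserve $\Lcal_W$. Both claims of the proposition then follow cleanly from this. Since $R$ is commutative noetherian, $\Lcal_W$ is itself a compactly generated localising subcategory of $\D(R)$ (generated by the compact objects $R/\pf$ for $\pf\in W$), and $\tau_c$, being associated to a bounded cosilting complex, is an intermediate compactly generated $t$-structure by \cite{hrbe-naka-21}. Invoking the Alonso--Jerem\'ias--Saor\'in classification of compactly generated $t$-structures in $\D(R)$ by descending sequences of specialisation-closed subsets of $\Spec R$, together with the fact that $\Mod(R)\cap\Lcal_W$ is a Serre subcategory of $\Mod(R)$, one verifies that the truncation functors $u_c,v_c$ of $\tau_c$ send $\Lcal_W$ into itself. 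Equivalently, letting $\Gamma_W\colon\D(R)\to\Lcal_W$ denote the right adjoint to inclusion and $L_W\colon\D(R)\to\Bcal_W$ the left adjoint to the inclusion of $\Bcal_W:=\Lcal_W^\perp$, both $\Gamma_W$ and $L_W$ preserve $\Ucal$ and $\Vcal$, and hence preserve the heart $\Hcal$.

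Given this compatibility, the first assertion is straightforward. Closure of $\Tcal_W$ under extensions in $\Hcal$ is immediate from $\Lcal_W$ being triangulated; closure under coproducts follows because a coproduct in $\Hcal$ of objects in $\Tcal_W$ is computed as $H_c^0$ of the $\D(R)$-coproduct, which belongs to $\Lcal_W$ by coproduct-closure, and $H_c^0$ preserves $\Lcal_W$. For closure under subobjects and quotients, I consider a short exact sequence $0\to a\to b\to c\to 0$ in $\Hcal$ with $b\in\Tcal_W$, so $L_W(b)=0$. Applying the exact functor $L_W$ to the associated distinguished triangle in $\D(R)$ yields $L_W(c)\simeq L_W(a)[1]$, and by the above compatibility both $L_W(a)$ and $L_W(c)$ lie in $\Hcal$. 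Consequently $L_W(a)\in\Hcal\cap\Hcal[-1]=0$ (as $\Hcal$ is the heart of a $t$-structure), forcing $L_W(a)=L_W(c)=0$ and hence $a,c\in\Lcal_W$.

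For the cohomological description of $\Lcal_W$, the compatibility combined with uniqueness of truncation triangles gives $L_W u_c=u_c L_W$ and $L_W v_c=v_c L_W$, so $L_W$ commutes with every $H_c^n$. The inclusion $\Lcal_W\subseteq\{x:H_c^\ast(x)\in\Tcal_W\}$ is then immediate: $x\in\Lcal_W$ yields $H_c^n(x)\in\Lcal_W\cap\Hcal=\Tcal_W$. Conversely, if $H_c^n(x)\in\Tcal_W$ for every $n$, then $L_W(H_c^n(x))=0$; by the commutation, $H_c^n(L_W(x))=0$ for every $n$, and the nondegeneracy of $\tau_c$ forces $L_W(x)=0$, so $x\in\Lcal_W$.

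The main obstacle is the compatibility claim itself. A naive attempt to derive closure of $\Tcal_W$ under subobjects and quotients purely from the long exact sequence of standard cohomology fails, because $\Lcal_W$, though triangulated, is not closed under arbitrary subquotients of distinguished triangles in $\D(R)$; the Serre property of $\Mod(R)\cap\Lcal_W$ alone is insufficient. What makes the compatibility of $\Lcal_W$ with $\tau_c$ go through is the common support-data parametrization of the two structures available in $\D(R)$ for $R$ commutative noetherian, which ties the smashing localisation $\Lcal_W$ to the cohomological stratification of $\tau_c$.
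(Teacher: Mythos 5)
There is a genuine gap. You assert that ``$u_c, v_c$ send $\Lcal_W$ into itself'' is \emph{equivalent} to ``$\Gamma_W$ and $L_W$ preserve $\Ucal$ and $\Vcal$ (and hence $\Hcal$)'', and both halves of your proof lean on the latter. The two assertions are not equivalent, and the second is false in general: while it is true that $u_c$, $v_c$ and $H_c^n$ carry $\Lcal_W$ into $\Lcal_W$, the colocalisation functor $L_W$ need not preserve the aisle, the coaisle, or the heart. For a concrete counterexample, take $R$ a three-dimensional regular local ring with maximal ideal $\mf$, let $W=\{\mf\}$, and let $c$ be the two-term cosilting complex whose $t$-structure is the right HRS-tilt of the standard $t$-structure at $(\Tcal_{\{\mf\}},\Fcal_{\{\mf\}})$, so that objects of $\Hcal$ have standard cohomology concentrated in two consecutive degrees (say $0$ and $1$), with $H^0\in\Fcal_{\{\mf\}}$ and $H^1\in\Tcal_{\{\mf\}}$. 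The module $R$ is $\mf$-torsion-free, so $R\in\Hcal$. But $\Rbb\Gamma_\mf(R)\simeq H^3_\mf(R)[-3]$ with $H^3_\mf(R)\neq 0$, so $L_W(R)$ has $H^0=R$ and $H^2=H^3_\mf(R)\neq 0$; its cohomology spans three degrees and therefore $L_W(R)\notin\Hcal$. The same example shows $L_W$ does not preserve $\Ucal[-1]$ or $\Vcal$.

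This undermines the two steps that actually carry the weight of your argument: the proof that $\Tcal_W$ is closed under subobjects and quotients invokes ``by the above compatibility both $L_W(a)$ and $L_W(c)$ lie in $\Hcal$'', and the inclusion $\{x:H_c^\ast(x)\in\Tcal_W\}\subseteq\Lcal_W$ relies on $L_W$ commuting with $H_c^n$, which again needs the false compatibility. What does hold is only the one-sided statement that $H_c^n(\Lcal_W)\subseteq\Lcal_W$; from it you get closure of $\Tcal_W$ under extensions and coproducts and the inclusion $\Lcal_W\subseteq\{x:H_c^\ast(x)\in\Tcal_W\}$, but not closure under subobjects and quotients nor the reverse inclusion. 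The paper's own proof is immune to this issue: it first invokes \cite[Thm.~4.5(1)]{pavo-vito-21} for the hereditary torsion class claim, then recognises $\{x:H_c^\ast(x)\in\Tcal_W\}$ as a localising subcategory of $\D(R)$ with support $W$ (via \cite[Prop.~4.1(3)]{pavo-vito-21}) and concludes by Neeman's classification, never needing $L_W$ to interact well with $\tau_c$. To repair your argument you would need a genuinely different proof of the two steps above, for instance along the paper's lines.
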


\begin{proof}
	The fact that $\Tcal_W$ is a hereditary torsion class is
	\cite[Thm.~4.5(1)]{pavo-vito-21}. Then by 
	\cite[Prop.~4.1(3)]{pavo-vito-21} the right-hand side of the equality is a
	localising subcategory of $\D(R)$ whose support is $W$. Therefore it coincides
	with $\Lcal_W$ by Neeman's classification \cite{neem-92a}.
\end{proof}

In the following we will denote the torsion-free class of $\Tcal_W$ by
$\Fcal_W:=\Tcal_W^\bot\subseteq\Hcal$. To these torsion pairs we will apply the
following general observation.

\begin{lemma}\label{lemma:BW-FW}
	Let $\Dcal$ be a compactly generated triangulated category, and $c\in\Dcal$ be
	a pure-injective cosilting object, with heart $\Hcal$. Consider the localising subcategory
	$\Lcal=\{\,x\in\Dcal\mid H_c^\ast(x)\in\Lcal\cap\Hcal\,\}$ of $\Dcal$. Then, for any
	$u\in\GSpec(c)$, we have:
	\[u\in\Lcal^\bot\subseteq\Dcal \iff
	H_c^0(u)\in(\Lcal\cap\Hcal)^\bot\subseteq\Hcal.\]
\end{lemma}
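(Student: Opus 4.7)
The plan is to reduce both implications to the natural isomorphism
\[\Dcal(x,u)\simeq \Hcal(H_c^0(x),H_c^0(u))\]
valid for every $x\in\Dcal$ and every $u\in\Prod(c)$. This isomorphism is recorded in the excerpt only for $u=c$, so a short preliminary step is needed to promote it to all of $\Prod(c)$. For this, I would use that $\Prod(c)\subseteq\Vcal$ and $v(c^I)=c^I$, together with the fact that the left-truncation $u\colon\Dcal\to\Ucal$ is a right adjoint and hence preserves products; unwinding the formula $H_c^0=u(v(-)[1])[-1]$ gives $H_c^0(c^I)=H_c^0(c)^I$. Since both $\Dcal(-,-)$ and $\Hcal(-,-)$ are contravariantly additive and turn products in the second variable into products, the natural transformation induced by $H_c^0$ is an isomorphism at every $c^I$, and hence at every retract thereof, i.e.\ on all of $\Prod(c)$.

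Once this tool is in place, both directions are immediate and there is no real obstacle. For the forward direction, assume $u\in\Lcal^\bot$. Any $y\in\Lcal\cap\Hcal$ lies in $\Hcal$, so $H_c^0(y)=y$, and the natural isomorphism gives
\[\Hcal(y,H_c^0(u))\simeq \Dcal(y,u)=0,\]
showing $H_c^0(u)\in(\Lcal\cap\Hcal)^\bot$. For the converse, assume $H_c^0(u)\in(\Lcal\cap\Hcal)^\bot$, and let $x\in\Lcal$. The defining hypothesis on $\Lcal$ (namely that $x\in\Lcal$ forces $H_c^{\ast}(x)\in\Lcal\cap\Hcal$) applied in degree zero yields $H_c^0(x)\in\Lcal\cap\Hcal$, whence
\[\Dcal(x,u)\simeq\Hcal(H_c^0(x),H_c^0(u))=0.\]
Since $x\in\Lcal$ was arbitrary, $u\in\Lcal^\bot$. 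Note that the converse direction only requires the inclusion $\Lcal\subseteq\{x\mid H_c^0(x)\in\Lcal\cap\Hcal\}$, which is the weak half of the hypothesis; the full strength of the definition of $\Lcal$ is not needed for this proof.
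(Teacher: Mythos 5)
Your proof is correct and follows the same route as the paper: combine the natural isomorphism $\Dcal(-,u)\simeq\Hcal(H_c^0(-),H_c^0(u))$ for $u\in\Prod(c)$ with the fact that $H_c^0(\Lcal)=\Lcal\cap\Hcal$. The only thing you add is the promotion of the isomorphism from $u=c$ to all of $\Prod(c)$, which the paper uses silently; your truncation-functor computation of $H_c^0(c^I)$ works (one has to keep track that the products involved are $\Ucal$-products landing in $\Hcal[1]$, which one then checks agree with $\Hcal$-products), though it is perhaps cleaner to simply note that $H_c^0\colon\Prod(c)\simeq\Inj(\Hcal)$ is an equivalence between categories each closed under products, hence preserves them.
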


\begin{proof}
	By the assumption on $\Lcal$, we have that $H_c^0(\Lcal)=\Lcal\cap\Hcal$. Then
	the claim follows from the natural isomorphism
	$\Dcal(-,u)\simeq\Hcal(H_c^0(-),H_c^0(u))$ and the assumption on $\Lcal$.
\end{proof}

Since we are mentioning the torsion pairs $(\Tcal_W,\Fcal_W)$, we pause for a
moment to show that
they are of finite type. This generalises \cite[Thm.~4.5(1.b)]{pavo-vito-21}, by
dropping the assumption that the $t$-structure is of derived type.
In fact, we prove that they are even Giraud-finite (see the paragraph above
Proposition~\ref{prop:semi-artinian-perfect}).

First, we need to briefly recall the notion of \emph{homotopy colimit} of a
directed \emph{coherent diagram}, \emph{i.e.} an object of $\D((\Mod R)^I)$, for a
directed small category $I$. 
In $\D(R)$, the homotopy colimit
is the left derived functor of the exact direct limit functor $\varinjlim$ of $\Mod(R)$.
Note that a direct system of shape $I$ in $\D(R)$ is an object of $D(R)^I$.
Therefore, we can only compute its homotopy colimit if it lies in the
essential image of the canonical functor $\D((\Mod R)^I)\to \D(R)^I$. In that
case, this directed homotopy colimit is computed as a termwise direct limit of
complexes.

The aisle of a $t$-structure is always closed under homotopy colimits of direct
systems; if the coaisle also has this property, the $t$-structure is said to
be \emph{homotopically smashing}.
Any smashing localisation is
homotopically smashing (see \cite[\S{}A.5]{hrbe-naka-21} and references
therein), so this is the case for $(\Lcal_W,\Bcal_W)$. The $t$-structure
associated to a pure-injective cosilting object is also homotopically
smashing \cite[Thm.~4.6]{laki-20}.
A direct system of objects in the heart of a homotopically smashing $t$-structure
always lifts to a coherent diagram, and its homotopy colimit coincides with the
direct limit computed in the heart \cite[Cor.~7.4]{saor-stov-viri-17}. Moreover,
the cohomology functor associated to such a $t$-structure commutes with directed
homotopy colimits.

\begin{prop}\label{prop:W-giraud-finite}
	Let $c\in\D(R)$ be a bounded cosilting complex with heart $\Hcal$. For
	any speciali\-sa\-tion-closed subset $W\subseteq\Spec(R)$, the torsion pair
	$(\Tcal_W,\Fcal_W)$ in $\Hcal$ is Giraud-finite.
\end{prop}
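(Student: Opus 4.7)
My plan is to show that the Giraud subcategory $\Ccal := \Tcal_W^{\bot_{0,1}} \subseteq \Hcal$ is closed under direct limits by lifting the question to $\D(R)$ and exploiting homotopical smashness of two $t$-structures at once. Since $c$ is a bounded cosilting complex, $\tau_c$ is compactly generated by \cite{hrbe-naka-21} and in particular homotopically smashing, so any directed system in $\Hcal$ lifts to a coherent diagram in $\D(R)$ whose homotopy colimit agrees with its direct limit computed in $\Hcal$, and $H_c^0$ commutes with directed homotopy colimits. On the other hand, as $W$ is specialisation-closed the stable $t$-structure $(\Lcal_W,\Bcal_W)$ is smashing, so by Neeman's classification $\Lcal_W$ is compactly generated in $\D(R)$; fix a set $\Gcal_W$ of compact generators.

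Given a directed system $(c_i)_{i\in I}$ in $\Ccal$ with direct limit $c_\infty\in\Hcal$, I identify $c_\infty = \hocolim c_i$ in $\D(R)$, and the task reduces to verifying $\Hom_{\D(R)}(t,c_\infty[j])=0$ for all $t\in\Tcal_W$ and $j=0,1$ (via the natural isomorphism $\Ext^j_\Hcal \cong \Hom_{\D(R)}(-,-[j])$ for $j=0,1$). By Proposition~\ref{prop:smashing-cohomological} every cohomology $H_c^n(g)$ of $g\in\Gcal_W$ lies in $\Tcal_W$, and since $\tau_c$ is intermediate only finitely many of them are non-zero. The compactness of $g$ in $\D(R)$ ensures that $\Hom_{\D(R)}(g,-)$ commutes with directed homotopy colimits, and a dévissage along the finite $\tau_c$-cohomological filtration of $g$ should translate this into commutation of $\Hom_\Hcal(H_c^n(g),-)$ and $\Ext^1_\Hcal(H_c^n(g),-)$ with directed colimits in $\Hcal$.

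Since each $c_i\in\Ccal$, for every such $H_c^n(g)$ one has $\Hom_\Hcal(H_c^n(g),c_i)=0=\Ext^1_\Hcal(H_c^n(g),c_i)$, and the commutation gives the same vanishings at $c_\infty$. One then checks that $\Tcal_W$ is generated, as a hereditary torsion class in $\Hcal$, by the family $\{H_c^n(g):g\in\Gcal_W,\,n\in\Zbb\}$; this should be a consequence of Proposition~\ref{prop:smashing-cohomological} and the long exact sequence in $\tau_c$-cohomology, applied to an inductive construction of $t\in\Tcal_W$ from $\Gcal_W$ inside $\Lcal_W$. A filtered-colimit decomposition of a general $t\in\Tcal_W$ by finite extensions of these generators, together with the induced long exact sequences in $\Ext^\ast_\Hcal$, should then propagate the vanishings of $\Hom_\Hcal(-,c_\infty)$ and $\Ext^1_\Hcal(-,c_\infty)$ from the generating family to all of $\Tcal_W$.

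The main obstacle is precisely this last propagation. Even after the vanishings against a ``finitely presented'' generating family of $\Tcal_W$ are in hand, extending them to an arbitrary $t\in\Tcal_W$ requires controlling $\Ext^1_\Hcal(-,c_\infty)$ along filtered colimits in the first argument, which is not automatic in a general Grothendieck category. This technical step is presumably the one for which the input of J.~Šťovíček acknowledged at the start of the paper is needed, and is likely handled via a careful analysis of the purity and finite-presentation properties of the heart of a compactly generated $t$-structure in $\D(R)$.
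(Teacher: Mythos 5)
You take a genuinely different route from the paper's, and your own approach has a real gap (which you yourself flag). The paper first establishes that $(\Tcal_W,\Fcal_W)$ is of \emph{finite type}, by verifying closure of $\Fcal_W$ under reduced products of sets of injectives (the criterion of Corollary~\ref{cor:finite-type-inj}), using Laking's coherent reduced-product diagrams and the fact that both the stable $t$-structure $(\Lcal_W,\Bcal_W)$ and the cosilting $t$-structure $\tau_c$ are homotopically smashing. Once finite type is known, one can right-mutate at $(\Tcal_W,\Fcal_W)$ to get the heart $\Hcal_t:=\Fcal_W\ast\Tcal_W[-1]$, which has its own finite-type hereditary torsion pair $(\Tcal_W',\Fcal_W')$ by the same argument. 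The key observation is then the decomposition
\[\Ccal_W \;=\; \Tcal_W^{\bot_{0,1}}\;=\;\Fcal_W\cap\Fcal_W',\]
exhibiting the Giraud subcategory as the intersection of two torsion-free classes (taken in two different hearts, but both identified with subcategories of $\D(R)$ closed under directed homotopy colimits). Closure of $\Ccal_W$ under direct limits in $\Hcal$ follows at once.

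Your plan instead tries to verify $\Hom_\Hcal(t,c_\infty)=0=\Ext^1_\Hcal(t,c_\infty)$ for all $t\in\Tcal_W$ directly, by dévissage from the compact generators of $\Lcal_W$. The final propagation step you flag is indeed a genuine gap: $\Tcal_W$ is generated from the $H_c^n(g)$ not only by filtered colimits but also by subobjects, quotients, extensions and coproducts, and controlling $\Ext^1_\Hcal(-,c_\infty)$ along all of these closure operations requires exactly the kind of finite-presentation machinery in $\Hcal$ that is not available --- the heart need not be locally coherent (indeed Theorem~\ref{thm:locally-noetherian-rare} shows locally noetherian hearts are rare). There is also an earlier soft spot: passing from the fact that $\Hom_{\D(R)}(g,-)$ commutes with directed hocolims to the claim that each $\Hom_\Hcal(H_c^n(g),-)$ and $\Ext^1_\Hcal(H_c^n(g),-)$ commutes with direct limits is not automatic. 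The $\tau_c$-cohomological filtration of $g$ yields a convergent spectral sequence, and commutation of the abutment with filtered colimits does not in general descend to the individual $E_2$-entries. The paper's intersection trick avoids having to establish any finite-presentation properties of cohomologies of compacts at all, which is why the proof is so short.
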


\begin{proof}
	First, we are going to prove that $(\Tcal_W,\Fcal_W)$ is of finite type. We
	show that $\Fcal_W$ is closed under taking reduced products of any set
	$\Scal:=\{\,e_i\mid i\in I\,\}\subseteq\Fcal_W\cap\Inj(\Hcal)$, and use 
	the characterisation of Corollary~\ref{cor:finite-type-inj}.
	Let $\tilde\Scal:=\{\,c_i\mid i\in I\,\}$ be the
	corresponding set of objects of $\Prod(c)$, through the equivalence
	$H_c^0\colon \Prod(c)\simeq\Inj(\Hcal)$. By Lemma~\ref{lemma:BW-FW}, we have
	that $\tilde\Scal\subseteq\Bcal_W$. Consider a filter $\Phi$ on $I$.
	By \cite[Prop.~2.7]{laki-20}, the reduced product diagram of $\tilde\Scal$ over
	$\Phi$ lifts to a \emph{coherent} reduced product diagram, which
	following the reference we denote by $\Red_{\Phi}(\tilde\Scal)$. Since
	$(\Lcal_W,\Bcal_W)$ and the cosilting $t$-structure of $c$ are homotopically
	smashing, by combining
	\cite[Thm.~4.6 and Thm.~3.12]{laki-20}, we obtain that both $\Bcal_W$ and the
	coaisle $\Vcal:={}^{\bot_{>0}}c$ are closed under reduced products, and
	therefore contain $\hocolim \Red_{\Phi}(\tilde\Scal)$. It is clear that
	$H^0_c(\Red_{\Phi}(\tilde\Scal))$ is the reduced product diagram of
	$\Scal$ over $\Phi$, and thus 
	we have $H^0_c(\hocolim\Red_{\Phi}(\tilde\Scal))\simeq
	\varinjlim H^0_c(\Red_{\Phi}(\tilde\Scal))=\prod\Scal/\Phi$. 

	We now have:
	\begin{align*}\textstyle
		&\Hcal\left(\Tcal_W,\prod\Scal/\Phi\right) \simeq
		\Hcal\left(\Tcal_W,H_c^0(\hocolim \Red_{\tilde\Phi}(\tilde\Scal))\right) \simeq \\
		\simeq \;&\Dcal\left(\Tcal_W,\hocolim \Red_{\tilde\Phi}(\tilde\Scal)\right)\subseteq
		\Dcal\left(\Lcal_W,\Bcal_W\right)=0,
	\end{align*}
	which shows that $\Prod\Scal/\Phi\in\Tcal_W^\bot=\Fcal_W$, as wanted.

	Now we prove Giraud-finiteness. By the first part, $(\Tcal_W,\Fcal_W)$ is
	a hereditary torsion pair of finite type; hence we can right-mutate $\Hcal$ at
	it, obtaining a Grothendieck heart $\Hcal_t:=\Fcal_W\ast\Tcal_W[-1]$. In this heart,
	Proposition~\ref{prop:smashing-cohomological} gives a hereditary torsion pair
	$(\Tcal_W',\Fcal_W')$, which is again of finite type by the previous paragraph. Observe that:
	\[\Tcal_W'=\Lcal_W\cap \Hcal_t=\{\,x\in\Dcal\mid
	H_c^\ast(x)\in\Tcal_W\,\}\cap\Hcal_t=\Tcal_W[-1].\]
	Now, we have proved that $\Fcal_W$ and $\Fcal_W'$ are closed under direct
	limits, in $\Hcal$ and $\Hcal_t$ respectively, and this is equivalent to the fact that
	they are closed under homotopy colimits in $\Dcal$.
	Since the Giraud subcategory
	$\Ccal_W:=\Tcal_W^{\bot_{0,1}}\subseteq\Hcal$ is equal 
	to the following orthogonal (in $\Dcal$):
	\[\Ccal_W=\Hcal\cap(\Tcal_W^\bot\cap(\Tcal_W[-1])^\bot)=
		\Hcal\cap(\Tcal_W^\bot\cap\Tcal_W'^\bot)=\Fcal_W\cap \Tcal_W'^\bot=\Fcal_W\cap\Fcal_W',\]
	we deduce that $\Ccal_W$ is closed under homotopy colimits, and, thus,
	under direct limits in $\Hcal$.
\end{proof}

\begin{rmk}
	By \cite[Thm.~4.5(2)]{pavo-vito-21}, these are the \emph{only} torsion pairs of
	finite type in the heart of \emph{restrictable} $t$-structures of $\D(R)$.
\end{rmk}

Now we turn back to our main goal. We are going to show that the subcategories
$\Bcal_W$ are `closed under mutation', in the sense of the following
proposition.

\begin{prop}\label{prop:mutation-BW}
	Let $c\in\D(R)$ be a bounded cosilting complex, and $c'$ a right mutation of $c$.
	Let $W\subseteq\Spec(R)$ be any specialisation-closed subset.
	Then $u\in\GSpec(c)$ belongs to $\Bcal_W$ if and only if so does
	$\Theta(u)\in\GSpec(c')$.
\end{prop}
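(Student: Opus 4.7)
My plan is to use Lemma~\ref{lemma:BW-FW} together with the explicit description of $\Theta$ from Proposition~\ref{prop:theta-GSupp(H)} to reduce the statement to a torsion-theoretic equivalence in the hearts $\Hcal$ and $\Hcal_{c'}$, which ultimately traces back to the shift-invariance of $\Lcal_W$.

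The case $u \in \Ecal$ is trivial since then $\Theta(u) = u$ by Theorem~\ref{thm:cosilting-mutation}(3). For $u \notin \Ecal$, I would apply Lemma~\ref{lemma:BW-FW} to $\Lcal_W$ in both hearts (its hypothesis is verified by Proposition~\ref{prop:smashing-cohomological}, applied to the bounded cosilting objects $c$ and $c'$ respectively). Writing $\Tcal_W := \Lcal_W \cap \Hcal$ and $\Tcal_W' := \Lcal_W \cap \Hcal_{c'}$, this reduces the statement to
\[ H_c^0(u) \in \Tcal_W^{\bot} \text{ in } \Hcal \iff H_{c'}^0(\Theta(u)) \in (\Tcal_W')^{\bot} \text{ in } \Hcal_{c'}. \]

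Next I would invoke Proposition~\ref{prop:theta-GSupp(H)}(2), which describes $H_{c'}^0(\Theta(u))$ as the injective envelope in $\Hcal_{c'}$ of $t(H_c^0(u))[-1]$, where $t(-)$ denotes the $\Tcal$-torsion part for the mutation torsion pair $(\Tcal, \Fcal)$. Since $(\Tcal_W')^{\bot}$ is a hereditary torsion-free class (hence closed under both subobjects and injective envelopes), the condition $H_{c'}^0(\Theta(u)) \in (\Tcal_W')^{\bot}$ is equivalent to $t(H_c^0(u))[-1] \in (\Tcal_W')^{\bot}$. A short cohomological computation then applies $\D(R)(-, t(H_c^0(u))[-1])$ to the triangle $f \to y \to t'[-1] \to f[1]$ coming from the torsion decomposition $0 \to f \to y \to t'[-1] \to 0$ of an object $y \in \Tcal_W'$ with respect to $(\Fcal, \Tcal[-1])$ in $\Hcal_{c'}$; using injectivity of $H_c^0(u)$ in $\Hcal$ and the vanishing $\D(R)(x, y[n]) = 0$ for $x, y \in \Hcal$ and $n < 0$, this identifies the right-hand side with an orthogonality condition in $\Hcal$ on $t(H_c^0(u))$ against the class $\Tcal \cap \Tcal_W$, which by Proposition~\ref{prop:smashing-cohomological} is precisely $H_c^1(\Tcal_W') = \Tcal \cap \Lcal_W$.

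The main technical point, which I expect to be the principal obstacle, is to match this with the analogous unpacking of $H_c^0(u) \in \Tcal_W^{\bot}$ performed in $\Hcal$ via the $\Tcal$-torsion sequence of $H_c^0(u)$: one needs to check that the resulting orthogonality conditions on the two sides agree, which hinges on careful bookkeeping across the two hearts. Once this is established, both conditions become equivalent to a shift-invariant statement about $\Lcal_W$ (namely, to $H_c^0(u) \in \Lcal_W$ if $H_c^0(u) \in \Tcal$, and more generally to the vanishing of $\Hom_\Hcal(\Tcal_W \cap \Tcal, t(H_c^0(u)))$), and the claim then follows immediately from the fact that $\Lcal_W$ is closed under shifts as a triangulated subcategory.
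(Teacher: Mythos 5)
Your outline follows exactly the same skeleton as the paper's proof: reduce to $u\notin\Ecal$; translate to the hearts via Lemma~\ref{lemma:BW-FW} and Proposition~\ref{prop:smashing-cohomological}; use Proposition~\ref{prop:theta-GSupp(H)}(2) to replace $H^0_{c'}(\Theta(u))$ by its subobject $t(e)[-1]$, where $e:=H^0_c(u)$ and $t$ is the torsion radical of the mutation pair (which the paper denotes $(\Xcal,\Ycal)$, your $(\Tcal,\Fcal)$); and then reduce the right-hand condition to the vanishing of $\Hom_\Hcal(\Xcal\cap\Tcal_W, t(e))$ using the explicit description $\Tcal_W'=(\Ycal\cap\Tcal_W)\ast(\Xcal\cap\Tcal_W)[-1]$ and the vanishing of $\D(R)$-morphisms between objects in the heart and negative shifts. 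All of that matches the paper.

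The gap is precisely where you say you expect "the principal obstacle": you need to establish that $\Hom_\Hcal(\Xcal\cap\Tcal_W, t(e))=0$ is equivalent to $e\in\Fcal_W$, and you never supply that argument. Your closing sentence, appealing to $\Lcal_W$ being closed under shifts, is a red herring at this point: shift-invariance of $\Lcal_W$ is what Proposition~\ref{prop:smashing-cohomological} already encodes by giving compatible torsion pairs $\Tcal_W\subseteq\Hcal$ and $\Tcal_W'\subseteq\Hcal_{c'}$, but it does not close the equivalence you need, which lives entirely inside $\Hcal$. The paper's argument here is short but not a mere bookkeeping step. First, since $u\notin\Ecal$ forces $t(e)\neq 0$ and $e$ is indecomposable injective, $e$ is the injective envelope of $t(e)$; so $e\in\Fcal_W$ is equivalent to $t(e)\in\Fcal_W$, i.e.\ to $\Hom_\Hcal(\Tcal_W,t(e))=0$ (this avoids the torsion sequence of $e$ you propose, which does not obviously simplify since nothing controls $\Hom_\Hcal(\Tcal_W,f(e))$). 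Second, to pass from $\Hom_\Hcal(\Tcal_W,t(e))=0$ to the weaker-looking condition $\Hom_\Hcal(\Xcal\cap\Tcal_W,t(e))=0$, one observes that any morphism from $\Tcal_W$ to $t(e)\in\Xcal$ has image lying in $\Xcal$ (since $\Xcal$ is closed under subobjects) and in $\Tcal_W$ (since $\Tcal_W$ is closed under quotients), hence in $\Xcal\cap\Tcal_W$; vanishing against $\Xcal\cap\Tcal_W$ therefore already forces the morphism to vanish. These two observations are what your sketch is missing, and they are what does the actual work.
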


\begin{proof}
	Let $\Hcal$ and $\Hcal_t$ be the hearts of $c$ and $c'$. We have
	$\Hcal_t=\Ycal\ast\Xcal[-1]$, for the hereditary torsion pair $t=(\Xcal,\Ycal)$
	in $\Hcal$ underlying the mutation. We denote by $H^0_c$ and $H^0_t$ the
	associated cohomology functors, respectively. Consider the torsion pairs
	$(\Tcal_W,\Fcal_W)$ in $\Hcal$ and $(\Tcal_W',\Fcal_W')$ in $\Hcal_t$ given by
	Proposition~\ref{prop:smashing-cohomological}. Observe that since as before:
	\[\Tcal_W'=\Lcal_W\cap\Hcal_t=\{\,x\in\D(R)\mid
	H_c^\ast(x)\in\Tcal_W\,\}\cap\Hcal_t,\]
	we have $\Tcal_W'=(\Ycal\cap\Tcal_W)\ast(\Xcal\cap\Tcal_W)[-1]$.

	Now let $u\in\GSpec(c)$, and consider $\Theta(u)\in\GSpec(c')$. If $u$ belongs
	to the class $\Ecal\subseteq\Prod(c)$ at which we have right-mutated, then
	$\Theta(u)=u$, and there is nothing to prove. Therefore, we assume that
	$u\notin\Ecal$. Using the characterisation of Lemma~\ref{lemma:BW-FW}, we
	translate the claim to injectives via cohomology. Let $e:=H_c^0(u)$ and
	$\Theta(e):=H^0_t(\Theta(u))$: we need to prove that $e\in\Fcal_W$ if and only
	if $\Theta(e)\in\Fcal_W'$. By the description of
	Proposition~\ref{prop:theta-GSupp(H)}, the latter is equivalent to
	$t(e)[-1]\in\Fcal_W'$, as this class is closed under subobjects and
	injective envelopes.

	Observe that we have:
	\[\Hcal_t(\Ycal\cap\Tcal_W,t(e)[-1])\subseteq\Hcal_t(\Ycal,\Xcal[-1])=0;\]
	hence $t(e)[-1]$ belongs to
	$\Fcal_W'=\Tcal_W'^\bot=[(\Ycal\cap\Tcal_W)\ast(\Xcal\cap\Tcal_W)[-1]]^\bot$ if and
	only if:
	\[ 0 =
	\Hcal_t((\Xcal\cap\Tcal_W)[-1],t(e)[-1])\simeq\Hcal(\Xcal\cap\Tcal_W,t(e)).\]
	Now, we show that this is equivalent to $\Hcal(\Tcal_W,t(e))=0$. Indeed, one
	implication is clear, as $\Xcal\cap\Tcal_W\subseteq\Tcal_W$. For the converse,
	assume $\Hcal(\Xcal\cap\Tcal_W,t(e))=0$; in particular, $t(e)\in\Xcal$ has no
	nonzero subobjects belonging to $\Xcal\cap\Tcal_W$. Since $\Tcal_W$ is closed
	under quotients and $\Xcal$ is closed under subobjects, it follows that any
	morphism with source in $\Tcal_W$ and target $t(e)$ has vanishing image, and
	so it is zero.

	So far we have proved that $\Theta(e)\in\Fcal_W'$ if and only if
	$t(e)\in\Fcal_W$; this is in turn equivalent to $e\in\Fcal_W$, as this class
	is closed under subobjects and injective envelopes.
\end{proof}

Now we are ready to prove the promised result about the canonical 
bijection between $\GSpec(c)$ and $\Spec(R)$ generalising Matlis'
correspondence. For a prime $\pf\in\Spec(R)$, we will write:
\[\spcl(\pf):=\{\,\qf\in\Spec(R)\mid \pf\subseteq\qf\,\}\quad\text{and}\quad
\gncl(\pf):=\{\,\qf\in\Spec(R)\mid \qf\subseteq\pf\,\}.\]
Later on, we will need to use the notations
$\spcl_\subseteq(\pf):=\spcl(\pf)$ and $\gncl_\subseteq(\pf):=\gncl(\pf)$ for
these sets, to distinguish them from similar notions with respect to other
partial orders on $\Spec R$.

\begin{thm}\label{thm:c(p)-support}
	Let $c\in\D(R)$ be a bounded cosilting complex. Then:
	\begin{enumerate}
		\item for every prime $\pf\in\Spec(R)$, there is a unique element $c(\pf)\in\GSpec(c)$ such that 
	$\pf\in\supp(c(\pf))\subseteq\gncl(\pf)$;
		\item	the assignment $\pf\mapsto c(\pf)$ gives a bijection $\Phi_c\colon \Spec(R)\simeq \GSpec(c)$;
		\item if $c'$ is a mutation of $c$, we have $\Phi_{c'}=\Theta\circ\Phi_c$.
	\end{enumerate}
\end{thm}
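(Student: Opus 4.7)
The plan is to use the fact, recalled at the start of \S\ref{sec:commutative-noetherian}, that every bounded cosilting $c$ is obtained from the injective cogenerator $\omega$ of $\Mod(R)$ by a finite chain of right mutations, up to shift; combined with Matlis' classical correspondence for $\omega$ and Proposition \ref{prop:mutation-BW}, this transfers the desired support condition along the chain. First I treat the base case: for $\pf \in \Spec(R)$, any element of $R \setminus \pf$ acts invertibly on $E(R/\pf)$, which forces $E(R/\pf) \otimes_R^\Lbb k(\qf) = 0$ whenever $\qf \not\subseteq \pf$, while $E(R/\pf) \otimes_R^\Lbb k(\pf) \neq 0$; hence $\supp(E(R/\pf)) = \{\pf\}$, and the classical Matlis bijection $\Phi_\omega(\pf) := E(R/\pf)$ obviously satisfies the support condition of (1), with uniqueness immediate since distinct primes give disjoint singleton supports.

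The crucial observation that enables the induction step is the reformulation: for $u \in \GSpec(c)$, the condition ``$\pf \in \supp(u) \subseteq \gncl(\pf)$'' is equivalent to ``for every specialisation-closed $W \subseteq \Spec(R)$, $u \in \Bcal_W$ if and only if $\pf \notin W$''. Indeed, $u \in \Bcal_W$ means $\supp(u) \cap W = \emptyset$; if $\pf \in W$ this fails because $\pf \in \supp(u)$; if $\pf \notin W$, spec-closure of $W$ forces $\gncl(\pf) \cap W = \emptyset$, so the inclusion $\supp(u) \subseteq \gncl(\pf)$ gives disjointness. This reformulated condition is precisely the invariant that Proposition \ref{prop:mutation-BW} preserves under right mutation, and it is trivially shift-invariant.

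Fixing a chain of right mutations $\omega = c_0, c_1, \ldots, c_n$ and an integer $k$ with $c[k]$ equivalent to $c_n$, with mutation bijections $\Theta_i \colon \GSpec(c_{i-1}) \to \GSpec(c_i)$, I define $\Phi_c(\pf) := (\Theta_n \circ \cdots \circ \Theta_1)(E(R/\pf))[-k]$. Existence in (1) is immediate: the reformulated invariant is preserved by each $\Theta_i$ and by the shift, so $c(\pf) := \Phi_c(\pf)$ inherits the support condition from $E(R/\pf)$. For uniqueness, given $u \in \GSpec(c)$ with $\pf \in \supp(u) \subseteq \gncl(\pf)$, shift by $[k]$ and apply the inverse chain of left mutations (Remark \ref{rmk:left-mutation}); since the invariant is preserved throughout, the result must be the unique element of $\GSpec(\omega)$ carrying that invariant, namely $E(R/\pf)$, forcing $u = c(\pf)$. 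This simultaneously shows that the definition of $c(\pf)$ is independent of the chosen chain. Item (2) follows because $\Phi_c$ is a composition of bijections; injectivity can also be verified directly, as $c(\pf) = c(\qf)$ yields $\pf \in \gncl(\qf)$ and $\qf \in \gncl(\pf)$, whence $\pf = \qf$.

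For (3), given a right mutation $c$ to $c'$ with bijection $\Theta \colon \GSpec(c) \to \GSpec(c')$, Proposition \ref{prop:mutation-BW} implies that $\Theta(c(\pf))$ still satisfies $\pf \in \supp \subseteq \gncl(\pf)$, so by the uniqueness from (1) applied to $c'$ we conclude $\Theta(c(\pf)) = c'(\pf)$, i.e.\ $\Phi_{c'} = \Theta \circ \Phi_c$. The main obstacle is really just the equivalence between the geometric support condition and the $\Bcal_W$-invariant; once that bridge is in place, the entire statement follows from Proposition \ref{prop:mutation-BW}, together with straightforward bookkeeping for shifts.
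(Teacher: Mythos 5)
Your proof is correct and takes essentially the same approach as the paper: Matlis' base case, the reformulation of the support condition $\pf\in\supp(u)\subseteq\gncl(\pf)$ as a membership invariant for the subcategories $\Bcal_W$, and Proposition~\ref{prop:mutation-BW} to transfer that invariant along a chain of mutations. Your universally-quantified version of the invariant (``for all specialisation-closed $W$, $u\in\Bcal_W$ iff $\pf\notin W$'') is equivalent to the paper's formulation $u\in\Bcal_{W_\pf}\setminus\Bcal_{W_\pf\cup\{\pf\}}$, and the rest is the same induction with slightly more explicit shift bookkeeping.
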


\begin{proof}
	For a prime $\pf$, we write $W_\pf:=\Spec(R)\setminus \gncl(\pf)$ for the
	largest specialisation closed subset of $\Spec(R)$ not containing $\pf$.
	Note that $\Bcal_{W_\pf}=\supp^{-1}(\gncl(\pf))$, while
	$\Bcal_{W_\pf\cup\{\pf\}}=\supp^{-1}(\gncl(\pf)\setminus\{\pf\})$.
	Therefore, for an object $x\in\D(R)$ we have $\pf\in
	\supp(x)\subseteq\gncl(\pf)$ if and only if
	$x\in\Bcal_{W_\pf}\setminus\Bcal_{W_\pf\cup\{\pf\}}$.

	Up to shift, we choose a finite chain of right mutations linking $\omega$ to
	$c$, and we argue by induction on its length to prove (1--2). We first
	check the base case, then we prove (3) in general, followed by the final
	step of induction.
	For $c=\omega$, we have already observed that the elements of $\GSpec(\omega)$
	are of the form $\omega(\pf):=E(R/\pf)$, and we have
	$\supp(E(R/\pf))=\{\pf\}$. This shows (1--2) for this base case.
	Now, if $c'$ is a right mutation of $c$, by Proposition~\ref{prop:mutation-BW} we
	have that $u\in\GSpec(c)$ lies in
	$\Bcal_{W_\pf}\setminus\Bcal_{W_\pf\setminus\{\pf\}}$ if and only if so does
	$\Theta(u)\in\GSpec(c')$; this shows (3).
	Arguing on the length of a chain of mutations linking $\omega$ and $c$, and
	using this same observation in the induction step, one concludes the
	proof of (1--2).
\end{proof}

\begin{rmk}
	Note that, while $\supp(\omega(\pf))=\{\pf\}$, it is not true in general
	that $\supp(c(\pf))=\{\pf\}$; by the Proposition, when mutating the
	support can leak downwards. For example, taking $R:=\Zbb$, so that
	$\omega=\Qbb/\Zbb$, and right-mutating at the class
	$\Prod(\Qbb)\subseteq\Prod(\omega)$, we obtain $c(p)=\widehat{\Zbb_{p}}$ for
	every nonzero prime $p\in\Zbb$ \cite[Ex.~5.5]{hrbe-naka-stov-24}. We then have
	$\supp(c(p))=\{0,p\}$.
\end{rmk}

In what follows, we will often use $\Phi_c$ to identify the sets $\Spec(R)$ and
$\GSpec(c)$.
After this identification, each particular choice of a bounded cosilting complex
$c$ will give a possibly different topology on $\Spec(R)$. For example, the
topology induced by $c=\omega$ is the \emph{Hochster topology} on $\Spec(R)$,
whose open sets are the specialisation-closed subsets.
As mentioned, these are the upper-sets of $\subseteq$, so this topology is the
$(\Tsf0)$ Alexandrov topology induced by the partial order given by inclusion
(see \S\ref{subsec:topology-gabriel-spectrum}).

\subsection{Residue fields as
Gabriel-simples}\label{subsec:residues-gabriel-simples}

For each prime $\pf\in\Spec(R)$, we denote by $k(\pf)$ the residue field $R_\pf/\pf R_\pf$, as above. These objects have the remarkable property of belonging, up to
shift, to the heart of any nondegenerate $t$-structure.

\begin{lemma}[{\cite[Cor.~2.7]{hrbe-naka-21},\cite[Prop.~4.1(1)]{pavo-vito-21}}]
	\label{lemma:s(p)-torsion-simple}
	For any $t$-structure $(\Ucal,\Vcal)$ in $\D(R)$, we have that either
	$k(\pf)\in\Ucal$ or $k(\pf)\in\Vcal$. In particular,
	any nondegenerate $t$-structure of $\D(R)$ with heart $\Hcal$ determines a
	(unique) function $f\colon \Spec(R)\to \Zbb$ such that $k(\pf)[-f(\pf)]\in\Hcal$.
\end{lemma}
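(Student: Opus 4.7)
The plan is to address the two statements separately. For the dichotomy $k(\pf)\in\Ucal$ or $k(\pf)\in\Vcal$, the natural starting point is the truncation triangle
\[ u(k(\pf))\to k(\pf)\to v(k(\pf))\to u(k(\pf))[1] \]
with $u(k(\pf))\in\Ucal$ and $v(k(\pf))\in\Vcal$, the goal being to show that one of the two truncations vanishes. The features of $k(\pf)$ I would exploit are that it is a field, so every complex of $k(\pf)$-modules splits as a direct sum of shifted cohomology, and that $\End_{\D(R)}(k(\pf))=k(\pf)$ is itself a field, making $k(\pf)$ a brick and, in particular, indecomposable in $\D(R)$. A natural attempt is to apply the base-change functor $k(\pf)\otimes_R^\Lbb -$ to the truncation triangle, use the hereditary structure of $\D(k(\pf))$ to obtain a splitting there, and descend that splitting to $\D(R)$ via the indecomposability of $k(\pf)$.

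The main obstacle is that an arbitrary $t$-structure on $\D(R)$ need not restrict to the localising subcategory $\Loc(k(\pf))=\supp^{-1}\{\pf\}$ (which is not smashing when $\pf$ is not maximal), and the truncation functors $u,v$ have poor compatibility with localising subcategories in general. The rigorous argument will therefore likely use specific features of the commutative noetherian setting, as proved in \cite[Cor.~2.7]{hrbe-naka-21} and \cite[Prop.~4.1(1)]{pavo-vito-21}, whose results I would invoke to finish this half of the lemma.

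For the ``in particular'' clause, once the dichotomy is granted the argument is formal. Apply the dichotomy to each shift $k(\pf)[m]$ and set
\[ A:=\{m\in\Zbb\mid k(\pf)[m]\in\Ucal\},\qquad B:=\{m\in\Zbb\mid k(\pf)[m]\in\Vcal\}. \]
The closure $\Ucal[1]\subseteq\Ucal$ makes $A$ upward-closed and dually $B$ downward-closed, with $A\cup B=\Zbb$ by the dichotomy. Non-degeneracy of $\tau$ forces both sets to be non-empty: from $\bigcap_n\Vcal[n]=0$ applied to $k(\pf)\neq 0$, some large positive shift of $k(\pf)$ escapes $\Vcal$ and thus lies in $\Ucal$ by the dichotomy, so $A\neq\emptyset$; the dual argument gives $B\neq\emptyset$. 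Disjointness $A\cap B=\emptyset$ follows from $\Ucal\cap\Vcal=0$, which is a consequence of the $t$-structure axiom $\Dcal(\Ucal,\Vcal)=0$ applied to an identity morphism. Hence $A=[a,\infty)$ and $B=(-\infty,a-1]$ for a unique $a\in\Zbb$, and setting $f(\pf):=1-a$ yields $k(\pf)[-f(\pf)]=k(\pf)[a-1]\in\Ucal[-1]\cap\Vcal=\Hcal$, with $a-1$ the unique integer witnessing this membership.
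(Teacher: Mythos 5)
Your proposal is correct and matches the paper's treatment: the dichotomy $k(\pf)\in\Ucal$ or $k(\pf)\in\Vcal$ is a cited result which the paper does not reprove, and you sensibly defer to the same references after correctly identifying why the naive base-change argument is not obviously rigorous for an arbitrary $t$-structure. Your derivation of the ``in particular'' clause is sound: $A$ is upward-closed since $\Ucal[1]\subseteq\Ucal$, $B$ is downward-closed dually, $A\cup B=\Zbb$ by applying the dichotomy to the shifted $t$-structures $(\Ucal[m],\Vcal[m])$, non-degeneracy excludes $A=\emptyset$ or $B=\emptyset$ since $k(\pf)\neq 0$, and $\Ucal\cap\Vcal=0$ gives disjointness; the unique threshold $a$ then yields $k(\pf)[a-1]\in\Ucal[-1]\cap\Vcal=\Hcal$ and the uniqueness of the witnessing shift. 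This is exactly the routine consequence the paper leaves implicit.
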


For the $t$-structure $\tau=(\Ucal,\Vcal)$ associated to a pure-injective cosilting
object $c$, there is a more explicit description of this function $f$. As
mentioned at the start of the section, such a $t$-structure is compactly
generated, and thus, by
\cite[Thm.~3.11]{alon-jere-saor-10}, there is a decreasing chain
$\cdots\supseteq V_i\supseteq V_{i+1}\supseteq\cdots$ of specialisation-closed
subsets $V_i\subseteq\Spec(R)$ such that:
\begin{equation}\label{eq:aisle-coaisle}\tag{\textsf{AJS}}
\begin{aligned}
	\Ucal[-1] &=\{\,x\in\D(R)\mid \supp(H^i(x))\subseteq V_i\,\}\\
	\Vcal &=\{\,x\in\D(R)\mid \Rbb\Gamma_{V_i}(x)\in\D^{\geq i}\,\},
\end{aligned}
\end{equation}
where here by $H^\ast$ we denote the cohomology with respect to the standard
$t$-structure $(\D^{<0},\D^{\geq 0})$ of $\D(R)$. Such a chain is called an
\emph{sp-filtration}. The \emph{local cohomology
functor} $\Rbb\Gamma_{V_i}$ is the left-truncation with respect to the smashing
localisation $(\Lcal_{V_i},\Bcal_{V_i})$, and it is computed as the
right-derived functor of the torsion radical $\Gamma_{V_i}$ with respect to the
hereditary torsion pair $(\Tcal_{V_i},\Fcal_{V_i})$ of $\Mod(R)$.
Then, in this notation, we have \cite[Prop.~4.7]{pavo-vito-21}:
\[f(\pf)=\max\{\,i\in\Zbb\mid \pf\in V_i\,\}.\]
Notice that, conversely, $f$ determines the corresponding sp-filtration and, therefore, the $t$-structure $\tau$ associated to $c$, by taking
\[V_i:=\{\,\pf\in\Spec(R)\mid i\leq f(\pf)\,\}.\]

In the rest of this subsection, once we fix a nondegenerate $t$-structure with heart $\Hcal$,
we will denote by $s(\pf):=k(\pf)[-f(\pf)]\in\Hcal$ the corresponding shift of
the residue field of $\pf$. We immediately observe that these object are
(hereditary-)torsion-simple, as per Definition~\ref{dfn:torsion-simple}.

\begin{lemma}\label{lemma:s(p)-hereditary-torsion-simple}
	Let $\Hcal$ be the heart of a nondegenerate $t$-structure in $\D(R)$. Then:
	\begin{enumerate}
		\item for any torsion pair $t=(\Tcal,\Fcal)$ in $\Hcal$, $s(\pf)$ belongs to
			either $\Tcal$ or $\Fcal$;
		\item	if moreover $t$ is hereditary, then $s(\pf)$ belongs to
	either $\Tcal$ or $\Tcal^{\bot_{0,1}}\subseteq\Fcal$.
	\end{enumerate}
\end{lemma}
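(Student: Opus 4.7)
For \textbf{part (1)}, my plan is to pass to the HRS-tilt $\tau_t=(\Ucal_t,\Vcal_t)$ of $\tau$ at $t$, whose heart is $\Hcal_t=\Fcal\ast\Tcal[-1]$. The sandwich $\Ucal\subseteq\Ucal_t\subseteq\Ucal[-1]$ makes $\tau_t$ nondegenerate, so Lemma~\ref{lemma:s(p)-torsion-simple} applies to it and tells us $s(\pf)$ lies in $\Ucal_t$ or in $\Vcal_t$. Applying the long exact sequence of $\tau_t$-cohomology to the torsion triangle $t\to s(\pf)\to f\to t[1]$ of $s(\pf)$ in $\Hcal$, and using that $t\in\Tcal\subseteq\Ucal_t$ (whose only nonvanishing $\tau_t$-cohomology is $H^{-1}_{\tau_t}(t)\cong t[-1]$) and $f\in\Fcal\subseteq\Hcal_t$ (whose only nonvanishing $\tau_t$-cohomology is $H^0_{\tau_t}(f)\cong f$), I would compute $H^{-1}_{\tau_t}(s(\pf))\cong t[-1]$ and $H^0_{\tau_t}(s(\pf))\cong f$. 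The dichotomy from Lemma~\ref{lemma:s(p)-torsion-simple} then forces $f=0$ (so $s(\pf)\in\Tcal$) or $t=0$ (so $s(\pf)\in\Fcal$).

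For \textbf{part (2)}, by (1) I may assume $s(\pf)\in\Fcal$, so that $\Hom_\Hcal(\Tcal,s(\pf))=0$ is automatic, and it remains to show $\Ext^1_\Hcal(\tau,s(\pf))=0$ for every $\tau\in\Tcal$. The key translation is
\[\Ext^1_\Hcal(\tau,s(\pf))\cong\Dcal(\tau[-1],s(\pf))\cong\Hom_{\Hcal_t}(\tau[-1],s(\pf)),\]
where the last identification uses that both $\tau[-1]\in\Tcal[-1]$ and $s(\pf)\in\Fcal$ belong to the heart $\Hcal_t$. I would next verify that $\Tcal[-1]\subseteq\Hcal_t$ is a hereditary torsion class: subobject-closure is automatic (it is the torsion-free part of the tilted pair $(\Fcal,\Tcal[-1])$), and closures under coproducts and extensions are inherited from the corresponding closures of $\Tcal$ in $\Hcal$ via the shift. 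Closure under quotients is the step where hereditariness of $t$ enters, through a short LES of $H^*_\tau$ applied to a quotient of an object of $\Tcal[-1]$: the would-be $\Fcal$-cohomology of the quotient embeds into an object of $\Tcal$ in that LES, and subobject-closure of $\Tcal$ together with $\Fcal\cap\Tcal=0$ forces it to vanish.

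With $\Tcal[-1]$ a hereditary torsion class in $\Hcal_t$, I apply part (1) of the present lemma to the heart $\Hcal_t$ and the torsion pair $(\Tcal[-1],(\Tcal[-1])^\bot)$: $s(\pf)$ must lie in one of the two classes. Since $s(\pf)\in\Fcal$ and $\Fcal\cap\Tcal[-1]=0$ inside $\Hcal_t$, it must sit in $(\Tcal[-1])^\bot$; in particular $\Hom_{\Hcal_t}(\tau[-1],s(\pf))=0$ for every $\tau\in\Tcal$, giving the desired Ext vanishing and hence $s(\pf)\in\Tcal^{\bot_{0,1}}$.

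The main obstacle I expect is the intermediate step showing that $\Tcal[-1]$ is a (hereditary) torsion class in $\Hcal_t$, and particularly the quotient-closure: this is the point at which hereditariness of $t$ is used nontrivially, via a careful tracking through the LES of $H^*_\tau$ that identifies a monomorphism from an $\Fcal$-object into a $\Tcal$-object, which must vanish. Everything else reduces to bookkeeping with Lemma~\ref{lemma:s(p)-torsion-simple} and with standard truncation triangles.
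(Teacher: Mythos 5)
Your proof of part~(1) is correct and is essentially the paper's argument, just phrased via the long exact sequence of $\tau_t$-cohomology instead of the observation that $\Hcal\cap\Hcal_t[i]$ vanishes for $i\neq 0,1$; both reduce to the dichotomy of Lemma~\ref{lemma:s(p)-torsion-simple} applied to (a shift of) the tilted $t$-structure $\tau_t$.

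Part~(2), however, has a genuine gap. You want to apply part~(1) to the pair $(\Tcal[-1],(\Tcal[-1])^\bot)$ in $\Hcal_t$, but this requires it to actually be a \emph{torsion pair}, i.e.\ that every object of $\Hcal_t$ admits a torsion decomposition with torsion part in $\Tcal[-1]$. The closure properties you verify (subobjects, quotients, extensions, coproducts) make $\Tcal[-1]$ a Serre subcategory, but in a general abelian category these do not produce a torsion radical; one needs something like AB5 and a generator. Here $\Hcal_t$ need not be Grothendieck: the lemma is stated for an arbitrary nondegenerate $t$-structure and an arbitrary hereditary torsion pair $t$, and when $t$ is not of finite type the HRS-tilt does not have a Grothendieck heart. (Notice that the TTF-triple $(\Fcal,\Tcal[-1],\Fcal')$ in $\Hcal_t$ that you appeal to implicitly is stated in the paper — via \cite[Rmk.~2.15]{pavo-vito-21} — only under the finite-type hypothesis.) The paper circumvents this entirely by never working inside $\Hcal_t$: it forms the subcategory $\Lcal=\{x\in\D(R)\mid H^\ast_\Hcal(x)\in\Tcal\}$ of the \emph{ambient} triangulated category $\D(R)$, observes that $\Lcal$ is localising and hence the aisle of a stable $t$-structure $(\Lcal,\Lcal^\bot)$, and applies Lemma~\ref{lemma:s(p)-torsion-simple} to that $t$-structure; then $s(\pf)\in\Lcal$ gives $s(\pf)\in\Tcal$, while $s(\pf)\in\Lcal^\bot$ gives $\Dcal(\Tcal,s(\pf))=0=\Dcal(\Tcal,s(\pf)[1])$, i.e.\ $s(\pf)\in\Tcal^{\bot_{0,1}}$. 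You should adopt that route (or else restrict the statement to finite-type hereditary torsion pairs, where your argument would go through).
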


\begin{proof}
	(1) Consider the HRS-tilted heart
 	$\Hcal_t:=\Fcal\ast\Tcal[-1]$, which is again the heart of a nondegenerate
	$t$-structure. Then some shift
 	of $s(\pf)$ must lie in $\Hcal_t$, by the lemma above. In other words, we must have
 	$s(\pf)\in\Hcal\cap \Hcal_t[i]$ for some integer $i\in\Zbb$. The only two
 	integers for which this intersection is nonzero are $i=0$, for which we get
 	$\Hcal\cap\Hcal_t=\Fcal$, and $i=1$, for which we get
 	$\Hcal\cap\Hcal_t[1]=\Tcal$.

	(2) Denote by $H^0\colon \D(R)\to \Hcal$ the $\Hcal$-cohomology functor. If
	$t$ is hereditary, consider the subcategory $\Lcal:=\{\,x\in\D(R)\mid
	H^\ast(x)\in\Tcal\,\}$ of $\D(R)$. Note that this is a localising subcategory
	of $\D(R)$, despite the fact that coproducts in $\Hcal$ may be computed
	differently than in $\D(R)$ (see \cite[Prop.~3.2]{parr-saor-15}). Applying the Lemma to the $t$-structure
	$(\Lcal,\Lcal^\bot)$, we have either $s(\pf)\in\Lcal$, which means
	$s(\pf)\in\Tcal$, or $s(\pf)\in\Lcal^\bot$, which in particular implies
	$s(\pf)\in\Tcal^{\bot_{0,1}}$.
\end{proof}

Now we need an important property of the cosilting $t$-structures in $\D(R)$.

\begin{lemma}\label{lemma:heart-localisation}
	Let $c$ be a pure-injective cosilting object, and $(\Ucal,\Vcal)$ the
	associated $t$-structure, with heart $\Hcal$. Then for any prime $\pf\in\Spec(R)$, we have:
	\begin{enumerate}
		\item $\Ucal\otimes R_\pf\subseteq\Ucal$ and $\Vcal\otimes
			R_\pf\subseteq\Vcal$; and 
		\item $\Hcal\otimes R_\pf\subseteq \Hcal$, and $H_c^0(-\otimes R_\pf)\simeq
			H_c^0(-)\otimes R_\pf$.
	\end{enumerate}
\end{lemma}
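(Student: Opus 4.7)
The plan is to work from the (AJS) description of $\Ucal$ and $\Vcal$ via an sp-filtration $\cdots \supseteq V_i \supseteq V_{i+1} \supseteq \cdots$ of $\Spec R$, and to exploit flatness of $R_\pf$ over $R$. The key combinatorial input is the identity $\supp(x \otimes R_\pf) = \supp(x) \cap \gncl(\pf) \subseteq \supp(x)$ for any $x \in \D(R)$: supports do not grow under tensoring with $R_\pf$.

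For the aisle in part (1), flatness yields $H^i(x \otimes R_\pf) \simeq H^i(x) \otimes R_\pf$, whose support is contained in that of $H^i(x)$, so the (AJS)-condition defining $\Ucal$ transfers directly. For the coaisle, the subsidiary claim to establish is that local cohomology commutes with localisation at $\pf$: $\Rbb\Gamma_W(x \otimes R_\pf) \simeq \Rbb\Gamma_W(x) \otimes R_\pf$ for any specialisation-closed $W$. The cleanest route is via the smashing stable $t$-structure $(\Lcal_W, \Bcal_W) = (\supp^{-1} W, \supp^{-1} W^\cpl)$: by the support-shrinking property both $\Lcal_W$ and $\Bcal_W$ are preserved by $-\otimes R_\pf$, so tensoring the truncation triangle of $x$ for $(\Lcal_W, \Bcal_W)$ yields, by uniqueness of truncation, the truncation triangle of $x \otimes R_\pf$. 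With this identity in hand, for $x \in \Vcal$ the condition $\Rbb\Gamma_{V_i}(x) \in \D^{\geq i}$ transfers to $x \otimes R_\pf$ by flatness.

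For part (2), the containment $\Hcal \otimes R_\pf \subseteq \Hcal$ is immediate from part (1) applied to $\Ucal[-1]$ and $\Vcal$, using $\Hcal = \Ucal[-1] \cap \Vcal$. For the commutation $H_c^0(-\otimes R_\pf) \simeq H_c^0(-) \otimes R_\pf$, the plan is to tensor the truncation triangle $u(x) \to x \to v(x) \to u(x)[1]$ with $R_\pf$ (exact by flatness), and use part (1) to recognise the resulting triangle as the truncation triangle of $x \otimes R_\pf$. Uniqueness then gives $u(x \otimes R_\pf) \simeq u(x) \otimes R_\pf$ and $v(x \otimes R_\pf) \simeq v(x) \otimes R_\pf$, which substitute into the formula $H_c^0 = u(v(-)[1])[-1]$ to conclude. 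The only real obstacle is the base-change identity for $\Rbb\Gamma_W$, but the abstract truncation argument above sidesteps any direct computation; otherwise everything reduces to flatness of $R_\pf$ and the support-shrinking property.
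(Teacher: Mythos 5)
Your proof is correct. The paper's own ``proof'' of part (1) is a citation to \cite[Lemma~2.11]{hrbe-naka-21}, followed by the remark that the claim ``can also be shown directly from the description \eqref{eq:aisle-coaisle}''; you have carried out exactly that direct route, so there is no divergence in spirit, only in level of detail. A few notes on the steps you treat as routine. The support-shrinking identity $\supp(x\otimes R_\pf)=\supp(x)\cap\gncl(\pf)$ does hold and is the right combinatorial input: $R_\pf\otimes_R k(\qf)$ vanishes unless $\qf\subseteq\pf$, in which case it equals $k(\qf)$. The base-change $\Rbb\Gamma_W(x\otimes R_\pf)\simeq\Rbb\Gamma_W(x)\otimes R_\pf$ via uniqueness of the $(\Lcal_W,\Bcal_W)$-truncation triangle is clean and avoids any explicit computation; since both classes $\Lcal_W=\supp^{-1}W$ and $\Bcal_W=\supp^{-1}W^\cpl$ are closed under $-\otimes R_\pf$ by the support-shrinking property, tensoring the truncation triangle of $x$ yields a triangle with the required membership constraints on the outer terms, and uniqueness identifies it with the truncation triangle of $x\otimes R_\pf$. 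The same uniqueness trick, now applied to $(\Ucal,\Vcal)$ using part (1), gives $u(x)\otimes R_\pf\simeq u(x\otimes R_\pf)$ and $v(x)\otimes R_\pf\simeq v(x\otimes R_\pf)$, and substituting into $H^0_c=u(v(-)[1])[-1]$ (using that $-\otimes R_\pf$ commutes with shift) yields $H^0_c(-\otimes R_\pf)\simeq H^0_c(-)\otimes R_\pf$ as required in (2); the inclusion $\Hcal\otimes R_\pf\subseteq\Hcal$ follows from $\Hcal=\Ucal[-1]\cap\Vcal$. No gaps.
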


\begin{proof}
	(1) is \cite[Lemma~2.11]{hrbe-naka-21}, but it can also be shown directly from
	the description \eqref{eq:aisle-coaisle} of $\Ucal$ and $\Vcal$ given
	above. (2) follows from (1).
\end{proof}

Observe that $(-)_\pf:=-\otimes R_\pf$ is the localisation functor (=right truncation)
with respect to the smashing localisation $(\Lcal_{W_\pf},\Bcal_{W_\pf})$ of
$\D(R)$, for $W_\pf:=\Spec(R)\setminus \gncl(\pf)$. Therefore, its restriction
to $\Hcal$ can be checked to be the functor $\Rsf\Lsf$ induced by the hereditary
torsion pair $(\Tcal_{W_\pf},\Fcal_{W_\pf})$, in the notation of
\S\ref{sub:localising}, and it is actually exact.
This property has two consequences for us. First, it shows that hereditary
torsion and torsion-free classes of finite type are closed under localisation.

\begin{lemma}\label{lemma:torsion-localisation-at-p}
	Let $c\in\D(R)$ be a pure-injective cosilting object, with heart $\Hcal$. If
	$(\Tcal,\Fcal)$ is a hereditary torsion pair of finite type in $\Hcal$, then
	$\Tcal\otimes R_\pf\subseteq\Tcal$ and $\Fcal\otimes R_\pf\subseteq\Fcal$.
\end{lemma}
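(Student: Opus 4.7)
The plan is to express $x_\pf := x \otimes_R R_\pf$ as an iterated directed colimit in the heart $\Hcal$ of copies of $x$ linked by multiplication maps, and then to conclude via the closure of $\Tcal$ (respectively, $\Fcal$) under directed colimits in $\Hcal$.

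The key input is that the $t$-structure of any pure-injective cosilting object is homotopically smashing, by \cite[Thm.~4.6]{laki-20}. Consequently, by \cite[Cor.~7.4]{saor-stov-viri-17}, any directed system in $\Hcal$ lifts to a coherent diagram whose homotopy colimit in $\D(R)$ lies in $\Hcal$ and coincides with the direct limit computed in $\Hcal$. Since $\Hcal$ is $R$-linear, each element $s \in R$ gives an endomorphism $s_x \colon x \to x$ in $\Hcal$, yielding an $\Nbb$-indexed directed system whose mapping telescope in $\D(R)$ is $x \otimes_R R[s^{-1}]$ (using flatness of $R[s^{-1}]$). By the principle above, this object belongs to $\Hcal$ and is the direct limit of the system in $\Hcal$. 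Writing $R_\pf = \varinjlim_{s \in R\setminus \pf} R[s^{-1}]$ as a filtered colimit over the divisibility order and applying the same identification once more, one obtains
\[ x \otimes_R R_\pf \;=\; \varinjlim\nolimits^\Hcal_{s\in R\setminus\pf}\; x \otimes_R R[s^{-1}],\]
realising $x_\pf$ as a directed colimit in $\Hcal$ of objects each of which is itself a directed colimit in $\Hcal$ of copies of $x$.

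The conclusion is then immediate. If $x \in \Tcal$, every stage of both directed systems lies in $\Tcal$, which being a torsion class is closed under all colimits in $\Hcal$ (as these decompose into coproducts and cokernels); hence $x_\pf \in \Tcal$. If $x \in \Fcal$, then by the finite-type hypothesis, $\Fcal$ is closed under directed colimits in $\Hcal$, and the same two-step argument yields $x_\pf \in \Fcal$.

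The main obstacle is the identification of the mapping telescope in $\D(R)$ with the direct limit in $\Hcal$ for the systems $(x \xrightarrow{s_x} x \xrightarrow{s_x} \cdots)$: it is precisely the homotopically smashing property which both confirms that $x \otimes_R R[s^{-1}]$ lies in $\Hcal$ and ensures that its computation in $\D(R)$ agrees with the direct limit computation in $\Hcal$. Once this identification is in hand, the argument is purely a matter of invoking the closure properties of $\Tcal$ and $\Fcal$.
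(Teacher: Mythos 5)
Your proof is correct, and it takes a genuinely different route from the paper's. The paper's argument is short and structural: since $t$ is of finite type, the tilted heart $\Hcal_t=\Fcal\ast\Tcal[-1]$ is itself associated to a pure-injective cosilting object, so both $\Hcal$ and $\Hcal_t$ are closed under $-\otimes R_\pf$ by Lemma~\ref{lemma:heart-localisation}; one then simply notes that $\Tcal=\Hcal\cap\Hcal_t[1]$ and $\Fcal=\Hcal\cap\Hcal_t$, and intersections of classes closed under $-\otimes R_\pf$ are again closed under $-\otimes R_\pf$. Your argument instead works directly inside $\Hcal$: you realise $x\otimes_R R_\pf$ as an iterated directed colimit in $\Hcal$ of copies of $x$ (first a mapping telescope giving $x\otimes_R R[s^{-1}]$, then a filtered colimit over the divisibility order), using the homotopically smashing property via \cite[Thm.~4.6]{laki-20} and \cite[Cor.~7.4]{saor-stov-viri-17} to identify the $\D(R)$-level homotopy colimits with direct limits in $\Hcal$, and then invoke closure of $\Tcal$ under colimits and of $\Fcal$ under directed colimits (finite type). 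What the paper's route buys is brevity, at the cost of first establishing that $\Hcal_t$ is a cosilting heart and invoking the intersection description; what your route buys is a self-contained, hands-on computation that never references the mutated heart and makes visible exactly where the finite-type hypothesis enters ($\Fcal$ closed under directed colimits), while relying a bit more heavily on the technology of coherent diagrams and their homotopy colimits. One small simplification worth noting: Lazard's theorem expresses $R_\pf$ as a filtered colimit of finite free $R$-modules, which would let you realise $x\otimes_R R_\pf$ as a single filtered colimit in $\Hcal$ of finite coproducts of copies of $x$, avoiding the two-step telescope-then-filtered-colimit decomposition.
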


\begin{proof}
	The tilted heart $\Hcal_t:=\Fcal\ast\Tcal[-1]$ is also associated to a 
	pure-injective cosilting object, and therefore it is closed under $-\otimes R_\pf$. We
	deduce that the intersections $\Tcal=\Hcal\cap\Hcal_t[1]$ and
	$\Fcal=\Hcal\cap\Hcal_t$ are also closed under $-\otimes R_\pf$.
\end{proof}

The second consequence is the following technical lemma.

\begin{lemma}\label{lemma:M(p)}
	Let $c\in\D(R)$ be a bounded cosilting complex with heart $\Hcal$.
	\begin{enumerate}
		\item $\Hcal\cap\supp^{-1}(\{\pf\}) =
			(\Mod(R)\cap\supp^{-1}(\{\pf\}))[-f(\pf)]$.
		\item If $0\to x\to s(\pf)\to y\to 0$ is a short exact sequence in $\Hcal$, then:
		\begin{enumerate}
			\item $\supp(x)$ and $\supp(y)$ are contained in $V(\pf)$; and
			\item if $x\neq 0$, $\supp(y)\subseteq V(\pf)\setminus\{\pf\}$.
		\end{enumerate}
	\end{enumerate}
\end{lemma}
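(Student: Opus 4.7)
The plan is to split the lemma into its three parts, handling (1) directly via the AJS description~\eqref{eq:aisle-coaisle} of the aisle and coaisle of $c$, (2a) via the vanishing of $s(\pf)$ after localising at primes outside $V(\pf)$, and (2b) by combining (1) with the hereditary-torsion-simplicity of $s(\pf)$ provided by Lemma~\ref{lemma:s(p)-hereditary-torsion-simple}. Throughout I will freely use that $-\otimes_R R_\qf$ restricts to an \emph{exact} endofunctor of $\Hcal$, as noted in the paragraph after Lemma~\ref{lemma:heart-localisation}.

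For (1), the inclusion $(\supseteq)$ is a direct check: if $M$ is a module with $\supp(M)\subseteq\{\pf\}$, then in the aisle condition on $M[-f(\pf)]$ the only nonzero $H^i$ is in degree $f(\pf)$, where $\pf\in V_{f(\pf)}$ by definition of $f$; in the coaisle condition, for $i>f(\pf)$ we have $\pf\notin V_i$ and so $M\in\Bcal_{V_i}$ kills $\Rbb\Gamma_{V_i}$, whereas for $i\leq f(\pf)$ we have $M\in\Lcal_{V_i}$, so $\Rbb\Gamma_{V_i}(M)=M$ lies in $\D^{\geq f(\pf)}\subseteq\D^{\geq i}$ after the shift. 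For $(\subseteq)$, given $x\in\Hcal$ with $\supp(x)\subseteq\{\pf\}$, the aisle condition forces $H^i(x)=0$ whenever $i>f(\pf)$ (otherwise $\pf$ would lie in $V_i$), and the coaisle condition at $i=f(\pf)$, using $\Rbb\Gamma_{V_{f(\pf)}}(x)=x$, forces $x\in\D^{\geq f(\pf)}$. Hence $x$ is concentrated in standard degree $f(\pf)$, as required.

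For (2a), given $\qf\notin V(\pf)$ I pick $r\in\pf\setminus\qf$; since $r$ kills $k(\pf)$ while being invertible in $R_\qf$, $k(\pf)\otimes_R R_\qf=0$, and flatness of $R_\qf$ promotes this to $s(\pf)\otimes_R^{\Lbb}R_\qf=0$. Applying $-\otimes_R R_\qf$ to the given SES then forces $x_\qf=y_\qf=0$ in $\Hcal$. For (2b), I argue by contradiction: assume $x\neq 0$ and $\pf\in\supp(y)$. Localising at $\pf$ (where $s(\pf)_\pf=s(\pf)$) produces an SES $0\to x_\pf\to s(\pf)\to y_\pf\to 0$ in $\Hcal$ whose three terms all have support inside $V(\pf)\cap\gncl(\pf)=\{\pf\}$; by (1), it lifts (up to a shift by $f(\pf)$) to an SES of $R$-modules $0\to M\to k(\pf)\to N\to 0$. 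The assumption $\pf\in\supp(y)$ forces $y_\pf\neq 0$ and hence $N\neq 0$, and since $k(\pf)$ has only the trivial $R$-submodules we deduce $M=0$, whence $x_\pf=0$ and $\supp(x)\subseteq V(\pf)\setminus\{\pf\}\subseteq W_\pf$, so $x\in\Tcal_{W_\pf}$. The crucial last step, and what I expect to be the main obstacle in writing cleanly, is the combination of the fact that $s(\pf)\notin\Tcal_{W_\pf}$ (its support is $\{\pf\}$, and $\pf\notin W_\pf$) with hereditary-torsion-simplicity to conclude that $s(\pf)\in\Fcal_{W_\pf}$; this gives $\Hcal(x,s(\pf))=0$, contradicting the nonzero inclusion $x\hookrightarrow s(\pf)$.
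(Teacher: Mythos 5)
Your proof is correct and follows the same overall strategy as the paper (AJS description for (1), localisation at $\pf$ and comparison with modules for (2)), but it diverges in a few places that are worth flagging. For the inclusion $(\supseteq)$ of (1), you verify the aisle and coaisle conditions directly, while the paper instead argues via transfinite extensions of $k(\pf)$ and closure properties of $\Hcal$; your direct check is cleaner, though you should spell out in the $(\subseteq)$ direction that $x\simeq x_\pf$ forces $H^i(x)$ to be an $R_\pf$-module and hence supported in $\gncl(\pf)$ --- this, combined with $\supp(H^i(x))\subseteq V_i$ and the disjointness of $V_i$ and $\gncl(\pf)$ for $i>f(\pf)$, is what makes the parenthetical ``otherwise $\pf$ would lie in $V_i$'' actually bite. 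For (2a), you localise at $\qf\notin V(\pf)$ and check vanishing directly, where the paper instead notes that $\Tcal_{V(\pf)}$ is a hereditary torsion class containing $s(\pf)$ and is thus closed under subobjects and quotients; both work. For (2b), your final contradiction uses the hereditary-torsion-simplicity of $s(\pf)$ from Lemma~\ref{lemma:s(p)-hereditary-torsion-simple} against the pair $(\Tcal_{W_\pf},\Fcal_{W_\pf})$, whereas the paper directly invokes the semi-orthogonality $\D(R)(\Lcal_{V(\pf)\setminus\{\pf\}},\Bcal_{V(\pf)\setminus\{\pf\}})=0$; again both are sound, and no circularity arises since Lemma~\ref{lemma:s(p)-hereditary-torsion-simple} does not depend on Lemma~\ref{lemma:M(p)}.

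One genuine imprecision: your claim that ``$k(\pf)$ has only the trivial $R$-submodules'' is false as stated (consider $\Qbb$ over $\Zbb$). What saves the argument is that $M$, being obtained from $x_\pf$ by shift, is itself an $R_\pf$-module, and since $R\to R_\pf$ is a ring epimorphism the $R$-monomorphism $M\hookrightarrow k(\pf)$ is automatically an $R_\pf$-monomorphism; simplicity of $k(\pf)$ over $R_\pf$ then forces $M\in\{0,k(\pf)\}$, and $N\neq 0$ rules out the second option. The conclusion is correct, but the stated justification needs this repair.
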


\begin{proof}
	(1) ($\subseteq$) Let $(\Ucal,\Vcal)$ be the $t$-structure associated to $c$, and let
	$x\in\Hcal\cap\supp^{-1}(\{\pf\})$. In particular, $x\in\Bcal_{W_\pf}$, so we
	have $x\simeq x_\pf$. Applying
	Lemma~\ref{lemma:heart-localisation} to the standard $t$-structure, for every
	index $i$ we deduce that $H^i(x)_\pf\simeq H^i(x_\pf)\simeq H^i(x)$, where by
	$H^\ast$ here we denote the standard cohomology. Now, in the notation of
	\eqref{eq:aisle-coaisle}, we are assuming that:
	\[x\in\Hcal\subseteq\Ucal[-1]=\{\,x\in\D(R)\mid \supp(H^i(x))\subseteq
	V_i\,\}\]
	which implies that for every $i>f(\pf)$ we have $\pf\notin V(\supp(H^i(x)))$,
	so $H^i(x)=H^i(x)_\pf=0$.

	We have shown that $x\in\D^{\leq f(\pf)}$; now we show that $x\in\D^{\geq
	f(\pf)}$ as well. Indeed, for any specialisation-closed subset  $W\subseteq\Spec(R)$, if $\supp(x)\subseteq\{\pf\}$ one has:
	\[\Rbb\Gamma_W(x)=\begin{cases}x&\text{if }\pf\in W\\0&\text{otherwise}\end{cases}\]
	by using, for example, the fact that $\Rbb\Gamma_W$ is the left truncation with respect to
	$(\Lcal_W,\Bcal_W)$.
	In particular, since we are assuming that $x\in\Vcal$, using the description
	\eqref{eq:aisle-coaisle} we obtain that:
	\[x\simeq \Rbb\Gamma_{V_{f(\pf)}}(x)\in\D^{\geq f(\pf)}.\]
	
	($\supseteq$) In $\Mod(R)$, every module $M$ with $\supp(M)\subseteq\{\pf\}$ is
	a transfinite extension of copies of $k(\pf)$. This is because these are the
	semi-artinian objects of the Giraud subcategory 
	\[\Mod(R_\pf)=\Tcal_{W_\pf}^{\bot_{0,1}}\subseteq\Mod(R),\] 
	and this class is closed
	under extensions and direct limits. Therefore, $M$ is in the smallest
	subcategory of $\D(R)$ containing $k(\pf)$ and closed under extensions and
	directed homotopy colimits. This implies that $M[-f(\pf)]$ lies in $\Hcal$.

	(2.a) follows from the fact that $\Hcal\cap\supp^{-1}(V(\pf))=:\Tcal_{V(\pf)}$ is
	a hereditary torsion class of $\Hcal$, by
	Proposition~\ref{prop:smashing-cohomological}, and it contains $s(\pf)$.

	For (2.b), consider the localisation $0\to x_\pf\to s(\pf)\to y_\pf\to 0$, which
	is again an exact sequence in $\Hcal$ by
	Lemma~\ref{lemma:heart-localisation}(2). Because of (2.a), both
	$\supp(x_\pf)$ and $\supp(y_\pf)$ are contained in
	$V(\pf)\cap\gncl(\pf)=\{\pf\}$. Therefore, by (1), up to shifting by
	$f(\pf)$ this is a short exact sequence of $R_\pf$-modules. In $\Mod(R_\pf)$,
	the residue field $k(\pf)$ is simple by definition, which shows that either
	$x_\pf=0$ or $y_\pf=0$. In the first case, we deduce that $\supp(x)\subseteq
	V(\pf)\setminus\{\pf\}$; but then
	$\D(R)(x,s(\pf))\subseteq\D(R)(\Lcal_{V(\pf)\setminus\{\pf\}},\Bcal_{V(\pf)\setminus\{\pf\}})=0$,
	and therefore $x=0$. In the second case, we have as wanted $\supp(y)\subseteq
	V(\pf)\setminus\{\pf\}$.
\end{proof}

Recall the notion of the Gabriel support $\GSupp$ of a hereditary torsion class from
Definition~\ref{dfn:gsupp}. As a first application of the previous lemma, we
show that the identification $\Phi_c$ between $\Spec(R)$ and
$\GSpec(c)\simeq\GSpec(\Hcal)$ identifies the commutative support $\supp$ with the
Gabriel support $\GSupp$. This is also where our terminology comes from.

\begin{prop}\label{prop:s(p)-inj-envelope}
	Let $c\in\D(R)$ be a bounded cosilting complex, with heart $\Hcal$. Then:
	\begin{enumerate}
		\item In $\Hcal$, the injective envelope of $s(\pf)$ is $H_c^0(c(\pf))$.
		\item For a hereditary torsion class $\Tcal$ in $\Hcal$, we have
			$H_c^0(\Phi_c(\supp(\Tcal)))=\GSupp(\Tcal)$. In other words,
			$\pf\in\supp(\Tcal)$ if and only if $H_c^0(c(\pf))\in\GSupp(\Tcal)$.
	\end{enumerate}
\end{prop}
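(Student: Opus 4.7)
For part (1), let $e\in\Inj(\Hcal)$ denote the injective envelope of $s(\pf)$ and write $e=H_c^0(u)$ for a unique $u\in\GSpec(c)$ via the equivalence $H_c^0\colon\Prod(c)\simeq\Inj(\Hcal)$. By the uniqueness clause of Theorem~\ref{thm:c(p)-support}(1), it suffices to verify $\pf\in\supp(u)\subseteq\gncl(\pf)$. Writing $W_\pf:=\Spec(R)\setminus\gncl(\pf)$, both $W_\pf$ and $W_\pf\cup\{\pf\}$ are specialisation-closed, and Lemma~\ref{lemma:BW-FW} together with Proposition~\ref{prop:smashing-cohomological} translates these support conditions into $e\in\Fcal_{W_\pf}$ and $e\notin\Fcal_{W_\pf\cup\{\pf\}}$, respectively. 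The first is obtained from $s(\pf)\in\Bcal_{W_\pf}$ (since $\supp(s(\pf))=\{\pf\}\subseteq\gncl(\pf)$), whence $\Hcal(\Tcal_{W_\pf},s(\pf))\subseteq\D(R)(\Lcal_{W_\pf},\Bcal_{W_\pf})=0$, together with closure of $\Fcal_{W_\pf}$ under injective envelopes. The second is witnessed by the nonzero inclusion $s(\pf)\hookrightarrow e$ combined with $s(\pf)\in\Tcal_{W_\pf\cup\{\pf\}}$.

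For part (2), applying (1) together with Lemma~\ref{lemma:s(p)-hereditary-torsion-simple}(2) (which yields the dichotomy $s(\pf)\in\Tcal$ or $s(\pf)\in\Tcal^{\bot_{0,1}}\subseteq\Tcal^\bot$) and closure of $\Tcal^\bot$ under injective envelopes, one obtains the equivalence $H_c^0(c(\pf))\in\GSupp(\Tcal)\iff s(\pf)\in\Tcal$. Since $\supp(s(\pf))=\{\pf\}$ gives $s(\pf)\in\Tcal\Rightarrow\pf\in\supp(\Tcal)$ at once, the statement reduces to the converse implication $\pf\in\supp(\Tcal)\Rightarrow s(\pf)\in\Tcal$.

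The main obstacle is precisely this converse. Given $t\in\Tcal$ with $\pf\in\supp(t)$, the plan is to construct a nonzero $t''\in\Tcal$ with $\supp(t'')\subseteq\{\pf\}$; Lemma~\ref{lemma:M(p)}(1) then identifies $t''\simeq M[-f(\pf)]$ for a nonzero $R_\pf$-module $M$ supported at $\{\pf\}$, and the transfinite-extension description of such $M$ provides $k(\pf)\hookrightarrow M$, so $s(\pf)\hookrightarrow t''$ and $s(\pf)\in\Tcal$ by closure under subobjects. To produce $t''$, first apply the exact $\Hcal$-localisation $(-)_\pf$ from Lemma~\ref{lemma:heart-localisation}: the image $t_1:=\im(t\to t_\pf)$ is a quotient of $t$ (so in $\Tcal$), embeds into $t_\pf\in\Bcal_{W_\pf}$, and still satisfies $\pf\in\supp(t_1)$ by additivity of triangulated support (the kernel of $t\to t_\pf$ lies in $\Tcal_{W_\pf}$, whose support avoids $\pf$). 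Then take $t'':=\Gamma_{\spcl(\pf)}(t_1)$, the torsion part of $t_1$ for the pair $(\Tcal_{\spcl(\pf)},\Fcal_{\spcl(\pf)})$ of Proposition~\ref{prop:smashing-cohomological}: this is a subobject of $t_1$, hence lies in $\Tcal$ and has $\supp(t'')\subseteq\spcl(\pf)$, while a further application of support additivity (using that the torsion-free quotient $t_1/t''\in\Fcal_{\spcl(\pf)}$ has support disjoint from $\spcl(\pf)$) preserves $\pf\in\supp(t'')$. The key technical ingredient underlying these steps is the identification $\Fcal_W=\Bcal_W\cap\Hcal$ for specialisation-closed $W$, which guarantees that $\Hcal$-torsion-free objects have triangulated support in the complement of $W$; in particular this delivers both the containment $\supp(t_1)\subseteq\gncl(\pf)$ (giving $\supp(t'')\subseteq\spcl(\pf)\cap\gncl(\pf)=\{\pf\}$) and the disjointness step in the nonzero-support argument.
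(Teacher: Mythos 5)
Your proof of part (1) is a genuinely different (and arguably more efficient) route than the paper's: you identify $E(s(\pf))$ with $c(\pf)$ via the \emph{uniqueness} clause of Theorem~\ref{thm:c(p)-support}(1), whereas the paper constructs a nonzero morphism $s(\pf)\to H_c^0(c(\pf))$ and shows it must be monic. Your version implicitly assumes, however, that the injective envelope $e=E(s(\pf))$ is indecomposable, so that $e=H_c^0(u)$ for some $u\in\GSpec(c)$. This is true --- using Lemma~\ref{lemma:M(p)}(2.b) together with the exactness of $(-)_\pf$ on $\Hcal$ one can check $s(\pf)$ is uniform --- but it needs to be said, since $\GSpec(c)$ consists of indecomposables and otherwise the ``unique $u$'' does not exist.

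The serious problem is in part (2). You assert the identity $\Fcal_W=\Bcal_W\cap\Hcal$ for specialisation-closed $W$, and both key steps of your construction of $t''$ (that $\supp(t_1)\subseteq\gncl(\pf)$, and that $t_1/t''\in\Fcal_{\spcl(\pf)}$ has support disjoint from $\spcl(\pf)$) rely on the false inclusion $\Fcal_W\subseteq\Bcal_W$. Lemma~\ref{lemma:BW-FW} only establishes the equivalence on objects of $\GSpec(c)$, that is, on \emph{indecomposable injectives}; it does not extend to arbitrary objects of $\Hcal$. In fact the inclusion fails already in the standard heart: take $R=\Zbb$, $\pf=(p)$, and $W=W_\pf=\Spec(\Zbb)\setminus\{(0),(p)\}$. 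Then $\Tcal_{W_\pf}$ consists of torsion abelian groups on which $p$ acts invertibly, so the torsion-free module $\Zbb$ lies in $\Fcal_{W_\pf}$; but $\supp(\Zbb)=\Spec(\Zbb)\not\subseteq\gncl((p))$, so $\Zbb\notin\Bcal_{W_\pf}$. A torsion-free object of $\Hcal$ can easily have support meeting $W$, so the implication $\pf\in\supp(\Tcal)\Rightarrow s(\pf)\in\Tcal$ cannot be obtained by chopping $t$ down to something supported only at $\pf$ via these two torsion radicals. The paper does not prove this implication from scratch --- it is taken as a black box from the external reference, and your (laudable) attempt to re-derive it does not go through as written.
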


\begin{proof}
	(1) We start by showing that:
	\[\Hcal(s(\pf),H_c^0(c(\pf)))\simeq\Dcal(s(\pf),c(\pf))\neq 0.\]
	Observe that since $s(\pf)$, which is a shift of $k(\pf)$, lies in
	$\Hcal={}^{\bot_{\neq0}}c$,
	this vanishes if and only if $k(\pf)\in{}^{\bot_\Zbb}c(\pf)$; we show that
	this is not the case. We have
	$c(\pf)\in\Bcal_{W_\pf}\setminus\Bcal_{W_\pf\cup\{\pf\}}$, for
	$W_\pf:=\Spec(R)\setminus \gncl(\pf)$, by Theorem~\ref{thm:c(p)-support}. Taking the localising
	subcategory $\Lcal:={}^{\bot_\Zbb}c(\pf)\subseteq\D(R)$, we then have that $\Lcal_{W_\pf}\subseteq\Lcal$. Then we must have $k(\pf)\notin\Lcal$,
	otherwise we would have $\Lcal_{W_\pf\cup\{\pf\}}\subseteq \Lcal$ and
	$c(\pf)\in\Bcal_{W_\pf\cup\{\pf\}}$, reaching a contradiction.

	Now, consider a nonzero morphism $s(\pf)\to H_c^0(c(\pf))$. If it is not monic
	in $\Hcal$, its image $y$ is a proper quotient of $s(\pf)$; therefore, by
	Lemma~\ref{lemma:M(p)}(2.b), it is contained in
	$\Lcal_{V(\pf)\setminus\{\pf\}}\subseteq\Lcal_{W_\pf}$, since
	$W_\pf=\Spec(R)\setminus \gncl(\pf)\supseteq V(\pf)\setminus\{\pf\}$. But then:
	\[\Hcal(y,H_c^0(c))\simeq\Dcal(y,c(\pf))
		\subseteq\Dcal(\Lcal_{W_\pf},\Bcal_{W_\pf})=0\]
	by Theorem~\ref{thm:c(p)-support}, and we would conclude that $y=0$, a
	contradiction. Hence $s(\pf)$ is a subobject of $H_c^0(c(\pf))$, which is then
	its injective envelope.

	(2) For a hereditary torsion pair $(\Tcal,\Fcal)$ in $\Hcal$, we have:
	\begin{align*}
		&\pf\in\supp\Tcal\iff s(\pf)\in\Tcal\iff s(\pf)\notin\Fcal\iff \\
	\iff &H_c^0(c(\pf))=E(s(\pf))\notin \Fcal \iff H_c^0(c(\pf))\in\GSupp(\Tcal).
	\end{align*}
	These implications are, in order, due to: (a) \cite[Thm.~4.1(3)]{pavo-vito-21},
	(b) Lemma~\ref{lemma:s(p)-hereditary-torsion-simple}, (c) the fact that
	$\Fcal$ is closed under subobjects and injective envelopes, and (d) the
	definition of $\GSupp(\Tcal)$.
\end{proof}

We are finally ready to prove the main result of this subsection.

\begin{prop}\label{prop:residues-gabriel-simples}
	Let $c$ be a bounded cosilting complex, with heart $\Hcal$. Then the objects
	\[\{\,s(\pf)\in\Hcal\mid \pf\in\Spec(R)\,\}\]
	are the Gabriel-simples of $\Hcal$.
\end{prop}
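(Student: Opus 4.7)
The plan is to reduce to showing each $s(\pf)$ is a Gabriel-simple of $\Hcal$, then establish this via the Giraud subcategory associated to localization at $\pf$. For the reduction, $\Hcal$ is semi-noetherian by iterated application of Corollary~\ref{cor:mutation-semi-noetherianity} starting from the locally noetherian $\Mod(R)$, so Lemma~\ref{lemma:gabriel-simple-subobject} supplies a bijection between isoclasses of Gabriel-simples and $\GSpec(\Hcal)$ via injective envelopes. Combined with Proposition~\ref{prop:s(p)-inj-envelope}(1) (identifying $E(s(\pf))=H_c^0(c(\pf))$) and Theorem~\ref{thm:c(p)-support} (the bijection $\Spec(R)\to\GSpec(\Hcal)$), once each $s(\pf)$ is shown to be Gabriel-simple, $\{s(\pf)\}_{\pf}$ will automatically form a complete and irredundant (by distinct supports) list of representatives.

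To show $s(\pf)$ is a Gabriel-simple, I would first verify that it is simple in the Giraud subcategory $\Ccal_{W_\pf}=\Tcal_{W_\pf}^{\bot_{0,1}}\simeq \Hcal/\Tcal_{W_\pf}$ associated to $W_\pf:=\Spec(R)\setminus\gncl(\pf)$, with the torsion pair of finite type by Proposition~\ref{prop:W-giraud-finite}. Indeed, $s(\pf)\in\Ccal_{W_\pf}$ by Lemma~\ref{lemma:heart-localisation}(2) and the $\pf$-locality of $k(\pf)$; and for any nonzero $\Lsf(x)\subseteq\Lsf(s(\pf))$ in $\Hcal/\Tcal_{W_\pf}$ coming from $0\neq x\subseteq s(\pf)$ in $\Hcal$, Lemma~\ref{lemma:M(p)}(2.b) yields $\supp(s(\pf)/x)\subseteq\spcl(\pf)\setminus\{\pf\}\subseteq W_\pf$, whence $s(\pf)/x\in\Tcal_{W_\pf}$ and $\Lsf(x)=\Lsf(s(\pf))$.

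The main obstacle is upgrading ``simple in some Giraud subcategory'' to the intrinsic property of being a Gabriel-simple of $\Hcal$, which is defined through the Gabriel filtration specifically. I would bridge this via hereditary-torsion-simplicity: $s(\pf)$ enjoys this by Lemma~\ref{lemma:s(p)-hereditary-torsion-simple}(2), and any Gabriel-simple $g=\Rsf_\alpha(s)$ does too, via a torsion-SES / brick-splitting argument. Applying the exact $\Lsf_\alpha$ to the torsion sequence of $g$ with respect to a pair $(\Tcal,\Fcal)$ forces either $\Lsf_\alpha(t(g))=0$ (then $t(g)\in\Gcal_\alpha$ combined with $g\in\Ccal_\alpha=\Gcal_\alpha^{\bot_{0,1}}$ gives $t(g)=0$, so $g\in\Fcal$) or $\Lsf_\alpha(g/t(g))=0$ (then $\Ext^1(\Gcal_\alpha,g)=0$ splits the sequence, and indecomposability of the brick $g$ combined with $\Lsf_\alpha(g)=s\neq 0$ forces $g/t(g)=0$, so $g\in\Tcal$).

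Finally, letting $g\subseteq E(s(\pf))$ be the Gabriel-simple subobject supplied by Lemma~\ref{lemma:gabriel-simple-subobject}, I would identify $g\simeq s(\pf)$ by comparing both in $\Ccal_{W_\pf}$. Essentiality of $g\cap s(\pf)\neq 0$ in $E(s(\pf))$ combined with Lemma~\ref{lemma:M(p)}(2.b) rules out $g\in\Tcal_{W_\pf}$, so $g\in\Fcal_{W_\pf}$ and its localization $g_\pf$ embeds in $\Ccal_{W_\pf}$ into $E(s(\pf))$, necessarily containing the simple socle $s(\pf)$ there. A final comparison---exploiting that $g$ is itself a brick, a Gabriel-simple in its own Giraud subcategory $\Ccal_\alpha$, and the structure of $\Ccal_{W_\pf}$-subobjects of $E(s(\pf))$---completes $g\simeq s(\pf)$. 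This last identification is where I expect the most delicate technical work.
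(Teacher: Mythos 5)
Your overall plan is the same as the paper's: show that each $s(\pf)$ is Gabriel-simple and then invoke Lemma~\ref{lemma:gabriel-simple-subobject} together with Proposition~\ref{prop:s(p)-inj-envelope}(1) and Theorem~\ref{thm:c(p)-support} to get surjectivity of the injective-envelope map onto $\GSpec(\Hcal)$. Your proof that $s(\pf)$ is simple in the Giraud subcategory $\Ccal_{W_\pf}=\Tcal_{W_\pf}^{\bot_{0,1}}$ is also correct and useful. However, you correctly flag --- and then do not close --- the central obstacle, and both steps you propose for the bridge have real problems.

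First, the general claim that Gabriel-simples are hereditary-torsion-simple: your brick-splitting argument does not work as stated. In the case $\Lsf_\alpha(g/t(g))=0$, i.e.\ $g/t(g)\in\Gcal_\alpha$, the class of the sequence $0\to t(g)\to g\to g/t(g)\to 0$ lives in $\Ext^1(g/t(g),t(g))$, not in $\Ext^1(g/t(g),g)$. From $g\in\Gcal_\alpha^{\bot_1}$ you only get $\Ext^1(g/t(g),g)=0$, which does not split the torsion sequence; you have no control on $\Ext^1(-,t(g))$ because $t(g)$, being only a subobject of $g$, lies in $\Gcal_\alpha^{\bot_0}$ but need not lie in $\Gcal_\alpha^{\bot_1}$. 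In fact the paper treats hereditary-torsion-simplicity of the Gabriel-simples as a \emph{consequence} of this Proposition (combined with Lemma~\ref{lemma:s(p)-hereditary-torsion-simple}); it is not used as an input, and proving it for a generic semi-noetherian $\Gcal$ would contradict Theorem~\ref{thm:torsion-simple-socle}, since not every semi-noetherian Gabriel spectrum is Alexandrov.

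Second, even granting hereditary-torsion-simplicity, your final identification $g\simeq s(\pf)$ is left genuinely open (``the most delicate technical work''). Knowing that the essential intersection $g\cap s(\pf)$ is nonzero and that $s(\pf)$ is simple in $\Ccal_{W_\pf}$ gives, after reflection, $s(\pf)\subseteq (g\cap s(\pf))_\pf=g_\pf$, but that does not produce an isomorphism between $g$ and $s(\pf)$ as objects of $\Hcal$; the diagram relating $g$, $g_\pf$ and $s(\pf)$ is not controlled. The paper sidesteps the entire issue by never working in $\Ccal_{W_\pf}$: it instead uses Proposition~\ref{prop:cantor-bendixson-gabriel} together with Proposition~\ref{prop:s(p)-inj-envelope}(2) to identify each Gabriel filtration step as $\Hcal_\alpha=\supp^{-1}(X_\alpha)\cap\Hcal$ for the Cantor--Bendixson filtration $X_\alpha$, picks the minimal $\alpha$ with $\pf\in X_\alpha$, and then shows $s(\pf)\in\Hcal_{\alpha-1}^{\bot_{0,1}}$ (by Lemma~\ref{lemma:s(p)-hereditary-torsion-simple}(2)) with all proper quotients lying in $\Hcal_{\alpha-1}$ (by Lemma~\ref{lemma:M(p)}(2.b)). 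That exhibits $s(\pf)$ as a simple object of a \emph{Gabriel filtration} Giraud subcategory directly, so no bridge is needed. The moral is that the Cantor--Bendixson machinery of \S\ref{subsec:topology-gabriel-spectrum} lets you locate the right Giraud subcategory from the start; replacing it with $\Ccal_{W_\pf}$ loses exactly the information you then struggle to recover.
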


\begin{proof}
	As mentioned at the beginning of this section, we know that $\Hcal$ is
	semi-noetherian, so its Gabriel spectrum $\GSpec(\Hcal)$ (which we identify with
	$\Spec(R)$ via $\Phi_c$) has Cantor--Bendixson rank. Denote by
	$\emptyset=X_{-1}\subseteq X_0\subseteq\cdots\subseteq X_\delta=\Spec(R)$ the
	Cantor--Bendixson filtration. Then the steps of the Gabriel filtration of
	$\Hcal$ are $\Hcal_\alpha:=\GSupp^{-1}(X_\alpha)=\supp^{-1}(X_\alpha)\cap\Hcal$, by
	Proposition~\ref{prop:cantor-bendixson-gabriel} and
	Proposition~\ref{prop:s(p)-inj-envelope}(2).
	For every prime $\pf\in\Spec(R)$, let
	$0\leq\alpha\leq \delta$ be
	the minimum ordinal for which $\pf\in X_\alpha$: it is a successor ordinal,
	and it is such that $\pf$ is an open point in $\Spec(R)\setminus X_{\alpha-1}$.
	In other words, $X_{\alpha-1}\sqcup\{\pf\}$ is open in $\Spec(R)$, with respect
	to the Gabriel topology of $\Hcal$. Let
	$(\Tcal:=\supp^{-1}(X_{\alpha-1}\sqcup\{\pf\})\cap\Hcal,\Fcal)$ be the corresponding
	hereditary torsion pair in $\Hcal$. Since
	$s(\pf)\in\Tcal$,
	for every short exact sequence $0\to x\to s(\pf)\to y\to 0$ in $\Hcal$ with
	$x\neq 0$ we
	also have $y\in\Tcal$. On the other hand, by Lemma~\ref{lemma:M(p)}(2.b), we actually have
	$\pf\notin\supp(y)$, so $\supp(y)\subseteq X_{\alpha-1}$ and therefore $y\in\Hcal_{\alpha-1}$. Since
	instead $\pf\notin X_{\alpha-1}$ by construction, we have $s(\pf)\in\Hcal_{\alpha-1}^{\bot_{0,1}}$ by
	Lemma~\ref{lemma:s(p)-hereditary-torsion-simple}(2). We have proved that $s(\pf)$ is a
	simple object of the Giraud subcategory corresponding to $\Hcal_{\alpha-1}$;
	this means that $s(\pf)$ is a Gabriel-simple object of $\Hcal$ of dimension
	$\alpha$. Since the
	assignment $s(\pf)\mapsto E(s(\pf))=H_c^0(c(\pf))$ is surjective on
	$\GSpec(\Hcal)$, by Lemma~\ref{lemma:gabriel-simple-subobject} these are all
	the Gabriel-simple objects of $\Hcal$.
\end{proof}

\subsection{Topological properties of the Gabriel spectra}

From the results proved so far, we immediately obtain the following
topological property of $\GSpec(c)$.

\begin{cor}\label{cor:T0-alexandrov}
	For a bounded cosilting complex $c\in\D(R)$, $\GSpec(c)$ is Alexandrov.
\end{cor}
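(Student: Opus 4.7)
The plan is to invoke Theorem~\ref{thm:torsion-simple-socle}, which characterises when the Gabriel spectrum of a semi-noetherian Grothendieck category is Alexandrov in terms of the existence of hereditary-torsion-simple subobjects of each indecomposable injective. All the pieces have been assembled in the preceding subsections.

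First, recall that $\GSpec(c)$ is homeomorphic to $\GSpec(\Hcal)$ via the equivalence $H_c^0\colon\Prod(c)\simeq\Inj(\Hcal)$, so it suffices to prove that $\GSpec(\Hcal)$ is Alexandrov. Since $\Mod(R)$ is locally noetherian and hence semi-noetherian, and since the heart $\Hcal$ of a bounded cosilting complex can be obtained from $\Mod(R)$ by a finite chain of mutations, Corollary~\ref{cor:mutation-semi-noetherianity} (applied iteratively) guarantees that $\Hcal$ is itself semi-noetherian. Thus Theorem~\ref{thm:torsion-simple-socle} applies.

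It remains to verify that every indecomposable injective of $\Hcal$ admits a hereditary-torsion-simple subobject. By Proposition~\ref{prop:s(p)-inj-envelope}(1), for each prime $\pf\in\Spec(R)$ the object $H_c^0(c(\pf))$ is precisely the injective envelope of $s(\pf)=k(\pf)[-f(\pf)]$ in $\Hcal$. Combining this with the bijection $\Phi_c\colon\Spec(R)\to\GSpec(c)$ of Theorem~\ref{thm:c(p)-support} and the equivalence $H_c^0\colon\Prod(c)\simeq\Inj(\Hcal)$, we conclude that every indecomposable injective of $\Hcal$ is of the form $E(s(\pf))$ for some $\pf\in\Spec(R)$. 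Finally, Lemma~\ref{lemma:s(p)-hereditary-torsion-simple}(2) tells us that $s(\pf)$ is hereditary-torsion-simple in $\Hcal$: for any hereditary torsion pair $(\Tcal,\Fcal)$ in $\Hcal$ one has either $s(\pf)\in\Tcal$ or $s(\pf)\in\Fcal$. Applying Theorem~\ref{thm:torsion-simple-socle} then yields the corollary.

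There is no real obstacle here: the corollary is essentially a bookkeeping statement, gathering Theorem~\ref{thm:torsion-simple-socle}, Proposition~\ref{prop:s(p)-inj-envelope}, and Lemma~\ref{lemma:s(p)-hereditary-torsion-simple}. The only subtlety worth flagging explicitly is that the general assumption of Theorem~\ref{thm:torsion-simple-socle} that every hereditary torsion pair be stable is \emph{not} available in $\Hcal$ (in contrast with $\Mod(R)$ itself), so one genuinely needs the finer criterion involving hereditary-torsion-simple subobjects — and this is exactly what the residue fields $s(\pf)$ provide.
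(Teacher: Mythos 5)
Your proof is correct and follows the paper's own strategy: both apply Theorem~\ref{thm:torsion-simple-socle} after showing that every indecomposable injective of $\Hcal$ has a hereditary-torsion-simple subobject, with the residue-field objects $s(\pf)$ playing that role via Lemma~\ref{lemma:s(p)-hereditary-torsion-simple}. The only (minor) variation is in how you establish that every indecomposable injective is of the form $E(s(\pf))$: the paper routes this through Proposition~\ref{prop:residues-gabriel-simples} (identifying the $s(\pf)$ as precisely the Gabriel-simples) and Lemma~\ref{lemma:gabriel-simple-subobject}, whereas you derive it more directly from Proposition~\ref{prop:s(p)-inj-envelope}(1) together with the bijectivity of $\Phi_c$ from Theorem~\ref{thm:c(p)-support}(2) and the equivalence $H_c^0\colon\Prod(c)\simeq\Inj(\Hcal)$. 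Your route is slightly more economical since it only needs surjectivity of $\pf\mapsto E(s(\pf))$ onto $\GSpec(\Hcal)$ rather than the full characterisation of the Gabriel-simples, but the underlying ingredients are the same.
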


\begin{proof}
	Combining Lemma~\ref{lemma:s(p)-hereditary-torsion-simple} and
	Proposition~\ref{prop:residues-gabriel-simples} we see that the
	Gabriel-simples of $\Hcal$ are hereditary-torsion-simple. Since any
	indecomposable injective contains a Gabriel-simple as a subobject by
	Lemma~\ref{lemma:gabriel-simple-subobject}, we apply
	Theorem~\ref{thm:torsion-simple-socle} to conclude that $\GSpec(c)$ is
	Alexandrov.
\end{proof}

It follows that the Gabriel topology on $\GSpec(c)$ is determined by its
closure preorder (see \S\ref{subsec:topology-gabriel-spectrum}). Once we fix a bounded cosilting
complex $c$ and identify $\GSpec(c)$ with $\Spec(R)$, we will denote by $\preceq$
the closure preorder on $\Spec(R)$ induced by the topology of $\GSpec(c)$.
For an element $\pf\in\Spec(R)$, we denote the smallest upper-set (respectively, lower-set) of $\Spec(R)$ with respect to
$\preceq$ which contains $\pf$ as follows:
\[\spcl_\preceq(\pf):=\{\,\qf\in\Spec(R)\mid \pf\preceq \qf\,\} \quad\text{and}\quad
	\gncl_\preceq(\pf):=\{\,\qf\in\Spec(R)\mid \qf\preceq \pf\,\}.\]
This is then the smallest open
(respectively, closed) set of $\Spec(R)$ for the Gabriel topology of $\GSpec(c)$
which contains
$\pf$. The preorder $\preceq$ is actually a partial order (that is, it is
antisymmetric), as a consequence of the following fact.

\begin{lemma}\label{lemma:preceq-finer}
	The (pre)order $\preceq$ is finer than $\subseteq$, that is, $\pf\preceq\qf$
	implies $\pf\subseteq\qf$. In other words, the map
	$\Phi_c\colon\GSpec(\omega)\simeq \Spec(R)\to \GSpec(c)$ is open.
\end{lemma}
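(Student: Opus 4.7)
The plan is to reformulate $\pf\preceq\qf$ as a Hom non-vanishing condition in $\D(R)$, then derive $\pf\subseteq\qf$ by a support-theoretic argument.

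First, I will combine Proposition~\ref{prop:residues-gabriel-simples} and Lemma~\ref{lemma:s(p)-hereditary-torsion-simple} to see that $s(\pf)=k(\pf)[-f(\pf)]$ is a hereditary-torsion-simple object of $\Hcal$, and use Proposition~\ref{prop:s(p)-inj-envelope}(1) to identify $H_c^0(c(\pf))$ as its injective envelope in $\Hcal$. By Lemma~\ref{lemma:hts-topology-characterisation}(2) together with the natural isomorphism $\Dcal(-,c(\qf))\simeq \Hcal(H_c^0(-),H_c^0(c(\qf)))$, the relation $\pf\preceq\qf$ is then equivalent to
\[\Dcal(k(\pf),c(\qf)[f(\pf)])\neq 0.\]

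Next, I will argue the contrapositive. Suppose $\pf\not\subseteq\qf$ and pick $r\in\pf\setminus\qf$. By Theorem~\ref{thm:c(p)-support}, $\supp(c(\qf))\subseteq\gncl(\qf)\subseteq\{\,\mf\in\Spec(R):r\notin\mf\,\}$, where the last inclusion uses that $r\notin\qf$ forces $r\notin\mf$ for every $\mf\subseteq\qf$. By Neeman's classification of localising subcategories of $\D(R)$, this places $c(\qf)$ in the essential image of the localisation functor $(-)\otimes_R^{\Lbb}R[r^{-1}]$, so multiplication by $r$ is an automorphism of $c(\qf)$, and hence of $\Dcal(k(\pf),c(\qf)[n])$ for every $n\in\Zbb$. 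On the other hand, $r\in\pf$ annihilates $k(\pf)$, so $r$ also acts as zero on that Hom group. Therefore $\Dcal(k(\pf),c(\qf)[n])=0$; specialising to $n=f(\pf)$ contradicts $\pf\preceq\qf$.

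The equivalent open-map formulation then follows formally: by Corollary~\ref{cor:T0-alexandrov} the opens of $\GSpec(c)$ are the $\preceq$-upper-sets and the opens of $\GSpec(\omega)$ are the $\subseteq$-upper-sets, so if $\preceq$ refines $\subseteq$ every $\subseteq$-upper-set is automatically a $\preceq$-upper-set, and $\Phi_c$ sends opens to opens. I expect the only delicate step to be passing from the support containment $\supp(c(\qf))\subseteq\{\,\mf:r\notin\mf\,\}$ to invertibility of $r$ on $c(\qf)$, but this is a direct application of Neeman's classification already in systematic use in the paper.
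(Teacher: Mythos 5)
Your proposal is correct, and it takes a genuinely different route from the paper. The paper's own proof is essentially a one-liner: Proposition~\ref{prop:smashing-cohomological} shows that for every specialisation-closed $W$ the class $\Tcal_W=\Lcal_W\cap\Hcal$ is a hereditary torsion class in $\Hcal$, and (implicitly using Proposition~\ref{prop:s(p)-inj-envelope}(2) to identify $\GSupp(\Tcal_W)$ with $\Phi_c(W)$) this exhibits every Hochster-open set as Gabriel-open, which is the open-map formulation. Your argument is bottom-up instead: you first rewrite $\pf\preceq\qf$ as the non-vanishing of $\Dcal\bigl(k(\pf),c(\qf)[f(\pf)]\bigr)$ via Lemma~\ref{lemma:hts-topology-characterisation}, Proposition~\ref{prop:s(p)-inj-envelope}(1) and the extension of the natural isomorphism $\Dcal(-,c)\simeq\Hcal(H_c^0(-),H_c^0(c))$ to $\Prod(c)$, and then rule it out by an element-level multiplication-by-$r$ argument: for $r\in\pf\setminus\qf$, the support inclusion $\supp(c(\qf))\subseteq\gncl(\qf)\subseteq D(r)$ (from Theorem~\ref{thm:c(p)-support}) forces $r$ to act invertibly on $c(\qf)$, while $r$ annihilates $k(\pf)$ — so every $\Dcal(k(\pf),c(\qf)[n])$ vanishes. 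Both approaches hinge on Theorem~\ref{thm:c(p)-support}; the paper's is shorter because it reuses the torsion-pair machinery already in place, while yours trades that for a concrete, self-contained computation and bypasses the Gabriel-support identification for $\Tcal_W$ that the paper leaves implicit.
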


\begin{proof}
	By Proposition~\ref{prop:smashing-cohomological}, the specialisation-closed
	subsets $W\subseteq\Spec(R)$ are open sets for the Gabriel topology of
	$\GSpec(c)$. This means that they are upper-sets for $\preceq$. In particular
	$\spcl_{\preceq}(\pf)\subseteq\spcl_{\subseteq}(\pf)$ for every
	$\pf\in\Spec(R)$, which is a rephrasing of the statement.
\end{proof}

Recall that, in a topological space, a non-empty closed subset is \emph{irreducible}
if it cannot be written as the union of two smaller
non-empty closed subsets. In an Alexandrov topological space, this means a
non-empty lower-set with respect to the closure preorder which cannot be written
as the union of two smaller non-empty lower-sets.
A topological space is
\emph{sober} if every irreducible closed subset is the closure of exactly one
point. In this case there is a bijection between points and irreducible closed
subsets, under which the closure (pre)order on points corresponds to inclusion
of closed subsets.

Recall also that a poset is \emph{artinian} (respectively, \emph{noetherian}) if it
satisfies the descending (respectively, ascending) chain condition.

\begin{cor}\label{cor:topological-GSpec(c)}
	For a bounded cosilting complex $c\in\D(R)$, we have that:
	\begin{enumerate}
		\item $\preceq$ is a partial order;
		\item $\GSpec(c)$ is $(\Tsf_0)$;
		\item $(\Spec(R),\preceq)$ is an artinian and noetherian poset;
		\item $\GSpec(c)$ is a sober topological space;
	\end{enumerate}
\end{cor}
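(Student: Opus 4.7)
The four items form a cascade in which antisymmetry of $\preceq$ is the key new input; once it is in hand, (2)--(4) will follow by combining Alexandrov-ness with formal poset arguments. For (1), since $\GSpec(c)$ is Alexandrov by Corollary~\ref{cor:T0-alexandrov}, $\preceq$ is automatically a preorder, and I would invoke Lemma~\ref{lemma:preceq-finer} for antisymmetry: $\pf\preceq\qf$ and $\qf\preceq\pf$ yield $\pf\subseteq\qf\subseteq\pf$, hence $\pf=\qf$. For (2), I would simply observe that in any Alexandrov space, being $(\Tsf_0)$ is equivalent to antisymmetry of the closure preorder, so (2) is immediate from (1). (Alternatively: by Corollary~\ref{cor:mutation-semi-noetherianity} applied inductively along a chain of mutations linking $\omega$ to $c$, the heart $\Hcal$ is semi-noetherian, so the corollary after Proposition~\ref{prop:cantor-bendixson-gabriel} applies directly.)

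For (3), my plan is to reduce to the analogous statements for $(\Spec(R),\subseteq)$, which are classical. Noetherianity of $R$ gives the ACC on prime chains. Krull's principal ideal theorem ensures every prime of $R$ has finite height, so any descending chain starting at $\pf_1$ has length at most $\height(\pf_1)+1$, giving the DCC. Since Lemma~\ref{lemma:preceq-finer} says $\pf\preceq\qf$ implies $\pf\subseteq\qf$, every $\preceq$-chain is a $\subseteq$-chain, and both chain conditions transfer from $\subseteq$ to $\preceq$.

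For (4), I would use (1)--(3) to identify $\GSpec(c)$ as the Alexandrov space on the noetherian poset $(\Spec(R),\preceq)$, so that closed sets correspond bijectively to lower sets for $\preceq$ and closures of points to principal lower sets. The main step is to show that a non-empty lower set $L$ is irreducible if and only if $L$ has a maximum element $m$, in which case $L=\gncl_\preceq(m)=\overline{\{m\}}$. If such $m$ exists, any decomposition $L=L_1\cup L_2$ into lower sets must place $m$ in some $L_i$, forcing $L\subseteq L_i$. Conversely, using ACC to pick maximal elements of $L$, if there are two distinct ones $m_1\neq m_2$, then $L\setminus\{m_1\}$ is itself a lower set (since nothing in $L$ strictly exceeds the maximal $m_1$), and $L=(L\setminus\{m_1\})\cup\gncl_\preceq(m_2)$ exhibits $L$ as a union of two proper lower sets (the second is proper because $m_1\notin\gncl_\preceq(m_2)$). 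Injectivity of $m\mapsto\overline{\{m\}}$ follows from antisymmetry, proving sobriety. I expect the verification that $L\setminus\{m_1\}$ is a lower set to be the only point requiring any care; everything else is bookkeeping on top of the preparatory results.
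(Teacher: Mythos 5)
Your approach for items (1)--(3) is exactly the paper's: antisymmetry from Lemma~\ref{lemma:preceq-finer}, the Alexandrov criterion for $(\Tsf_0)$, and transfer of ACC and DCC along the refinement of orders. For (4), the paper also reduces sobriety to the observation that a closed (= $\preceq$-lower) set is the union of $\gncl_\preceq(\qf)$ over its $\preceq$-maximal elements (an equality valid by noetherianity), so that irreducibility means having a single maximal element; your argument is a minor variant of this.

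However, there is a slip in your decomposition in the reducibility direction of (4). You write $L = (L\setminus\{m_1\})\cup\gncl_\preceq(m_2)$ for two distinct maximal elements $m_1\neq m_2$. But $\gncl_\preceq(m_2)\subseteq L\setminus\{m_1\}$ (since $m_1\notin\gncl_\preceq(m_2)$, as $m_1$ and $m_2$ are distinct maximals and hence incomparable), so the right-hand side equals $L\setminus\{m_1\}\neq L$. The correct witnessing decomposition is $L = (L\setminus\{m_1\})\cup\gncl_\preceq(m_1)$: this does cover $m_1$, both pieces are lower sets, the first is proper because it omits $m_1$, and the second is proper because it omits $m_2$. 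You should also make explicit the implicit use of noetherianity when passing from ``unique maximal element of $L$'' to ``maximum element of $L$'': by the ACC, every element of $L$ lies below some maximal element, so if there is only one maximal element it dominates all of $L$. With those two repairs your argument goes through.
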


\begin{proof}
	(1) Since $\preceq$ is finer than $\subseteq$ and the latter is antisymmetric,
	$\preceq$ must be as well.

	(2) An Alexandrov topology is $(\Tsf_0)$ if and
	only if its closure preorder is a partial order, so this is a rephrasing of
	(1).

	(3) This follows from the fact that $(\Spec(R),\subseteq)$ is both artinian and
	noetherian. Indeed, noetherianity of this poset follows from the noetherianity
	of $R$. Artinianity, instead, follows from Krull's Principal Ideal Theorem: a
	prime ideal $\pf$ generated by $n$ elements has height at most $n$. Since any
	prime ideal is finitely generated, it has finite height, and therefore there
	are no infinite strictly decreasing chains of primes below it. Now, since
	$\preceq$ is finer than $\subseteq$, any chain for $\preceq$ is also a chain
	for $\subseteq$, and therefore $(\Spec(R),\preceq)$ satisfies both chain
	conditions as well.

	(4) So far we know that $\GSpec(c)$ is $(\Tsf_0)$ Alexandrov and that the
	closure order $\preceq$ is noetherian. For a subset $\Qcal\subseteq\Spec(R)$,
	denote by $\max_\preceq\Qcal$ the subset of $\preceq$-maximal elements of
	$\Qcal$: then by noetherianity we have that every element of $\Qcal$ lies
	below a maximal element, that is,
	$\Qcal\subseteq\bigcup_{\qf\in\max_{\preceq}\Qcal}\gncl_{\preceq}(\qf)$.
	Observe that if $\Qcal$ is closed, that is, a lower-set for $\preceq$, this inclusion is an
	equality. Then a closed subset $\Qcal$ is irreducible if and only if
	$\max_{\preceq}\Qcal=\{\qf\}$, that is, $\Qcal=\gncl_{\preceq}(\qf)$ is the
	closure of a (unique) point.
\end{proof}

\subsection{Right mutation induces an open map between Gabriel spectra}

We are now ready to prove the main result of this section. We are going to
prove the following.

\begin{thm}\label{thm:theta-open}
	Let $c\in\D(R)$ be a bounded cosilting object, and let $c'$ be a
	right mutation of $c$. Then the map $\Theta\colon \GSpec(c)\to \GSpec(c')$ is
	open.
\end{thm}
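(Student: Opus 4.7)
The plan is to translate openness of $\Theta$ into a concrete statement about the closure preorders on $\Spec(R)$ and then argue by cases. Since both $\GSpec(c)$ and $\GSpec(c')$ are Alexandrov by Corollary~\ref{cor:T0-alexandrov} and $\Theta$ is a bijection, openness of $\Theta$ is equivalent to continuity of $\Theta^{-1}$; after identifying both spectra with $\Spec(R)$ via $\Phi_c$ and $\Phi_{c'}=\Theta\circ\Phi_c$, this reduces to showing that the closure preorder $\preceq_{c'}$ refines $\preceq_c$, that is, $\pf\preceq_{c'}\qf\Rightarrow\pf\preceq_c\qf$ for all $\pf,\qf\in\Spec(R)$. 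By Lemma~\ref{lemma:hts-topology-characterisation} combined with the natural adjunction $\Dcal(-,c(\qf))\simeq\Hcal(H_c^0(-),H_c^0(c(\qf)))$, this amounts to the implication $\Dcal(s_{c'}(\pf),c'(\qf))\neq 0\Rightarrow\Dcal(s_c(\pf),c(\qf))\neq 0$.

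Let $(\Xcal,\Ycal)$ be the hereditary torsion pair of finite type in $\Hcal$ underlying the mutation, write $\Hcal_t=\Ycal\ast\Xcal[-1]$ for the heart of $c'$, and set $E:=\Phi_c^{-1}(\Ecal\cap\GSpec(c))$; by construction, $E=\{\pf\in\Spec(R):s_c(\pf)\in\Ycal\}$. By Lemma~\ref{lemma:s(p)-hereditary-torsion-simple}, $s_c(\pf)$ lies in either $\Ycal$ or $\Xcal$, and correspondingly $s_{c'}(\pf)=s_c(\pf)$ for $\pf\in E$ or $s_{c'}(\pf)=s_c(\pf)[-1]$ for $\pf\in E^\cpl$. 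By the classification in the remark following Proposition~\ref{prop:W-giraud-finite}, $(\Xcal,\Ycal)=(\Tcal_W,\Fcal_W)$ for some specialisation-closed $W\subseteq\Spec(R)$, and the proof of that proposition identifies $\Xcal[-1]=\Lcal_W\cap\Hcal_t$; applying Proposition~\ref{prop:smashing-cohomological} to the bounded cosilting complex $c'$ then exhibits $\Xcal[-1]$ as a hereditary torsion class in $\Hcal_t$, in particular closed under injective envelopes there. Theorem~\ref{thm:piecewise-homeomorphism} takes care of the case where $\pf,\qf$ are both in $E$ or both in $E^\cpl$, so only the two cross cases remain; in each I will verify that $\pf\not\preceq_{c'}\qf$, making the implication vacuous.

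For $\pf\in E^\cpl$ and $\qf\in E$, we have $c'(\qf)=c(\qf)$ and $s_{c'}(\pf)=s_c(\pf)[-1]$, and the adjunction gives $\Dcal(s_{c'}(\pf),c'(\qf))\simeq\Hcal(H_c^{-1}(s_c(\pf)),H_c^0(c(\qf)))=0$, since $s_c(\pf)\in\Hcal$ has no cohomology in nonzero degrees. In the main case $\pf\in E$ and $\qf\in E^\cpl$, $s_{c'}(\pf)=s_c(\pf)\in\Ycal$, and Proposition~\ref{prop:theta-GSupp(H)}(2) exhibits $H_{c'}^0(c'(\qf))$ as the injective envelope in $\Hcal_t$ of a subobject of $\Xcal[-1]$; by the closure of $\Xcal[-1]$ under injective envelopes noted above, $H_{c'}^0(c'(\qf))$ itself lies in $\Xcal[-1]$. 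Since $(\Ycal,\Xcal[-1])$ is a torsion pair in $\Hcal_t$, we conclude $\Hcal_t(s_{c'}(\pf),H_{c'}^0(c'(\qf)))=0$ and hence $\pf\not\preceq_{c'}\qf$.

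The hardest step will be this last cross case: the key is to upgrade the subobject supplied by Proposition~\ref{prop:theta-GSupp(H)} to the full injective envelope, which in turn requires both the recognition of the mutation's torsion pair as coming from a specialisation-closed subset of $\Spec(R)$ and the applicability of Proposition~\ref{prop:smashing-cohomological} to the mutated heart $\Hcal_t$ — both features specific to the commutative noetherian setting.
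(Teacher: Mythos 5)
Your reduction matches the paper's: identify both spectra with $\Spec(R)$ via the canonical $\Phi$ maps, reduce to showing $\preceq_{c'}$ refines $\preceq_c$, dispose of the two same-side cases via Theorem~\ref{thm:piecewise-homeomorphism}, and observe that the cross case $\pf\in E^\cpl,\;\qf\in E$ is vacuous (the paper derives this from Lemma~\ref{lemma:two-cases} rather than the cohomology degree calculation you give, but the conclusion is the same). So far so good.

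The argument collapses in the remaining case $\pf\in E,\;\qf\in E^\cpl$, which is exactly where all the work lies. You claim that $\Xcal[-1]$, being a hereditary torsion class in $\Hcal_t$, is ``in particular closed under injective envelopes there''. This inference is false: hereditary means the torsion class is closed under subobjects, equivalently the torsion-\emph{free} class is closed under injective envelopes. Closure of the torsion class under injective envelopes is the strictly stronger property of \emph{stability}, which does not hold for general hereditary torsion pairs in $\Hcal_t$ (indeed the paper goes to some lengths, around Definition~\ref{dfn:strongly-perfect} and Proposition~\ref{prop:perfect-converse}, to characterise when something like this does happen). If your claim were true, then $H^0_{c'}(c'(\qf))\in\Xcal[-1]$ for all $\qf\in E^\cpl$, so $E$ would be clopen in $\GSpec(c')$ after every right mutation, i.e.\ the implication you are verifying would always be vacuous. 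But this directly contradicts the paper's computations: in Example~\ref{example:H1-twodim} (one-step tilt of a two-dimensional local ring at $\{\mf\}$), one has $\pf\preceq'\mf$ for every minimal prime $\pf$ even though $\pf\in E$ and $\mf\in E^\cpl$. So not only does the argument fail, the conclusion you draw in this case (that $\pf\not\preceq_{c'}\qf$) is simply wrong.

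The actual content of the theorem is that $\pf\preceq_{c'}\qf$ \emph{can} hold across the divide, and when it does, one must produce a witnessing nonzero morphism $\Hcal_c(s_c(\pf),H^0_c(c(\qf)))\neq 0$ (equivalently show $\qf\in\spcl_\preceq(\pf)$). The paper's Lemma~\ref{lemma:meat} does this by a substantial construction: from the assumed nonzero map over $\Hcal_t$ it extracts a specific quotient $w$ of $s(\pf)$ in $\Hcal$, then localises at $\qf$ and uses the minimality of $\qf$ (via a quotient category argument, Lemma~\ref{lemma:q-minimal}) to force $\qf\in\supp(w)\subseteq\spcl_\preceq(\pf)$. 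None of this is present in your sketch, so the ``hardest step'' you flag at the end is indeed hard — and currently missing.
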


The proof will occupy the rest of this subsection, involving a few reduction and
reformulation steps, as well as some auxiliary lemmas.

We start by fixing some notation. 
Let $c$ and $c'$ be as in the statement, and $\Ecal\subseteq\Prod(c)$ be the
class at which we have right-mutated. Denote by $\Ocal$ and
$\Ocal'$ the two topologies induced on $\Spec(R)$ by the identifications
$\Phi_c\colon\Spec(R)\simeq \GSpec(c)$ and
$\Phi_{c'}\colon\Spec(R)\simeq\GSpec(c')$. These are $(\Tsf0)$ Alexandrov
topologies, by Corollary~\ref{cor:T0-alexandrov}; denote by $\preceq$ and
$\preceq'$ the corresponding closure partial orders on $\Spec(R)$. Denote also
by $E$ the subset of $\Spec(R)$ for which $\Phi_c(E)=\Ind\Ecal=\Phi_{c'}(E)$.
For a set $\Qcal\subseteq\Spec(R)$, denote by $\min_\preceq\Qcal\subseteq\Qcal$ the
set of elements of $\Qcal$ which are minimal with respect to $\preceq$. Since
$(\Spec(R),\preceq)$ is artinian by
Corollary~\ref{cor:topological-GSpec(c)}(3), we have:
\[\textstyle \Qcal\subseteq \bigcup_{\qf\in\min_\preceq\Qcal}\spcl_\preceq(\qf).\]
Translating the claim of Theorem~\ref{thm:theta-open} in these terms, we get the
following lemma.

\begin{lemma}\label{lemma:reformulation}
	The following are equivalent:
	\begin{enumerate}
		\item $\Theta\colon \GSpec(c)\to \GSpec(c')$ is open;
		\item $\Ocal'$ is finer than $\Ocal$, that is, $\Ocal\subseteq\Ocal'$;
		\item $\preceq'$ is finer that $\preceq$, that is, $\pf\preceq'\qf\implies
			\pf\preceq\qf$ for every $\pf,\qf\in\Spec(R)$;
		\item $\pf\preceq'\qf\implies \pf\preceq\qf$ for every $\pf\in E,\;\qf\notin E$;
		\item $\min_\preceq(\spcl_{\preceq'}(\pf)\cap E^\cpl)\subseteq
			\spcl_\preceq(\pf)$ for every $\pf\in E$.
	\end{enumerate}
\end{lemma}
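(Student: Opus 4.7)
The chain of equivalences is essentially bookkeeping, given the preparatory results, so I plan to prove $(1)\Leftrightarrow(2)\Leftrightarrow(3)\Leftrightarrow(4)\Leftrightarrow(5)$ in order, each one by unpacking definitions and invoking a single earlier result. For $(1)\Leftrightarrow(2)$, I will use Theorem~\ref{thm:c(p)-support}(3), which gives $\Theta=\Phi_{c'}\circ\Phi_c^{-1}$: under the set-theoretic identifications $\Phi_c,\Phi_{c'}$ with $\Spec(R)$, the map $\Theta$ becomes the identity of $\Spec(R)$ viewed between the two topologies, and so its openness is exactly the inclusion $\Ocal\subseteq\Ocal'$. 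For $(2)\Leftrightarrow(3)$, Corollary~\ref{cor:T0-alexandrov} ensures both topologies are Alexandrov, so their opens are precisely the upper sets for $\preceq$ and $\preceq'$ respectively. Then $\Ocal\subseteq\Ocal'$ translates to "every $\preceq$-upper set is a $\preceq'$-upper set", and testing this condition on the smallest $\preceq$-upper set $\spcl_\preceq(\pf)$ containing each point reduces it to the pointwise statement (3).

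For $(3)\Leftrightarrow(4)$ the forward direction is trivial, and for the converse I will case-split on the position of $\pf,\qf$ relative to $E$. Lemma~\ref{lemma:two-cases} tells me that $E$ is closed in both $\Ocal$ and $\Ocal'$, hence a $\preceq$- and $\preceq'$-lower set; this immediately excludes the mixed case $\pf\notin E$, $\qf\in E$ whenever $\pf\preceq'\qf$. When $\pf,\qf$ lie on the same side of $E$, Theorem~\ref{thm:piecewise-homeomorphism} says that $\Ocal$ and $\Ocal'$ induce the same subspace topology on $E$ and on $E^\cpl$, so the two preorders agree on such pairs and $\pf\preceq'\qf$ already forces $\pf\preceq\qf$. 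Only the remaining mixed case $\pf\in E$, $\qf\notin E$ requires the hypothesis, and that case is exactly (4).

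Finally, for $(4)\Leftrightarrow(5)$, the direction $(4)\Rightarrow(5)$ is immediate from the definitions of the $\spcl$-sets. For $(5)\Rightarrow(4)$ I will invoke the artinianity of $(\Spec(R),\preceq)$ from Corollary~\ref{cor:topological-GSpec(c)}(3): given $\pf\in E$ and $\qf\in E^\cpl$ with $\pf\preceq'\qf$, I extract a $\preceq$-minimal element $\qf_0$ of $\spcl_{\preceq'}(\pf)\cap E^\cpl$ lying $\preceq$-below $\qf$ (such a $\qf_0$ exists by applying artinianity to the nonempty set $\{y\in\spcl_{\preceq'}(\pf)\cap E^\cpl\mid y\preceq\qf\}$, and a short chase shows it is automatically $\preceq$-minimal in the whole of $\spcl_{\preceq'}(\pf)\cap E^\cpl$), and then (5) delivers $\pf\preceq\qf_0\preceq\qf$. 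The most delicate step in the chain is $(3)\Leftrightarrow(4)$, as it is the only place where the mutation-specific inputs (closedness of $E$ via Lemma~\ref{lemma:two-cases} and the piecewise homeomorphism of Theorem~\ref{thm:piecewise-homeomorphism}) are combined with the Alexandrov structure; the remaining equivalences are routine translations between the topological, order-theoretic and pointwise formulations.
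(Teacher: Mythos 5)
Your proof is correct and follows essentially the same route as the paper's: $(1)\Leftrightarrow(2)$ via Theorem~\ref{thm:c(p)-support}(3), $(2)\Leftrightarrow(3)$ via the Alexandrov property, $(3)\Leftrightarrow(4)$ via the closedness of $E$ (Lemma~\ref{lemma:two-cases}) and the piecewise homeomorphism (Theorem~\ref{thm:piecewise-homeomorphism}), and $(4)\Leftrightarrow(5)$ via artinianity of $\preceq$. The only mild stylistic divergence is in $(5)\Rightarrow(4)$: the paper first observes that $\spcl_{\preceq'}(\pf)\cap E^\cpl$ is $\Ocal$-open (again using Lemma~\ref{lemma:two-cases} together with Theorem~\ref{thm:piecewise-homeomorphism}) and hence a $\preceq$-upper set, which upgrades the artinianity bound to the equality $\spcl_{\preceq'}(\pf)\cap E^\cpl=\bigcup_{\qf\in\min_\preceq(\cdot)}\spcl_\preceq(\qf)$; you instead extract a $\preceq$-minimal witness $\qf_0$ directly from artinianity and check by hand that it is minimal in the full set, which works equally well and avoids the extra topological observation.
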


\begin{proof}
	The first three items are just reformulations.
	By Theorem~\ref{thm:c(p)-support}(3), we have $\Phi_{c'}=\Theta\circ\Phi_c$,
	which means that after identifying $\GSpec(c)$ and $\GSpec(c')$ with
	$\Spec(R)$, the map $\Theta$ corresponds to the identity. Hence it is open if
	and only if the target topology $\Ocal'$ is finer than the source topology
	$\Ocal$.
	($2\Leftrightarrow3$) is clear, as open sets for an Alexandrov topology are
	the upper-sets of the closure order.

	The implication $(3\Rightarrow 4)$ is obvious, while the converse
	$(4\Rightarrow 3)$ follows from the general
	results proved in earlier sections. Indeed, by Lemma~\ref{lemma:two-cases},
	$E$ is closed with respect to both $\Ocal$ and $\Ocal'$, so it is a lower set
	for both $\preceq$ and $\preceq'$. In particular, neither of the sides of the
	implication in~(3) can hold with $\pf\notin E$
	and $\qf\in E$.
	Moreover, by Theorem~\ref{thm:piecewise-homeomorphism}, $\Ocal$ and $\Ocal'$
	induce the same subspace topologies on $E$ and its complement $E^\cpl$, which
	means that the two restriction of $\preceq$ and $\preceq'$ on these subsets
	coincide. Therefore the implication in~(3) holds (and it is actually an
	equivalence) when either $\pf,\qf\in E$ or $\pf,\qf\notin E$. Hence (4) is the
	only case of (3) which is not already established by earlier results.

	($4\Leftrightarrow 5$) The complement $E^\cpl$ is open for both $\Ocal$ and
	$\Ocal'$ by Lemma~\ref{lemma:two-cases}. Observe that item (4) can be
	restated as: $\spcl_{\preceq'}(\pf)\cap E^\cpl\subseteq\spcl_{\preceq}(\pf)$
	for every $\pf\in E$. We notice that $\spcl_{\preceq'}(\pf)\cap E^\cpl$ is
	open for $\Ocal'$, and since it is contained in $E^\cpl$, is also open for
	$\Ocal$, again by Lemma~\ref{lemma:two-cases}. In other words, it is an
	upper-set for $\preceq$, and thus we actually have an equality:
	\[\textstyle\spcl_{\preceq'}(\pf)\cap
	E^\cpl=\bigcup_{\qf\in\min_{\preceq}(\spcl_{\preceq'}(\pf)\cap
	E^\cpl)}\spcl_\preceq(\qf).\]
	This gives the equivalence with (5).
\end{proof}

As in \S\ref{subsec:residues-gabriel-simples}, denote by $s(\pf)$ and
$s'(\pf)$ the Gabriel-simple objects of $\Hcal$ and $\Hcal_t$, for
$\pf\in\Spec(R)$. Denote by $(\Tcal:=\supp^{-1}(E^\cpl),\Fcal)$ the torsion pair of finite type in $\Hcal$ at
which we have right-mutated, so that $\Hcal_t=\Fcal\ast\Tcal[-1]$. Then, by
definition of $s'(\pf)$ we have:
\[s'(\pf)=\begin{cases}
	s(\pf)&\text{if }s(\pf)\in\Fcal,\text{ that is if }\pf\in E\\
	s(\pf)[-1]&\text{if }s(\pf)\in\Tcal,\text{ that is if }\pf\in E^\cpl,
\end{cases}\]
where these are the only cases that can occurr, by
Lemma~\ref{lemma:s(p)-hereditary-torsion-simple}.

\medskip
From now on, we fix $\pf\in E$ and $\qf\in\min_\preceq(\spcl_{\preceq'}(\pf)\cap
E^\cpl)$, towards proving that Lemma~\ref{lemma:reformulation}(5) holds. We
therefore have $s'(\pf)=s(\pf)$ and $s'(\qf)=s(\qf)[-1]$.
The assumption of minimality of $\qf$ will be used as follows. We have said that
$\spcl_{\preceq'}(\pf)\cap E^\cpl$ is an upper-set for $\preceq$; since $\qf$ is
$\preceq$-minimal in it, the difference $(\spcl_{\preceq'}(\pf)\cap
E^\cpl)\setminus\{\qf\}$ is also $\preceq$-upper. We therefore have localising
subcategories of $\Hcal$:
\[\tilde\Xcal:=\supp^{-1}(\spcl_{\preceq'}(\pf)\cap E^\cpl)\cap\Hcal\quad\text{and}\quad
\Xcal:=\supp^{-1}(\spcl_{\preceq'}(\pf)\cap E^\cpl\setminus\{\qf\})\cap\Hcal,\]
whose supports only differ by $\qf$.
If we write $\begin{tikzcd}[cramped,sep=small]\Lsf\colon \Hcal\arrow[shift
left]{r}&\arrow[shift left]{l}\Hcal/\Xcal:\!\Rsf\end{tikzcd}$ for the exact
localisation functor associated to $\Xcal$ and its right adjoint, then we have
the following technical lemma.

\begin{lemma}\label{lemma:q-minimal}
	We have that:
	\begin{enumerate}
		\item $s(\qf)\in\Xcal^{\bot_{0,1}}$, so that $s(\qf)\simeq \Rsf\Lsf(s(\qf))$;
		\item $\Lsf(s(\qf))$ is simple in $\Hcal/\Xcal$;
		\item $\tilde\Xcal/\Xcal:=\Lsf(\tilde\Xcal)=\Loc(\Lsf(s(\qf)))\subseteq\Hcal/\Xcal$.
		\item for every object $z\in\tilde\Xcal\setminus\Xcal$, there is a
			monomorphism $\Lsf(s(\qf))\hookrightarrow \Lsf(z)$ in $\Hcal/\Xcal$.
	\end{enumerate}
\end{lemma}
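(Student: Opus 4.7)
The four assertions will be proved in order: (1) is nearly immediate from the hereditary-torsion-simple dichotomy, (2) is the main obstacle, and (3)-(4) are formal consequences.

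For (1), I would apply Lemma~\ref{lemma:s(p)-hereditary-torsion-simple}(2) to the hereditary torsion pair $(\Xcal,\Xcal^\bot)$, giving $s(\qf)\in\Xcal\cup \Xcal^{\bot_{0,1}}$. Combining this with Proposition~\ref{prop:s(p)-inj-envelope}(2) yields the useful principle that $\pf'\in\supp(\Lcal)$ if and only if $s(\pf')\in\Lcal$ for any hereditary torsion class $\Lcal\subseteq\Hcal$. Since $\qf\notin\supp(\Xcal)$ by construction, this forces $s(\qf)\in\Xcal^{\bot_{0,1}}$; the isomorphism $s(\qf)\simeq \Rsf\Lsf(s(\qf))$ then follows from the fact that the Giraud counit is an isomorphism on the Giraud subcategory.

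The main step is (2). The plan is to suppose, for contradiction, that $0\neq a\subsetneq \Lsf(s(\qf))$ is a proper non-zero subobject in $\Hcal/\Xcal$, set $A:=\Rsf(a)\subsetneq s(\qf)$ (using (1) and the full faithfulness of $\Rsf$), and show $s(\qf)/A\in\Xcal$. Combined with $\Lsf\Rsf(a)=a$ and the exactness of $\Lsf$ applied to $0\to A\to s(\qf)\to s(\qf)/A\to 0$, this will force $a=\Lsf(s(\qf))$, a contradiction. The key tool is the characterisation $y\in\Lcal$ if and only if $\D(R)(y,c(\pf'))=0$ for every $\pf'\notin\supp(\Lcal)$, which follows from the equivalence $\Prod(c)\simeq\Inj(\Hcal)$, Proposition~\ref{prop:s(p)-inj-envelope}, and the natural isomorphism $\D(R)(-,c)\simeq\Hcal(H_c^0(-),H_c^0(c))$. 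The support bound $\supp(s(\qf)/A)\subseteq V(\qf)\setminus\{\qf\}$ from Lemma~\ref{lemma:M(p)}(2.b) will be used in two ways. First, for $\pf'\notin\supp(\tilde\Xcal)$, since $s(\qf)\in\tilde\Xcal$ (as $\qf\in \supp(\tilde\Xcal)$) and $\tilde\Xcal$ is closed under quotients, $s(\qf)/A\in\tilde\Xcal$, giving the vanishing. Second, for $\pf'=\qf$ -- the subtle case where the role of $\qf$ is really used -- the inclusion $V(\qf)\setminus\{\qf\}\subseteq W_\qf:=\Spec(R)\setminus\gncl(\qf)$ together with Neeman's classification place $s(\qf)/A$ in $\Lcal_{W_\qf}$, while Theorem~\ref{thm:c(p)-support} places $c(\qf)$ in $\Bcal_{W_\qf}=\Lcal_{W_\qf}^\bot$, forcing the vanishing by orthogonality.

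For (3), since $\supp(\tilde\Xcal)=\supp(\Xcal)\sqcup\{\qf\}$, the lattice isomorphism of Proposition~\ref{prop:gsupp} combined with the correspondence in Proposition~\ref{prop:spectrum-quotients}(1) between $\LOC(\Hcal/\Xcal)$ and the localising subcategories of $\Hcal$ containing $\Xcal$ will show that $\tilde\Xcal/\Xcal$ is minimal non-zero in $\LOC(\Hcal/\Xcal)$. Since $\Lsf(s(\qf))\neq 0$ lies in $\tilde\Xcal/\Xcal$ by (1), minimality will force $\Loc(\Lsf(s(\qf)))=\tilde\Xcal/\Xcal$. Lastly, (4) is immediate from (2) and (3): the argument in the proof of Corollary~\ref{cor:minimal-nonzero} shows every non-zero object of $\Loc(\Lsf(s(\qf)))=\tilde\Xcal/\Xcal$ admits a transfinite filtration by copies of the simple object $\Lsf(s(\qf))$, the first stage of which exhibits $\Lsf(s(\qf))$ as a subobject; applied to $\Lsf(z)\neq 0$ for $z\in\tilde\Xcal\setminus\Xcal$, this gives the required monomorphism.
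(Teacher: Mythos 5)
Your proof is correct. Items (1) and (3) match the paper almost verbatim, and the transfinite-filtration argument you give for (4) is an equally valid variant of the paper's observation that an object of $\Loc(\Lsf(s(\qf)))$ with no monomorphism from the simple $\Lsf(s(\qf))$ would lie in $(\Loc(\Lsf(s(\qf))))^\bot$.

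The only genuine divergence is in (2). Having reduced to showing that the cokernel $s(\qf)/A$ of a nonzero $A\subseteq s(\qf)$ lies in $\Xcal$, you verify membership via orthogonality against the indecomposable injectives $H^0_c(c(\pf'))$, $\pf'\notin\supp(\Xcal)$, splitting into the cases $\pf'\notin\supp(\tilde\Xcal)$ and $\pf'=\qf$. The paper's proof of this point is noticeably shorter because $\Xcal$ and $\tilde\Xcal$ are defined \emph{directly} as support categories: once you know $s(\qf)/A\in\tilde\Xcal$ (closure under quotients) and $\qf\notin\supp(s(\qf)/A)$ (Lemma~\ref{lemma:M(p)}(2.b)), you have $\supp(s(\qf)/A)\subseteq\supp(\tilde\Xcal)\setminus\{\qf\}=\supp(\Xcal)$, hence $s(\qf)/A\in\Xcal$ immediately from the definition of $\Xcal$ as $\supp^{-1}(\cdots)\cap\Hcal$. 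Both arguments invoke exactly the same two facts, so your route is not wrong, just a detour: it re-derives, via Proposition~\ref{prop:s(p)-inj-envelope} and the injective-cogeneration of the torsion-free class, a support criterion that Proposition~\ref{prop:smashing-cohomological} already hands you for free.
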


\begin{proof}
	(1) By construction of $\Xcal$ we have
	$s(\qf)\notin\Xcal$. Therefore $s(\qf)\in\Xcal^{\bot_{0,1}}$ by
	Lemma~\ref{lemma:s(p)-hereditary-torsion-simple}(2). 

	(2) Using (1), to show that $\Lsf(s(\qf))$ is simple we can equivalently show
	that all the proper quotients $x$ of $s(\qf)$ in $\Hcal$ belong to $\Xcal$.
	Since $s(\qf)\in\tilde\Xcal$, we have $x\in\tilde\Xcal$; on the other hand, by
	Lemma~\ref{lemma:M(p)}(2.b) we have $\qf\notin\supp(x)$, hence actually
	$x\in\Xcal$.

	(3) By looking at supports, the inclusion
	$\Xcal\subseteq\tilde\Xcal$ is a minimal inclusion of localising subcategories
	of $\Hcal$. Since the assignment $\Lcal\mapsto \Lcal/\Xcal$ gives an
	inclusion-preserving bijection between localising subcategories $\Lcal$ of $\Hcal$
	containing $\Xcal$ and localising subcategories of $\Hcal/\Xcal$ by
	Proposition~\ref{prop:spectrum-quotients}(1), we deduce that
	$\tilde\Xcal/\Xcal$ is a minimal nonzero localising subcategory of
	$\Hcal/\Xcal$. Since it contains
	$\Lsf(s(\qf))$, it must then coincide with $\Loc(\Lsf(s(\qf))$.

	(4) For $z\in\tilde\Xcal\setminus\Xcal$, we have that $0\neq
	\Lsf(z)\in\tilde\Xcal/\Xcal=\Loc(\Lsf(s(\qf)))$, by (3). Then $\Lsf(z)$ must have a
	monomorphism from the simple object $\Lsf(s(\qf))$ in $\Hcal/\Xcal$, otherwise it
	would lie in the torsion-free class
	$(\Lsf(s(\qf)))^\bot=(\Loc(\Lsf(s(\qf))))^\bot$.
\end{proof}

Now we are ready to prove that Lemma~\ref{lemma:reformulation}(5) holds.

\begin{lemma}\label{lemma:meat}
	We have $\min_{\preceq}(\spcl_{\preceq'}(\pf)\cap
	E^\cpl)\subseteq\spcl_{\preceq}(\pf)$ for every $\pf\in E$.
\end{lemma}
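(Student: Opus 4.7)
The plan is to translate $\pf\preceq'\qf$ into explicit morphism data in $\Hcal$ via Lemma~\ref{lemma:hts-topology-characterisation} and $\tau_c$-cohomology, verify that the relevant object lies in $\tilde\Xcal\setminus\Xcal$, and then work in the Giraud quotient $\Lsf\colon\Hcal\to\Hcal/\Xcal$ from Lemma~\ref{lemma:q-minimal} to conclude $\pf\preceq\qf$.

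First, Lemma~\ref{lemma:hts-topology-characterisation} applied to $\Hcal_t$ converts $\pf\preceq'\qf$ into the existence of a nonzero subobject $y\hookrightarrow s(\pf)$ in $\Hcal_t$ together with a nonzero $\alpha\colon y\to s(\qf)[-1]$ in $\Hcal_t$, using $s'(\pf)=s(\pf)$ and $s'(\qf)=s(\qf)[-1]$. Applying $H^*_c$ to the $\Hcal_t$-short exact sequence $0\to y\to s(\pf)\to z\to 0$, and using $s(\pf)\in\Hcal$ together with $\Hcal_t=\Fcal\ast\Tcal[-1]$, the long exact sequence forces $z\in\Fcal$ and produces a four-term $\Hcal$-exact sequence $0\to y_F\to s(\pf)\to z\to y_X\to 0$, with $y_F=H^0_c(y)\in\Fcal$ a subobject of $s(\pf)$ and $y_X=H^1_c(y)\in\Tcal$. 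Applying $\Dcal(-,s(\qf)[-1])$ to the decomposition triangle $y_F\to y\to y_X[-1]\to y_F[1]$ and using the cosilting vanishings $\Dcal(\Hcal[n],\Hcal)=0$ for $n\geq 1$, the nonzero $\alpha$ corresponds to a nonzero morphism $\phi\colon y_X\to s(\qf)$ in $\Hcal$.

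Second, I would verify that $y_X\in\tilde\Xcal\setminus\Xcal$. The inclusion $\supp(y_X)\subseteq E^\cpl$ follows from $y_X\in\Tcal=\supp^{-1}(E^\cpl)\cap\Hcal$. For the stronger $\supp(y_X)\subseteq\spcl_{\preceq'}(\pf)$, one combines $\supp(y)\subseteq V(\pf)$ (from $y\hookrightarrow s(\pf)$ in $\Hcal_t$ via the $\Hcal_t$-analog of Lemma~\ref{lemma:M(p)}) with the fact that $\spcl_{\preceq'}(\pf)$ is specialisation-closed as an $\Ocal'$-open set (since hereditary torsion pairs of finite type in $\Hcal_t$ are classified by specialisation-closed subsets of $\Spec(R)$). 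Moreover $y_X\notin\Xcal$: the image of $\phi$ is a nonzero subobject of $s(\qf)$, hence has support $\{\qf\}$ by Lemma~\ref{lemma:M(p)}, which cannot lie in $\Xcal$ whose support excludes $\qf$.

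Finally, pass to the Giraud quotient. By Lemma~\ref{lemma:q-minimal}, $\Lsf(s(\qf))$ is simple in $\Hcal/\Xcal$, and since $c(\qf)\in\Xcal^{\bot_{0,1}}$ (the Giraud subcategory being closed under injective envelopes for hereditary torsion pairs), $\Lsf(c(\qf))$ is the injective envelope of $\Lsf(s(\qf))$ in $\Hcal/\Xcal$; the adjunction then yields a natural isomorphism $\Hcal(s(\pf),c(\qf))\simeq\Hcal/\Xcal(\Lsf(s(\pf)),\Lsf(c(\qf)))$, so it suffices to produce a nonzero map in the quotient. The localised map $\Lsf(\phi)\colon\Lsf(y_X)\to\Lsf(s(\qf))$ is nonzero and hence surjective onto the simple target, while Lemma~\ref{lemma:q-minimal}(4) also provides an embedding $\Lsf(s(\qf))\hookrightarrow\Lsf(y_X)$; combining these with the localised four-term exact sequence and using the injectivity of $\Lsf(c(\qf))$ in $\Hcal/\Xcal$, one extends the composite $\Lsf(z)\twoheadrightarrow\Lsf(y_X)\to\Lsf(s(\qf))\hookrightarrow\Lsf(c(\qf))$ to a nonzero morphism $\Lsf(s(\pf))\to\Lsf(c(\qf))$, which lifts to a nonzero $s(\pf)\to c(\qf)$ in $\Hcal$ and yields $\pf\preceq\qf$ by Lemma~\ref{lemma:hts-topology-characterisation}. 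The main obstacle is this final lifting step, whose non-triviality relies essentially on the $\preceq$-minimality of $\qf$, encoded in Lemma~\ref{lemma:q-minimal}(3--4) via the description $\tilde\Xcal/\Xcal=\Loc(\Lsf(s(\qf)))$ and the simplicity of $\Lsf(s(\qf))$.
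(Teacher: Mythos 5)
Your setup matches the paper's through the construction of the four-term sequence $0\to y_F\to s(\pf)\to z\to y_X\to 0$ (in the paper's notation $0\to H_c^0(x)\to s(\pf)\to y\to H_c^1(x)\to 0$), the nonzero $\phi\colon y_X\to s(\qf)$, the verification $y_X\in\tilde\Xcal\setminus\Xcal$, and the use of the Giraud quotient $\Lsf$. Up to that point the argument is essentially the one in the paper.

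The final step has a genuine gap. The composite you propose, $\Lsf(z)\twoheadrightarrow\Lsf(y_X)\to\Lsf(s(\qf))\hookrightarrow\Lsf(c(\qf))$, factors through $\Lsf(y_X)$; but $\Lsf(z)\to\Lsf(y_X)$ is the cokernel of $\Lsf(s(\pf))\to\Lsf(z)$ (apply the exact $\Lsf$ to the four-term sequence), so precomposing with $\Lsf(s(\pf))\to\Lsf(z)$ gives the \emph{zero} map to $\Lsf(c(\qf))$. Injectivity of $\Lsf(c(\qf))$ lets you extend maps along monomorphisms \emph{into} $\Lsf(c(\qf))$'s source, but it does not rescue a composite that is already identically zero, and none of the maps you have listed leave $\Lsf(s(\pf))$ landing outside the kernel of $\Lsf(z)\to\Lsf(y_X)$. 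The $\preceq$-minimality of $\qf$, which you correctly identify as the essential input encoded in Lemma~\ref{lemma:q-minimal}, is not enough on its own to produce a nonzero element of $\Hcal/\Xcal(\Lsf(s(\pf)),\Lsf(c(\qf)))$ by any direct composition from the data at hand: the relevant information sits in $\Ext^1$, not $\Hom$.

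The paper's proof avoids this by never attempting to manufacture a morphism $s(\pf)\to c(\qf)$ directly. Instead it sets $w:=\im(s(\pf)\to z)$ (a quotient of $s(\pf)$, so $\supp(w)\subseteq\spcl_\preceq(\pf)$ automatically), pulls back the exact sequence $0\to\Lsf(w)\to\Lsf(z)\to\Lsf(y_X)\to 0$ along the mono $\Lsf(s(\qf))\hookrightarrow\Lsf(y_X)$ to get $0\to\Lsf(w)\to P\to\Lsf(s(\qf))\to 0$, applies $\Rsf$ and $-\otimes R_\qf$, and shows the resulting extension by $s(\qf)$ cannot split (using that $(\Rsf(P))_\qf\in\Fcal$ while $s(\qf)\in\Tcal$). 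Non-splitting of the connecting $\Ext^1$ class then forces $\qf\in\supp((\Rsf\Lsf(w))_\qf)$, and a small diagram chase transports this to $\qf\in\supp(w)$, whence $\qf\in\spcl_\preceq(\pf)$. So the detour through supports and a non-splitting argument is not cosmetic: it is precisely how the paper turns first-extension data into the required closure relation. To repair your proposal you would need to recreate this mechanism, e.g.\ by replacing the attempted composition with the non-split extension argument.
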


\begin{proof}
	We fix $\pf\in E$ and $\qf\in\min_{\preceq}(\spcl_{\preceq'}(\pf)\cap E^\cpl)$,
	and keep the notations introduced so far. In order to show that $\qf\in
	\spcl_\preceq(\pf)$, we will start by constructing a quotient $w$ of $s(\pf)$
	in $\Hcal$. Since $w$ will lie in every localising subcategory of
	$\Hcal$ containing $s(\pf)$, its support will be contained in the smallest open set
	containing $\supp(s(\pf))=\{\pf\}$, that is,
	$\supp(w)\subseteq\spcl_\preceq(\pf)$. Then we will show that
	$\qf\in\supp(w)$, concluding the proof.

	\smallskip\noindent\textbf{Step 1.}\ \emph{Construction of the quotient $w$}. Recall that the Gabriel-simples are
	hereditary-torsion-simple subobjects of the indecomposable injectives of
	$\Hcal_t$, by Lemma~\ref{lemma:s(p)-hereditary-torsion-simple}. Therefore, by 
	Lemma~\ref{lemma:hts-topology-characterisation}, our assumption that
	$\pf\preceq'\qf$ translates into:
	\[\exists\, x\subseteq s'(\pf)\text{ in }\Hcal_t\text{ such that
	}\Hcal_t(x,s'(\qf))\neq 0.\]
	Now, as mentioned, we have $s'(\pf)=s(\pf)$ and $s'(\qf)=s(\qf)[-1]$.
	The triangle witnessing that $x$ is a subobject of $s'(\pf)$ in $\Hcal_t$ and its
	long exact sequence of $\Hcal$-cohomology are then:
	\[
		x\to s(\pf)\to y\to x[1],\qquad 0\to H_c^0(x)\to s(\pf)\to y\to H_c^1(x)\to 0,
	\]
	with $y\in\Fcal$.
	Since $s'(\qf)=s(\qf)[-1]$ is torsion-free for the pair $(\Fcal,\Tcal[-1])$ in
	$\Hcal_t$, we have:
	\[0\neq \Hcal_t(x,s'(\qf))\simeq \Hcal_t(H_c^1(x)[-1],s(\qf)[-1])\simeq
	\Hcal(H_c^1(x),s(\qf)).\]
	We let
	$z:=H_c^1(x)\in\Tcal$, and we denote by $w$ the image (in $\Hcal$) of the
	morphism $s(\pf)\to y$, so that there are short exact sequences:
	\[0\to w\to y\to z\to 0\quad\text{in }\Hcal\qquad\text{and}\qquad
		0\to \Lsf(w)\to \Lsf(y)\to \Lsf(z)\to 0\quad\text{in }\Hcal/\Xcal.\]

	\smallskip\noindent\textbf{Step 2.}\ \emph{Proof that $\qf$ lies in
	$\supp(w)$}.
	Since $z[-1]$ is a quotient of the subobject
	$x$ of $s(\pf)$ in $\Hcal_t$, it lies in every localising subcategory of
	$\Hcal_t$ containing $s(\pf)$. Hence, as before, we have:
	\[\supp(z)=\supp(z[-1])\subseteq\spcl_{\preceq'}(\pf).\]
	On the other hand, by construction $z\in\Tcal=\supp^{-1}(E^\cpl)\subseteq\Hcal$,
	so in fact we have:
	\[\supp(z)\subseteq\spcl_{\preceq'}(\pf)\cap E^\cpl,\quad\text{that is,}\quad z\in\tilde\Xcal.\]
	Moreover, since $\Hcal(z,s(\qf))\neq 0$ and $s(\qf)\in\Xcal^{\bot_{0,1}}$, we
	must have $z\notin\Xcal$.
	We are in a position to apply Lemma~\ref{lemma:q-minimal}(4), which yields a monomorphism
	$\Lsf(s(\qf))\hookrightarrow \Lsf(z)$ in $\Hcal/\Xcal$. We use it to compute a
	pullback from the sequence above:
	\[\begin{tikzcd}[sep=1pc]
		0\arrow{r}& \Lsf(w) \arrow{r} & \Lsf(y) \arrow{r} & \Lsf(z)\arrow{r} & 0\\
		0\arrow{r}& \Lsf(w) \arrow[equal]{u} \arrow{r} & P \arrow[hook]{u} \arrow{r}
		\arrow[phantom,description, "\urcorner", very near start]{ur} &
			\Lsf(s(\qf))	\arrow[hook]{u} \arrow{r} & 0
	\end{tikzcd}\qquad \text{in }\Hcal/\Xcal.\]
	Applying the left exact functor $\Rsf$ we obtain an exact sequence:
	\[0\to \Rsf\Lsf(w)\to \Rsf(P)\to s(\qf)\to t\to 0\text{ in }\Hcal\]
	with $t\in\Xcal$. Since instead $s(\qf)\notin\Xcal$, $t$ must be a proper
	quotient of $s(\qf)$, and therefore  we have
	$\supp(t)\subseteq\spcl_{\subseteq}(\qf)\setminus\{\qf\}$ by
	Lemma~\ref{lemma:M(p)}(2.b).
	We are going to perform a last manipulation of this sequence, localising it
	at $\qf$ by applying the functor $-\otimes R_\qf$. This yields an exact
	sequence in $\Hcal$ by Lemma~\ref{lemma:heart-localisation}. Moreover, by our last
	observation on the support of $t$, we have $t\otimes R_\qf=0$. The (short)
	exact sequence we obtain is therefore:
	\[0\to (\Rsf\Lsf(w))_\qf\to (\Rsf(P))_\qf\to s(\qf)\to 0\quad\text{in }\Hcal.\]
	Now we show that this sequence cannot split.
	Since $\Rsf$ is left-exact, from the monomorphism $P\hookrightarrow
	\Lsf(y)$ in $\Hcal/\Xcal$ we deduce that $\Rsf(P)\hookrightarrow \Rsf\Lsf(y)$ in $\Hcal$.
	Since $\spcl_{\preceq'}(\pf)\cap E^\cpl\subseteq E^\cpl$, we have an inclusion of
	the corresponding localising subcategories $\Xcal\subseteq\Tcal$; therefore,
	by Lemma~\ref{lemma:torsion-localisation}(1), the torsion-free class $\Fcal$ is
	closed under $\Rsf\Lsf$. By construction we have $y\in\Fcal$, from which we deduce
	that $\Rsf\Lsf(y)$ and then $\Rsf(P)$ both belong to $\Fcal$ as well. Moreover,
	$(\Tcal,\Fcal)$ is of finite type by assumption, so by
	Lemma~\ref{lemma:torsion-localisation-at-p} the torsion-free class $\Fcal$ is
	closed under localisation at $\qf$. We finally deduce that
	$(\Rsf(P))_\qf\in\Fcal$; therefore the sequence cannot split,
	as $s(\qf)\in\Tcal$.

	We conclude that we must have $\qf\in\supp((\Rsf\Lsf(w))_\qf)$.
	Indeed, if we write as in earlier sections $W_\qf:=\Spec(R)\setminus\gncl_{\subseteq}(\qf)$, we have by definition that
	$(\Rsf\Lsf(w))_\qf\in\Bcal_{W_\qf}$; if we had $\qf\notin \supp((\Rsf\Lsf(w))_\qf)$, then
	we would actually have $(\Rsf\Lsf(w))_\qf\in\Bcal_{W_\qf\cup\{\qf\}}$. However,
	since $s(\qf)\in\Lcal_{W_\qf\cup\{\qf\}}$, this would make the connecting map
	$s(\qf)\to (\Rsf\Lsf(w))_\qf[1]$ vanish, splitting the sequence.

	Therefore we have $\qf\in\supp((\Rsf\Lsf(w))_\qf)\subseteq\supp(\Rsf\Lsf(w))$.  In view of
	the sequence:
	\[0\to x_0\to w\to \Rsf\Lsf(w)\to x_1\to 0\]
	with $x_0,x_1\in\Xcal$, we deduce that:
	\[\qf\in\supp(\Rsf\Lsf(w))\subseteq\supp(x_0)\cup\supp(w)\cup\supp(x_1)\subseteq\supp(w)\cup\supp(\Xcal).\]
	Since $\qf\notin\supp(\Xcal)$ by construction, we finally conclude that
	$\qf\in\supp(w)$. This, as mentioned earlier, proves that
	$\qf\in\spcl_\preceq(\qf)$.
\end{proof}

Summarising the argument:

\begin{proof}[Proof of Theorem~\ref{thm:theta-open}]
	The statement of Theorem~\ref{thm:theta-open} is item (1) of
	Lemma~\ref{lemma:reformulation}. This is equivalent to item (5) of the Lemma,
	which holds by Lemma~\ref{lemma:meat}.
\end{proof}

\section{Computations of Gabriel spectra}
\label{sec:concrete-computations}

Let $R$ be a commutative noetherian ring. The goal of this section is to
compute explicitly the Gabriel spectra of the hearts $\Hcal$ associated to certain
bounded cosilting complexes $c$ of $\D(R)$, and the corresponding closure orders on
$\Spec(R)$. As seen before, there is a canonical bijection
$H^0_c(\Phi_c(-))\colon \Spec(R)\to \GSpec(\Hcal)$, which we use to
identify these sets without further mention.

We start by fixing some notation for the rest of the section. Let $c\in\D(R)$ be
a bounded cosilting complex, and $\tau$ be its associated homotopically smashing
$t$-structure. As explained at the beginning of
\S\ref{subsec:residues-gabriel-simples}, $\tau$ is described by an sp-filtration
$\cdots\supseteq V_{i-1}\supseteq V_i\supseteq\cdots$. Since $c$ is bounded,
this filtration is \emph{intermediate}, that is, we have $V_{-i}=\Spec R$ and
$V_i=\emptyset$ for $i\gg 0$. Up to shifting, we may assume that $\tau$
corresponds to a filtration of the form
\[\Spec(R)=V_{-1}\supsetneq V_0\supseteq\cdots\supseteq V_{n-1}\supsetneq
V_n=\emptyset,\]
for $n\geq 0$ (the \emph{length} of the filtration). In this case, $\tau$ can be
obtained from the standard $t$-structure by a chain of right mutations
\cite[Lemma~6.8]{pavo-vito-21}, whose construction we now recall.

\begin{construction}\label{con:chain}
	Let $\tau$ be the $t$-structure associated to the filtration
	\[\Spec R\supsetneq V_0\supseteq\cdots\supseteq V_{n-1}\supsetneq \emptyset.\]
	For every $0\leq j\leq n$, consider the $t$-structures $\tau_j$, with hearts
	$\Hcal_j$, associated to the filtrations:
	\begin{align*}
		\tau_0 \qquad &\Spec(R)\supsetneq \emptyset \\
		\tau_1 \qquad &\Spec(R)\supsetneq V_0\supsetneq \emptyset \\
		\tau_2 \qquad &\Spec(R)\supsetneq V_0 \supseteq V_1 \supsetneq \emptyset \\
		\vdots\qquad\;&\qquad\vdots\\
		\tau_n \qquad &\Spec(R)\supsetneq V_0 \supseteq V_1 \supseteq\cdots\supseteq V_{n-1}\supsetneq \emptyset
	\end{align*}
	Then $\tau_0$ is the standard $t$-structure, $\tau_n$ equals
	$\tau$, and $\tau_i$ is obtained from $\tau_{i-1}$ by right mutation at the
	closed set $V_{i-1}^\cpl\subseteq\GSpec(\Hcal_{i-1})$, for every $1\leq i\leq n$. 
\end{construction}

When referring to this chain of mutations, for every $0\leq i\leq n$ we denote
by $s_i(\pf)$ the Gabriel-simples of $\Hcal_i$, and by
$\preceq_i$ the closure order on $\Spec(R)$ induced by $\GSpec(\Hcal_i)$.
In the heart $\Hcal_i$, we write $t_i:=(\Tcal_{V_i},\Fcal_{V_i})$ for the
hereditary torsion pair of finite type given by Proposition~\ref{prop:smashing-cohomological}, which is the
one involved in the mutation to obtain $\tau_{i+1}$.
Since $\tau_n=\tau$, we will also write $\Hcal:=\Hcal_n$ for its heart and
$\preceq:=\preceq_n$ for its closure order, which is what we want to determine.

\subsection{Preliminary observations}\label{subsec:preliminary-computations}

Our first observation is that Theorem~\ref{thm:piecewise-homeomorphism} gives an
upper bound on how fine the Gabriel topology on $\GSpec(\Hcal)$ and its closure
order $\preceq$ can be.

\begin{lemma}\label{lemma:bound-topology}
	For every $0\leq i\leq n$, the topologies induced on the differences
	$V_{i-1}\setminus V_{i}$ by $\GSpec(\Hcal)$ and $\Spec(R)$ coincide. In other
	words, for $\pf,\qf\in V_{i-1}\setminus V_{i}$ we have $\pf\preceq\qf$ if and
	only if $\pf\subseteq\qf$.
\end{lemma}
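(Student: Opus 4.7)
The plan is to prove the lemma by chaining piecewise homeomorphisms along the mutation chain provided by Construction~\ref{con:chain}. Fix $0 \leq i \leq n$; set $S := V_{i-1} \setminus V_i$. I will show that the subspace topology induced on $S$ by $\GSpec(\Hcal_j)$ is independent of $j$, and in particular agrees with the one induced by $\GSpec(\Hcal_0) = \GSpec(\Mod(R))$. Since each $\Hcal_j$ is semi-noetherian (starting from the locally noetherian category $\Mod(R)$ and applying Corollary~\ref{cor:mutation-semi-noetherianity} at each mutation step), the hypothesis of Theorem~\ref{thm:piecewise-homeomorphism} is satisfied at every step, so the identity map on $\Spec(R)$ (which under the identifications $\Phi_{c_{j-1}}, \Phi_{c_j}$ corresponds to the bijection $\Theta$) induces a homeomorphism on both the closed subset $V_{j-1}^\cpl$ (the mutation set) and its complement $V_{j-1}$, between the subspace topologies coming from $\GSpec(\Hcal_{j-1})$ and $\GSpec(\Hcal_j)$.

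The main combinatorial observation is that because the sp-filtration is decreasing, for every $1 \leq j \leq n$ the set $S$ lies entirely in one of the two pieces to which Theorem~\ref{thm:piecewise-homeomorphism} applies. Concretely:
\begin{itemize}
	\item if $j \leq i$, then $V_{j-1} \supseteq V_{i-1} \supseteq S$, so $S \subseteq V_{j-1}$;
	\item if $j \geq i+1$, then $V_{j-1} \subseteq V_i$, hence $S \subseteq V_i^\cpl \subseteq V_{j-1}^\cpl$.
\end{itemize}
In either case, Theorem~\ref{thm:piecewise-homeomorphism} implies that the subspace topology on $S$ is preserved by the $j$-th mutation.

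Iterating from $j = 1$ to $j = n$, the subspace topology on $S$ induced by $\GSpec(\Hcal) = \GSpec(\Hcal_n)$ coincides with the one induced by $\GSpec(\Hcal_0) = \GSpec(\Mod(R))$. Under Matlis' bijection $\pf \mapsto E(R/\pf)$, the Gabriel topology on $\GSpec(\Mod(R))$ identifies with the Hochster dual of the Zariski topology on $\Spec(R)$ (its open sets being the specialisation-closed subsets), whose closure preorder is precisely $\subseteq$. Restricting to $S$, we conclude that $\pf \preceq \qf$ if and only if $\pf \subseteq \qf$ for $\pf, \qf \in S$, which is exactly the claim. There is no substantive obstacle: the argument is purely a bookkeeping use of Theorem~\ref{thm:piecewise-homeomorphism} together with the monotonicity of the filtration.
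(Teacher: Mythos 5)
Your proof is correct and follows essentially the same route as the paper: fix $S=V_{i-1}\setminus V_i$, observe that at each step $j$ of the mutation chain from Construction~\ref{con:chain} the set $S$ is entirely contained in one of the two pieces to which Theorem~\ref{thm:piecewise-homeomorphism} applies, and iterate. Your case split on $j\leq i$ versus $j\geq i+1$ and your explicit check of the semi-noetherianity hypothesis are slightly more detailed than the paper's terse statement, but the underlying argument is identical.
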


\begin{proof}
	In the chain of Construction~\ref{con:chain}, right mutation is performed at
	the closed sets $V_j^\cpl$, for $1\leq j\leq n$. Since $V_{i-1}\setminus V_i$
	always lies entirely either in $V_j^\cpl$ or in $V_j$, by
	Theorem~\ref{thm:piecewise-homeomorphism} each mutation operation does not
	change its subspace topology.
\end{proof}

Together with the fact that the sets $V_i$ are upper-sets for $\preceq$ by
Lemma~\ref{lemma:preceq-finer}, this shows that, in order to have a full
description of $\GSpec(\Hcal)$, we only need to determine the
relations $\pf\preceq\qf$ involving $\pf\notin V_i\ni\qf$ for some $i$.
As a consequence of Proposition~\ref{prop:ttf-simple}, we also have that no such
relation can occur for certain primes $\pf$.

\begin{lemma}\label{lemma:maximal-difference}
	Let $0\leq i\leq n$, and let $\pf\in V_{i-1}\setminus V_{i}$ be
	maximal in this difference (with respect to $\preceq$, or equivalently to
	$\subseteq$). Then $\pf$ is maximal in $\Spec(R)$ with respect to $\preceq$.
\end{lemma}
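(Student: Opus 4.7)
My plan is to split on whether $i = n$ (in which case $\pf$ turns out to be a maximal ideal) or $i < n$ (in which case I propagate simplicity through the mutation chain of Construction~\ref{con:chain} using Proposition~\ref{prop:ttf-simple}). The first case is the easy boundary; the real content is a transfer lemma from the \emph{quotient} topology on $\GSpec(\Hcal_i/\Tcal_{V_i})$ to simplicity in $\Hcal_{i+1}$.

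\smallskip

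\emph{Case $i=n$.}  Here $V_n=\emptyset$ and $\pf\in V_{n-1}$ is $\subseteq$-maximal in $V_{n-1}$. Since $V_{n-1}$ is specialisation-closed and contains $\pf$, it contains $V(\pf)$; maximality then forces $V(\pf)=\{\pf\}$, i.e.\ $\pf$ is a maximal ideal of $R$. Lemma~\ref{lemma:preceq-finer} (stating that $\preceq$ refines $\subseteq$) immediately gives that $\pf$ is $\preceq$-maximal in $\Spec R$.

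\smallskip

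\emph{Case $i<n$.} First, a quick observation: because $V_{i-1}$ is specialisation-closed, every $\qf\supsetneq \pf$ lies in $V_{i-1}$, so by maximality in $V_{i-1}\setminus V_i$ we actually have that every $\qf\supsetneq\pf$ lies in $V_i$. Equivalently, $\pf$ is $\subseteq$-maximal in $\Spec R\setminus V_i$. Now I pass to the intermediate heart $\Hcal_i$ from Construction~\ref{con:chain}: by Lemma~\ref{lemma:preceq-finer} the closure order $\preceq_i$ refines $\subseteq$, so $\pf$ is $\preceq_i$-maximal in $\Spec R\setminus V_i$ as well. By Proposition~\ref{prop:spectrum-quotients}(1), $\Spec R\setminus V_i$ (with the subspace topology) is homeomorphic to $\GSpec(\Hcal_i/\Tcal_{V_i})$, so $\pf$ is an isolated point there. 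Applying Proposition~\ref{prop:cantor-bendixson-gabriel}(1) to $\Hcal_i/\Tcal_{V_i}$, the Gabriel-simple at $\pf$ --- which under the equivalence $\Hcal_i/\Tcal_{V_i}\simeq \Ccal_{V_i}$ corresponds to $s_i(\pf)$ (using $\pf\notin V_i$ and Lemma~\ref{lemma:s(p)-hereditary-torsion-simple}(2) to get $s_i(\pf)\in\Tcal_{V_i}^{\bot_{0,1}}=\Ccal_{V_i}$) --- is simple in $\Ccal_{V_i}$.

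\smallskip

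Next I transfer this simplicity up the chain $\Hcal_i\to\Hcal_{i+1}\to\cdots\to\Hcal_n$. For the first step, $\Hcal_{i+1}$ is the right mutation of $\Hcal_i$ at $(\Tcal_{V_i},\Fcal_{V_i})$, and Lemma~\ref{lemma:core=giraud} exhibits the core $\Ccal_{V_i}$ inside the TTF-triple of $\Hcal_{i+1}$; since $\pf\notin V_i$, we have $s_{i+1}(\pf)=s_i(\pf)\in\Ccal_{V_i}$, and Proposition~\ref{prop:ttf-simple}(3) promotes simplicity in $\Ccal_{V_i}$ to simplicity in $\Hcal_{i+1}$. For $j\geq i+2$, I argue inductively: $\pf\notin V_{j-1}$ (because $V_{j-1}\subseteq V_i$), so $s_{j-1}(\pf)\notin \Tcal_{V_{j-1}}$; hence if $s_{j-1}(\pf)$ is simple in $\Hcal_{j-1}$ then $\Lsf(s_{j-1}(\pf))$ is simple in $\Hcal_{j-1}/\Tcal_{V_{j-1}}\simeq\Ccal_{V_{j-1}}$, and Proposition~\ref{prop:ttf-simple}(3) again yields that $s_j(\pf)=s_{j-1}(\pf)$ is simple in $\Hcal_j$. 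Iterating, $s_n(\pf)$ is simple in $\Hcal=\Hcal_n$, so $\pf$ is $\preceq$-maximal.

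\smallskip

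The delicate point --- the one that took me the longest to spot --- is that it is \emph{not} enough to try to feed the induction through $\Hcal_{n-1}$: for instance if $i=n-1$ and $\pf$ is not itself a maximal ideal, then $\pf$ need not be maximal in the last difference of the truncated filtration of $\Hcal_{n-1}$, so the Lemma's hypothesis fails for $\Hcal_{n-1}$. The right move is to enter the chain at the \emph{distinguished level} $\Hcal_i$, where the topological maximality of $\pf$ in $\Spec R\setminus V_i$ can be seen directly, and then climb up using Proposition~\ref{prop:ttf-simple}(3) at each subsequent step where $\pf$ is left in the Giraud core.
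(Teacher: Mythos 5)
Your case $i=n$ and the overall architecture for $i<n$ --- enter the chain at $\Hcal_i$, show that the relevant object is simple in the Giraud subcategory $\Ccal_{V_i}$, promote it to a simple of $\Hcal_{i+1}$ via Proposition~\ref{prop:ttf-simple}(3), and then propagate up to $\Hcal_n$ --- match the paper's, but there is a gap at the key step. From the isolatedness of $\pf$ in $\GSpec(\Hcal_i/\Tcal_{V_i})$, Proposition~\ref{prop:cantor-bendixson-gabriel}(1) gives the existence of \emph{some} simple object $\bar{s}$ of $\Hcal_i/\Tcal_{V_i}$ whose injective envelope is the indecomposable injective at $\pf$. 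You then assert that, under the equivalence with $\Ccal_{V_i}$, this $\bar{s}$ ``corresponds to $s_i(\pf)$'', offering only that $s_i(\pf)\in\Ccal_{V_i}$ (Lemma~\ref{lemma:s(p)-hereditary-torsion-simple}(2)). That is not enough: both $\bar{s}$ and $s_i(\pf)$ sit essentially inside the same coirreducible injective of $\Ccal_{V_i}$, so since $\bar{s}$ is simple one gets an inclusion $\bar{s}\subseteq s_i(\pf)$ --- but nothing you have said excludes $s_i(\pf)$ from being a strictly larger, non-simple object of $\Ccal_{V_i}$, in which case your subsequent promotion of $s_i(\pf)=s_{i+1}(\pf)$ would not go through. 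The paper avoids the topological detour and proves simplicity of $s_i(\pf)$ in $\Ccal_{V_i}$ directly: by Lemma~\ref{lemma:M(p)}(2.b), every proper quotient of $s_i(\pf)$ in $\Hcal_i$ has support in $V(\pf)\setminus\{\pf\}$, which by your maximality hypothesis is contained in $V_i$, hence lies in $\Tcal_{V_i}$; together with $s_i(\pf)\in\Ccal_{V_i}$ this is exactly the statement that $s_i(\pf)$ is simple there. (Alternatively, you could carry $\bar{s}$ itself through Proposition~\ref{prop:ttf-simple}(3) and only afterwards identify it with $s_{i+1}(\pf)$ using Proposition~\ref{prop:residues-gabriel-simples} and Lemma~\ref{lemma:gabriel-simple-subobject} in $\Hcal_{i+1}$.)

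Where you do genuinely diverge from the paper --- and correctly --- is in propagating the conclusion from $\Hcal_{i+1}$ up to $\Hcal_n$: you iterate the TTF-argument at each level, checking that a simple torsion-free $s_{j-1}(\pf)\in\Ccal_{V_{j-1}}$ stays simple through $\Hcal_{j-1}\to\Hcal_{j-1}/\Tcal_{V_{j-1}}\simeq\Ccal_{V_{j-1}}\to\Hcal_j$. This is sound and pleasantly self-contained. The paper instead invokes Theorem~\ref{thm:theta-open} once (right mutation refines the closure order, so isolated points remain isolated), which is shorter given that the theorem has already been established, but rests on considerably heavier machinery.
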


\begin{proof}
	First, notice that the equivalent reformulation of the assumption follows from
	the fact that $\preceq$ and $\subseteq$ coincide on $V_{i-1}\setminus V_{i}$, by
	the Lemma above.

	We distinguish two cases. If $i=n$, then we are assuming that $\pf$ is maximal
	in the difference $V_{n-1}\setminus V_n=V_{n-1}$; therefore, $\pf$ is a
	maximal prime in $\Spec(R)$ with respect to $\subseteq$. Since the relation
	$\preceq$ is finer by Lemma~\ref{lemma:preceq-finer}, $\pf$ is also maximal
	with respect to $\preceq$.

	Now, consider the case $i<n$.
	If $\pf\in V_{i-1}\setminus V_{i}$ is maximal in this difference,
	the Gabriel-simple $s_{i}(\pf)$ of $\Hcal_{i}$ is a simple object of the
	Giraud subcategory of $t_{i}$, by 
	Lemmas~\ref{lemma:s(p)-hereditary-torsion-simple} and~\ref{lemma:M(p)}(2.b). Therefore
	$s_{i+1}(\pf)=s_{i}(\pf)$ is a simple object of $\Hcal_{i+1}$ by
	Proposition~\ref{prop:ttf-simple}(3). This means that $\pf$ is a maximal
	element with respect to the closure order $\preceq_{i+1}$ of
	$\GSpec(\Hcal_{i+1})$ (\emph{cf.}\
	Proposition~\ref{prop:cantor-bendixson-gabriel}(1)). Since right mutation refines the closure order by
	Theorem~\ref{thm:theta-open}, $\pf$ is maximal with respect to $\preceq_j$ for
	$i+1\leq j\leq n$ as well. In particular, for $j=n$ we obtain the claim.
\end{proof}

This lemma has an interesting consequence, which we explain before moving on.
While there are nontrivial examples of cosilting complexes in $\D(R)$ with
locally noetherian hearts (see \emph{e.g.}~\cite[Ex.~3.16]{hrbe-pavo-21}), in
our experience they do not seem to enjoy the derived type property, \emph{i.e}
they are not cotilting.
As an application of the topological insight obtained so far,
we are able to show that any cotilting complex with a locally
noetherian heart is in fact trivial in the following sense.

\begin{thm}\label{thm:locally-noetherian-rare}
	If $\Gcal$ is a locally noetherian abelian category such
	that there is a triangle equivalence $\D^{\mathsf{b}}(\Gcal) \simeq
	\D^{\mathsf{b}}(\Mod(R))$ of the bounded derived categories, then $\Gcal \simeq
	\Mod(R)$.
\end{thm}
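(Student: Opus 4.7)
The plan is to convert the derived equivalence into a cosilting structure on $\D(R)$ and then use the local noetherian hypothesis to show the associated sp-filtration is trivial. First I would transfer the problem to $\D(R)$. Since $\Gcal$ is Grothendieck, the standard $t$-structure of $\D^{\mathsf{b}}(\Gcal)$ can be transported through the equivalence $\D^{\mathsf{b}}(\Gcal) \simeq \D^{\mathsf{b}}(\Mod R)$ and extended to a bounded $t$-structure on $\D(R)$ with heart equivalent to $\Gcal$. By the classification discussed at the beginning of Section~\ref{sec:commutative-noetherian}, this $t$-structure is $\tau_c$ for some bounded cosilting complex $c \in \D(R)$, with associated sp-filtration $\Spec R = V_{-1} \supsetneq V_0 \supseteq \cdots \supseteq V_{n-1} \supsetneq V_n = \emptyset$ as in Construction~\ref{con:chain}. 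Writing $\Hcal := \Hcal_c \simeq \Gcal$, the objective reduces to showing $n = 0$.

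Next, I would use local noetherianness to pin down the Gabriel topology of $\GSpec(\Hcal)$. Since $\Hcal$ is locally noetherian, every hereditary torsion-free class is closed under direct limits, so every hereditary torsion pair in $\Hcal$ is of finite type. By the remark following Proposition~\ref{prop:W-giraud-finite}, such pairs are exactly the $(\Tcal_W, \Fcal_W)$ for specialisation-closed $W \subseteq \Spec R$. Combined with Proposition~\ref{prop:gsupp}, this yields a lattice isomorphism between the localising subcategories of $\Hcal$ and the specialisation-closed subsets of $\Spec R$, so the Gabriel topology on $\GSpec(\Hcal)$ coincides through $\Phi_c$ with the Hochster topology on $\Spec R$. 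Equivalently, the closure order $\preceq$ of Section~\ref{sec:commutative-noetherian} equals the standard inclusion order $\subseteq$.

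The hard part will be to exploit this rigidity to rule out non-trivial filtrations. I plan to run through the chain of right mutations from Construction~\ref{con:chain} linking $\omega$ to $c$: by Theorem~\ref{thm:theta-open}, each step refines the closure order, so the chain $\subseteq = {\preceq_0} \leq \cdots \leq {\preceq_n} = \subseteq$ forces every intermediate order to equal $\subseteq$. Using Proposition~\ref{prop:residues-gabriel-simples} to identify the Gabriel-simples as $s(\pf) = k(\pf)[-f(\pf)]$ together with Lemma~\ref{lemma:M(p)}, I then aim to show that Matlis' theorem for locally noetherian categories, applied to the indecomposable injectives $H_c^0(c(\pf))$, pins each $c(\pf)$ down to the form $E(R/\pf)[-f(\pf)]$ supported only at $\pf$; monotonicity of $f$ along $\subseteq$ under the normalisation $V_{-1} = \Spec R$ then collapses $f$ to the constant value $-1$, yielding $V_0 = \emptyset$ and hence $n = 0$. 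Consequently $\Hcal$ coincides with a shift of the standard heart $\Mod R$, and $\Gcal \simeq \Mod R$ as required.
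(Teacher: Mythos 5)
Your first two steps and the observation in the first half of the third are sound: locally noetherian hearts have all hereditary torsion pairs of finite type, hence the Gabriel topology on $\GSpec(\Hcal)$ agrees with the Hochster topology on $\Spec(R)$, i.e.\ $\preceq\,=\,\subseteq$; and chaining Theorem~\ref{thm:theta-open} through Construction~\ref{con:chain} from $\preceq_0=\subseteq$ to $\preceq_n=\subseteq$ indeed forces $\preceq_i=\subseteq$ for every intermediate heart. This is a genuinely different route from the paper, which instead applies Lemma~\ref{lemma:maximal-difference} once to the final heart and then argues that each $V_i$ is clopen.

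The problem is the last step, which is where the proof has to land. The claim that $c(\pf)$ equals $E(R/\pf)[-f(\pf)]$ with $\supp(c(\pf))=\{\pf\}$ is not established by anything you cite and is not a consequence of $\preceq=\subseteq$: Theorem~\ref{thm:c(p)-support} only gives $\pf\in\supp(c(\pf))\subseteq\gncl(\pf)$, and Proposition~\ref{prop:s(p)-inj-envelope} identifies $H^0_c(c(\pf))$ as the injective hull \emph{in $\Hcal$} of $s(\pf)=k(\pf)[-f(\pf)]$, which is strictly weaker. And even granting that claim, ``monotonicity of $f$ along $\subseteq$ collapses $f$ to a constant'' is simply not a valid inference: a monotone function on a poset need not be constant. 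Concretely, take $R=k\times k$ and $V_0=\{\pf_1\}$: the heart of the associated $t$-structure is $\Mod(k)\times\Mod(k)[-1]$, which is locally noetherian, all the $\preceq_i$ are trivially $\subseteq$ since $\Spec(R)$ is an antichain, yet $f(\pf_1)=0\neq-1=f(\pf_2)$ and $n=1\neq0$. So your reduction ``the objective is to show $n=0$'' is already false as stated; you need the reduction to $R$ connected that the paper performs first, and then a genuine argument to turn $\preceq_1=\subseteq$ into $V_0\in\{\emptyset,\Spec(R)\}$. One way to repair your route, staying within your framework, is to apply Theorem~\ref{thm:onestep-recipe} to the first tilt: for $\pf\subsetneq\qf$ with $\pf\notin V_0$, $\qf\in V_0$ and $\height(\qf/\pf)=1$, the domain $(R/\pf)_\qf$ is one-dimensional, so the complement of the restricted $V_0$ is automatically coherent and $\pf\not\preceq_1\qf$, contradicting $\preceq_1=\subseteq$; hence no such pair exists, $V_0$ is clopen, and connectedness finishes. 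But as written, step~4 does not follow.
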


\begin{proof}
	We prove the claim for $R$ connected. The general case follows easily, see for
	example the first paragraph of \cite[Rmk. 4.10]{hrbe-naka-stov-24}, and references therein.
	A locally noetherian abelian category is Grothendieck \cite[\S2.4]{craw-94}. Therefore, if
	there is an equivalence $\Fsf\colon \D^b(\Gcal)\simeq \D^b(\Mod(R))$, the
	image of the standard $t$-structure of $\D^b(\Gcal)$ is a cotilting
	$t$-structure in $\D^b(\Mod(R))$, whose bounded cotilting complex $c$ is the image of the
	injective cogenerator of $\Gcal$ (\emph{cf.}\ \cite[Thm.~5.3]{psar-vito-18}
	and \cite[Thm.~2.3]{hrbe-mart-24}).
	Let $\Hcal\simeq\Gcal$ be the heart associated to $c$. Since it is locally
	noetherian, any hereditary torsion pair is automatically of finite type
	\cite[Cor.~III.4.1]{gabr-62}. In other words, the Gabriel and the Ziegler
	topologies on $\GSpec(\Hcal)$ coincide. By \cite[Cor.~3.14]{hrbe-pavo-21} and
	\cite[Thm.~4.5(2)]{pavo-vito-21}, the supports of the torsion pairs of finite
	type in $\Hcal$ are precisely the specialisation closed sets, and thus $\Phi_c\colon\Spec(R)\to \GSpec(\Hcal)$ is a homeomorphism, when we endow
	$\Spec(R)$
	with its Hochster topology.
	Lemma~\ref{lemma:maximal-difference} asserts that for any prime ideal $\pf$
	which is maximal in any difference set $V_{i-1}\setminus V_i$ of the
	associated sp-filtration the singleton $\{\pf\}$ is open in
	$\GSpec(\Hcal)$, and therefore in $\Spec(R)$; that is, $\pf$ is a maximal
	prime of $\Spec(R)$. It follows
	that every $V_i$ is a clopen subset of $\Spec(R)$ in the Hochster topology,
	that is, $V_i$ is a union of connected components of $\Spec(R)$ for
	any $i \in \Zbb$. Our assumption that $R$ is connected then gives either
	$V_i=\Spec(R)$ or $V_i=\emptyset$, that is, the $t$-structure is a shift of
	the standard one.
\end{proof}

Note that if $R$ is connected, the proof above guarantees that
$\Hcal=\Mod(R)[i]$ for some $i$. Recall that an object $c$ is
\emph{$\Sigma$-pure-injective} if $c^{(I)}$ is pure-injective for all sets $I$.
We can therefore conclude the following.

\begin{cor}
	Let $c$ be a bounded cotilting complex in $\D(R)$, for $R$ a connected
	commutative noetherian ring. Then $c$ is $\Sigma$-pure-injective if and only
	if $c$ is equivalent to a shift of the injective cogenerator $\omega$ of
	$\Mod(R)$.
\end{cor}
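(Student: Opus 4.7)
The strategy is to show that, for a bounded cotilting object $c \in \D(R)$, $\Sigma$-pure-injectivity is equivalent to local noetherianity of the heart $\Hcal_c$; then Theorem~\ref{thm:locally-noetherian-rare} (together with the consequence drawn in the remark preceding the statement) will finish the proof in the connected case.

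The $(\Leftarrow)$ direction is immediate. If $c$ is equivalent to $\omega[i]$, then $\Prod(c) = \Prod(\omega[i])$, and since $\Mod(R)$ is locally noetherian by assumption on $R$, direct sums of injective $R$-modules are again injective, hence pure-injective; so $c^{(I)}$ is pure-injective for every set $I$.

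For $(\Rightarrow)$, assume $c$ is $\Sigma$-pure-injective. First I would verify that $c^{(I)}$ lies in $\Hcal_c$ for every set $I$: the aisle ${}^{\bot_{\leq 0}}c$ is always closed under coproducts, and the computation
\[\Dcal(c^{(I)}, c[k]) \simeq \Dcal(c, c[k])^I = 0 \quad\text{for }k>0\]
shows that $c^{(I)}$ lies in the coaisle ${}^{\bot_{>0}}c$ as well. By $\Sigma$-pure-injectivity, each $c^{(I)}$ moreover lies in $\Prod(c)$, and via the equivalence $H_c^0\colon \Prod(c)\simeq \Inj(\Hcal_c)$ this says that every copower of the injective cogenerator $c$ of $\Hcal_c$ is again injective. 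A Faith--Walker--Papp-style argument (the categorical analogue of Bass's characterization of right noetherian rings) then yields that $\Hcal_c$ is locally noetherian. Since $c$ is cotilting, $\tau_c$ has derived type, so the realization functor provides a triangle equivalence $\D^b(\Hcal_c) \simeq \D^b(\Mod(R))$ (as in \cite{psar-vito-18, hrbe-mart-24}). Applying Theorem~\ref{thm:locally-noetherian-rare} gives $\Hcal_c \simeq \Mod(R)$, and the observation recorded before the statement then forces $\tau_c$ to be a shift of the standard $t$-structure, whence $c$ is equivalent to $\omega[i]$ for some $i \in \Zbb$.

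The main delicate step is the categorical Faith--Walker-type implication ``every copower of an injective cogenerator is injective $\Rightarrow$ Grothendieck category is locally noetherian''; while this is classical at the level of module categories, for an abstract Grothendieck category with an injective cogenerator one needs to cite or adapt a suitable version. The intermediate claim that pure-injectivity of $c^{(I)}$ combined with membership in $\Hcal_c$ forces $c^{(I)} \in \Prod(c)$ is relatively routine using the cogenerating property of $c$ in $\Hcal_c$, but it deserves explicit verification.
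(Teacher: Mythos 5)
Your overall strategy coincides with the paper's: reduce to the equivalence
``$c$ is $\Sigma$-pure-injective $\iff$ $\Hcal_c$ is locally noetherian'' and then
apply Theorem~\ref{thm:locally-noetherian-rare} together with the observation that
this forces $\Hcal_c=\Mod(R)[i]$. The only difference is that the paper obtains
the equivalence in a single step by citing \cite[Prop.~5.6]{laki-20}, whereas you
sketch a from-scratch derivation of it. Your sketch is plausible in outline, but
the two steps you flag as delicate are in fact the nontrivial content of that
reference: (i) that $\Sigma$-pure-injectivity forces every copower $c^{(I)}$ (shown
to lie in $\Hcal_c$) to land in $\Prod(c)\simeq\Inj(\Hcal_c)$, and (ii) a
Faith--Walker/Bass-type criterion for local noetherianity in an abstract
Grothendieck category. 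Both can be supplied --- (ii) is classical, e.g.\
\cite[Ch.~V, Prop.~4.3]{sten-75} --- but you would still need a careful argument
for (i), which is not purely formal since pure-injectivity in $\D(R)$ and
injectivity in $\Hcal_c$ are a priori different notions mediated by $H^0_c$. In
short: same route, but the paper closes the two gaps you leave open by invoking
Laking's result rather than re-proving it.
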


\begin{proof}
	By \cite[Prop.~5.6]{laki-20}\footnote{See the arXiv version
	\texttt{arXiv:1804.01326} for a correction of the published statement.},
	$c$ cotilting is $\Sigma$-pure-injective if and only if
	the corresponding heart $\Hcal$ is locally noetherian. By the observation
	above, the Proposition shows that this is the
	case if and only if $\Hcal=\Mod(R)[i]$ for some $i\in\Zbb$, which means that
	$c$ is equivalent to $\omega[i]$.
\end{proof}

\subsection{Perfect mutation revisited}

As explained above, to completely determine the closure order $\preceq$, we need
to characterise, for every $0\leq i\leq n$, the relations $\pf\preceq \qf$ for $\pf\notin V_i\ni \qf$.
A case in which this is easily done is when $V_i$ is a clopen subset for
$\GSpec(\Hcal)$:
indeed, then $V_i^\cpl$ is also open, that is, an upper-set for $\preceq$, and no
such relation can exist. In this subsection we show that this phenomenon is
tightly linked to perfectness of torsion pairs, which will allow us to recognise
it in examples.

Let $W$ be a specialisation-closed subset of $\Spec(R)$, $t=(\Tcal_W,\Fcal_W)$
the associated hereditary torsion pair of finite type in $\Hcal$, and
$\Hcal_t:=\Fcal_W\ast\Tcal_W[-1]$ the right mutation. We know from
Corollary~\ref{cor:strongly-perfect} that if $t$ is strongly perfect, then $W$ is
clopen for $\Hcal_t$. The following result is a partial converse, under the
assumption of derived type.

\begin{prop}\label{prop:perfect-converse}
	Assume that the associated cosilting complex is cotilting, so that $\tau$ is
	of derived type. Then $W$ is clopen in $\GSpec(\Hcal_t)$ if and only if $t$
	is (strongly) perfect.
\end{prop}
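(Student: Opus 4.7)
The forward direction is essentially Corollary~\ref{cor:strongly-perfect}: the derived-type assumption makes the natural monomorphism $\Ext^2_\Hcal\hookrightarrow \Dcal(-,-[2])$ an isomorphism, so perfect and strongly perfect coincide, and the proposition preceding Corollary~\ref{cor:strongly-perfect} supplies a TTF-quadruple $(\Fcal,\Tcal_W[-1],\Ccal,\Rcal)$ in $\Hcal_t$ with $\Ccal$ a hereditary torsion class of Gabriel support $\GSupp(\Tcal_W[-1])^\cpl=W^\cpl$, so $W$ is clopen. For the converse, assume $W$ is clopen in $\GSpec(\Hcal_t)$. Since $\Hcal_t$ is semi-noetherian by Corollary~\ref{cor:mutation-semi-noetherianity}, Proposition~\ref{prop:gsupp} produces a unique hereditary torsion class $\Xcal\subseteq\Hcal_t$ with $\GSupp(\Xcal)=W^\cpl$; the plan is to identify $\Xcal=\Ccal$, which makes $\Ccal$ a hereditary torsion class and delivers (strong) perfectness.

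The inclusion $\Xcal\subseteq\Fcal'$ is immediate: disjointness of Gabriel supports gives $\Xcal\cap\Tcal_W[-1]=0$, and any morphism $y\to x$ with $y\in\Tcal_W[-1]$, $x\in\Xcal$ has image in $\Xcal\cap\Tcal_W[-1]=0$ by closure of $\Xcal$ under subobjects and of $\Tcal_W[-1]$ under quotients, so $\Xcal\subseteq\Tcal_W[-1]^\bot=\Fcal'$. The reverse inclusion $\Fcal'\subseteq\Xcal$ reduces to stability of the hereditary torsion pair $(\Xcal,\Xcal^\bot)$: each indecomposable injective of $\Hcal_t$ contains a hereditary-torsion-simple subobject (a Gabriel-simple, by Proposition~\ref{prop:residues-gabriel-simples} and Lemma~\ref{lemma:s(p)-hereditary-torsion-simple}), hence lies entirely in $\Xcal$ or in $\Xcal^\bot$. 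Combining this with the description (from the proof of Proposition~\ref{prop:gsupp}) of every injective as an injective envelope of a coproduct of indecomposable injectives, together with $E(A\oplus B)\simeq E(A)\oplus E(B)$, every injective of $\Hcal_t$ decomposes as a direct sum of an $\Xcal$-part and an $\Xcal^\bot$-part. Stability then implies that for $x\in\Fcal'$ the injective envelope $E_{\Hcal_t}(x)\in\Fcal'$ has all its indecomposable summands in $W^\cpl\cap\Inj(\Hcal_t)\subseteq\Xcal$, hence $E_{\Hcal_t}(x)\in\Xcal$ and so $x\in\Xcal$.

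Once $\Xcal=\Fcal'$, the torsion decomposition $0\to f\to x\to t[-1]\to 0$ of $x\in\Xcal\subseteq\Hcal_t=\Fcal\ast\Tcal_W[-1]$ has $t[-1]\in\Xcal\cap\Tcal_W[-1]=0$ (by closure of $\Xcal$ under quotients), yielding $\Xcal\subseteq\Fcal$ and $\Ccal=\Fcal\cap\Fcal'=\Xcal$. Now $\Ccal$ is simultaneously a hereditary torsion-free class (closed under injective envelopes) and a hereditary torsion class (closed under quotients), so applying $\Hcal_t(\Tcal_W[-1],-)$ to $0\to c\to E_{\Hcal_t}(c)\to E_{\Hcal_t}(c)/c\to 0$ for $c\in\Ccal$, with both outer terms in $\Ccal\subseteq\Tcal_W[-1]^\bot$, yields $\Ext^1_{\Hcal_t}(\Tcal_W[-1],c)=0$, which in derived type equals $\Ext^2_\Hcal(\Tcal_W,c)=0$; hence $t$ is perfect. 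The main obstacle is the stability argument: in arbitrary semi-noetherian Grothendieck categories not every hereditary torsion pair is stable, so the proof relies crucially on the abundance of hereditary-torsion-simple subobjects coming from Gabriel-simples in the hearts of bounded cosilting complexes over commutative noetherian rings, together with the semi-noetherian description of injectives needed to propagate the $\Xcal$/$\Xcal^\bot$ splitting through injective envelopes.
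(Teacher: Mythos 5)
The forward direction ($\Leftarrow$) is fine and matches the paper's use of Corollary~\ref{cor:strongly-perfect}. The converse is where the trouble lies, and it is a genuine gap, not a cosmetic one.

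You take a direct route: let $\Xcal$ be the unique hereditary torsion class with $\GSupp(\Xcal)=W^\cpl$, and try to show $\Xcal=\Ccal$. The inclusion $\Xcal\subseteq\Fcal'$ (and, with a little more work, $\Xcal\subseteq\Ccal$) is correct. But the step establishing $\Fcal'\subseteq\Xcal$ rests on the assertion that every indecomposable injective of $\Hcal_t$ ``lies entirely in $\Xcal$ or in $\Xcal^\bot$'' because it contains a hereditary-torsion-simple subobject. That inference does not hold. If the Gabriel-simple $s\subseteq e$ lies in $\Xcal^\bot$, then indeed $e=E(s)\in\Xcal^\bot$ since torsion-free classes of hereditary pairs are closed under injective envelopes. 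But if $s\in\Xcal$, you cannot conclude $e\in\Xcal$ unless $\Xcal$ is already known to be closed under injective envelopes — which is precisely the stability you set out to prove. You are conflating two distinct conditions: ``every indecomposable injective has a hereditary-torsion-simple subobject'' (true here; it is exactly what Theorem~\ref{thm:torsion-simple-socle} uses to get the Alexandrov property) and ``every indecomposable injective \emph{is} hereditary-torsion-simple'' (equivalent to stability of \emph{all} hereditary torsion pairs, by the discussion after Theorem~\ref{thm:torsion-simple-socle}). The paper explicitly warns, in the paragraph following that corollary, that the latter, stronger condition ``will not necessarily hold'' in these hearts. So the step ``$W^\cpl\cap\Inj(\Hcal_t)\subseteq\Xcal$'' is unsupported, and the rest of the argument collapses with it.

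A second warning sign worth noting: your converse argument never really uses the cotilting hypothesis. You invoke derived type only to relabel $\Ext^1_{\Hcal_t}(\Tcal_W[-1],-)\simeq\D(R)(\Tcal_W,-[2])$ as $\Ext^2_\Hcal$, but that first identification holds for any nondegenerate $t$-structure ($\theta_1$ is always an isomorphism), so your argument, if correct, would prove strong perfectness without the cotilting assumption. The paper's proof of the converse is a contrapositive that uses cotilting essentially: from a witness $x\in\Ccal_W$ with $\supp(x)\nsubseteq W^\cpl$, it forms the $\Hcal$-injective envelope $e=E(x)$ and needs $e\in\Prod(c)$ (hence $e\in\Bcal_W$, i.e., $\supp(e)\subseteq W^\cpl$ via Lemma~\ref{lemma:BW-FW}), which only works because $\Inj(\Hcal)=\Prod(c)$ in the cotilting case. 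The exact sequence $0\to x\to e\to\mho^1(x)\to 0$, living in both $\Hcal$ and $\Hcal_t$, then exhibits a subobject whose support escapes $W^\cpl$ inside an object whose support is contained in $W^\cpl$, showing $W^\cpl$ cannot be open. Some such use of the cotilting hypothesis and of supports of $\Hcal$-injectives is what your proof is missing.
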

\begin{proof}
	The implication $(\Leftarrow)$ is proved in Corollary~\ref{cor:strongly-perfect}, it
	remains to prove the converse.

	Let $\Bcal_W:=\supp^{-1}(W^\cpl)\subseteq\D(R)$. We have an inclusion
	$\Hcal\cap\Bcal_W\subseteq\Ccal_W:=\Tcal_W^{\bot_{0,1}}$. Since $\Bcal_W$ is
	triangulated, $\Hcal\cap\Bcal_W$ is closed under cokernels of monomorphisms;
	therefore if this inclusion is an equality, then $t$ is perfect, and even
	strongly perfect, since we are in the cotilting case (see the paragraph below
	Definition~\ref{dfn:strongly-perfect}). Assume
	instead that $t$ is not perfect, so that there must be $0\neq x\in\Ccal_W$ with
	$\supp(x)\nsubseteq W^\cpl$. Let $e:=E(x)\in\Fcal_W$ be the injective envelope of
	$x$; since $e\in\Prod(c)$, by the cotilting assumption, we have that
	$e\in\Bcal_W$ by Lemma~\ref{lemma:BW-FW}; that is, $\supp(e)\subseteq W^\cpl$. If
	we consider the short exact sequence in $\Hcal$
	\[0\to x\to e\to \mho^1(x)\to 0,\]
	we have that $\mho^1(x)\in\Fcal_W$ by the fact that $x$ and $e$
	belong to $\Ccal_W$. Therefore this is also an exact sequence in $\Hcal_t$.
	Since the support of $e$ is contained in $W^\cpl$ but the support of its
	subobject $x\subseteq e$ is not, this shows that $W^\cpl$ is not open in
	$\GSpec(\Hcal_t)$.
\end{proof}

Recall that if $W\subseteq\Spec(R)$ is specialisation-closed, the torsion pair
$(\Tcal_W,\Fcal_W)$ in $\Mod(R)$ is (strongly) perfect if and only if
$W^\cpl\subseteq\Spec(R)$ is a \emph{coherent} subset \cite[Thm.~3.1]{krau-08}. If this is the
case, then any minimal element of $W$ must have height at most one
\cite[Thm.~1.2]{ange-mark-stov-taka-vito-20}. Any subset of $\Spec(R)$ is
coherent if $R$ is of Krull dimension at most one \cite[Thm.~1.2]{krau-08}.

\begin{cor}\label{cor:aperture-zero}
	Let $(R,\mf)$ be a local noetherian domain, and let $\emptyset\neq V_0\subsetneq\Spec(R)$ be a
	specialisation-closed subset. Let $\tau$ be the $t$-structure associated to the
	filtration
	\[\Spec(R)\supsetneq V_0 \supsetneq\emptyset.\]
	For the zero ideal $\of\in\Spec(R)$, we have
	$\of\preceq\mf$ if and only if $V_0^\cpl\subseteq\Spec(R)$ is not coherent.
\end{cor}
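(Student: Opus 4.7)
The strategy will be to present $\tau$ as a single right mutation of the standard $t$-structure and then apply Proposition~\ref{prop:perfect-converse} together with Krause's characterisation of perfect hereditary torsion pairs. By Construction~\ref{con:chain} (applied with $n=1$), $\tau$ arises from the standard $t$-structure $\tau_0$ by right mutation at the hereditary torsion pair $t_0=(\Tcal_{V_0},\Fcal_{V_0})$ in $\Mod(R)$. Since the cosilting complex defining $\tau_0$ is the injective cogenerator $\omega$, which is cotilting, Proposition~\ref{prop:perfect-converse} will identify the condition ``$V_0$ is clopen in $\GSpec(\Hcal)$'' with the perfectness of $t_0$, which Krause's theorem \cite[Thm.~3.1]{krau-08} in turn identifies with the coherence of $V_0^\cpl$. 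I will also use from the outset that, since $R$ is a local domain and $\emptyset \neq V_0 \subsetneq \Spec(R)$ is specialisation-closed, necessarily $\of \in V_0^\cpl$ and $\mf \in V_0$.

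One direction of the equivalence will be immediate: if $V_0^\cpl$ is coherent then $V_0^\cpl$ is open in $\GSpec(\Hcal)$, and it is then an open neighbourhood of $\of$ avoiding $\mf$, so $\of \not\preceq \mf$. For the converse, I will assume $V_0^\cpl$ is not coherent, hence not open in $\GSpec(\Hcal)$, and show that every open $U \subseteq \GSpec(\Hcal)$ containing $\of$ must contain $\mf$. The argument will invoke Lemma~\ref{lemma:bound-topology} twice. First, $U \cap V_0^\cpl$ is Hochster-open in $V_0^\cpl$; but $\of$ is the minimum of $(\Spec R, \subseteq)$, so the only Hochster-open subset of $V_0^\cpl$ containing $\of$ is all of $V_0^\cpl$, giving $V_0^\cpl \subseteq U$. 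The inclusion must be strict, because $V_0^\cpl$ itself is not open in $\GSpec(\Hcal)$. Second, $U \cap V_0$ is therefore a nonempty Hochster-open subset of $V_0$, that is, the trace on $V_0$ of a nonempty specialisation-closed subset of $\Spec R$; any such subset contains the unique maximal ideal $\mf$, so $\mf \in U$.

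All the genuinely hard work has been done in Proposition~\ref{prop:perfect-converse} (and, before that, in Krause's theorem), so no serious obstacle remains. The remaining effort is a small amount of topological unwinding which exploits the rigidity of the Hochster topology on the spectrum of a local domain: the extremality of $\of$ as minimum and $\mf$ as maximum forces every nonempty Hochster-open subset to be ``as large as possible'' on both sides of the partition $\Spec(R) = V_0^\cpl \sqcup V_0$, so an open neighbourhood of $\of$ in $\GSpec(\Hcal)$ can fail to contain $\mf$ only when $V_0^\cpl$ is itself open, which is precisely the coherent case.
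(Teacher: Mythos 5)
Your proposal is correct and follows the same route as the paper: both reduce the statement to Proposition~\ref{prop:perfect-converse} together with Krause's coherence criterion, and both exploit the rigidity of the Hochster topology on the spectrum of a local domain via Lemma~\ref{lemma:bound-topology}. The only cosmetic difference is in the unwinding of the converse direction: the paper finds intermediate primes $\pf\in V_0^\cpl$, $\qf\in V_0$ with $\pf\preceq\qf$ and concatenates $\of\preceq\pf\preceq\qf\preceq\mf$ by transitivity, whereas you argue dually that every open set containing $\of$ must swallow all of $V_0^\cpl$, then strictly more, and hence $\mf$; these are the same argument up to passing between the closure order and its open upper-sets.
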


\begin{proof}
	The $t$-structure $\tau$ (with heart denoted by $\Hcal$) is obtained from the
	standard one by an HRS-tilt at a hereditary torsion pair,
	$t=(\Tcal_{V_0},\Fcal_{V_0})$, and therefore it is of derived type by
	\cite[Cor.~5.12]{pavo-vito-21}.
	Then, by Proposition~\ref{prop:perfect-converse},
	$V_0\subseteq\GSpec(\Hcal)$ is clopen
	if and only if $t$ is perfect, that is, if and
	only if $V_0^\cpl\subseteq\Spec(R)$ is coherent. Now, if this is the case, as we have observed
	at the beginning of the subsection, $V_0^\cpl$ is an upper-set for $\preceq$.
	Since by assumption on $V_0$ we have $\of\in V_0^\cpl$ and $\mf\notin
	V_0^\cpl$, we have $\of\npreceq\mf$. If $V_0^\cpl$ is not coherent, on the other
	hand, $V_0^\cpl$ is not an upper-set for $\preceq$, so there exist $\pf\in
	V_0^\cpl$ and $\qf\in V_0$ with $\pf\preceq\qf$. Since on both differences
	$V_0^\cpl=\Spec(R)\setminus V_0$ and $V_0=V_0\setminus\emptyset$ the order
	$\preceq$ coincides with inclusion by Lemma~\ref{lemma:bound-topology}, we have $\of\preceq
	\pf$ and $\qf\preceq\mf$, from which we obtain that $\of\preceq\mf$.
\end{proof}

In Proposition~\ref{prop:discrete-mutation}, we saw that $W$ is clopen for $\Hcal_t$ also if $W^\cpl$ is
a discrete subspace for $\Hcal$. In particular, when $\Hcal$ comes from a
cotilting complex, by Proposition~\ref{prop:perfect-converse}, if
$W^\cpl$ is discrete for $\Hcal$, then $t$ is (strongly) perfect. In fact, this
is also true without the cotilting assumption.

\begin{prop}\label{prop:zerodim-perfect}
	Let $\Hcal$ be the heart associated to a bounded cosilting complex in $\D(R)$.
	Assume that $W^\cpl$ is a discrete subspace of $\GSpec(\Hcal)$. Then $t$ is
	perfect.
\end{prop}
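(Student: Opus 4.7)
The plan is to verify the three hypotheses of Proposition~\ref{prop:semi-artinian-perfect} for the torsion pair $t = (\Tcal_W, \Fcal_W)$ and then invoke that proposition. Condition~(c), Giraud-finiteness, is granted by Proposition~\ref{prop:W-giraud-finite}. Condition~(a), that $\Hcal/\Tcal_W$ be semi-artinian, follows at once from the hypothesis: Proposition~\ref{prop:s(p)-inj-envelope}(2) identifies $\GSupp(\Tcal_W)$ with $W$, so by Proposition~\ref{prop:spectrum-quotients}(1) the space $\GSpec(\Hcal/\Tcal_W)$ is the subspace $W^\cpl$ of $\GSpec(\Hcal)$, and by the first lemma of Subsection~\ref{subsec:disconnect} discreteness of this subspace is equivalent to semi-artinianity of the quotient.

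The substantive step is condition~(b): for every simple object $\bar s$ of $\Hcal/\Tcal_W$, one needs $\Ext^2_\Hcal(\Tcal_W, \Rsf(\bar s)) = 0$. My strategy is to identify $\Rsf(\bar s)$, up to a support estimate, with the Gabriel-simple $s(\pf)$ for a suitable $\pf \in W^\cpl$, and then to use the natural monomorphism $\Ext^2_\Hcal(-, -) \hookrightarrow \D(R)(-, -[2])$ recalled in Section~\ref{sec:general}, together with the $t$-structure orthogonality $\D(R)(\Lcal_W, \Bcal_W) = 0$ and the inclusion $\Tcal_W \subseteq \Lcal_W$ from Proposition~\ref{prop:smashing-cohomological}. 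Concretely, the injective envelope of $\bar s$ in $\Hcal/\Tcal_W \simeq \Ccal_W$ corresponds under $\Rsf$ and $\Phi_c$ to $H_c^0(c(\pf))$ for a unique $\pf \in W^\cpl$ (by Theorem~\ref{thm:c(p)-support}), which is also the injective envelope of $s(\pf) \in \Ccal_W$ in $\Hcal$ (using Proposition~\ref{prop:s(p)-inj-envelope}(1) and Lemma~\ref{lemma:s(p)-hereditary-torsion-simple}). Essentiality of the injective envelope inside $\Ccal_W$ forces $\Rsf(\bar s) \cap s(\pf) \neq 0$, and a torsion argument (any morphism from an object of $\Fcal_W$ to one in $\Tcal_W$ vanishes) upgrades this to an inclusion $\Rsf(\bar s) \subseteq s(\pf)$ in $\Hcal$.

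It then remains to show $\supp(\Rsf(\bar s)) \subseteq W^\cpl$. Given the inclusion $\Rsf(\bar s) \subseteq s(\pf)$, one analyses the quotient $s(\pf)/\Rsf(\bar s)$ via Lemma~\ref{lemma:M(p)}(2.b), which localises its support to $\spcl(\pf) \setminus \{\pf\}$; combining this with the discreteness hypothesis (which controls $\spcl_\preceq(\pf) \cap W^\cpl$) and with the fact that $\Rsf(\bar s) \in \Fcal_W$, one deduces $\supp(\Rsf(\bar s)) \subseteq \{\pf\} \subseteq W^\cpl$. The Ext-vanishing
\[\Ext^2_\Hcal(\Tcal_W, \Rsf(\bar s)) \hookrightarrow \D(R)(\Tcal_W, \Rsf(\bar s)[2]) \subseteq \D(R)(\Lcal_W, \Bcal_W) = 0\]
then yields~(b), and Proposition~\ref{prop:semi-artinian-perfect} concludes that $t$ is perfect. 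The principal obstacle is the final support estimate $\supp(\Rsf(\bar s)) \subseteq W^\cpl$: in a general heart, subobjects need not have smaller support than their ambient object, so one must exploit the specific structure of $\Ccal_W$ and the discreteness of $W^\cpl$ to push the inclusion into $s(\pf)$ all the way to a support bound.
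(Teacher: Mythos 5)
Your overall plan coincides with the paper's: reduce to Proposition~\ref{prop:semi-artinian-perfect}, and verify (c) via Proposition~\ref{prop:W-giraud-finite} and (a) via the lemma opening \S\ref{subsec:disconnect}. Where you diverge is in establishing condition~(b). Both arguments must identify $\Rsf(\bar s)$ with $s(\pf)$ for some $\pf\in W^\cpl$, after which the $\Ext^2$-vanishing follows at once from the inclusion $\Ext^2_\Hcal(-,-)\hookrightarrow\Hom_{\D(R)}(-,-[2])$ and $\Hom_{\D(R)}(\Lcal_W,\Bcal_W)=0$. The paper gets the identification by citing Proposition~\ref{prop:residues-gabriel-simples}: $\Rsf(\bar s)$, being a simple object of the core $\Ccal_W$ of the TTF-triple in the tilted heart $\Hcal_t$, is simple in $\Hcal_t$ by Proposition~\ref{prop:ttf-simple}(3), hence a Gabriel-simple of $\Hcal_t$, hence $s_t(\pf)$ for some $\pf$; since it lies in $\Hcal\cap\Hcal_t=\Fcal_W$, necessarily $\pf\in W^\cpl$ and $s_t(\pf)=s(\pf)$. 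You instead try to build the identification by hand: first produce an inclusion $\Rsf(\bar s)\subseteq s(\pf)$ inside $H_c^0(c(\pf))$, then bound the support.

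That route is sound — and in fact $\supp(\Rsf(\bar s))\subseteq\{\pf\}$ is equivalent to $\Rsf(\bar s)=s(\pf)$ via Lemma~\ref{lemma:M(p)}(1) — but the support estimate you yourself flag as the principal obstacle is only gestured at, and the ingredients you list do not combine quite as directly as you suggest. What is actually needed is: set $y:=s(\pf)/\Rsf(\bar s)$; then $y\in\Fcal_W$ (from the long exact $\Ext(\Tcal_W,-)$ sequence, using $s(\pf),\Rsf(\bar s)\in\Ccal_W$), $\supp(y)\subseteq\spcl_\subseteq(\pf)\setminus\{\pf\}$ by Lemma~\ref{lemma:M(p)}(2.b), and $\supp(y)\subseteq\GSupp(\Loc(s(\pf)))=\spcl_\preceq(\pf)$ by Proposition~\ref{prop:s(p)-inj-envelope}(2). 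If $y\neq0$ then $y\notin\Tcal_W$ forces $\supp(y)\cap W^\cpl\neq\emptyset$, which together with $\spcl_\preceq(\pf)\cap W^\cpl=\{\pf\}$ (discreteness) and $\pf\notin\supp(y)$ gives a contradiction. Also, your intermediate ``torsion argument'' is misstated — it is $\Hom(\Tcal_W,\Fcal_W)$ that vanishes, not the other way — whereas the inclusion $\Rsf(\bar s)\subseteq s(\pf)$ really comes from coirreducibility of $H_c^0(c(\pf))$ and the equivalence $\Lsf\colon\Ccal_W\simeq\Hcal/\Tcal_W$ applied to the nonzero intersection $\Rsf(\bar s)\cap s(\pf)$. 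In short, your approach is a genuine, more elementary alternative to the paper's appeal to the Gabriel-simple machinery, but the final support step needs to be spelled out as above before the argument is complete.
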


\begin{proof}
	We will show that $t$ is perfect using
	Proposition~\ref{prop:semi-artinian-perfect}. By
	Proposition~\ref{prop:W-giraud-finite}, $t$ is Giraud-finite. It remains to
	check the condition $\Ext_\Hcal^2(\Tcal,\Rsf(s)) = 0$ for any
	simple object $s \in \Gcal/\Tcal$. By
	Proposition~\ref{prop:residues-gabriel-simples}, there is $\pf \in W^\cpl$
	such that $\Rsf(s) \simeq s(\pf)$. Then we have:
	\[\Ext_\Hcal^2(\Tcal,s(\pf))\hookrightarrow \Hom_{\D(R)}(\Tcal,s(\pf)[2])\subseteq
	\Hom_{\D(R)}(\supp^{-1}W,\supp^{-1}W^c)=0.\]
	We conclude that $t$ is perfect.
\end{proof}

Lastly, we have another condition that implies that
a specialisation closed set $W\subseteq\Spec(R)$ is clopen in the
Gabriel spectrum of a cosilting heart $\Hcal$.

\begin{lemma}\label{lemma:loccoh-restrict-open}
	If the local cohomology functor $\Rbb\Gamma_W$ restricts to $\Hcal$, then
	$W$ is clopen in $\GSpec(\Hcal)$.
\end{lemma}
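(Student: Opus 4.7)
The plan is to deduce from the hypothesis that $\Fcal_W = \Hcal \cap \Bcal_W$ is a localising subcategory of $\Hcal$ with $\supp(\Fcal_W) = W^\cpl$; by Proposition~\ref{prop:s(p)-inj-envelope}(2) this will make $W^\cpl$ open in $\GSpec(\Hcal)$, and since $W$ is already open (being $\GSupp(\Tcal_W)$), the space $W$ will be clopen.

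The heart of the argument is to upgrade the smashing truncation triangle
\[ \Rbb\Gamma_W(x)\to x\to \Lbb_W(x)\to \Rbb\Gamma_W(x)[1] \]
associated to $x\in\Hcal$ into a short exact sequence in $\Hcal$. I would factor the morphism $\Rbb\Gamma_W(x)\to x$ through its image $I$ in $\Hcal$, with kernel $K\in\Hcal$ and cokernel $C\in\Hcal$. The octahedron axiom applied to this factorisation yields a distinguished triangle $K[1]\to \Lbb_W(x)\to C\to K[2]$. Applying $\Rbb\Gamma_W$: since $K\in\Tcal_W\subseteq\Lcal_W$ one has $\Rbb\Gamma_W(K[1])=K[1]$, and since $\Lbb_W(x)\in\Bcal_W$ one has $\Rbb\Gamma_W(\Lbb_W(x))=0$. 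The resulting triangle then forces $\Rbb\Gamma_W(C)\simeq K[2]$. The hypothesis gives $\Rbb\Gamma_W(C)\in\Hcal$, so $K[2]\in\Hcal$; combined with $K\in\Hcal$, this forces $K=0$ (hearts are disjoint from their shifts). Hence $\Rbb\Gamma_W(x)\hookrightarrow x$ is monic in $\Hcal$, $\Lbb_W(x)\simeq C\in\Hcal$, and the truncation triangle is a short exact sequence in $\Hcal$.

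The rest is bookkeeping. This short exact sequence identifies the torsion-free class of the pair with torsion class $\Tcal_W$ as $\{x\in\Hcal : \Rbb\Gamma_W(x)=0\} = \Hcal\cap\Bcal_W$, so $\Fcal_W=\Hcal\cap\Bcal_W$. Closure of $\Fcal_W$ under quotients follows by applying $\Rbb\Gamma_W$ to a short exact sequence $0\to a\to b\to c\to 0$ in $\Hcal$ with $b\in\Fcal_W$: subobject closure gives $\Rbb\Gamma_W(a)=\Rbb\Gamma_W(b)=0$, so the triangle forces $\Rbb\Gamma_W(c)=0$. Closure under coproducts follows from $(\Tcal_W,\Fcal_W)$ being of finite type (Proposition~\ref{prop:W-giraud-finite}). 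Finally, $\pf\in\supp(\Fcal_W)$ iff $s(\pf)\in\Hcal\cap\Bcal_W$ iff $\supp(s(\pf))\subseteq W^\cpl$ iff $\pf\in W^\cpl$, so $\supp(\Fcal_W)=W^\cpl$, and we are done. The main obstacle is the first step: the hypothesis only controls the $\Rbb\Gamma_W$-side of the truncation triangle, and the octahedron manoeuvre above is what allows the control to propagate to $\Lbb_W(x)$ via the identity $\Rbb\Gamma_W(C)\simeq K[2]$.
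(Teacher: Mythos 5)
Your proof is correct, but it takes a substantially longer route than the paper's. The paper simply observes that, since $\Rbb\Gamma_W$ is a triangulated functor, its restriction to $\Hcal$ (once known to land in $\Hcal$) is automatically exact: a short exact sequence in $\Hcal$ is a distinguished triangle, and applying $\Rbb\Gamma_W$ yields a triangle with all terms in $\Hcal$, hence a short exact sequence. Together with coproduct preservation, the kernel $\{x\in\Hcal:\Rbb\Gamma_W(x)=0\}=\Hcal\cap\Bcal_W=\supp^{-1}(W^\cpl)\cap\Hcal$ is then a localising subcategory, hence a hereditary torsion class, and its Gabriel support $W^\cpl$ is open. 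You instead expend the octahedron argument to show that the smashing truncation triangle $\Rbb\Gamma_W(x)\to x\to\Lbb_W(x)\to$ is a short exact sequence in $\Hcal$ for every $x\in\Hcal$ (the key being that $\Rbb\Gamma_W(C)\simeq K[2]\in\Hcal$ while also $K\in\Hcal$, forcing $K=0$), which identifies $(\Rbb\Gamma_W,\Lbb_W)$ as the torsion radical and coradical of $(\Tcal_W,\Fcal_W)$ and hence $\Fcal_W=\Hcal\cap\Bcal_W$. That identification is a genuinely stronger piece of information than the statement requires, and it is what makes your proof longer. Your subsequent verification of closure under quotients --- applying $\Rbb\Gamma_W$ to $a\to b\to c\to a[1]$ and reading off $\Rbb\Gamma_W(c)=0$ --- is at heart the same exactness observation the paper makes at the outset, so once you have that in hand the octahedron step becomes avoidable: you can check directly that the kernel $\Hcal\cap\Bcal_W$ is a Serre subcategory closed under coproducts, without ever relating it to $\Fcal_W$. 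Everything else in your argument (closure under coproducts via finite type, and the computation $\supp(\Hcal\cap\Bcal_W)=W^\cpl$) matches the paper's implicit steps. In short: correct, and the extra content you prove is interesting, but the octahedron manoeuvre is not the quickest path to the lemma as stated.
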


\begin{proof}
	The restriction of $\Rbb\Gamma_W$ to $\Hcal$ is an exact and coproduct
	preserving endofunctor on $\Hcal$ whose kernel is precisely
	$\supp^{-1}(W^\cpl) \cap \Hcal$. It follows that $\supp^{-1}(W^\cpl) \cap
	\Hcal$ is a hereditary torsion class in $\Hcal$, and thus $W^\cpl$ is open in
	$\GSpec(\Hcal)$.
\end{proof}

\begin{ex}\label{example:twodimtwostep}
	Let $(R,\mf)$ be a local ring of Krull dimension two and consider the filtration
	\[(V_{-1}\supseteq V_0\supseteq V_1\supseteq V_2) \;=\; (\Spec(R)\supseteq
	\{\mf\} \supseteq \{\mf\}\supseteq\emptyset).\]
	We claim that the local cohomology functor
	$\Rbb\Gamma_{\{\mf\}}$
	restricts to $\Hcal$. Recalling the description of
	the $t$-structure $\tau$ associated to the filtration (see
	\S\ref{subsec:residues-gabriel-simples}),
	the fact that this functor restricts to the coaisle $\Vcal$ of $\tau$ is immediate, so
	it is sufficient to check that the aisle
	\[\Ucal = \{\,x\in\D(R)\mid \supp(H^i(x))\subseteq V_{i+1}\,\}\]
	is closed under $\Rbb\Gamma_{\{\mf\}}$. Let
	$x \in \Ucal$ and consider the standard truncation triangle
	$$\tau^{<0} x \to x \to \tau^{\geq 0} x \to (\tau^{<0} x)[1]$$ 
	By the description of $\Ucal$, we
	have $\Rbb\Gamma_{\{\mf\}}(\tau^{\geq 0} x) \simeq \tau^{\geq 0} x\in\Ucal$. On the
	other hand, we have 
	\[\Rbb\Gamma_{\{\mf\}}(\tau^{<0} x) \in \D^{<2}\cap\supp^{-1}(\{\mf\})\subseteq\Ucal\] 
	by Grothendieck's Vanishing Theorem \cite[Prop.~20.20.7, 02UZ]{stacks}. This shows that
	$\Rbb\Gamma_{\{\mf\}}(x)\in\Ucal$.

	Therefore, $\Rbb\Gamma_{\{\mf\}}$ restricts to $\Hcal$, and so $\{\mf\}^\cpl$ is open in
	$\GSpec(\Hcal)$ by Lemma~\ref{lemma:loccoh-restrict-open}. Together with
	Lemma~\ref{lemma:bound-topology}, this fully describes the topology on
	$\GSpec(\Hcal)$. Note that, by Proposition~\ref{prop:perfect-converse}, the
	hereditary torsion pair supported on $\{\mf\}$ in the heart $\Hcal_1$
	corresponding to the truncated filtration $\Spec(R) \supseteq V_0
	\supseteq \emptyset$ is perfect. We will compute the Gabriel
	topology of $\GSpec(\Hcal_1)$ in the next subsection (see
	Figure~\ref{figure:2-dim-1-step}).
\end{ex}

\subsection{Spectra of two-term cosilting complexes}

In this subsection, we use Corollary~\ref{cor:aperture-zero} to study the case
of filtrations of length one. The idea is that in order to check whether two
primes $\pf$ and $\qf$ satisfy $\pf\preceq\qf$, we can pass to the local domain
$(R/\pf)_\qf$ and apply the Corollary. It will, thus, be useful to spell out how the
Gabriel topology on $\GSpec(\Hcal)$ behaves with respect to certain
restrictions of scalars. 

Let $\lambda: R \to S$ be a ring epimorphism of
commutative noetherian rings. Then the induced map $\lambda_*: \Spec(S)
\hookrightarrow \Spec(R)$ on Zariski spectra is injective \cite[Lemma~10.107.9,
04VW]{stacks}. Our intermediate sp-filtration $\Spec(R) = V_{-1} \supseteq V_0
\supseteq \cdots \supseteq V_{n-1} \supseteq V_n = \emptyset$ on $\Spec(R)$
induces another intermediate sp-filtration on $\Spec(S)$ by setting
$V^\lambda_i = \lambda_*^{-1}(V_i)$. We denote by $\tau_\lambda$ and
$\Hcal_\lambda$ the induced $t$-structure and heart in $\D(S)$. By \cite[Lemma~10.107.9, 04VW]{stacks}, $\lambda$ induces an isomorphism $k(\pf) \simeq
k(\lambda_*^{-1}(\pf))$ for any $\pf$ in the image of $\lambda_*$. Inspecting
Lemma~\ref{lemma:s(p)-torsion-simple}, we see that the Gabriel-simple objects in
$\Hcal$ and $\Hcal_\lambda$ lying over $\pf$ and $\lambda_*^{-1}(\pf)$,
respectively, may be identified and denoted both simply as $s(\pf)$.
Moreover, by \cite[\S5.2, Thm.~5.7]{brea-hrbe-modo-22}, the $t$-structure $\tau_\lambda$ is associated to a pure-injective cosilting object $c^\lambda := \RHom_R(S,c)$, the derived coextension of scalars of $c$ along $\lambda$.

Under the identification of Theorem~\ref{thm:c(p)-support}(2), the embedding
$\lambda_*$ induces an injective map $\Phi_c\lambda_*\Phi_{c^\lambda}^{-1}\colon \GSpec(c^\lambda)
\hookrightarrow \GSpec(c)$. In the following, we show that this embedding is
topological for a useful class of choices of $\lambda$.

\begin{lemma}\label{lemma:full-support-localisation}
	Let $\qf \in \Spec(R)$, $I \subseteq \qf$ an ideal, and consider the natural
	ring epimorphism $\lambda\colon R \to (R/I)_\qf$. Then the Gabriel
	topology on $\GSpec(c^\lambda)$ is the subspace topology induced by the
	embedding $\Phi_c\lambda_*\Phi_{c_\lambda}^{-1}: \GSpec(c^\lambda) \hookrightarrow \GSpec(c)$.
\end{lemma}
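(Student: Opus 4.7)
\noindent\textit{Proof plan.} By Corollary~\ref{cor:T0-alexandrov}, both $\GSpec(c)$ and $\GSpec(c^\lambda)$ are $(\Tsf_0)$ Alexandrov spaces, so the subspace topology on the image $\lambda_*(\Spec S)\subseteq\GSpec(c)$ is recovered from the restriction of the closure partial order $\preceq$ on $\GSpec(c)$. The claim then reduces to the equivalence
\[\pf\preceq^\lambda\pf'\;\iff\;\lambda_*(\pf)\preceq\lambda_*(\pf'),\qquad\forall\,\pf,\pf'\in\Spec(S).\]
The identifications $V_i^\lambda=\lambda_*^{-1}(V_i)$ and $k(\pf)\simeq k(\lambda_*\pf)$ give $\lambda_*s^\lambda(\pf)=s(\lambda_*\pf)$ in $\D(R)$, so the two Gabriel-simples agree under restriction of scalars, and they are hereditary-torsion-simple by Lemma~\ref{lemma:s(p)-hereditary-torsion-simple} and Proposition~\ref{prop:residues-gabriel-simples}. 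Lemma~\ref{lemma:hts-topology-characterisation}(3) then characterises both relations by the existence of a nonzero subobject of the Gabriel-simple at $\pf$ admitting a nonzero morphism to the Gabriel-simple at $\pf'$, computed in $\Hcal_\lambda$ or $\Hcal$ respectively.

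For the forward direction, I would push forward a witnessing $x\subseteq s^\lambda(\pf)$ in $\Hcal_\lambda$ by $\lambda_*$ to produce $\lambda_* x\subseteq s(\lambda_*\pf)$ in $\Hcal$. This requires two ingredients: first, $\lambda_*\Hcal_\lambda\subseteq\Hcal$, verified via \eqref{eq:aisle-coaisle} using the identification $\supp_R\lambda_*=\lambda_*\supp_S$ together with the compatibility $\Rbb\Gamma_{V_i}\circ\lambda_*\simeq\lambda_*\circ\Rbb\Gamma_{V_i^\lambda}$ at the derived level; and second, the faithfulness of $\lambda_*\colon\D(S)\to\D(R)$, which follows from the exactness and conservativity of $\lambda_*\colon\Mod(S)\to\Mod(R)$ for a ring epimorphism.

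For the reverse direction, given $0\neq y\subseteq s(\lambda_*\pf)$ in $\Hcal$ with a nonzero $\phi\colon y\to s(\lambda_*\pf')$, I would set $x:=y_\qf=y\otimes_R R_\qf$ and verify the following chain of observations. First, $y$ is annihilated by $\lambda_*\pf\supseteq I$ as a subobject of $s(\lambda_*\pf)$, so $y_\qf$ is naturally an $S$-complex. Second, by Lemma~\ref{lemma:heart-localisation} we have $y_\qf\in\Hcal$, and the aisle/coaisle compatibility above promotes this to $y_\qf\in\Hcal_\lambda$. Third, localising the monic $y\hookrightarrow s(\lambda_*\pf)$ yields the monic $x\hookrightarrow s(\lambda_*\pf)_\qf=s^\lambda(\pf)$, and $x\neq 0$ because $\lambda_*\pf\in\supp(y)$: otherwise $\supp(y)\subseteq V(\lambda_*\pf)\setminus\{\lambda_*\pf\}$ would be specialisation-closed, forcing $\D(R)(y,s(\lambda_*\pf))=0$ by the smashing decomposition of $\D(R)$ at this subset and contradicting the nonzero monomorphism $y\hookrightarrow s(\lambda_*\pf)$. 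Fourth, $\phi$ factors through $y\to y_\qf$ by the $\qf$-locality of $s(\lambda_*\pf')$ (since $\lambda_*\pf'\subseteq\qf$), producing the required nonzero morphism $x\to s^\lambda(\pf')$ in $\Hcal_\lambda$.

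The main technical obstacle will be the derived compatibility $\lambda_*\circ\Rbb\Gamma_{V_i^\lambda}\simeq\Rbb\Gamma_{V_i}\circ\lambda_*$ underlying both directions, since $\lambda$ need not be a homological ring epimorphism in general. I would handle this by factoring $\lambda$ through the quotient $R\to R/I$ (where the torsion functor $\Gamma_{V_i}$ is readily computed on $I$-torsion objects) and the flat, hence homological, localisation $R/I\to (R/I)_\qf$, and by using a \v{C}ech-theoretic or homotopy-colimit presentation of $\Rbb\Gamma_{V_i}$ that is manifestly preserved by restriction of scalars.
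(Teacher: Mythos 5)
Your proof takes a genuinely different route. The paper works through the right adjoint $\Fsf_\lambda:=\RHom_R((R/I)_\qf,-)$ of restriction of scalars: it proves $\Fsf_\lambda(c(\qf))\simeq c^\lambda(\qf)$ by showing the endomorphism ring is local and that $\qf$ lies in the support, and then obtains both directions at once from the resulting adjunction isomorphism
$\Hom_{\D(R)}(s(\pf),c(\qf))\simeq\Hom_{\D(S)}(s(\pf),\Fsf_\lambda(c(\qf)))$,
applied directly to the Gabriel-simple $s(\pf)$ (which \emph{is} already an $S$-complex). You instead push and pull witnessing subobjects via restriction of scalars $\lambda_*$ and localisation at $\qf$, using criterion~(3) rather than criterion~(2) of Lemma~\ref{lemma:hts-topology-characterisation}. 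The forward direction is plausible, but the justification ``faithfulness of $\lambda_*\colon\D(S)\to\D(R)$ follows from exactness and conservativity of $\lambda_*$ on modules'' is not right as stated: for a non-homological ring epimorphism, $\lambda_*$ need not even be full at the derived level, and derived faithfulness is a separate matter. What you actually need is faithfulness of the \emph{restricted} exact functor $\lambda_*\colon\Hcal_\lambda\to\Hcal$ once $\lambda_*\Hcal_\lambda\subseteq\Hcal$ is established, and that does follow from conservativity of $\lambda_*$ — so this step is salvageable with a more careful argument.

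The reverse direction has a genuine gap. You set $x:=y_\qf$ for a subobject $y\subseteq s(\lambda_*\pf)$ in $\Hcal$ and claim ``$y$ is annihilated by $\lambda_*\pf\supseteq I$ as a subobject of $s(\lambda_*\pf)$, so $y_\qf$ is naturally an $S$-complex.'' This conflates subobjects in the cosilting heart $\Hcal$ with submodules of $k(\lambda_*\pf)$ in $\Mod(R)$. A subobject of $s(\lambda_*\pf)$ in $\Hcal$ is a priori a complex with cohomology spread over several degrees, constrained only to have support in $V(\lambda_*\pf)$ by Lemma~\ref{lemma:M(p)}(2.a). Neither being supported on $V(\lambda_*\pf)$, nor even the fact that every $a\in\lambda_*\pf$ induces the zero map in $\End_{\D(R)}(y)$, implies that $y$ lies in the essential image of $\D(R/\lambda_*\pf)\to\D(R)$: already over $R=k[x]/(x^2)$ with $\mathrm{char}\,k\neq 2$, the complex $[R\xrightarrow{\;x\;}R]$ is killed by $x$ in $\End_{\D(R)}$ yet is not in the image of $\D(k)\to\D(R)$. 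Consequently $x=y_\qf$ cannot be regarded as an object of $\Hcal_\lambda$, and the proposed factorisation of $\phi$ through $x$ inside $\Hcal_\lambda$ is not available. The natural way to cut $y$ down to an $S$-complex is to apply $\RHom_R(S,-)$; but doing so directly to $c(\lambda_*\pf')$ via criterion~(2) of Lemma~\ref{lemma:hts-topology-characterisation} and Proposition~\ref{prop:s(p)-inj-envelope}, rather than to an intermediate subobject via criterion~(3), is exactly the paper's proof.
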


\begin{proof}
	Let us denote by $\preceq_\lambda$ the order on $\Spec((R/I)_\qf)$ associated to
	the Gabriel topology of $\GSpec(c^\lambda)$. Using $\lambda_\ast$, we identify
	$\Spec((R/I)_\qf)$ with the subset $\{\,\pf\in\Spec(R)\mid
	I\subseteq\pf\subseteq\qf\,\}$ of $\Spec(R)$. Given $\pf\subseteq\pf'$
	in this subset, we need to prove that $\pf\preceq\pf'$ if and
	only if $\pf\preceq_\lambda\pf'$.
	Without loss of generality, we may assume that $\pf'=\qf$, otherwise
	consider the composition $\lambda\colon
	R\to (R/I)_\qf\to (R/I)_{\pf'}$.

	To prove the claim, we first show that
	$\Fsf_\lambda:=\Rbb\Hom_R((R/I)_\qf,-)\colon \Prod(c)\to \Prod(c^\lambda)$ sends the
	indecomposable $c(\qf)=\Phi_c(\qf)$ to $c^\lambda(\qf)=\Phi_{c^\lambda}(\qf)$, that is, to the unique
	indecomposable object of $\Prod(c^\lambda)$ having $\qf$ in its support (see
	Theorem~\ref{thm:c(p)-support}).

	To show that $\Fsf_\lambda(c(\qf))$ is indecomposable, we prove that it has a
	local endomorphism ring. We have a chain of natural isomorphisms:
	\begin{align*}
		&\RHom_{(R/I)_\qf}(\Fsf_\lambda(c(\qf)),\Fsf_\lambda(c(\qf))) \simeq \\
		\simeq&\RHom_{R}\left(\RHom_R\left(R/I,c(\qf)\right),c(\qf)\right) \simeq \\
		\simeq&\, R/I \otimes_R^\Lbb \RHom_R\left(c(\qf),c(\qf)\right),
	\end{align*}
	where the first isomorphism comes directly from the derived $\otimes$-$\Hom$ adjunction and
	the second one follows from a standard homological identity (see for example
	\cite[Proposition 2.2]{chri-holm-09}) using the fact that
	$R/I$ is finitely generated and that $c(\qf)$ is isomorphic in $\D(R)$ to a
	bounded complex of injective $R$-modules. As $c(\qf)$ is in $\Prod(c)$, the
	positive cohomology of $\RHom_R(c(\qf),c(\qf))$ vanishes.
	The isomorphism of objects of $\D((R/I)_\qf)$ is in fact an isomorphism of
	differential graded algebras. This can be checked using the explicit
	expressions of the isomorphisms above, noting that, once we fix a
	bounded complex of injectives $E^\bullet$ quasi-isomorphic to $c(\qf)$, we
	have:
	\begin{enumerate}[label=\roman*)]
		\item $\Fsf_\lambda(c(\qf))\simeq \Hom^\bullet_R((R/I)_\qf,E^\bullet)$ is a
			bounded complex of injective $(R/I)_\qf$-modules;
		\item $\RHom_R(c(\qf),c(\qf))\simeq \Hom^\bullet_R(E^\bullet,E^\bullet)$ is a
			bounded complex of flat $R$-modules (in fact, $R_\qf$-modules). Indeed,
			each component of $\Hom^\bullet_R(E^\bullet,E^\bullet)$ is a direct sum of
			$\Hom$-spaces between injectives, which are flat by the well-known module
			version of the homological identity used above.
	\end{enumerate}
	Applying the
	standard cohomology $H^0$ to the chain of isomorphisms above, we thus obtain
	an isomorphism of $R$-algebras:
	\[\End_{\D((R/I)_\qf)}(\Fsf_\lambda(c(\qf))) \simeq R/I \otimes_R
	\End_{\D(R)}(c(\qf)).\]
	Since $\End_{\D(R)}(c(\qf))$ is isomorphic to the endomorphism ring of an
	indecomposable object of the Grothendieck category $\Hcal$, it is a local
	$R$-algebra, and thus also its quotient ring
	\[R/I \otimes_R \End_{\D(R)}(c(\qf)) \simeq
	\End_{\D(R)}(c(\qf))/I\End_{\D(R)}(c(\qf))\]
	is a local ring, establishing finally that
	$\End_{\D((R/I)_\qf)}(\Fsf_\lambda(c(\qf)))$ is local.

	The last task is to show that the maximal ideal $\qf$ of $\Spec((R/I)_\qf)$ belongs to the support of
	$\Fsf_\lambda(c(\qf))\in\D((R/I)_\qf)$.
	By Neeman's classification of smashing subcategories (see
	\S\ref{subsubsec:GSpec-Spec}), for this it suffices to check the non-vanishing
	of $\RHom_{(R/I)_\qf}(k(\qf),c(\qf)^\lambda)$. By adjunction,
	$\RHom_{(R/I)_\qf}(k(\qf),\Fsf_\lambda(c(\qf))) \simeq
	\RHom_{R_\qf}(k(\qf),c(\qf))$, and the latter cannot
	vanish since the maximal ideal $\qf$ of $R_\qf$ lies in the support of $c(\qf)$
	by Theorem~\ref{thm:c(p)-support}.
	
	Using that $s(\pf)$ is naturally a Gabriel-simple object in $\Hcal_\lambda$ as
	discussed above, we obtain using standard adjunction arguments the following
	chain of isomorphisms:
	\begin{align*}
		&\Hom_{\Hcal}(s(\pf),H_c^0(c(\qf)))
			\simeq \Hom_{\D(R)}(s(\pf),c(\qf)) \simeq \\
		\simeq& \Hom_{\D((R/I)_\qf)}(s(\pf),\Fsf_\lambda(c(\qf))) \simeq
			\Hom_{\Hcal_\lambda}(s(\pf),H^0_{c^\lambda}(c(\qf)))).
	\end{align*}
	By Lemma~\ref{lemma:hts-topology-characterisation} and
	Proposition~\ref{prop:s(p)-inj-envelope}, $\pf \preceq \qf$ if and only if the
	first $\Hom$-group does not vanish, while $\pf
	\preceq_\lambda \qf$ if and only if the last $\Hom$-group
	does not vanish, finishing the proof.
\end{proof}

From now on, we restrict to the case of an
sp-filtration $\Spec(R)\supsetneq V_0\supsetneq\emptyset$ of length one.
In another words, $\Hcal$ is the right tilt of the standard heart $\Mod(R)$ at
$V_0$. In this case, Lemma~\ref{lemma:full-support-localisation} gives us a
recipe to compute the Gabriel topology of $\GSpec(\Hcal)$. Recall that in
view of Theorem~\ref{thm:theta-open} and Lemma~\ref{lemma:bound-topology}, to
recover the topology it suffices to determine when $\pf \preceq \qf$ for $\pf
\subseteq \qf$ such that $\pf \not\in V_0$ and $\qf \in V_0$.

\begin{thm}\label{thm:onestep-recipe}
	Let $V_0\subseteq\Spec(R)$ a specialisation-closed subset, and let $\tau$ be
	the $t$-structure, with heart $\Hcal$, associated to the filtration
	\[\Spec(R) \supsetneq V_0\supsetneq\emptyset.\]
	Denote by $\preceq$ the closure order on $\Spec(R)$ induced by
	$\GSpec(\Hcal)$. Given $\pf,\qf\in\Spec(R)$, we have $\pf\preceq\qf$ if and
	only if $\pf\subseteq\qf$ and the following condition holds: for the natural
	ring epimorphism $\lambda\colon R\to (R/\pf)_\qf$, the subset
	$\lambda_\ast^{-1}(V_0)$ has coherent complement in $\Spec((R/\pf)_\qf)$.
\end{thm}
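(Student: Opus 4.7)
My plan is to reduce the statement to Corollary~\ref{cor:aperture-zero} via the localisation principle of Lemma~\ref{lemma:full-support-localisation}, after handling a few trivial cases.

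First, by Lemma~\ref{lemma:preceq-finer}, the relation $\pf \preceq \qf$ forces $\pf \subseteq \qf$, so I may assume this inclusion throughout. Because $V_0$ is specialisation-closed, the case $\pf \in V_0 \not\ni \qf$ cannot occur, which leaves three sub-cases depending on how $\{\pf, \qf\}$ interacts with $V_0$. In the case $\pf, \qf \in V_0$, every prime $\rf$ with $\pf \subseteq \rf \subseteq \qf$ also lies in $V_0$, so $\lambda_*^{-1}(V_0) = \Spec((R/\pf)_\qf)$ and the complement is empty (trivially coherent); in the case $\pf, \qf \notin V_0$, no such $\rf$ lies in $V_0$ (else $\qf$ would too), so $\lambda_*^{-1}(V_0) = \emptyset$ and the complement is the full spectrum, which is coherent since it corresponds to the trivial (zero) torsion pair in $\Mod((R/\pf)_\qf)$. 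In both of these cases, Lemma~\ref{lemma:bound-topology} gives $\pf \preceq \qf$ directly from $\pf \subseteq \qf$, matching the asserted equivalence.

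The remaining, non-trivial case is $\pf \notin V_0$, $\qf \in V_0$ with $\pf \subseteq \qf$. Here the plan is to apply Lemma~\ref{lemma:full-support-localisation} to the ring epimorphism $\lambda\colon R \to (R/\pf)_\qf$, which is a local noetherian domain. The pulled-back filtration is $\Spec((R/\pf)_\qf) \supsetneq \lambda_*^{-1}(V_0) \supsetneq \emptyset$: the left inclusion is strict because the zero ideal of $(R/\pf)_\qf$ corresponds to $\pf \notin V_0$, and the right inclusion is strict because the maximal ideal corresponds to $\qf \in V_0$. So the pulled-back filtration satisfies the hypotheses of Corollary~\ref{cor:aperture-zero}.

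Using the identification of Gabriel-simples discussed before Lemma~\ref{lemma:full-support-localisation} and the fact that under $\lambda_*$ the pair $(\pf, \qf)$ corresponds to the pair (zero ideal, maximal ideal) in $\Spec((R/\pf)_\qf)$, Lemma~\ref{lemma:full-support-localisation} yields that $\pf \preceq \qf$ in $\GSpec(\Hcal)$ if and only if the zero ideal is $\preceq_\lambda$-below the maximal ideal in $\GSpec(\Hcal_\lambda)$. By Corollary~\ref{cor:aperture-zero} applied to the pulled-back filtration, this is equivalent to the (non-)coherence condition on $\lambda_*^{-1}(V_0)^\cpl$ in $\Spec((R/\pf)_\qf)$, exactly as claimed. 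No real obstacle remains: the tools have been assembled and the only care needed is the bookkeeping of the trivial cases and verifying that $(R/\pf)_\qf$ falls under the hypotheses of Corollary~\ref{cor:aperture-zero}.
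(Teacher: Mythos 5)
Your proof is correct and follows essentially the same route as the paper: both arguments split into the same case analysis (handled by Lemmas~\ref{lemma:preceq-finer} and~\ref{lemma:bound-topology} in the trivial cases), and in the non-trivial case $\pf\in V_0^\cpl$, $\qf\in V_0$ both reduce via Lemma~\ref{lemma:full-support-localisation} to Corollary~\ref{cor:aperture-zero} applied to the local noetherian domain $(R/\pf)_\qf$. The bookkeeping you flag (strictness of the pulled-back filtration, and that $(R/\pf)_\qf$ is a local domain) is exactly what the paper checks.
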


\begin{proof}
	\noindent\textbf{Case $1$}: \textit{$\pf,\qf$ belong both to $V_0$ or both to
	$V_0^\cpl$}. In this case,
	by Lemma~\ref{lemma:bound-topology}, we have $\pf\preceq\qf$ if and only if
	$\pf\subseteq\qf$. Observe that in this case $\lambda_\ast^{-1}(V_0)$ is
	either $\Spec((R/I)_\qf)$ or $\emptyset$, respectively, which always has
	coherent complement.

	\noindent\textbf{Case $2$}: \textit{$\pf\in V_0$ and $\qf\in V_0^\cpl$}. In this case, by 
	Lemma~\ref{lemma:preceq-finer}, we cannot have $\pf\preceq\qf$, as
	$\pf\nsubseteq\qf$.

	\noindent\textbf{Case $3$}: \textit{$\pf\in V_0^\cpl$ and $\qf\in V_0$}.
	By Lemma~\ref{lemma:full-support-localisation}, we have $\pf \preceq \qf$ if
	and only if $\lambda_*^{-1}(\pf) \preceq_\lambda \lambda_*^{-1}(\qf)$. Since
	$\lambda_*^{-1}(\pf)$ coincides with the zero prime and $\lambda_*^{-1}(\qf)$
	with the maximal prime of the local domain $(R/\pf)_\qf$, the rest follows
	from Corollary~\ref{cor:aperture-zero}.
\end{proof}

\begin{rmk}
	Given primes $\pf\subseteq\qf$, the ring epimorphism $\lambda\colon R\to
	(R/\pf)_\qf$ induces a fully faithful embedding
	$\Mod((R/\pf)_\qf)\hookrightarrow \Mod(R)$. If $(\Tcal,\Fcal)$ is a
	(hereditary) torsion
	pair in $\Mod(R)$, its restriction to $\Mod((R/\pf)_\qf)$ by intersection is
	a (hereditary) torsion pair in $\Mod((R/\pf)_\qf)$. Indeed, the torsion part
	of an $(R/\pf)_\qf$-module for the pair $(\Tcal,\Fcal)$ is an
	$(R/\pf)_\qf$-module, as seen in the proof of \cite[Cor.~2.11]{pavo-25}. Then
	so is the torsion-free part, as $\Mod((R/\pf)_\qf)$ is closed under cokernels
	in $\Mod(R)$.

	This restriction applied to the hereditary torsion pair of $\Mod(R)$ corresponding to
	$V_0$ yields the hereditary torsion pair of $\Mod((R/\pf)_\qf)$ corresponding
	to $\lambda_\ast^{-1}(V_0)$, as can be easily seen by checking which residue
	fields lie in the torsion class.
	Hence, the theorem says that
	$\pf\preceq\qf$ if and only if the restriction of $(\Tcal,\Fcal)$ to
	$\Mod((R/\pf)_\qf)$ is perfect in this category.
\end{rmk}

\begin{figure}[h]
\begin{tikzpicture}
	\begin{scope}
		\path[fill=gray!15!white,rounded corners=8pt] (-2,-2) rectangle (2,2);
		\path[clip,rounded corners=8pt] (-2,-2) rectangle (2,2);
		\path[fill=gray!70!white,rounded corners=8pt] (-3,-1) -- (-1,.5) -- (1,-.5) -- (3,1) -- (3,3) -- (-3,3) -- cycle;
		\path[draw,rounded corners=8pt] (-2,-2) rectangle (2,2);
		\node at (-1.5,1.5) {$V_0$};
		\node at (1.5,-1.5) {$V_0^\cpl$};
		\path[draw,fill=white,rounded corners=2pt,shift={+(1,.9)}] (0,-.9) -- (.6,0) -- (0,.9) -- (-.6,0) -- cycle;
		{
			\path[clip,rounded corners=8pt] (-3,-1) -- (-1,.5) -- (1,-.5) -- (3,1) -- (3,3) -- (-3,3) -- cycle;
			\path[fill=black,rounded corners=2pt,shift={+(1,.9)}] (0,-.9) -- (.6,0) -- (0,.9) -- (-.6,0) -- cycle;
		}
	\end{scope}
	\begin{scope}[xshift=5cm]
		\path[fill=gray!15!white,rounded corners=8pt] (-2,-2) rectangle (2,2);
		\path[clip,rounded corners=8pt] (-2,-2) rectangle (2,2);
		\path[fill=gray!70!white,rounded corners=8pt] (-3,-1) -- (-1,.5) -- (1,-.5) -- (3,1) -- (3,3) -- (-3,3) -- cycle;
		\path[draw,rounded corners=8pt] (-2,-2) rectangle (2,2);
		\path[draw,fill=white,rounded corners=2pt] (0,-.9) -- (.6,0) -- (0,.9) -- (-.6,0) -- cycle;
		\node at (-1.5,1.5) {$V_0$};
		\node at (1.5,-1.5) {$V_0^\cpl$};
		{
			\path[clip,rounded corners=8pt] (-3,-1) -- (-1,.5) -- (1,-.5) -- (3,1) -- (3,3) -- (-3,3) -- cycle;
			\path[fill=black,rounded corners=2pt] (0,-.9) -- (.6,0) -- (0,.9) -- (-.6,0) -- cycle;
		}
	\end{scope}
	\begin{scope}[xshift=10cm]
		\path[fill=gray!15!white,rounded corners=8pt] (-2,-2) rectangle (2,2);
		\path[clip,rounded corners=8pt] (-2,-2) rectangle (2,2);
		\path[fill=gray!70!white,rounded corners=8pt] (-3,-1) -- (-1,.5) -- (1,-.5) -- (3,1) -- (3,3) -- (-3,3) -- cycle;
		\path[draw,rounded corners=8pt] (-2,-2) rectangle (2,2);
		\path[draw,fill=white,rounded corners=2pt,shift={+(-1,-.9)}] (0,-.9) -- (.6,0) -- (0,.9) -- (-.6,0) -- cycle;
		\node at (-1.5,1.5) {$V_0$};
		\node at (1.5,-1.5) {$V_0^\cpl$};
		{
			\path[clip,rounded corners=8pt] (-3,-1) -- (-1,.5) -- (1,-.5) -- (3,1) -- (3,3) -- (-3,3) -- cycle;
			\path[fill=black,rounded corners=2pt,shift={+(-1,-.9)}] (0,-.9) -- (.6,0) -- (0,.9) -- (-.6,0) -- cycle;
		}
	\end{scope}
\end{tikzpicture}
	\caption{
		The squares represent $\Spec(R)$, with the specialisation-closed subset
		$V_0$ shaded in a darker gray. The diamonds represent the image of the embedding
		$\lambda_\ast\colon\Spec((R/\pf)_\qf)\hookrightarrow \Spec(R)$, depending on
		the three possible cases:  $\pf,\qf\in V_0$ (left),  $\pf\in V_0^\cpl$ and
		$\qf\in V_0$ (middle), $\pf,\qf\in V_0^\cpl$ (right). According to
		Theorem~\ref{thm:onestep-recipe}, we have $\pf\preceq\qf$ in the heart
		$\Hcal$ of the mutation at $V_0$ if and
		only if the white region is coherent in the diamond. Notice that this is
		trivially true in the first and third case, in accordance with
		Lemma~\ref{lemma:bound-topology}.
	}
	\label{figure:diamonds}
\end{figure}

Note that, in general, one cannot read the coherence of a complement of a
specialisation-closed subset just from the Zariski topology, see
Example~\ref{example:Nagata-ring} below. We now use our results to compute the
closure order of the Gabriel spectrum of some two-term cosilting complexes.

\begin{ex}\label{example:H1-twodim}
	Let us compute the Gabriel topology of the one-step heart $\Hcal_1$ of
	Example~\ref{example:twodimtwostep}. Here, $(R,\mf)$ is a
	two-dimensional local ring and $V_0 = V(\mf)$. Let $\pf\neq \mf$ be a prime ideal of $R$. Employing Theorem~\ref{thm:onestep-recipe}, we
	see that $\pf \preceq \mf$ if and only if $\dim(R/\pf) = 2$. Indeed, if
	$\lambda\colon R \to R/\pf$ is the natural projection then $\lambda_*^{-1}(V_0)$ is
	precisely the singleton subset $\{\mf\}\subseteq \Spec(R/\pf)$.
	As mentioned, if the complement of $\{\mf\}$ is coherent, then $\mf$
	must have height at most one, and therefore $\dim(R/\pf)<2$. Conversely, if
	$\dim(R/\pf)<2$, any subset of $\Spec(R/\pf)$ is coherent. Therefore,
	$\pf\preceq\mf$ if and only if $\pf$ is a minimal prime. Together with
	Lemma~\ref{lemma:bound-topology}, this provides us with a complete description of
	the closure order, as illustrated in Figure~\ref{figure:2-dim-1-step}.
\end{ex}

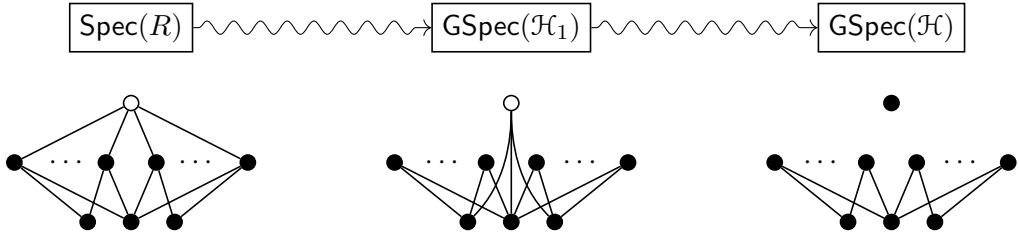
\begin{figure}[h]
	\centering
\usetikzlibrary{positioning}
\usetikzlibrary{decorations.pathmorphing}
\usetikzlibrary{arrows.meta}
\begin{tikzpicture}
	[black/.style={circle,draw=black,semithick,fill=black,inner sep=2pt},
	 white/.style={circle,draw=black,semithick,fill=white,inner sep=2pt},
	 dots/.style={draw=none,fill=none},
	 label/.style={rectangle,draw=black,semithick,},
	 HRS/.style={decoration={snake,pre length=2pt,post length=2pt}},
	 every edge quotes/.style={},
	 node distance=14pt and 10pt]
	\node[label] (R) at (0,2) {$\Spec(R)$};
	\node[white] (m) at (0,1) {};

	\node[black] (q-1) [below left=18pt and 5pt of m] {};
	\node[black] (q1) [below right=18pt and 5pt of m] {};
	\node[black] (q-2) [left=28pt of q-1] {};
	\node[black] (q2) [right=28pt of q1] {};
	\path (q-2) edge [draw=none] node {$\;\;\cdots$} (q-1);
	\path (q2) edge [draw=none] node {$\cdots\;\,$} (q1);

	\node[black] (p0) [below right=18pt and 5pt of q-1] {};
	\node[black] (p1) [right=of p0] {};
	\node[black] (p-1) [left=of p0] {};

	\draw[semithick] (q-1) -- (m);
	\draw[semithick] (q-2) -- (m);
	\draw[semithick] (q1) -- (m);
	\draw[semithick] (q2) -- (m);

	\draw[semithick] (p-1) -- (q-2) -- (p0) -- (q-1) -- (p-1);
	\draw[semithick] (p1) -- (q2) -- (p0) -- (q1) -- (p1);

	\node[label] (H1) at (5,2) {$\GSpec(\Hcal_1)$};
	\node[white] (am) at (5,1) {};
	\node[black] (aq-1) [below left=18pt and 5pt of am] {};
	\node[black] (aq1) [below right=18pt and 5pt of am] {};
	\node[black] (aq-2) [left=28pt of aq-1] {};
	\node[black] (aq2) [right=28pt of aq1] {};
	\path (aq-2) edge [draw=none] node {$\;\cdots$} (aq-1);
	\path (aq2) edge [draw=none] node {$\cdots\,$} (aq1);

	\node[black] (ap0) [below right=18pt and 5pt of aq-1] {};
	\node[black] (ap1) [right=of ap0] {};
	\node[black] (ap-1) [left=of ap0] {};

	\draw[semithick] (ap-1) .. controls +(45:.7) and +(270:.5) .. (am);
	\draw[semithick] (ap0) -- (am);
	\draw[semithick] (ap1) .. controls +(135:.7) and +(270:.5) .. (am);

	\draw[semithick] (ap-1) -- (aq-2) -- (ap0) -- (aq-1) -- (ap-1);
	\draw[semithick] (ap1) -- (aq2) -- (ap0) -- (aq1) -- (ap1);

	\node[label] (H) at (10,2) {$\GSpec(\Hcal)$};
	\node[black] (bm) at (10,1) {};
	\node[black] (bq-1) [below left=18pt and 5pt of bm] {};
	\node[black] (bq1) [below right=18pt and 5pt of bm] {};
	\node[black] (bq-2) [left=28pt of bq-1] {};
	\node[black] (bq2) [right=28pt of bq1] {};
	\path (bq-2) edge [draw=none] node {$\,\cdots$} (bq-1);
	\path (bq2) edge [draw=none] node {$\cdots$} (bq1);

	\node[black] (bp0) [below right=18pt and 5pt of bq-1] {};
	\node[black] (bp1) [right=of bp0] {};
	\node[black] (bp-1) [left=of bp0] {};

	\draw[semithick] (bp-1) -- (bq-2) -- (bp0) -- (bq-1) -- (bp-1);
	\draw[semithick] (bp1) -- (bq2) -- (bp0) -- (bq1) -- (bp1);

	\draw [-{Computer Modern Rightarrow[length=3pt]},decorate,HRS] (R) -- (H1);
	\draw [-{Computer Modern Rightarrow[length=3pt]},decorate,HRS] (H1) -- (H);
\end{tikzpicture}

	\vskip 1em
	\caption{
		The Gabriel topologies of Examples~\ref{example:twodimtwostep} and
		~\ref{example:H1-twodim}, illustrated by the Hasse quiver of their closure
		order. We depict the primes layered by height. Moving to the right, we tilt
		at the specialization closed set consisting of the white points, twice at
		the maximal prime in this case. After the first step, the primes of height
		$1$ become open points, while the closure $\gncl_\preceq(\mf)$ of the
		maximal prime still contains every minimal prime. After the second tilt,
		$\{\mf\}$ becomes clopen.
		}
	\label{figure:2-dim-1-step}
\end{figure}

\begin{ex}\label{example:dim3}
	Let $R$ be a commutative noetherian ring of Krull dimension
	three. Consider the specialisation-closed subset $V_0:=\{\,\pf\in\Spec(R)\mid
	\height\pf\geq 2\,\}$, and the sp-filtration $\Spec(R)\supsetneq
	V_0\supsetneq \emptyset$. The corresponding $t$-structure is obtained by
	right-mutating the standard $t$-structure at the hereditary torsion pair of
	finite type with support $V_0$. This pair does not fit in either of the two
	cases of \S\ref{subsec:disconnect}: indeed, on one hand
	$V_0^\cpl\subseteq\Spec(R)$ is not
	a discrete subspace. On the other hand, 
	the corresponding torsion pair is not perfect because $V_0^\cpl$ is not
	coherent, as the minimal primes of $V_0$ have height two in $\Spec(R)$.

	We determine the topology on $\GSpec(\Hcal)$ for the heart $\Hcal$ corresponding to this
	filtration. Let $\pf \subseteq \qf$ be such that $\qf \in
	V_0$ and $\pf \not\in V_0$. If $\pf$ is of height one then $\pf$ is maximal in
	$V_0^\cpl$, and thus $\pf \not\preceq \qf$ by
	Lemma~\ref{lemma:maximal-difference}. Alternatively this follows also from
	Theorem~\ref{thm:onestep-recipe}: with the notation used there,
	$\lambda_\ast^{-1}(V_0)$ is the set of all
	nonzero primes over the domain $(R/\pf)_\qf$ and thus it has coherent
	complement. Let now $\pf$ be a minimal prime: we claim that $\pf
	\preceq \qf$. Indeed, this follows from Theorem~\ref{thm:onestep-recipe}, as
	the minimal elements of the restriction of $V_0$ to $\Spec((R/\pf)_\qf)$ have all
	height two. Again, together with Lemma~\ref{lemma:bound-topology} this
	completes the description of $\GSpec(\Hcal)$, see Figure~\ref{figure:3-dim}.
\end{ex}

\begin{figure}[ht]
\begin{tikzpicture}
	[black/.style={circle,draw=black,semithick,fill=black,inner sep=2pt},
	 white/.style={circle,draw=black,semithick,fill=white,inner sep=2pt},
	 dots/.style={draw=none,fill=none},
	 label/.style={rectangle,draw=black,semithick,},
	 HRS/.style={decoration={snake,pre length=2pt,post length=2pt}},
	 every edge quotes/.style={},
	 node distance=14pt and 10pt]
	\node[label] (R) at (0,2) {$\Spec(R)$};
	\node[white] (m) at (0,1) {};

	\node[white] (q0) [below=18pt of m] {};
	\node[white] (q-1) [left=14pt of q0] {};
	\node[white] (q1) [right=14pt of q0] {};
	\node[white] (q-2) [left=14pt of q-1] {};
	\node[white] (q2) [right=14pt of q1] {};
	\node[white] (q-3) [left=28pt of q-2] {};
	\node[white] (q3) [right=28pt of q2] {};
	\path (q-3) edge [draw=none] node {$\;\;\cdots$} (q-2);
	\path (q3) edge [draw=none] node {$\cdots\;\,$} (q2);

	\node[black] (p-1) [below left=18pt and 5pt of q0] {};
	\node[black] (p1) [below right=18pt and 5pt of q0] {};
	\node[black] (p-2) [left=14pt of p-1] {};
	\node[black] (p2) [right=14pt of p1] {};
	\node[black] (p-3) [left=28pt of p-2] {};
	\node[black] (p3) [right=28pt of p2] {};
	\path (p-3) edge [draw=none] node {$\,\cdots$} (p-2);
	\path (p3) edge [draw=none] node {$\cdots$} (p2);

	\node[black] (r) [below right=18pt and 5pt of p-1] {};

	\draw[semithick] (q0) -- (m);
	\draw[semithick] (q-1) -- (m);
	\draw[semithick] (q-2) -- (m);
	\draw[semithick] (q-3) -- (m);
	\draw[semithick] (q1) -- (m);
	\draw[semithick] (q2) -- (m);
	\draw[semithick] (q3) -- (m);

	\draw[semithick] (q-2) -- (p-3) -- (q-3) -- (p-2);
	\draw[semithick] (p-3) -- (q-1);
	\draw[semithick] (p-1) -- (q-2) -- (p-2) -- (q-1) -- (p-1) -- (q0);
	\draw[semithick] (p1) -- (q2) -- (p2) -- (q1) -- (p1) -- (q0);
	\draw[semithick] (p-2) -- (q0) -- (p2);
	\draw[semithick] (q2) -- (p3) -- (q3) -- (p2);
	\draw[semithick] (p3) -- (q1);

	\draw[semithick] (r) -- (p-3);
	\draw[semithick] (r) -- (p-2);
	\draw[semithick] (r) -- (p-1);
	\draw[semithick] (r) -- (p1);
	\draw[semithick] (r) -- (p2);
	\draw[semithick] (r) -- (p3);

	\node[label] (H) at (8,2) {$\GSpec(\Hcal)$};
	\node[black] (am) at (8,1) {};

	\node[black] (aq0) [below=18pt of am] {};
	\node[black] (aq-1) [left=14pt of aq0] {};
	\node[black] (aq1) [right=14pt of aq0] {};
	\node[black] (aq-2) [left=14pt of aq-1] {};
	\node[black] (aq2) [right=14pt of aq1] {};
	\node[black] (aq-3) [left=28pt of aq-2] {};
	\node[black] (aq3) [right=28pt of aq2] {};
	\path (aq-3) edge [draw=none] node {$\;\;\,\cdots$} (aq-2);
	\path (aq3) edge [draw=none] node {$\cdots\;\;$} (aq2);

	\node[black] (ap-1) [below left=18pt and 12pt of aq0] {};
	\node[black] (ap1) [below right=18pt and 12pt of aq0] {};
	\node[black] (ap-2) [left=14pt of ap-1] {};
	\node[black] (ap2) [right=14pt of ap1] {};
	\node[black] (ap-3) [left=28pt of ap-2] {};
	\node[black] (ap3) [right=28pt of ap2] {};
	\path (ap-3) edge [draw=none] node {$\,\cdots$} (ap-2);
	\path (ap3) edge [draw=none] node {$\cdots$} (ap2);

	\node[black] (ar) [below right=18pt and 12pt of ap-1] {};

	\draw[semithick] (aq0) -- (am);
	\draw[semithick] (aq-1) -- (am);
	\draw[semithick] (aq-2) -- (am);
	\draw[semithick] (aq-3) -- (am);
	\draw[semithick] (aq1) -- (am);
	\draw[semithick] (aq2) -- (am);
	\draw[semithick] (aq3) -- (am);

	\draw[semithick] (ar) -- (ap-3);
	\draw[semithick] (ar) -- (ap-2);
	\draw[semithick] (ar) -- (ap-1);
	\draw[semithick] (ar) -- (ap1);
	\draw[semithick] (ar) -- (ap2);
	\draw[semithick] (ar) -- (ap3);

	\draw[semithick] (ar) .. controls +(95:1) .. (aq-3);
	\draw[semithick] (ar) .. controls +(95:1) .. (aq-2);
	\draw[semithick] (ar) .. controls +(92:1) .. (aq-1);
	\draw[semithick] (ar) -- (aq0);
	\draw[semithick] (ar) .. controls +(88:1) .. (aq1);
	\draw[semithick] (ar) .. controls +(85:1) .. (aq2);
	\draw[semithick] (ar) .. controls +(85:1) .. (aq3);

	\draw [-{Computer Modern Rightarrow[length=3pt]},decorate,HRS] (R) -- (H);
\end{tikzpicture}
\centering

	\vskip 1em
	\caption{Illustration of the Gabriel topology of
		Example~\ref{example:dim3} (for clarity, in the case of a local domain). In the
		right tilt at the white points (those of height at least $2$) the prime
		ideals of height $1$ become open points,
		while the minimal ideals are still in the closure of every point.}
	\label{figure:3-dim}
\end{figure}
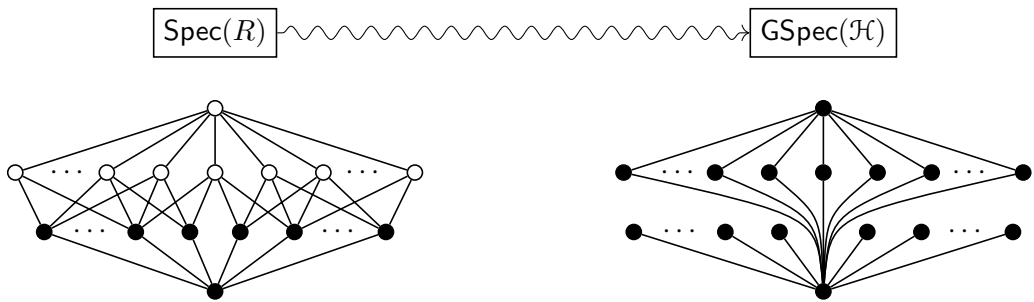

\begin{ex}\label{example:2dim-coherent-noncoherent}
	Let $(R,\mf)$ be a local commutative noetherian domain of Krull dimension two.
	Fix a prime ideal $\pf$ of height one and let us consider the specialisation
	closed subset $V_0:=V(\pf)$ as well as the sp-filtration
	$\Spec(R)\supsetneq V_0\supsetneq \emptyset$. In this case, the Gabriel
	topology differs depending on whether the complement $V_0^\cpl$ is
	coherent or not. In the first case, $V_0^\cpl$ becomes open in $\GSpec(\Hcal)$
	and so the topology us fully described by the disconnection along $V_0$ and
	Lemma~\ref{lemma:bound-topology}.

	Assume now that $V_0^\cpl$ is not coherent (which may happen, see
	Example~\ref{example:Nagata-ring}) and let us compute the Gabriel
	topology of $\GSpec(\Hcal)$ using Theorem~\ref{thm:onestep-recipe}. As the
	zero prime $\of$ is the only non-maximal prime in $V_0^\cpl$, it
	suffices again to compute when $\of \preceq \qf$ for $\qf \in V_0 =
	\{\pf, \mf\}$. As $V_0^\cpl$ is not coherent, we have $\of \preceq
	\mf$. On the other hand, $\of \not\preceq \pf$, as $R_\pf$ is a ring of
	Krull dimension one and thus the the complement of $\lambda_\ast^{-1}(V_0)$
	is coherent in $\Spec(R_\pf)$. 
\end{ex}

It is possible to find rings with homeomorphic Zariski spectra exhibiting the
two contrasting behaviours present in the example above. This shows that, as
mentioned before, the topology of Gabriel spectrum of a two-term cosilting
complex is not completely determined by $\Spec(R)$.

\begin{ex}\label{example:Nagata-ring}
	Let $k$ be a field of countable cardinality and characteristic different from two. Consider the following two local commutative noetherian domains of Krull
	dimension two: the polynomial ring $k[x,y]_{(x,y)}$ in two
	variables localised at the maximal ideal $(x,y)$, and the normal ring $R$ constructed by Nagata \cite{naga-58}, where we also use $k$ as the base field in the construction (this is where $k$ of characteristic different from two is required). These two rings have homeomorphic Zariski spectra. Indeed, the homeomorphism class of the Zariski spectrum of any local commutative noetherian domain of Krull
	dimension two is completely determined by the cardinality of the spectrum. It is well-known that Krull dimension $>1$ prevents the spectrum from being finite. On the other hand, both $k[x,y]_{(x,y)}$ and $R$ are noetherian rings of countable cardinality, and as such they have a countable amount of distinct prime ideals. We note that in $k[x,y]_{(x,y)}$, as in most well-behaved examples
	of local domains of Krull dimension $2$, the Zariski closure $V(\pf)$ of a
	prime ideal $\pf$ of height one will always have a coherent complement (see
	\cite[Prop.~6.4]{ange-mark-stov-taka-vito-20} for details). Nagata's ring $R$, on the other
	hand, has a prime ideal $\pf$ of height one such that $V(\pf)^\cpl$ is not
	coherent \cite[Example 6.5]{ange-mark-stov-taka-vito-20}.
\end{ex}

\begin{figure}[ht]
	\centering
\begin{tikzpicture}
	[black/.style={circle,draw=black,semithick,fill=black,inner sep=2pt},
	 white/.style={circle,draw=black,semithick,fill=white,inner sep=2pt},
	 dots/.style={draw=none,fill=none},
	 label/.style={rectangle,draw=black,semithick,},
	 HRS/.style={decoration={snake,pre length=2pt,post length=2pt}},
	 every edge quotes/.style={},
	 node distance=14pt and 10pt]
	\node[label] (L) at (0,2) {$\Spec(k[x,y]_{(x,y)})$};
	\node[white] (m) at (0,1) {};

	\node[white] (q-1) [below left=18pt and 7pt of m] {};
	\node[black] (q1) [below right=18pt and 7pt of m] {};
	\node[black] (q-2) [left=14pt of q-1] {};
	\node[black] (q2) [right=14pt of q1] {};
	\node[black] (q-3) [left=28pt of q-2] {};
	\node[black] (q3) [right=28pt of q2] {};
	\path (q-3) edge [draw=none] node {$\;\;\,\cdots$} (q-2);
	\path (q3) edge [draw=none] node {$\cdots\;\;$} (q2);

	\node[black] (p) [below right=18pt and 7pt of q-1] {};

	\draw[semithick] (q-1) -- (m);
	\draw[semithick] (q-2) -- (m);
	\draw[semithick] (q-3) -- (m);
	\draw[semithick] (q1) -- (m);
	\draw[semithick] (q2) -- (m);
	\draw[semithick] (q3) -- (m);

	\draw[semithick] (q-1) -- (p);
	\draw[semithick] (q-2) -- (p);
	\draw[semithick] (q-3) -- (p);
	\draw[semithick] (q1) -- (p);
	\draw[semithick] (q2) -- (p);
	\draw[semithick] (q3) -- (p);

	\node[label] (H) at (8,2) {$\GSpec(\Hcal)$};
	\node[black] (am) at (8,1) {};

	\node[black] (aq-1) [below left=18pt and 7pt of am] {};
	\node[black] (aq1) [below right=18pt and 7pt of am] {};
	\node[black] (aq-2) [left=14pt of aq-1] {};
	\node[black] (aq2) [right=14pt of aq1] {};
	\node[black] (aq-3) [left=28pt of aq-2] {};
	\node[black] (aq3) [right=28pt of aq2] {};
	\path (aq-3) edge [draw=none] node {$\;\;\cdots$} (aq-2);
	\path (aq3) edge [draw=none] node {$\cdots\,\;$} (aq2);

	\node[black] (ap) [below right=18pt and 7pt of aq-1] {};

	\draw[semithick] (aq-1) -- (am);

	\draw[semithick] (aq-2) -- (ap);
	\draw[semithick] (aq-3) -- (ap);
	\draw[semithick] (aq1) -- (ap);
	\draw[semithick] (aq2) -- (ap);
	\draw[semithick] (aq3) -- (ap);

	\draw [-{Computer Modern Rightarrow[length=3pt]},decorate,HRS] (L) -- (H);
\end{tikzpicture}
\vskip 2em
\begin{tikzpicture}
	[black/.style={circle,draw=black,semithick,fill=black,inner sep=2pt},
	 white/.style={circle,draw=black,semithick,fill=white,inner sep=2pt},
	 dots/.style={draw=none,fill=none},
	 label/.style={rectangle,draw=black,semithick,},
	 HRS/.style={decoration={snake,pre length=2pt,post length=2pt}},
	 every edge quotes/.style={},
	 node distance=14pt and 10pt]
	\node[label] (L) at (0,2) {$\Spec(R)$};
	\node[white] (m) at (0,1) {};

	\node[white] (q-1) [below left=18pt and 7pt of m] {};
	\node[black] (q1) [below right=18pt and 7pt of m] {};
	\node[black] (q-2) [left=14pt of q-1] {};
	\node[black] (q2) [right=14pt of q1] {};
	\node[black] (q-3) [left=28pt of q-2] {};
	\node[black] (q3) [right=28pt of q2] {};
	\path (q-3) edge [draw=none] node {$\;\;\,\cdots$} (q-2);
	\path (q3) edge [draw=none] node {$\cdots\;\;$} (q2);

	\node[black] (p) [below right=18pt and 7pt of q-1] {};

	\draw[semithick] (q-1) -- (m);
	\draw[semithick] (q-2) -- (m);
	\draw[semithick] (q-3) -- (m);
	\draw[semithick] (q1) -- (m);
	\draw[semithick] (q2) -- (m);
	\draw[semithick] (q3) -- (m);

	\draw[semithick] (q-1) -- (p);
	\draw[semithick] (q-2) -- (p);
	\draw[semithick] (q-3) -- (p);
	\draw[semithick] (q1) -- (p);
	\draw[semithick] (q2) -- (p);
	\draw[semithick] (q3) -- (p);

	\node[label] (H) at (8,2) {$\GSpec(\Hcal)$};
	\node[black] (am) at (8,1) {};

	\node[black] (aq-1) [below left=18pt and 7pt of am] {};
	\node[black] (aq1) [below right=18pt and 7pt of am] {};
	\node[black] (aq-2) [left=14pt of aq-1] {};
	\node[black] (aq2) [right=14pt of aq1] {};
	\node[black] (aq-3) [left=28pt of aq-2] {};
	\node[black] (aq3) [right=28pt of aq2] {};
	\path (aq-3) edge [draw=none] node {$\;\;\cdots$} (aq-2);
	\path (aq3) edge [draw=none] node {$\cdots\,\;$} (aq2);

	\node[black] (ap) [below right=18pt and 7pt of aq-1] {};

	\draw[semithick] (aq-1) -- (am);

	\draw[semithick] (aq-2) -- (ap);
	\draw[semithick] (aq-3) -- (ap);
	\draw[semithick] (aq1) -- (ap);
	\draw[semithick] (aq2) -- (ap);
	\draw[semithick] (aq3) -- (ap);

	\draw[semithick] (ap) -- (am);

	\draw [-{Computer Modern Rightarrow[length=3pt]},decorate,HRS] (L) -- (H);
\end{tikzpicture}
	\caption{Here we illustrate how the resulting Gabriel topology differs when
		right tilting at the Zariski closure of a height one ideal depending on
		whether the complement is coherent or not in
		Example~\ref{example:2dim-coherent-noncoherent} and
		Example~\ref{example:Nagata-ring}.}
\end{figure}
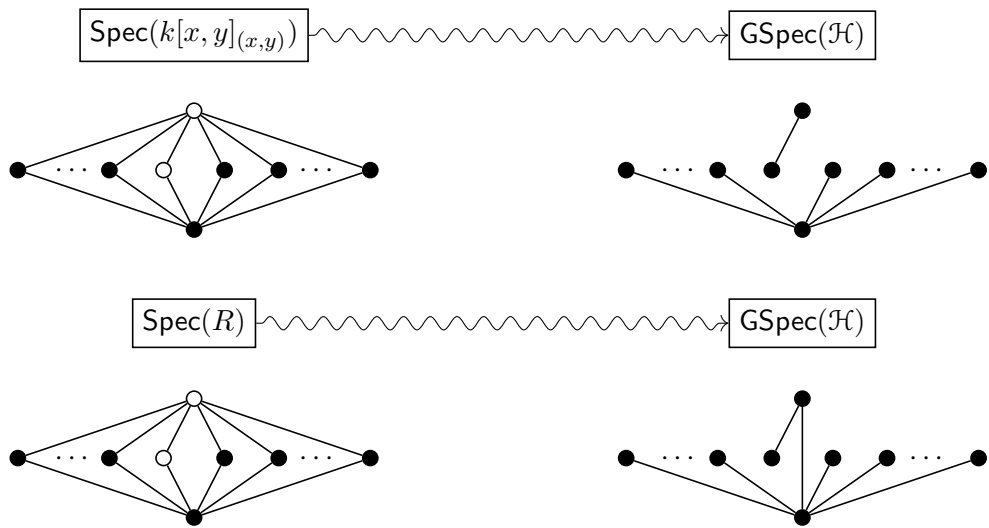

\subsection{Intermediate truncated-slice filtrations}\label{subsec:zerodim-examples}

We conclude with the computation of the Gabriel spectra for a specific type of
bounded cosilting complexes in $\D(R)$. An intermediate filtration,
which up to shift has the form
$\Spec(R)=:V_{-1}\supsetneq V_0\supseteq \cdots\supseteq V_{n-1}\supsetneq
V_n:=\emptyset$,
is said to be \emph{truncated-slice} if for every $0\leq i<n$ the differences $V_{i-1}\setminus
V_{i}$ are discrete subspaces of $\Spec(R)$ with respect to the Hochster
topology, \emph{i.e.}\ they do not contain proper inclusions of primes. The adjective `truncated'
refers to the fact that we do not require any condition on the last nontrivial
difference $V_{n-1}\setminus V_n=V_{n-1}$. These filtrations therefore slightly
generalise the intermediate \emph{slice filtrations} of
\cite[\S3]{hrbe-naka-stov-24}, which are those for which $V_{n-1}$ does not
contain proper inclusions either.

Consider a truncated-slice filtration, as above, and the associated heart
$\Hcal$. The results of \S\ref{subsec:preliminary-computations} are enough to give an explicit
description of the Gabriel tropology of $\Hcal$.

\begin{prop}\label{prop:truncated-slice}
	Let $\tau$ be the $t$-structure associated to an intermediate truncated-slice
	filtration $\Spec(R)=V_{-1}\supsetneq V_0\supseteq\cdots\supseteq V_{n-1}\supsetneq
	V_n:=\emptyset$, with heart $\Hcal$. Then:
	\begin{enumerate}
		\item $V_{n-1}^\cpl$ is a discrete clopen in $\GSpec(\Hcal)$, 
			and the open subsets of $V_{n-1}$ are the specialisation-closed subsets.
		\item $\tau$ can be obtained from the standard $t$-structure by
			a chain of perfect right mutations at discrete closed subspaces.
	\end{enumerate}
	If $\tau$ is associated to a slice filtration, $\GSpec(\Hcal)$
	is discrete, and so $\Hcal$ is semi-artinian.
\end{prop}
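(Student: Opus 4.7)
The plan is to proceed by induction along the chain of right mutations $\tau_0\rightsquigarrow\tau_1\rightsquigarrow\cdots\rightsquigarrow\tau_n=\tau$ provided by Construction~\ref{con:chain}, establishing at each stage $0\leq k\leq n$ the following inductive claim: the space $\GSpec(\Hcal_k)$ is the disjoint union of the clopen discrete subspace $V_{k-1}^\cpl$ and the open subspace $V_{k-1}$ equipped with the Hochster subspace topology --- that is, whose open sets are precisely the specialisation-closed subsets of $V_{k-1}$. The base case $k=0$ is immediate, since $V_{-1}^\cpl=\emptyset$ and $\GSpec(\Hcal_0)\simeq\Spec(R)$ carries the Hochster topology.

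For the inductive step, I would decompose the next mutation set as $V_k^\cpl = V_{k-1}^\cpl\sqcup(V_{k-1}\setminus V_k)$. By the inductive hypothesis, the first piece is clopen and discrete in $\GSpec(\Hcal_k)$. The second, seen inside $V_{k-1}$ with its Hochster subspace topology, is closed --- since $V_k$ is specialisation-closed in $\Spec(R)$, the complement $V_{k-1}\setminus V_k$ is stable under passing to smaller primes within $V_{k-1}$ --- and it is an antichain for $\subseteq$ by the truncated-slice assumption, hence discrete. So $V_k^\cpl$ is a discrete closed subspace of $\GSpec(\Hcal_k)$, and Proposition~\ref{prop:discrete-mutation} then shows that it becomes clopen and discrete in $\GSpec(\Hcal_{k+1})$, while Theorem~\ref{thm:piecewise-homeomorphism} guarantees that the complementary subspace $V_k$ retains its topology from $\GSpec(\Hcal_k)$; by the inductive hypothesis and transitivity of subspace topologies, this is precisely the Hochster subspace topology on $V_k\subseteq\Spec(R)$, giving the inductive claim at stage $k+1$.

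Taking $k=n$ yields assertion~(1). Assertion~(2) follows from the same inductive analysis: each mutation in Construction~\ref{con:chain} is performed at a discrete closed subspace, and Proposition~\ref{prop:zerodim-perfect} then ensures that every such mutation is perfect. Finally, if the filtration is a slice filtration, $V_{n-1}$ is itself an antichain under $\subseteq$, whence its Hochster subspace topology is discrete; combined with the disjoint-union decomposition from~(1), this forces $\GSpec(\Hcal)$ to be discrete, and the lemma preceding Proposition~\ref{prop:discrete-mutation} (applied with $E=\GSpec(\Hcal)$, so that ${}^\bot E=0$) then gives that $\Hcal$ is semi-artinian. The only delicate bookkeeping lies in the inductive step, confirming the compatibility of the iterated subspace topologies on $V_{k-1}\setminus V_k\subseteq V_{k-1}\subseteq\Spec(R)$ and their preservation under mutation; once this is in place, everything reduces to direct applications of Theorem~\ref{thm:piecewise-homeomorphism}, Proposition~\ref{prop:discrete-mutation}, and Proposition~\ref{prop:zerodim-perfect}.
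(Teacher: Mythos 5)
Your proof is correct but takes a somewhat different route from the paper's. The paper first proves assertion~(1) directly for the final heart $\Hcal$, using Lemma~\ref{lemma:maximal-difference} (which rests on the openness theorem, Theorem~\ref{thm:theta-open}) to show that every point of $V_{n-1}^\cpl$ is open, together with Lemma~\ref{lemma:bound-topology}; assertion~(2) then follows by an induction that invokes~(1) for each truncated filtration $\tau_i$. You instead run a single induction on $k$ carrying along the full clopen decomposition of $\GSpec(\Hcal_k)$ into a discrete piece $V_{k-1}^\cpl$ and a Hochster-topologized piece $V_{k-1}$, with the inductive step driven by Proposition~\ref{prop:discrete-mutation} and Theorem~\ref{thm:piecewise-homeomorphism} plus an elementary order-theoretic observation that the lower-set antichain $V_{k-1}\setminus V_k$ is closed and discrete in the Hochster subspace topology. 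The notable structural difference is that your route never needs Lemma~\ref{lemma:maximal-difference} and is therefore independent of Theorem~\ref{thm:theta-open}, relying entirely on the machinery of \S\ref{subsec:disconnect}; conversely, the paper's argument is shorter precisely because it freely uses the openness theorem already in hand. Both treatments of the concluding semi-artinianity claim are equivalent, though you helpfully spell out the appeal to the unnumbered lemma preceding Proposition~\ref{prop:discrete-mutation}, a step the paper leaves implicit.
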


\begin{proof}
	As in the beginning of the section, we denote by $\tau_i$, for
	$0\leq i\leq n$, the $t$-structures associated to the truncations of the
	sp-filtration of $\tau$, which are all truncated-slice filtrations. As
	mentioned, $\tau_{i+1}$ is obtained from $\tau_i$ by a right mutation, whose
	underlying closed set is $V_i^\cpl$.

	(1) We know that $V_{n-1}^\cpl$ is closed by
	Proposition~\ref{prop:smashing-cohomological}. Let $\pf$ be a point in
	$V_{n-1}^\cpl$, which must then belong to a difference $V_{i-1}\setminus
	V_{i}$ for some $0\leq
	i<n$. Since the filtration is truncated-slice, $\pf$ is maximal in
	$V_i\setminus V_{i-1}$; hence, it is an open point in $\GSpec(\Hcal)$ by
	Lemma~\ref{lemma:maximal-difference}. This shows that $V_{n-1}^\cpl$, which is
	closed by Lemma~\ref{lemma:preceq-finer}, is in fact a
	discrete clopen subset of $\GSpec(\Hcal)$. The open subsets of the clopen
	$V_{n-1}=V_{n-1}\setminus V_n$ are the specialisation-closed subsets by
	Lemma~\ref{lemma:bound-topology}.

	(2) We argue by induction on $n\geq 0$. If $n=0$, $\tau$ is the standard
	$t$-structure, and there is nothing to prove. For $n>0$, consider
	$\tau_{n-1}$, which is itself truncated-slice. As mentioned, $\tau_n$ is obtained from
	$\tau_{n-1}$ by right mutation at the closed subset $V_{n-1}^\cpl$. Expressing
	this set as 
	$V_{n-1}^\cpl=V_{n-2}^\cpl\sqcup (V_{n-2}\setminus V_{n-1})$, we have that
	$V_{n-2}^\cpl$ is a discrete clopen in $\GSpec(\Hcal_{n-1})$, by item (1). On
	the other hand, the subspace topology on $V_{n-2}\setminus V_{n-1}$ is
	discrete, by Lemma~\ref{lemma:bound-topology} and the fact that this
	difference does not contain proper inclusions of primes by assumption.
	Therefore, $V_{n-1}^\cpl$ is discrete for $\GSpec(\Hcal_{n-1})$. This shows
	that the right mutation from $\tau_{n-1}$ to $\tau_n$ is discrete, and
	therefore also perfect by Proposition~\ref{prop:zerodim-perfect}.
\end{proof}

We conclude with two examples of intermediate slice filtrations.

\begin{ex}\label{ex:height}
	The \emph{height filtration} is the filtration of $\Spec(R)$ given by the
	rule:
	\[V_n = \{\pf \in \Spec(R) \mid \height(\pf) > n\} \quad\text{for all }n \in \Zbb.\]
	The height filtration is an intermediate slice filtration for any commutative
	noetherian ring $R$ of finite Krull dimension \cite[\S 3]{hrbe-naka-stov-24}.
	In view of Lemma~\ref{prop:truncated-slice}, we then obtain that there is a
	finite chain of discrete perfect right mutations connecting the
	standard heart in $\D(R)$, for $R$ of finite Krull dimension, to a semi-artinian heart.
\end{ex}

\begin{ex}\label{ex:CM-excellent}
	A \emph{codimension function} is a function $d: \Spec(R) \to \Zbb$ satisfying
	$d(\qf) = d(\pf) + 1$ for any minimal proper inclusion $\pf \subsetneq \qf$ in
	$\Spec(R)$. A codimension function does not always exist --- if $R$ is local,
	its existence is equivalent to $\Spec(R)$ being a catenary space. Also, it may
	happen that the codimension function for $R$ exists, but the height function
	$\pf \mapsto \height(\pf)$ is not a codimension function
	\cite[Ex.~2.17(2)]{hrbe-naka-stov-24}. If a codimension function exists, it is
	uniquely determined on each connected component of $\Spec(R)$ up to an
	additive constant. Assume that $R$ is of finite Krull dimension. Given a
	codimension function $d$, the filtration given by the rule
	\[V_n = \{\pf \in \Spec(R) \mid d(\pf) > n\}\quad\text{for all }n\in\Zbb\]
	is an intermediate slice filtration
	\cite[\S3]{hrbe-naka-stov-24}. The corresponding heart $\Hcal$ does not depend
	on the choice of the codimension function \cite[Rmk.~4.10]{hrbe-naka-stov-24}
	and is called the \emph{Cohen--Macaulay heart}.
	Takahashi showed in
	\cite{taka-23} that if $R$ is \emph{CM-excellent} then the $t$-structure
	induced this way from any codimension function is \emph{restrictable}, and thus of
	derived type by \cite[Thm.~6.16]{pavo-vito-21}. In fact, the converse was
	proved in \cite[Cor.~5.6]{hrbe-mart-24} --- if a codimension function gives
	rise to a $t$-structure of derived type then $R$ is CM-excellent. We remark
	that for a local ring being CM-excellent is equivalent to it being a
	homomorphic image of a local Cohen--Macaulay ring \cite{kawa-02,kawa-08}, and
	in this case the codimension function always exists, so that $\Mod(R)$ is
	derived equivalent to the semi-artinian Cohen--Macaulay heart.
\end{ex}

\bibliographystyle{plain}
\bibliography{references.bib}

\begin{thebibliography}{10}

\bibitem{alon-jere-saor-10}
Leovigildo Alonso~Tarr{\'{\i}}o, Ana Jerem{\'{\i}}as~L{\'{o}}pez, and Manuel
  Saor{\'{\i}}n.
\newblock Compactly generated {$t$}-structures on the derived category of a
  {N}oetherian ring.
\newblock {\em J. Algebra}, 324(3):313--346, 2010.

\bibitem{ange-laki-stov-vito-22}
Lidia Angeleri~H{\"{u}}gel, Rosanna Laking, Jan
  {\v{S}}{\v{t}}ov{\'{\i}}{\v{c}}ek, and Jorge Vit{\'{o}}ria.
\newblock Mutation and torsion pairs.
\newblock 2022.

\bibitem{ange-mark-stov-taka-vito-20}
Lidia Angeleri~H{\"{u}}gel, Frederik Marks, Jan
  {\v{S}}{\v{t}}ov{\'{\i}}{\v{c}}ek, Ryo Takahashi, and Jorge Vit{\'{o}}ria.
\newblock Flat ring epimorphisms and universal localizations of commutative
  rings.
\newblock {\em Q. J. Math.}, 71(4):1489--1520, 2020.

\bibitem{ange-mark-vito-17}
Lidia Angeleri~H{\"{u}}gel, Frederik Marks, and Jorge Vit{\'{o}}ria.
\newblock Torsion pairs in silting theory.
\newblock {\em Pacific J. Math.}, 291(2):257--278, 2017.

\bibitem{beli-reit-07}
Apostolos Beligiannis and Idun Reiten.
\newblock Homological and homotopical aspects of torsion theories.
\newblock {\em Mem. Amer. Math. Soc.}, 188(883):viii+207, 2007.

\bibitem{bbd-81}
A.~A. Be\u{\i}linson, J.~Bernstein, and P.~Deligne.
\newblock Faisceaux pervers.
\newblock In {\em Analysis and topology on singular spaces, {I} ({L}uminy,
  1981)}, volume 100 of {\em Ast\'{e}risque}, pages 5--171. Soc. Math. France,
  Paris, 1982.

\bibitem{bond-orlo-01}
Alexey~I. Bondal and Dmitri Orlov.
\newblock Reconstruction of a variety from the derived category and groups of
  autoequivalences.
\newblock {\em Compositio Math.}, 125(3):327--344, 2001.

\bibitem{brea-hrbe-modo-22}
Simion Breaz, Michal Hrbek, and George~Ciprian Modoi.
\newblock Silting, cosilting, and extensions of commutative ring.
\newblock {\em arXiv preprint arXiv:2204.01374}, 2022.

\bibitem{burk-94-thesis}
Kevin Burke.
\newblock {\em Some model-theoretic properties of functor categories for
  modules}.
\newblock PhD thesis, University of Manchester, 1994.

\bibitem{chri-holm-09}
Lars~Winther Christensen and Henrik Holm.
\newblock Ascent properties of auslander categories.
\newblock {\em Canadian Journal of Mathematics}, 61(1):76--108, 2009.

\bibitem{craw-94}
William Crawley-Boevey.
\newblock Locally finitely presented additive categories.
\newblock {\em Comm. Algebra}, 22(5):1641--1674, 1994.

\bibitem{gabr-62}
Pierre Gabriel.
\newblock Des cat\'{e}gories ab\'{e}liennes.
\newblock {\em Bull. Soc. Math. France}, 90:323--448, 1962.

\bibitem{happ-reit-smal-96}
Dieter Happel, Idun Reiten, and Sverre~O. Smal\o.
\newblock Tilting in abelian categories and quasitilted algebras.
\newblock {\em Mem. Amer. Math. Soc.}, 120(575):viii+ 88, 1996.

\bibitem{herz-97}
Ivo Herzog.
\newblock The {Z}iegler spectrum of a locally coherent {G}rothendieck category.
\newblock {\em Proc. London Math. Soc. (3)}, 74(3):503--558, 1997.

\bibitem{hrbe-hu-zhu-24}
Michal Hrbek, Jiangsheng Hu, and Rongmin Zhu.
\newblock Gluing compactly generated t-structures over stalks of affine
  schemes.
\newblock {\em Israel Journal of Mathematics}, pages 1--41, 2024.

\bibitem{hrbe-mart-24}
Michal Hrbek and Lorenzo Martini.
\newblock Product-complete tilting complexes and cohen--macaulay hearts.
\newblock {\em Revista Matem{\'a}tica Iberoamericana}, 40(6):2339--2369, 2024.

\bibitem{hrbe-naka-21}
Michal Hrbek and Tsutomu Nakamura.
\newblock Telescope conjecture for homotopically smashing t-structures over
  commutative noetherian rings.
\newblock {\em J. Pure Appl. Algebra}, 225(4):106571, 13, 2021.

\bibitem{hrbe-naka-stov-24}
Michal Hrbek, Tsutomu Nakamura, and Jan {\v{S}}{\v{t}}ov{\'{\i}}{\v{c}}ek.
\newblock Tilting complexes and codimension functions over commutative
  {N}oetherian rings.
\newblock {\em Nagoya Math. J.}, 255:618--693, 2024.

\bibitem{hrbe-pavo-21}
Michal Hrbek and Sergio Pavon.
\newblock Singular equivalences to locally coherent hearts of commutative
  noetherian rings.
\newblock {\em J. Algebra}, 632:117--153, 2023.

\bibitem{jens-lenz-89}
Christian~U. Jensen and Helmut Lenzing.
\newblock {\em Model-theoretic algebra with particular emphasis on fields,
  rings, modules}, volume~2 of {\em Algebra, Logic and Applications}.
\newblock Gordon and Breach Science Publishers, New York, 1989.

\bibitem{kawa-02}
Takesi Kawasaki.
\newblock On arithmetic macaulayfication of noetherian rings.
\newblock {\em Transactions of the American Mathematical Society},
  354(1):123--149, 2002.

\bibitem{kawa-08}
Takesi Kawasaki.
\newblock Finiteness of cousin cohomologies.
\newblock {\em Transactions of the American Mathematical Society},
  360(5):2709--2739, 2008.

\bibitem{krau-97}
Henning Krause.
\newblock The spectrum of a locally coherent category.
\newblock {\em J. Pure Appl. Algebra}, 114(3):259--271, 1997.

\bibitem{krau-98b}
Henning Krause.
\newblock Exactly definable categories.
\newblock {\em J. Algebra}, 201(2):456--492, 1998.

\bibitem{krau-00}
Henning Krause.
\newblock Smashing subcategories and the telescope conjecture---an algebraic
  approach.
\newblock {\em Invent. Math.}, 139(1):99--133, 2000.

\bibitem{krau-08}
Henning Krause.
\newblock Thick subcategories of modules over commutative {N}oetherian rings
  (with an appendix by {S}rikanth {I}yengar).
\newblock {\em Math. Ann.}, 340(4):733--747, 2008.

\bibitem{laki-20}
Rosanna Laking.
\newblock Purity in compactly generated derivators and t-structures with
  {G}rothendieck hearts.
\newblock {\em Math. Z.}, 295(3-4):1615--1641, 2020.

\bibitem{mark-vito-18}
Frederik Marks and Jorge Vit{\'{o}}ria.
\newblock Silting and cosilting classes in derived categories.
\newblock {\em J. Algebra}, 501:526--544, 2018.

\bibitem{matl-58}
Eben Matlis.
\newblock Injective modules over {N}oetherian rings.
\newblock {\em Pacific J. Math.}, 8:511--528, 1958.

\bibitem{naga-58}
Masayoshi Nagata.
\newblock An example of a normal local ring which is analytically reducible.
\newblock {\em Memoirs of the College of Science, University of Kyoto. Series
  A: Mathematics}, 31(1):83--85, 1958.

\bibitem{neem-92a}
Amnon Neeman.
\newblock The chromatic tower for {$D(R)$}. {W}ith an appendix by {M}arcel
  {B}\"{o}kstedt.
\newblock {\em Topology}, 31(3):519--532, 1992.

\bibitem{neem-01}
Amnon Neeman.
\newblock {\em Triangulated categories}, volume 148 of {\em Annals of
  Mathematics Studies}.
\newblock Princeton University Press, Princeton, NJ, 2001.

\bibitem{neem-21}
Amnon Neeman.
\newblock The {$t$}-structures generated by objects.
\newblock {\em Trans. Amer. Math. Soc.}, 374(11):8161--8175, 2021.

\bibitem{nico-saor-zvon-19}
Pedro Nicol{\'{a}}s, Manuel Saor{\'{\i}}n, and Alexandra Zvonareva.
\newblock Silting theory in triangulated categories with coproducts.
\newblock {\em J. Pure Appl. Algebra}, 223(6):2273--2319, 2019.

\bibitem{parr-saor-15}
Carlos~E. Parra and Manuel Saor{\'{\i}}n.
\newblock Direct limits in the heart of a t-structure: the case of a torsion
  pair.
\newblock {\em J. Pure Appl. Algebra}, 219(9):4117--4143, 2015.

\bibitem{pavo-25}
Sergio Pavon.
\newblock Torsion-simple objects in abelian categories.
\newblock {\em J. Pure Appl. Algebra}, 229(1):Paper No. 107818, 2025.

\bibitem{pavo-vito-21}
Sergio Pavon and Jorge Vit{\'{o}}ria.
\newblock Hearts for commutative {N}oetherian rings: torsion pairs and derived
  equivalences.
\newblock {\em Doc. Math.}, 26:829--871, 2021.

\bibitem{pope-73}
Nicolae Popescu.
\newblock {\em Abelian categories with applications to rings and modules}.
\newblock London Mathematical Society Monographs, No. 3. Academic Press,
  London-New York, 1973.

\bibitem{posi-stov-21}
Leonid Positselski and Jan {\v{S}}{\v{t}}ov{\'{\i}}{\v{c}}ek.
\newblock The tilting-cotilting correspondence.
\newblock {\em Int. Math. Res. Not. IMRN}, (1):191--276, 2021.

\bibitem{pres-09}
Mike Prest.
\newblock {\em Purity, spectra and localisation}, volume 121 of {\em
  Encyclopedia of Mathematics and its Applications}.
\newblock Cambridge University Press, Cambridge, 2009.

\bibitem{psar-vito-18}
Chrysostomos Psaroudakis and Jorge Vit{\'{o}}ria.
\newblock Realisation functors in tilting theory.
\newblock {\em Math. Z.}, 288(3-4):965--1028, 2018.

\bibitem{ried-scho-91}
Christine Riedtmann and Aidan Schofield.
\newblock On a simplicial complex associated with tilting modules.
\newblock {\em Comment. Math. Helv.}, 66(1):70--78, 1991.

\bibitem{saor-stov-viri-17}
Manuel Saor{\'{\i}}n, Jan {\v{S}}{\v{t}}ov{\'{\i}}{\v{c}}ek, and Simone Virili.
\newblock $t$-structures on stable derivators and {G}rothendieck hearts.
\newblock {\em Advances in Mathematics}, 429:109139, 2023.

\bibitem{stacks}
The {Stacks project authors}.
\newblock The stacks project.
\newblock {\tt https://stacks.math.columbia.edu}, 2023.

\bibitem{sten-75}
Bo~Stenstr{\"{o}}m.
\newblock {\em Rings of quotients}.
\newblock Die Grundlehren der mathematischen Wissenschaften, Band 217.
  Springer-Verlag, New York-Heidelberg, 1975.

\bibitem{stov-24}
Jan {\v{S}}{\v{t}}ov{\'{\i}}{\v{c}}ek.
\newblock Flat epimorphisms and silting epimorphisms coincide for commutative
  rings.
\newblock {\em In preparation}, 2025.

\bibitem{taka-23}
Ryo Takahashi.
\newblock Faltings’ annihilator theorem and t-structures of derived
  categories.
\newblock {\em Mathematische Zeitschrift}, 304(1):10, 2023.

\end{thebibliography}

\end{document}